\newcommand{\ifims}[2]{#1} % 1 - Latex, 2 - ims version
\newcommand{\ifAMS}[2]{#1}   % 1 - AMS, 2 - JEL
\newcommand{\ifau}[3]{#1}  % #1 - 1 author, #2 - 2 authors, #3 - 3 authors
\newcommand{\ifbook}[2]{#1}   % 1 - paper; 2 - book
\def\thetitle{
Simultaneous likelihood-based bootstrap confidence sets\\
 for a large number of models}
\def\theruntitle{Simultaneous bootstrap confidence sets}
\def\theabstract{
The paper studies a problem of constructing simultaneous likelihood-based confidence sets.
We consider a simultaneous multiplier bootstrap procedure for estimating the quantiles of the joint distribution of the likelihood ratio statistics, and for adjusting the confidence level for multiplicity.
Theoretical results state the bootstrap validity in the following setting: the sample size \(n\) is fixed, the maximal parameter dimension \(p_{\textrm{max}}\) and the number of considered parametric models \(K\) are s.t. \((\log K)^{12}p_{\max}^{3}/n\) is small.
We also consider the situation when the parametric models are misspecified. If the models' misspecification is significant, then the bootstrap critical values exceed the true ones and the simultaneous bootstrap confidence set becomes conservative. Numerical experiments for local constant and local quadratic regressions illustrate the theoretical results.
}
\def\kwdp{62F25}
\def\kwds{62F40, 62E17}
\def\thekeywords{simultaneous inference, correction for multiplicity, family-wise error, misspecified model, multiplier/weighted bootstrap
}
\def\authora{Mayya Zhilova}
\def\thanksa
\def\thanksaa{
Financial support by the German Research Foundation (DFG) through the Collaborative
Research Center 649 ``Economic Risk'' is gratefully acknowledged. The research was partly supported by the Russian Science Foundation grant (project 14-50-00150).
}
\def\runauthora{zhilova, m.}
\def\addressa{
    Weierstrass-Institute,
    \\
    Mohrenstr. 39,\\ 10117 Berlin, Germany,
    }
\def\emaila{zhilova@wias-berlin.de}
\def\affiliationa{Weierstrass-Institute}
\def\authorb{}%\raisebox{.4\height}{\scriptsize*}}
\renewenvironment{abstract}
    {\centerline{\textbf{Abstract}}\bigskip
      \begin{center}
       \begin{minipage}{12cm}
        \begin{small}
    }
    {   \end{small}
       \end{minipage}
      \end{center}
     \bigskip
    }
\numberwithin{equation}{section}
\numberwithin{figure}{section}
\newcounter{example}[section]
\numberwithin{example}{section}
\newcounter{remark}[section]
\numberwithin{remark}{section}
\newtheorem{theorem}{Theorem}[section]
\newtheorem{proposition}[theorem]{Proposition}
\newtheorem{lemma}[theorem]{Lemma}
\newtheorem{corollary}[theorem]{Corollary}
\newtheorem{exmp}[example]{Example}
\newtheorem{rmrk}[remark]{Remark}
\newenvironment{example}{\begin{exmp}\rm}{\end{exmp}}
\newenvironment{remark}{\begin{rmrk}\rm}{\end{rmrk}}
\begin{document}
\thispagestyle{empty}
\ifims{

\title{\thetitle}
\ifau{ % 1 author
  \author{
    \authora
    \ifdef{\thanksa}{\thanks{\thanksa}\ $^{,}$\thanks{\thanksaa}}{}
    \\[5.pt]
    \addressa \\
    \texttt{ \emaila}
  }
}
{  % 2 authors
  \author{
    \authora
    \ifdef{\thanksa}{\thanks{\thanksa}}{}
    \\[5.pt]
    \addressa \\
    \texttt{ \emaila}
    \and
    \authorb
    \ifdef{\thanksb}{\thanks{\thanksb}}{}
    \\[5.pt]
    \addressb \\
    \texttt{ \emailb}
  }
}
{   % 3 authors
  \author{
    \authora
    \ifdef{\thanksa}{\thanks{\thanksa}}{}
    \\[5.pt]
    \addressa \\
    \texttt{ \emaila}
    \and
    \authorb
    \ifdef{\thanksb}{\thanks{\thanksb}}{}
    \\[5.pt]
    \addressb \\
    \texttt{ \emailb}
    \and
    \authorc
    \ifdef{\thanksc}{\thanks{\thanksc}}{}
    \\[5.pt]
    \addressc \\
    \texttt{ \emailc}
  }
}

\maketitle
\pagestyle{myheadings}
\markboth
 {\hfill \textsc{ \small \theruntitle} \hfill}
 {\hfill
 \textsc{ \small
 \ifau{\runauthora}
      {\runauthora\ and \runauthorb}
      {\runauthora, \runauthorb, and \runauthorc}
 }
 \hfill}
\begin{abstract}
\theabstract
\end{abstract}

\ifAMS
    {\par\noindent\emph{AMS 2000 Subject Classification:} Primary \kwdp. Secondary \kwds}
    {\par\noindent\emph{JEL codes}: \kwdp}

\par\noindent\emph{Keywords}: \thekeywords
} % end front latex
{ % front ims
\begin{frontmatter}
\title{\thetitle\protect\thanksref{T1}}
\thankstext{T1}{\thankstitle}

% "Title of the paper"

\runtitle{\theruntitle}

\begin{aug}
\author{\authora\ead[label=e1]{\emaila}}
\address{\addressa \\
 \printead{e1}}
 \end{aug}

 \runauthor{\runauthora}
\affiliation{\affiliationa}

\begin{abstract}
\theabstract
\end{abstract}

\begin{keyword}[class=AMS]
\kwd[Primary ]{\kwdp}
\kwd[; secondary ]{\kwds}
\end{keyword}

\begin{keyword}
\kwd{\thekeywords}
\end{keyword}

\end{frontmatter}
} % end front ims

\renewcommand{\(}{$\,}
\renewcommand{\)}{\,$}

\def\nquad{\hspace{-1cm}}
\def\eqdef{\stackrel{\operatorname{def}}{=}}
\def\tod{\stackrel{d}{\longrightarrow}}
\def\tow{\stackrel{w}{\longrightarrow}}
\def\toP{\stackrel{\P}{\longrightarrow}}

\newcommand{\cc}[1]{\mathscr{#1}}
\newcommand{\bb}[1]{\boldsymbol{#1}}

\renewcommand{\bar}[1]{\overline{#1}}
\renewcommand{\hat}[1]{\widehat{#1}}
\renewcommand{\tilde}[1]{\widetilde{#1}}

\renewcommand{\Gamma}{\varGamma}
\renewcommand{\Pi}{\varPi}
\renewcommand{\Sigma}{\varSigma}
\renewcommand{\Delta}{\varDelta}
\renewcommand{\Lambda}{\varLambda}
\renewcommand{\Psi}{\varPsi}
\renewcommand{\Phi}{\varPhi}
\renewcommand{\Theta}{\varTheta}
\renewcommand{\Omega}{\varOmega}
\renewcommand{\Xi}{\varXi}
\renewcommand{\Upsilon}{\varUpsilon}
\def\nn{\nonumber \\}

\def\suml{\sum\limits}
\def\supl{\sup\limits}
\def\maxl{\max\limits}
\def\infl{\inf\limits}
\def\intl{\int\limits}
\def\liml{\lim\limits}
\def\Cov{\operatorname{Cov}}
\def\Var{\operatorname{Var}}
\def\arginf{\operatornamewithlimits{arginf}}
\def\argsup{\operatornamewithlimits{argsup}}
\def\argmax{\operatornamewithlimits{argmax}}
\def\argmin{\operatornamewithlimits{argmin}}
\def\val{\operatorname{val}}

\def\D{\boldsymbol{D}}
\def\dd{\operatorname{d}}
\def\tr{\operatorname{tr}}
\def\I{I\!\!I}
\def\R{I\!\!R}
\def\E{I\!\!E}
\def\P{I\!\!P}
\def\X{\mathfrak{X}}
\def\kappa{\varkappa}
\def\Const{\mathrm{Const.} \,}
\def\cdt{\boldsymbol{\cdot}}
\def\tm{\!\times\!}
\def\T{\top}
\def\diag{\operatorname{diag}}
\def\diam{\operatorname{diam}}
\def\rank{\operatorname{rank}}
\def\loc{\operatorname{loc}}

\def\av{\bb{a}}
\def\bv{\bb{b}}
\def\cv{\bb{c}}
\def\dv{\bb{d}}
\def\ev{\bb{e}}
\def\fv{\bb{f}}
\def\gv{\bb{g}}
\def\hv{\bb{h}}
\def\iv{\bb{i}}
\def\jv{\bb{j}}
\def\kv{\bb{k}}
\def\lv{\bb{l}}
\def\mv{\bb{m}}
\def\nv{\bb{n}}
\def\ov{\bb{o}}
\def\pv{\bb{p}}
\def\qv{\bb{q}}
\def\rv{\bb{r}}
\def\sv{\bb{s}}
\def\tv{\bb{t}}
\def\uv{\bb{u}}
\def\vv{\bb{v}}
\def\wv{\bb{w}}
\def\xv{\bb{x}}
\def\yv{\bb{y}}
\def\zv{\bb{z}}

\def\Cv{\bb{C}}
\def\Gv{\bb{G}}
\def\Mv{\bb{M}}
\def\Sv{\bb{S}}
\def\Uv{\bb{U}}
\def\Xv{\bb{X}}
\def\Yv{\bb{Y}}
\def\Zv{\bb{Z}}

\def\alphav{\bb{\alpha}}
\def\epsv{\bb{\varepsilon}}
\def\etav{\bb{\eta}}
\def\gammav{\bb{\gamma}}
\def\varepsilonv{\bb{\varepsilon}}
\def\phiv{\bb{\phi}}
\def\psiv{\bb{\psi}}
\def\tauv{\bb{\tau}}
\def\upsilonv{\bb{\upsilon}}
\def\xiv{\bb{\xi}}
\def\zetav{\bb{\zeta}}

\def\Psiv{\bb{\Psi}}
\def\CONST{\mathtt{C}}

\def\itemv{\vfill\item}
\newenvironment{myslide}[1]
    {\begin{frame}\frametitle{#1}\vfill}
    {\vfill\end{frame}}

\def\vsp{\vspace{0.05\textheight} \vfill}
\def\summarysign{\resizebox{0.08\textwidth}{0.08\textheight}{\includegraphics{summary}}\,}
\def\nix{}
\def\wpu{$\bullet$}

\def\btri{\vfill{\( \blacktriangleright \) }}
\def\btrir{\vfill{\( \blacktriangleright \) }}

\newcommand{\mygraphics}[3]{\begin{center}
    \resizebox{#1\textwidth}{#2\textheight}{\includegraphics{#3}}
    \end{center}
}

\newcommand{\mybox}[3]{\begin{center}
    \resizebox{#1\textwidth}{#2\textheight}{#3}
    \end{center}
}

%\definecolor{myhcolor}{rgb}{0.2,0,0.8}
%\definecolor{myhcolor}{named}{red}
\newenvironment{eqnh}
{
    %\color{myhcolor}} {}
    \setbeamercolor{postit}{fg=black,bg=hellgelb} %{fg=myhcolor,bg=white}
    \begin{beamercolorbox}[center,wd=\textwidth]{postit} %rounded=true,shadow=true,
    \begin{eqnarray*}}
    {\end{eqnarray*}\end{beamercolorbox}
}

\def\ND{\cc{N}}
\def\Bernoulli{\mathrm{Bernoulli}}
\def\Vola{\mathrm{Vola}}
\def\Poisson{\mathrm{Poisson}}
\def\ag{\mathrm{ag}}
\def\glob{\operatorname{glob}}
\def\blk{\operatorname{block}}
\def\lin{\operatorname{lin}}
\def\cond{\, \big| \,}

\def\rdl{\epsilon}
\def\rd{\bb{\rdl}}
\def\rddelta{\delta}
\def\rdomega{\varrho}
\def\rddeltab{\rddelta^{*}}
\def\rhorb{\rhor^{*}}

\def\wv{\bb{w}}
\def\varthetav{\bb{\vartheta}}
\def\Lr{\breve{L}}
\def\zetavr{\breve{\zetav}}
\def\etavr{\breve{\etav}}
\def\xivr{\breve{\xiv}}

\def\rdb{\rd}
\def\rdm{\underline{\rdb}}

\def\taub{\tau_{\rdb}}
\def\taum{\tau_{\rdm}}
\def\kappab{\kappa_{\rd}}
\def\deltab{\delta_{\rd}}

\def\taubGP{\tau_{\rdb,\GP}}
\def\taumGP{\tau_{\rdm,\GP}}
\def\kappabGP{\kappa_{\rd,\GP}}
\def\deltabGP{\delta_{\rd,\GP}}
\def\nubm{\nu_{\rd}}
\def\uub{u_{\rd}}
\def\uubGP{u_{\rd,\GP}}
\def\nubmGP{\nu_{\rd, G}}

\def\rG{\rd,\GP}

\def\LinSp{\mathrm{L}}
\def\Id{I\!\!\!I}
\def\Ind{\operatorname{1}\hspace{-4.3pt}\operatorname{I}}

\def\BG{\mathcal{R}}
\def\bg{r}
\def\fmup{\phi}
\def\rg{r}
\def\uc{u_{c}}
\def\muc{\mu_{c}}
\def\mud{\mu_{0}}
\def\xxd{\xx_{0}}
\def\yyd{\yy_{0}}
\def\gmd{\gm_{0}}

\def\ms{m^{*}}
\def\Inv{A}
\def\InvT{\Inv^{\T}}
\def\Invt{\tilde{\Inv}}

\def\ssize{N}
\def\nsize{{n}}

\def\rhor{\omega}

\def\LT{L}
\def\LGP{\LT_{\GP}}
\def\La{\mathbb{L}}
\def\Lab{\La_{\rdb}}
\def\Lam{\La_{\rdm}}

\def\DP{D}
\def\DPc{\DP_{0}}
\def\DPb{\DP_{\rdb}}
\def\DPm{\DP_{\rdm}}

\def\LabGP{\La_{\rdb,\GP}}
\def\LamGP{\La_{\rdm,\GP}}

\def\DPbGP{\DP_{\rdb,\GP}}
\def\DPmGP{\DP_{\rdm,\GP}}
\def\riskbGP{\riskt_{\rdb,\GP}}

\def\gmi{\mathtt{b}}
\def\gmiid{\mathtt{g}_{1}}
\def\kullbi{\Bbbk}
\def\Thetasi{\Theta_{\loc}}
\def\rri{\mathtt{u}}
\def\rris{\rri_{0}}

\def\Ipc{\bb{\mathrm{f}}}
\def\IF{\Bbb{F}}
\def\IFc{\IF_{0}}
\def\IFb{\IF_{\rdb}}
\def\IFm{\IF_{\rdm}}

\def\DF{\cc{D}}
\def\DFc{\DF_{0}}
\def\DFb{\DF_{\rdb}}
\def\DFm{\breve{\DF}_{\rd}}
\def\DFm{\DF_{\rdm}}

\def\DPr{\breve{\DP}}
\def\VF{\cc{V}}
\def\VFc{\VF_{0}}

\def\HHc{\HH_{0}}
\def\HHb{\HH_{\rd}}
\def\HHm{\HH_{\rdm}}

\def\xib{\xi^{*}}
\def\xivb{\xiv_{\rdb}}
\def\xivm{\xiv_{\rdm}}
\def\CAm{\underline{\CA}}
\def\CAb{\CA}

\def\penr{\operatorname{pen}}
\def\pen{\mathfrak{t}}
\def\PEN{\operatorname{PEN}}
\def\RSS{\operatorname{RSS}}
\def\med{\operatorname{med}}

\def\ex{\mathrm{e}}
\def\entrl{\mathbb{Q}}
\def\entrlb{\entrl}
\def\entr{\entrl}

\def\kullb{\cc{K}} %{\wp}
\def\kullbc{\kullb^{c}}

\def\gm{\mathtt{g}}
\def\gmc{\gm_{c}}
\def\gmb{\gm}
\def\gmbm{\gmb_{1}}

\def\yy{\mathtt{y}}
\def\yyc{\yy_{c}}
\def\xx{\mathtt{x}}
\def\xxc{\xx_{c}}
\def\tc{t_{c}}

\def\alp{\alpha}
\def\alpn{\rho}
\def\gmu{\mathfrak{a}}

\def\losst{\varrho}
\def\loss{\wp}
\def\lossp{u}
\def\closs{g}

\def\riskt{\cc{R}}
\def\emprisk{\ell}
\def\bias{b}
\def\bern{q}

\def\TT{\nsize}

\def\Pone{P}
\def\Pf{\P_{f(\cdot)}}
\def\Ef{\E_{f(\cdot)}}
\def\Ps{\P_{\thetas}}
\def\Es{\E_{\thetas}}
\def\Pu{\P_{\upsilons}}
\def\Eu{\E_{\upsilons}}

\def\Pvs{\P_{\thetavs}}
\def\Evs{\E_{\thetavs}}

\def\UPd{w}
\def\nunup{\nu_{1}}
\def\rru{\rr_{1}}
\def\rups{\rr_{0}}
\def\rupsb{\rups^{*}}
\def\rrf{\rr^{\flat}}

\def\smooths{\mathbb{S}}
\def\smooth{\smooths_{1}}

\def\elli{\bar{\ell}}

\def\K{K}

\def\Psir{\breve{\Psi}}

\def\af{a}
\def\afs{\af^{*}}

\def\kapla{\varkappa}

\newcommand{\mlew}[1]{\tilde{\thetav}_{#1}}
\newcommand{\mlea}[1]{\hat{\thetav}_{#1}}
\newcommand{\mluw}[1]{\tilde{\theta}_{#1}}
\newcommand{\mlua}[1]{\hat{\theta}_{#1}}
\newcommand{\penm}[1]{\boldsymbol{m}_{#1}}

\def\Pdom{\mu_{0}}
\def\PDOM{\bb{\mu}_{0}}
\def\EDOM{\E_{0}}

\def\mk{m}
\def\Mk{\cc{M}}
\def\SV{\cc{S}}

\def\Cs{E}
\def\Csd{\Cs^{\circ}}
\def\Ca{A}
\def\CS{\cc{E}}
\def\CA{\cc{A}}
\def\CAb{\CA_{\rd}}
\def\CAC{\CA_{\CoFu}}

\def\Ccb{m_{\rdb}}
\def\Ccm{m_{\rdm}}
\def\CcbGP{m_{\rdb,\GP}}
\def\CcmGP{m_{\rdm,\GP}}

\def\etas{\eta^{*}}

\def\omegav{\bb{\phi}}
\def\omegavs{\omegav^{*}}
\def\omegavc{\omegav'}

\def\nuvs{\nuv^{*}}
\def\nuvc{\nuv'}

\def\nunu{\nu_{0}}
\def\numu{\nu_{1}}
\def\nupi{\nu^{+}}
\def\nubu{\beta}

\def\nus{\nu}
\def\nusb{\nus}
\def\nusr{\nus^{\bracketing}}
\def\Nusb{\mathbb{N}}
\def\Nusr{\mathbb{N}^{\diamond}}

\def\dist{d}
\def\distd{\mathfrak{a}}

\def\hatk{\kappa}
\def\ko{k^{\circ}}

\def\qqq{\mathfrak{q}}
\def\ppp{{s}}
\def\Cqq{C(\qqq)}
\def\Cqqb{C^{\diamond}(\qqq)}
\def\Crho{C(\mrho)}
\def\Cqqm{\log(4)}
\def\Cqpr{(\qqq \rrp + \dimp / 2)}

\def\Cdima{\mathfrak{e}_{0}}
\def\Cdimb{\mathfrak{e}_{1}}
\def\cdima{\mathfrak{c}_{0}}
\def\cdimb{\mathfrak{c}_{1}}
\def\cdim{\mathfrak{c}}

\def\rdomega{\varrho}
\def\deltaD{\delta}
\def\alphai{\alpha_{1}}
\def\alphaii{\alpha_{2}}
\def\alphaiii{\alpha_{3}}
\def\alphaiv{\alpha_{4}}

\def\err{\diamondsuit}
\def\errbm{\bar{\err}_{\rdomega}}
\def\errm{\err_{\rdm}}
\def\errb{\err_{\rdb}}

\def\errbGP{\err_{\rdomega,\GP}}
\def\errmGP{\err_{\rdm,\GP}}
\def\errbmGP{\bar{\err}_{\rd,\GP}}

\def\errs{\err_{\rdomega}^{*}}
\def\deltas{\alpha}

\def\xivbGP{\xiv_{\rdb,\GP}}
\def\xivmGP{\xiv_{\rdm,\GP}}

\def\SP{S}
\def\GP{G}
\def\GPt{\GP_{0}}
\def\GPn{\GP_{1}}
\def\gp{g}
\def\gs{s}

\def\errbGP{\err_{\rdb,\GP}}
\def\errmGP{\err_{\rdm,\GP}}
\def\errpmGP{\err_{\GP}^{\pm}}

\def\LCS{\cc{C}}

\def\DPGP{\DP_{\GP}}
\def\thetavsGP{\thetavs_{\GP}}

\def\LL{\cc{L}}
\def\LLb{\LL^{*}}
\def\LLh{\cc{L}}

\def\YY{\cc{Y}}
\def\LP{L^{\circ}}

\def\modcnrd{\mathfrak{A}}

\def\pens{\pi}
\def\pnn{\mathfrak{g}}
\def\pnnd{\mathfrak{u}}
\def\pnndGP{\pnnd_{\GP}}

\def\confpr{\mathfrak{c}}
\def\confprb{\confpr^{*}}

\def\pn{\pens^{*}}
\def\penInt{\mathfrak{D}}
\def\penH{\mathbb{H}}
\def\pmu{\mathfrak{u}}
\def\Closs{\cc{R}}

\def\dimp{p}
\def\riskb{\riskt_{\rdb}}
\def\dimpp{\dimp+1}
\def\BB{I\!\!B}
\def\vA{\mathtt{v}}

\def\deficiency{\Delta}
\def\spread{\Delta}
\def\dimtotal{\dimp^{*}}

\def\thetav{\bb{\theta}}
\def\thetavs{\thetav^{*}}
\def\thetavc{\thetav'}
\def\thetavd{\thetav^{\circ}}
\def\thetavdc{\thetav^{\sharp}}
\def\dthetavs{\thetav,\thetavs}

\def\thetas{\theta^{*}}
\def\thetac{\theta'}
\def\thetad{\theta^{\circ}}
\def\thetab{\theta^{\dag}}
\def\thetavb{\thetav^{\dag}}

\def\vtheta{\vartheta}
\def\vthetav{\bb{\vtheta}}
\def\prior{\Pi}

\def\Gam{\Xi}
\def\Gam{\mathcal{S}}
\def\RG{R}
\def\Psu{\Upsilon}
\def\Phim{\breve{\Phi}}

\def\Proj{P}

\def\gammavs{\gammav^{*}}
\def\gammavd{\gammav^{\circ}}
\def\etavs{\etav^{*}}
\def\etavd{\etav^{\circ}}
\def\etavc{\etav'}

\def\taus{\tau_{0}}
\def\taup{\lceil \tau \rceil}

\def\sigmas{{\sigma^{*}}}
\def\Sigmas{\Sigma_{0}}

\def\upsilonc{\upsilon'}
\def\upsilond{\upsilon^{\circ}}
\def\upsilonp{{\upsilon}^{*}}
\def\upsilonm{{\upsilon}_{*}}
\def\upsilonvs{\upsilonv^{*}}
\def\upsilons{\upsilon^{*}}
\def\upsilonb{\bar{\upsilon}}
\def\upsilonvd{\upsilonv^{\circ}}

\def\ups{\bb{\upsilon}}
\def\upss{\ups_{0}}
\def\upsc{\ups^{\prime}}
\def\upsd{\ups^{\circ}}
\def\upsdc{\ups^{\sharp}}
\def\upsdu{\ups^{\flat}}

\def\Ups{\varUpsilon}
\def\Upsd{\Ups^{\circ}}
\def\Upss{\Ups_{\circ}}
\def\UpsP{\Ups^{c}}

\def\Thetas{\Theta_{0}}
\def\ThetasGP{\Theta_{0,\GP}}
\def\varthetav{\bb{\vartheta}}

\def\glink{g}

\def\fvs{\fv}
\def\fs{f}
\def\fb{\fv^{\dag}}

\def\uc{\uv'}
\def\ud{\uv^{\circ}}
\def\uvs{\uv^{*}}
\def\us{u^{*}}
\def\vs{v^{*}}

\def\reps{\epsilon}
\def\eps{\epsilon}

\def\repsc{\reps_{0}}
\def\repsb{\reps^{*}}
\def\repsg{g}

\def\lu{\delta}
\def\lub{\bar{\lu}}

\def\Uu{U}
\def\UU{\cc{Y}}
\def\UUM{\cc{M}}
\def\UP{\cc{U}}
\def\up{\mathfrak{u}}

\def\VP{V}
\def\VPc{\VP_{0}}
\def\VPV{\cc{U}}
\def\VPVc{\cc{\VPV}_{0}}
\def\VPGP{\VP_{\GP}}
\def\VPSP{\VP_{\SP}}

\def\VV{H}
\def\GV{\cc{G}}
\def\GVS{S}

\def\VVb{\VV^{*}}
\def\VVc{\VV_{0}}
\def\vv{\bb{h}}
\def\vva{g}
\def\vp{\mathbf{v}}
\def\vpc{\vp_{0}}
\def\VVca{\VV}
\def\Vtt{H}

\def\DG{D}

\def\Vn{V_{0}}
\def\vn{v_{0}}

\def\norm{\mathfrak{c}}
\def\normc{\delta}
\def\norma{c}

\def\egridd{\cc{E}_{\delta}}
\def\penb{\varkappa}

\def\dotzeta{\dot{\zeta}}
\def\mes{\pi}
\def\mesl{\Lambda}
\def\cprr{F}

\def\lambdam{\gm_{1}}
\def\lambdaB{{\lambda}^{*}}
\def\lambdac{{\lambda'}}

\def\cla{{b}}
\def\fis{\mathfrak{a}}
\def\fiss{\fis_{1}}

\def\Vd{{V}}
\def\vd{\bar{v}}

\def\klim{k^{\circ}}
\def\midm{\mid \!}

\def\Ldrift{M}
\def\ldrift{m}
\def\mY{b}
\def\Lvar{D}
\def\lvar{\sigma}

\def\Mubcu{\Upsilon}
\def\Dthetav{\bb{u}}

\def\B{\cc{B}}
\def\BD{\B^{\circ}}
\def\BU{B}
\def\BI{\B^{*}}

\def\mub{\mu^{*}}
\def\mubc{\mu}
\def\mubcb{\mubc^{*}}
\def\Mubc{\mathbb{M}}
\def\Mubcb{\mathrm{M}}

\def\zzc{\zz_{c}}
\def\ww{w}
\def\wwc{\ww_{c}}

\def\norms{\circ} %{\vartriangle}
\def\rs{\rr_{\norms}}
\def\yys{\yy_{\norms}}
\def\xxs{\xx_{\norms}}
\def\zzs{\zz_{\norms}}
\def\uu{\mathtt{u}}
\def\uus{\uu_{\norms}}
\def\mus{\mu_{\norms}}
\def\gms{\gm_{\norms}}
\def\wws{\ww_{\circ}}

\def\srho{s}
\def\mrho{\varrho}

\def\Lmgf{\mathfrak{M}}
\def\Lmgfb{\Lmgf^{*}}

\def\lmgf{\mathfrak{m}}
\def\lmgfb{\lmgf^{*}}

\def\Expzeta{\mathfrak{N}}
\def\expzeta{\mathfrak{s}}

\def\rr{\mathtt{r}}
\def\rrb{\rr^{*}}
\def\rru{\rr_{\circ}}
\def\rrc{\rr'}
\def\rs{r_{*}}

\def\zz{\mathfrak{z}}
\def\zzb{\tilde{\zz}}
\def\tt{\mathfrak{t}}
\def\zb{z_{\rd}}
\def\zzg{\zz_{1}}
\def\zzQ{\zz_{0}}
\def\zzq{\zz}

\def\Cr{\mathfrak{c}}
\def\Crp{\mathfrak{C}}
\def\Crl{\mathfrak{r}}
\def\Crlp{\mathfrak{R}}
\def\Crlq{\cc{T}}
\def\Crlmu{\cc{M}}

%%%%%%%%%%%%   semipar   %%%%%%%%%%%%%%%%%%%%%%%%
\def\zetah{\zeta_{h}}
\def\GG{G}
\def\HH{H}
\def\pG{p}
\def\pH{q}
\def\hh{H^{*}}

\def\mubch{\mubc_{1}}
\def\rhoh{\rho_{1}}
\def\CoFuh{\CoFu_{1}}
\def\dimh{p_{1}}
\def\VPh{\VP_{1}}
\def\VPt{\VP_{0}}

\def\LLh{L_{1}}
\def\pnndh{\pnnd_{1}}

\def\LCS{C}
\def\Ac{A_{0}}
\def\Ab{A_{\rd}}
\def\DPrb{\DPr_{\rdb}}
\def\DPrm{\DPr_{\rdm}}
\def\Cb{\cc{C}_{\rdb}}
\def\Ub{\cc{U}_{\rdb}}
\def\zetavrb{\zetavr_{\rd}}
\def\xivrb{\breve{\xiv}_{\rd}}
\def\VPrb{\breve{\VP}_{\rdb}}
\def\Larb{\breve{\La}_{\rdb}}
\def\Larm{\breve{\La}_{\rdm}}

\def\deltav{\bb{\delta}}

\def\score{\nabla}
\def\scorer{\breve{\nabla}}

\def\LCS{C}
\def\Ac{A_{0}}
\def\Bc{B_{0}}
\def\AF{A}
\def\Ab{A_{\rdb}}
\def\Am{A_{\rdm}}
\def\DPrc{\DPr_{0}}
\def\DPrb{\DPr_{\rdb}}
\def\DPrm{\DPr_{\rdm}}
\def\Cb{\cc{C}_{\rdb}}
\def\Cm{\cc{C}_{\rdm}}
\def\Ub{\cc{U}_{\rdb}}
\def\deltav{\bb{\delta}}
\def\nuv{\bb{\nu}}
\def\xivrb{\breve{\xiv}_{\rd}}
\def\VPrb{\breve{\VP}_{\rdb}}
\def\Larb{\breve{\La}_{\rdb}}
\def\Lar{\breve{\La}}
\def\Larm{\breve{\La}_{\rdm}}
\def\VH{Q}
\def\VHc{\VH_{0}}
\def\zetavrm{\zetavr_{\rdm}}
\def\N{\mathbb{N}}

\def\Span{\operatorname{span}}
\def\Exc{{\square}}
\def\UUs{U_{\circ}}
\def\errbm{\errb^{*}}
\def\corrDF{\nu}
\def\BBr{\breve{\BB}}
\def\taua{\tau}
\def\AssId{\mathcal{I}}
\def\assId{\iota}
\def\AFD{\cc{A}}

\def\BanX{\cc{X}}
\def\basX{\ev}
\def\apprX{\alpha}
\def\fvs{\fv^{*}}
\def\lkh{\ell}
\def\Bc{B_{0}}
\def\dimn{\dimp_{\nsize}}
\def\betan{\beta_{\nsize}}

%%%%%%%%%%%   BvM   %%%%%%%%%%%%%%%%%%%%%%%%%%%%%%

\def\xivGP{\xiv_{\GP}}
\def\dimA{\mathtt{p}}
\def\dimAGP{\dimA}
\def\dime{\dimA_{e}}
\def\dimG{\dimA_{\GP}}
\def\dimS{\dimA_{s}}
\def\nubm{\nu_{\rd}}
\def\uub{u_{\rd}}
\def\uubGP{u_{\rd,\GP}}

\def\priorden{\pi}
\def\xivGP{\xiv_{\GP}}
\def\dimAGP{\dimA}
\def\nubm{\nu_{\rd}}
\def\uub{u_{\rd}}
\def\uubGP{u_{\rd,\GP}}

\def\CR{\mathcal{C}}
\def\CRb{\CR_{\rdb}}
\def\vthetavb{\bar{\vthetav}}
\def\Covpost{\mathfrak{S}}

\def\Db{\DP_{+}}
\def\Dm{\DP_{-}}
\def\uvb{\uv_{+}}
\def\uvm{\uv_{-}}
\def\uud{\omega}
\def\taub{\delta}
\def\Lip{L}
\def\Xb{X_{+}}
\def\Xm{X_{-}}
\def\deltam{\delta_{-}}
\def\betauv{\delta}
\def\betab{\betauv_{1}}
\def\betaf{\betauv_{2}}
\def\upsv{\bb{\varkappa}}
\def\upsvb{\bar{\upsv}}
\def\rhob{\varrho}
\def\alpb{\alp_{1}}
\def\betap{\betauv_{3}}
\def\Ec{\E^{\circ}}
\def\ff{f}
\def\fpos{g}
\def\fneg{h}
\def\alpb{\alp_{+}}
\def\alpm{\alp_{-}}

%%%%%%%%%%%   sms   %%%%%%%%%%%%%%%%%%%%%%%%%%%%%%
\def\kappak{\kappa}
\def\kappas{\kappak^{*}}
\def\Kappak{\cc{K}}
\def\DPk{\DP_{\kappak}}
\def\VPk{\VP_{\kappak}}

%%%%%%%%%%%%   sp   %%%%%%%%%%%%%%%%%%%%%%%%%%%%%
\def\ts{s}
\def\tsv{\bb{\ts}}
\def\mm{\kappa}
\def\mmc{\mm'}
\def\mmd{\mm^{\circ}}
\def\mmo{\mm^{*}}
\def\mmmmo{\mm,\mmo}
\def\mmt{\tilde{\mm}}
\def\mma{\hat{\mm}}
\def\pp{z}

\def\LLL{L_{1}}
\def\LLr{L_{0}}
\def\muL{\mu_{1}}
\def\mur{\mu_{0}}

\def\LmgfL{\Lmgf_{1}}
\def\Lmgfr{\Lmgf_{0}}
\def\Lmgfm{\Lmgf_{1}}

\def\Kappa{\cc{K}}
\def\CoFu{\cc{C}}
\def\CoFuc{\CoFu_{0}}
\def\CoFub{\CoFu^{*}}
\def\CoFuL{\CoFu_{1}}
\def\CoFur{\CoFu_{0}}
\def\CAL{\CA_{1}}
\def\CAr{\CA_{0}}
\def\CAzz{\cc{A}}

\def\pnnL{\pnn_{1}}
\def\pnnr{\pnn_{0}}
\def\ttd{\delta}
\def\alphaL{\alpha_{1}}
\def\alphar{\alpha_{0}}
\def\alpharL{\alpha}
\def\rat{\mathfrak{t}}
\def\mquad{\nquad}
\def\zzL{\zz_{1}}
\def\zzr{\zz_{0}}

\def\mmset{\mathcal{I}}
\def\xex{u}
\def\dcm{q}
\def\dc{g}
\def\dcL{\dc_{1}}
\def\dcr{\dc_{0}}
\def\kk{k}

\def\cpen{\tau}

%=================  density  ==============
\def\dens{f}
\def\jj{j}
\def\JJ{\cc{J}}
\def\Zphi{Z}
\def\Zphiv{\bb{\Zphi}}

%================= LES =====================
\def\nuu{\mathfrak{u}}
\def\nud{\mathfrak{u}_{0}}
\def\nun{c_{\nuu}}
\def\rhork{\kullb}
\def\GH{\mbox{GH}}
\def\HYP{\mbox{HYP}}
\def\NIG{\mbox{NIG}}
\def\IR{{\rm I\!R}}
\def\taggr{b}
\def\penm{\boldsymbol{m}}
\def\Crlp{\cc{R}}

%================== qfu/cmr ====================
\def\Mh{M}
\def\Mht{\Mh^{c}}

\def\Mhh{\Mh^{-}}
\def\Mhc{G}
\def\Lh{L_{1}}
\def\Uh{\cc{U}}
\def\wloc{w}
\def\Bias{B}
\def\bias{b}
\def\ExpzetaU{\Expzeta_{1}}
\def\vpci{\vp_{i,0}}
\def\IFci{\IF_{i,0}}

\def\erqb{\Circle_{\rdb}}
\def\erqm{\Circle_{\rdm}}
\def\errqm{\errm^{*}}
\def\errqb{\errb^{*}}
\def\Nsize{N}
\def\VVD{\VV_{1}}
\def\AA{A}
\def\Wloc{W}

\newcommand*{\Scale}[2][4]{\scalebox{#1}{$#2$}}

\def\xs{x^{*}}
\def\xn{x_{0}}
\def\xm{x_{m}}
\def\rx{\rr_{x}}
\def\etavn{\etav_{0}}
\def\etavm{\etav_{m}}
\def\etavpr{\etav_{pr}}
\def\re{\rr_{\eta}}
\def\res{\rr_{\eta}^{*}}
\def\rth{\rr_{\theta}}
\def\rths{\rr_{\theta}^{*}}
\def\rx{\rr_{x}}
\def\xpr{x_{0}}%{x_{pr}}
\def\xm{x_{m}}
\def \indsqrt{\hspace{-0.05cm}\vspace{-0.1cm}\(\text{\scalebox{0.75}{$\sqrt{\ \,}$}}\)}
\def\upsilonvx{\upsilonv_{x}}
\def\upsilonvsx{\upsilonvs_{x}}
\def\upsilonvsc{\upsilonvs_{0}}
\def \sigmau{\underline{\sigma}}
\def \sigmao{\bar{\sigma}}

\def\LR {\mathrm{LR}}
\def\LRb {\mathrm{LR}^{\sbt}}

 \def \Ctau {\CONST_{\Scale[0.65]{P^{\ast}}}}
 \def \Cptau {\CONST_{f^{\Scale[0.45]{\ast}}}}

\def\KL{\operatorname{KL}}
\def\smb{\operatorname{smb}}
\def\ac{\operatorname{ac}}

\def \dv{h}
\def \NV{N_{\sigma}}%{N_{\operatorname{var}}}
\def \ND{N_{\dv}}
\def \Ndens{N_{f^{\Scale[0.45]{\ast}}}}

\def \zzbarr{\hat{\zz}}%{\barbar{\zz}}
\def \bbV{\barbar{V}}
\def \bbVb{\barbar{\Vr}}
\def \bbD{\barbar{D}}
\def \bbB{\barbar{B}}
\def \bbH{\barbar{H}}
\def \deltabbVb{\delta_{{\Scale[0.6]{\bbVb}}}}
\def \bbgmu{\barbar{\gmu}}

\newcommand{\varg}[2]{\left[{\begin{smallmatrix} #1 \\#2 \end{smallmatrix}}\right]}

\newcommand{\gradv}[2]{\left({\begin{smallmatrix} #1 \\#2 \end{smallmatrix}}\right)}

\newcommand{\supp}{\operatorname{supp}}

\def\qqq{\mathfrak{c}}
\def\qqqb{\qqq^{\sbt}}

\def\mcorr{\Scale[1.15]{\mathfrak{c}}}
\def\mcorrb{{\Scale[1.15]{\mathfrak{c}}}^{\sbt}}
\def\mcorrq{\Scale[1.15]{\mathfrak{q}}}
\def\mcorrqb{{\Scale[1.15]{\mathfrak{q}}}^{\sbt}}

\def\qqs{\Scale[0.85]{\mathfrak{c}}}
\def\qqb{\Scale[0.85]{\mathfrak{b}}}
\def \alphaq {\alpha_{\qqs}}
\def \alphaqb {\alpha_{\qqb}}
\def\st{\tilde{s}}

\def\Pf{\mathcal{P}}
\def\Ff{\mathcal{F}}
\def\Uf{\mathcal{U}}
\def\Tf{\mathcal{T}}

\def \Pe{\P_{\etav}}
\def \Pes{\P_{\etav}^{*}}
\def \Ps{\P^{*}}

\def\Sety{\Omega(\rups,\yy)}
\def\Setyb{\Omega^{\sbt}(\rupsb,\yy)}

\def\penx{\mathfrak{q}}
\def\Jm{\mathtt{J}}

\def\DP{D}
\def\DPc{\DP_{0}}
\def\Db{\cc{D}}
\def\Bb{\mathcal{B}}
\def\Dbr{\mathbb{D}}
\def\Dbmm{\Db_{m}}

\def \zv{\bb{z}}

\def\Mb{M^{\sbt}}

\def \Exc{\cc{X}}
\def \Excb{\Exc^{\sbt}}

\def \Mb{M^{\sbt}}
\def \Cac{C_{ac}}
\def \CONSTac{\CONST_{\ac}}
\def \Deltaac{\Delta_{\ac}}
\def \DeltaacAT{\DeltappLR}%{\Delta_{\ac,\bant}}
\def \DeltaLR{\Delta_{\Scale[0.6]{\operatorname{LR}}}}
\def \DeltaLRBE{\Delta_{\Scale[0.6]{\textrm{B.E.},\operatorname{LR}}}}

\def\gmub{{\gmu}^{\sbt}}
\def\rupsb{\rups^{\sbt}}

\def \Fe{F_{\varepsilon}}
\def \Feb{F_{\varepsilon}^{\sbt}}

\def \pb{p^{\sbt}}
\def \Fb{F^{\sbt}}
\def \Gb{G^{\sbt}}
\def \Hb{H^{\sbt}}

\def \Grid{\mathrm{G}}
\def \Gridx{\Grid_{\X}}
\def \Gridcar{\Grid_{\#}}
\def \Gridgamma{\Grid_{\gamma}}

\def \Br{\mathtt{B}}
\def \Brb{\Br^{\sbt}}
\def \crr{\mathtt{c}}

\def\Labe{\La_{\rdb,\etav}}
\def\Lame{\La_{\rdm,\etav}}
\def\Labth{\La_{\rdb,\thetav}}
\def\Lamth{\La_{\rdm,\thetav}}
\def\xivbth{\xiv_{\rdb,\thetav}}
\def\xivmth{\xiv_{\rdm,\thetav}}
\def\xivt{\tilde{\xiv}}

\def\gammavg{\gammav_{\gm}}
\def\etavs{\etav^{*}}
\def\thetavh{\hat{\thetav}}
\def\card{\mathrm{card}}

\newcommand{\sbt}{\,\begin{picture}(0,1)
                    \put(1,3){\circle{2}}
                    \put(1,3){\circle{3}}
                    \end{picture}\ }
\newcommand{\sbtw}{\,\begin{picture}(0,1)
					\put(1.2,8){\circle{3}}
                    \put(1.2,8){\circle{4}}
                    \put(1.2,8){\circle{5}}
                    \put(1.2,8){\circle{6}}
                    \end{picture}\ }

\newcommand{\sbtt}{\,\begin{picture}(0,1)
                    \put(1,2){\circle{2}}
                    \put(1,2){\circle{3}}
                    \end{picture}\ }
\def\Lt{\widetilde{L}}
\def\Lb{L^{\sbt}}
\def\Eb{\E^{\sbt}}
\def\Pb{\P^{\sbt}}
\def\Varb{\Var^{\sbt}}
\def\Covb{\Cov^{\sbt}}
\def\Indb{\Ind^{\sbt}}
\def\zetavb{\zetav^{\sbt}}
\def\zetab{\zeta^{\sbt}}
\def\xivb{\xiv^{\sbt}}
\def\Xib{\Xi^{\sbt}}

\def\etavb{\etav^{\sbt}}
\def\Thetasb{\Thetas^{\sbt}}
\def\Labb{\Lab^{\sbt}}
\def\Lamb{\Lam^{\sbt}}
\def\omegab{\omega_{1}}
\def\omegabk{\omega_{1,k}}
\def\deltab{\delta^{\sbt}}
\def\zzb{\zz^{\sbt}}
\def\rhob{\rho^{\sbt}}
\def\phib{\phi^{\sbt}}
\def \vphi{\varphi}
\def \vphib{\varphi^{\sbt}}

\def \deltasmb{\delta_{\smb}}
\def \DeltaSmB{\barbar{\delta}_{\Scale[0.6]{{\operatorname{smb}}}}}

\def\deltabias{\delta_{\textit{bias}}}
\def\Sigmabias{\Sigma_{\textit{bias}}}

\def\Xib{\Xi^{\sbt}}
\def\Xic{\Xi_{0}}
\def\Xibc{\Xib_{0}}

\def\xib{\xi^{\sbt}}

\def\Phivbar{\bar{\Phi}}
\def\Psivbar{\bar{\Psi}}

\def \Idv{\bb{1}}

\def\xivbr{\bar{\xiv}}
\def\xivbrn{{\bar{\xiv}}_{0}}
\def\xivn{\xiv_{0}}
\def\phivbr{\bar{\phiv}}
\def\psivbr{\bar{\psiv}}
\def\zbr{\bar{z}}
\def\phivbar{\bar{\phiv}}
\def\psivbar{\bar{\psiv}}
\def\phivt{\tilde{\phiv}}

\def\varrhom{\varrho_{m}}
\def\arho{a_{\varrho}}
\def\brho{b_{\varrho}}
\def\mmn{m_{0}}

\def\phivm{\phiv_{\bar{m}}}
\def\phivtm{\tilde{\phiv}_{\bar{m}}}
\def\phivbarm{\bar{\phiv}_{\bar{m}}}
\def \Thr{\mathcal{T}}

\def\alphavbar{\bar{\bb{\alpha}}}
\def\ybr{\breve{y}}
\def\xibr{\breve{\xi}}
\def\Phibar{\bar{\Phi}}
\def\Phibarj{\bar{\Phi}^{\,j}}
\def\Psibar{\bar{\Psi}}
\def\Psibarj{\bar{\Psi}^{\,j}}
\def\xivbbr{{\xivbr}^{\sbt}}
\def\phivbbar{{\phivbar}^{\sbt}}
\def\Sigmabar{\bar{\Sigma}}
\def\Sigmabr{\breve{\Sigma}}
\def\deltabr{\breve{\delta}}
\def\zbar{\bar{z}}
\def\zvbar{\bar{\zv}}
\def\abar{\bar{a}}
\def\Bsmb{B_{0}}
\def\Bt{\tilde{B}}
\def\Vt{\tilde{V}}
\def\Vh{\hat{V}}
\def\Bh{\hat{B}}

\def \Deltapluspp{\Delta_{\phiv\psiv}^{+}}
\def \Deltaminuspp{\Delta_{\phiv\psiv}^{-}}
\def \Deltapp{\Delta_{\ell_{2}}\hspace{-0.1cm} }
\def \DeltappLR{\Delta_{\ac,\Scale[0.58]{\operatorname{LR}}}}

\def\deltabrBE{\breve{\delta}_{n,\Scale[0.55]{\operatorname{B.E.}}}}
\def\deltaBE{{\delta}_{n,\Scale[0.55]{\operatorname{B.E.}}}}
\def\DeltaBE{{\Delta}_{\Scale[0.55]{\operatorname{B.E.}}}}
\def\DeltaBEfull{{\Delta}_{\Scale[0.55]{\operatorname{B.E.}},\,\full}}
\def\DeltaBEfullzz{{\Delta}_{\Scale[0.55]{\operatorname{B.E.}}\,\zz,\,\full}}
\def\Deltabfull{{\Delta}_{\mb_{0},\,\full}}
\def\DeltabfullI{{\Delta}_{\mb,\,\full}}
\def\DeltabfullIk{{\Delta}_{\mb,\,\full,\,k}}
\def\DeltabfullII{{\Delta}_{\full_{2}}}
\def\DeltabfullO{{\Delta}_{\full_{0}}}
\def\Deltafull{\Delta_{\full}}

\def \tv{\bb{t}}
\def \yv{\bb{y}}
\def \xv{\bb{x}}
\def \xvn{\xv_{0}}
\def \xvg{ \xv_{{\tiny \textit{\textrm{G}}}}}

\def \Ret{\operatorname{Re}}
\def \Imt{\operatorname{Im}}

\def \CONST{\mathtt{C}}
\def \errbb{\err^{\sbt}}
\def \errmb{\errm^{\sbt}}
\def \Deltab{\Delta^{\sbt}}

\def \DeltaW{\Delta_{\operatorname{W}}}
\def \DeltaWsq{\Delta_{{\operatorname{W}}^{2}}}

\def \DeltakW{\Delta_{k,\operatorname{W}}}
\def \DeltakWsq{\Delta_{k,{\operatorname{W}}^{2}}}
\def \DeltabkW{\Deltab_{k,\operatorname{W}}}
\def \DeltabkWsq{\Deltab_{k,{\operatorname{W}}^{2}}}

\def \DeltabW{\Deltab_{\textrm{W}}}
\def \Deltabw{\Deltab_{\text{\tiny{W}}}}
\def \DeltabWsq{\Deltab_{{\textrm{W}}^{\,2}}}
\def \DeltaG{\Delta_{\textrm{G}}}
\def \DeltaGp{\Delta_{\textrm{G},0}}
\def \DeltaGI{\Delta_{\textrm{G},1}}
\def \DeltaGII{\Delta_{\textrm{G},2}}
\def \DeltaGIII{\Delta_{\textrm{G},3}}
\def \DeltaGs{\Delta_{\textrm{Gs}}}
\def\wind{\text{\tiny{W}}}

\def\ZZ{\mbox{\small{\(\mathfrak{Z}\hspace{-0.05cm}\)}}}
\def\ZZqf{\mbox{\small{\(\mathfrak{Z}\)}}_{\hspace{-0.06cm}\mathtt{qf}}}
\def\ZZb{\ZZ}%^{\sbt}}
\def\ZZqfb{\ZZqf^{\sbt}}

\def\Deltaqqf{\Delta_{\mathtt{qf},1}}
\def\Deltaqlf{\Delta_{\mathtt{qf},2}}
\def\Deltaqqfkk{\Delta_{\mathtt{qf},1,k}}
\def\Deltaqlfkk{\Delta_{\mathtt{qf},2,k}}
\def\Deltaqqfk{\Delta_{\mathtt{qf}_{1},k}}
\def\Deltaqlfk{\Delta_{\mathtt{qf}_{2},k}}

\def\bbv{\barbar{v}}
\def\bbg{\bb{g}}%{\barbar{\bb{g}}}
\def\vt{\mathrm{v}}
\def\Vb{\cc{V}}
\def\Vbr{\mathbb{V}}
\def\wb{\mathbf{w}}
{
\def\DPb{D_{\rdb}}
\def\DPm{D_{\rdm}}
\def\Dbb{\Db_{\rdb}}
\def\Dbm{\Db_{\rdm}}
\def\Dbrb{\Dbr_{\rdb}}
\def\Dbrm{\Dbr_{\rdm}}

\def\U{\mathcal{U}}
\def\gmb{\gm^{\sbt}}
\def\gmdb{\gmd^{\sbt}}
\def\Gn{\Gamma_{0}}
\def\Gbn{\Gamma_{0}^{\sbt}}
\def\Gq{\Gamma_{\dimq}}
\def\nunub{\nunu^{\sbt}}
\def\Ub{\U^{\sbt}}
\def\A{\mathcal{A}}
\def\Ab{\A^{\sbt}}

\def \Dr{\mathfrak{D}}
\def \dr{\mathfrak{d}}
\def \Drz{\bar{\Dr}}
\def \Drc{\Dr_{0}}
\def \Vr{\mathcal{V}}
\def \Vrc{\Vr_{0}}
\def \Vrb{\bar{\Vr}}
\def \Zr{\mathcal{Z}}
\def\Hr{\mathcal{H}}
\def \Kr{\mathcal{K}}
\def \Kc{K_{0}}
\def \Sigmab{\Sigma^{\sbt}}

\def\full{\operatorname{full}}
\def\total{\operatorname{total}}
\def\mb{\operatorname{b}}
\def\sm{\operatorname{sm}}
\def\sumop{\operatorname{sum}}

\def\dimtotal{\dimp_{\Scale[0.6]{\sumop}}}

\def\BBPb{\BB_{\rdb}}

\def \UPc{U_{0}}
\def \rhoru{\rhor_{u}}
\def \ru{\rr_{u}}
\def \rus{\rr_{u}^{*}}
\def \uc{u^{\circ}}

\def\Uscr{\mathscr{U}}
\def\Yscr{\mathscr{Y}}
\def\Yscrb{\Yscr^{\sbt}}
\def\X{\mathcal{X}}
\def\Xs{\mathcal{X}_{0}}
\def\Xm{\mathcal{X}_{M}}
\def\Ys{\Upsilon_{0}}
\def\Us{U_{0}}
\def\H{\mathcal{H}}
\def \Hs{\H_{0}}
\def \Hm{\H_{M}}
\def\Hg{\textEta}
\def\B{\mathcal{B}}
\def\Int{\mathcal{I}}
\def \Id {\boldsymbol{I}}%{\emph{\mathbf{I}}}

\def \Hc{H_{0}}

\def\cind{\circ}
\def\bind{(b)}

\def\thetat{\tilde{\theta}}
\def\thetabt{\tilde{\theta}^{\sbtt}}
\def\thetavt{\tilde{\thetav}}
\def\thetavb{\tilde{\thetav}}
\def\thetavbt{\tilde{\thetav}^{\sbtt}}
\def\thetabt{\tilde{\theta}^{\sbtt}}
\def\varepsilons{\varepsilon^{*}}
\def\varepsilonvs{\varepsilon^{*}}
\def\varepsilonh{\hat{\varepsilon}}
\def\varepsilonvh{\hat{\varepsilonv}}

\def\Ym{\mathbb{Y}}%{\mathrm{Y}}%

\def \betav{\bb{\beta}}

\def\dimd{d}
\def\dimq{q}

\def\numK{K}
\def\numKp{K^{\prime}}
\def\dimQ{Q}
\def\numk{k}
\def\numS{S}

\def\ak{k}
\def\kn{\textrm{u}_{0}}

\def\dn{\textrm{d}_{0}}
\def\dm{\textrm{d}_{m}}
\def\drb{\textrm{d}_{\rdb}}
\def\drm{\textrm{d}_{\rdm}}
\def\mn{{\mu}_{0}}

\def\dpc{d_{0}}
\def\dpi{\mathbf{d}}
\def\lvec{\vec{\varphi}}%{\vec{f}}
\def\lbp{\mathbf{l}}

\def \cf{\mathfrak{c}}
\def \cfb{\mathfrak{c}_{\rdb}}
\def \cfm{\mathfrak{c}_{\rdm}}
\def \kappab{\kappa_{\rdb}}
\def \kappam{\kappa_{\rdm}}

\def \gammavp{\gammav_{\dimp}}
\def \gammavq{\gammav_{\dimq}}

\def \tauw{\tau_{w}}
\def \taue{\tau_{\rdb}}
\def \tauew{\tau_{\rdb,w}}

\def \sk{\mathbf{s}}
\def \kk{\mathbf{k}}

%%%colored%%%
\def\xblue {{\color{blue}x}}
\def\rspace {{\color{red} \textbf{(...) }}}
\def \lqouter {{\color{red} \(\langle\langle\)}}
\def \rqouter {{\color{red} \(\rangle\rangle\) } }
\def \ants{\textbf{{\color{red} (***) }}}
\def \bants{\textbf{{\color{blue} (***) }}}
\def \gants{\textbf{{\color{green} (***) }}}
\def \ant{\textbf{{\color{red} (*) }}} 
\def \bant{\textbf{{\color{blue} (*) }}}

\makeatletter
\newcommand{\labitem}[2]{%
\def\@itemlabel{\textbf{#1}}
\item
\def\@currentlabel{#1}\label{#2}}
\newcommand{\highw}{\raisebox{.45\height}{\small{\textit{w}}}}%{$\m@th\ast$}}
\makeatother

\makeatletter
\def\th@newremark{\th@remark\thm@headfont{\bfseries}}{\thm@bodyfont{\normalfont}}
\makeatother
\theoremstyle{newremark}
\newtheorem{newremark}{Remark}
\numberwithin{newremark}{section}

\newcommand{\scale}[2][4]{\scalebox{#1}{$#2$}}
\newcommand{\inbr}[1]{\scale[0.6]{[#1]}}

\newcommand*\oline[1]{%
  \vbox{%
    \hrule height 0.4pt%                  % Line above with certain width
    \kern0.25ex%                          % Distance between line and content
    \hbox{%
      \kern 0em %-0.1em%                        % Distance between content and left side of box, negative values for lines shorter than content
      \ifmmode#1\else\ensuremath{#1}\fi%  % The content, typeset in dependence of mode
      \kern 0em% -0.1em%                        % Distance between content and left side of box, negative values for lines shorter than content
    }% end of hbox
  }% end of vbox
}

\newcommand*{\barbar}[1]{\hat{#1}}%{\oline{\oline{#1}}}

\def\IF{\mathbf{f}}
\definecolor{blue(pigment)}{rgb}{0.2, 0.2, 0.6}
\definecolor{ultramarine}{rgb}{0.07, 0.04, 0.56}
\definecolor{ao(english)}{rgb}{0.0, 0.5, 0.0}
\definecolor{darkspringgreen}{rgb}{0.09, 0.45, 0.27}
\definecolor{hookersgreen}{rgb}{0.0, 0.44, 0.0}
\definecolor{darkgray}{rgb}{0.66, 0.66, 0.66}
\definecolor{dimgray}{rgb}{0.41, 0.41, 0.41}

\newcommand\crule[3][dimgray]{\textcolor{#1}{\rule{#2}{#3}}}
\newcommand\rrule[3][red]{\textcolor{#1}{\rule{#2}{#3}}}

\newcommand*\rfrac[2]{{}^{#1}\!/_{#2}}

\def \yy{\xx}
\def \qq {\mbox{\large{$\mathfrak{q}$}}}
\def\Ym{\bb{Y}}%{\mathrm{Y}}%
\def\Yv{Y}

\newpage
\hypersetup{linkcolor=ultramarine}
{\small{\tableofcontents}}
\newpage
\hypersetup{linkcolor=hookersgreen}

%!TEX root = simult_bootst_root.tex
\section{Introduction}
\label{sect:introduction}
The problem of simultaneous confidence estimation appears in numerous practical applications when a confidence statement has to be made  simultaneously for a collection of  objects, e.g. in safety analysis in clinical trials, gene expression analysis, population biology, functional magnetic resonance imaging and many others. See e.g.  \cite{Miller1981simultaneous,Westfall1993resampling,Manly2006Biol,Benjamini2010simultaneous,Dickhaus2014simult},
  and references therein. This problem is also closely related to construction of simultaneous confidence bands in curve estimation, which goes back to \cite{Working1929applications}. For an extensive literature review about constructing the simultaneous confidence bands we refer to \cite{HallHorow2013simple}, \cite{Liu2010simultaneous}, and \cite{Wasserman2006all}.

  A simultaneous confidence set requires a probability bound to be constructed jointly for several possibly dependent statistics. Therefore, the critical values of the corresponding statistics should be chosen in such a way that the joint probability distribution  achieves a required family-wise confidence level. This choice  can be made by multiplicity correction of the marginal confidence levels. The Bonferroni correction method (\cite{Bonferroni1936teoria}) uses a  probability union bound, the corrected marginal significance levels are taken equal to the total level divided by the number of models. This procedure can be very conservative if the considered statistics are positively correlated and if their number is large. The {\v{S}}id{\'a}k correction method (\cite{Sidak1967rectangular}) is more powerful than Bonferroni correction, however, it also becomes conservative in the case of large number of dependent statistics.
 
Most of the existing results about simultaneous bootstrap confidence sets and resampling-based multiple testing are asymptotic (with sample size tending to infinity), see e.g. \cite{Beran1988balanced,Beran1990refining,Hall1Pittelkow990simultaneous,Haerdle1991bootstrap,Shao1995jackknife,HallHorow2013simple}, and  \cite{Westfall1993resampling,Dickhaus2014simult}. 
 The results based on asymptotic distribution of maximum of an approximating Gaussian process (see \cite{BickelRosenblatt1973density,Johnston1982probabilities,Hardle1989asymptotic}) require a huge sample size \(n\), since they yield a coverage probability error of order \(\left(\log(n)\right)^{-1}\) (see \cite{Hall1991convratesup}). 
 Some papers considered an alternative approach in context of confidence band estimation based on the approximation of the underlying empirical processes by
its bootstrap counterpart. In particular, 
 \cite{Hall1993Edgeworthbands} 
showed that such an approach leads to a significant improvement
of the error rate (see also \cite{Neumann1998simultaneous,Claeskens2003bootstrap}). 
%that bootstrap gives much better approximation than the results based on limiting distribution of a maximum, especially for the case of uniform confidence bands. 
%\cite{Neumann1998simultaneous,Claeskens2003bootstrap} 
\cite{Chernozhukov2014Honest} constructed honest confidence bands for nonparametric density estimators without requiring the existence of limit distribution of the supremum of the studentized empirical process: instead, they used  an approximation between sup-norms of an  empirical and Gaussian processes, and anti-concentration property of suprema of Gaussian processes.

 In many modern applications the sample size cannot be large, and/or can be smaller than a parameter dimension, for example, in genomics, brain imaging, spatial epidemiology and microarray data analysis, see \cite{Leek2008general,Kim2008effects,ArlotBlanchTesting2010,CaoKosorok2011simultaneous}, and references therein.

For the recent results on resampling-based simultaneous confidence sets in high-dimensional finite sample set-up we refer to the papers by \cite{ArlotBlanchTesting2010} and \cite{ChernoMultBoot,Chernozhukov2014Honest,Chernozhukov.et.al.(2014a)}. %\cite{Chernozhukov2014Honest}.
\cite{ArlotBlanchTesting2010} considered i.i.d. observations of a Gaussian vector with a dimension possibly much larger than the sample size, and with unknown covariance matrix. They examined
 %the one- and two-sided
multiple testing problems for the mean values of its coordinates and
provided non-asymptotic control for the family-wise error rate using resampling-type procedures.
\cite{ChernoMultBoot} presented a number of non-asymptotic results on Gaussian approximation and multiplier bootstrap for maxima of sums of high-dimensional vectors (with a dimension possibly much larger than a sample size) in a very general set-up. As an application the authors considered the problem of multiple hypothesis testing in the framework of approximate means. They derived non-asymptotic results for the general stepdown procedure by \cite{RomanoWolf2005exact} with improved error rates and in high-dimensional setting. \cite{Chernozhukov2014Honest} showed how this technique applies to the problem of constructing an honest confidence set in nonparametric density estimation. \cite{Chernozhukov.et.al.(2014a)} extended the results from maxima to the class of sparsely convex sets.
\begin{comment}
\cite{Chernozhukov.et.al.(2014a)} derived high-dimensional CLT with explicit rates of approximation on a class of sparsely convex sets. Elements of this class can be represented as intersections of many high-dimensional convex sets s.t. an indicator function of each convex set depends only on a small number of its coordinates.
The random vector's dimension can be as large as \(O\left(\ex^{Cn^{c}}\right)\) for some constants \(c, C>0\), where \(n\) is the sample size. Using these results, the authors  justify Gaussian multiplier and empirical bootstraps on the class of sparsely convex sets.
\end{comment}

The present paper studies simultaneous likelihood-based bootstrap confidence sets in the following setting:
\begin{enumerate}
\labitem{1}{sett1}\hspace{-0.21cm}. the sample size \(n\) is fixed;
\labitem{2}{sett2}\hspace{-0.21cm}. the parametric models can be misspecified;
\labitem{3}{sett3}\hspace{-0.21cm}. the number \(\numK\) of the parametric models can be exponentially large w.r.t. \(n\);
\labitem{4}{sett4}\hspace{-0.21cm}. the maximal dimension \(\dimp_{\max}\) of the considered parametric models can be dependent on the sample size \(n\).
\end{enumerate}
This set-up, in contrast with the paper by \cite{Chernozhukov.et.al.(2014a)}, does not require the sparsity condition , in particular the dimension \(\dimp_{1},\dots,\dimp_{\numK}\) of each parametric family may grow with the sample size. Moreover, the simultaneous likelihood-based confidence sets are not necessarily convex, and the parametric assumption can be violated. 

The considered simultaneous multiplier bootstrap procedure involves two main steps:  estimation of the quantile functions  of the likelihood ratio statistics, and multiplicity correction of the marginal confidence level. 
Theoretical results of the paper state the bootstrap validity in the setting \ref{sett1}-\ref{sett4} taking in account the multiplicity correction. The resulting approximation bound requires the quantity \((\log K)^{12}p_{\max}^{3}/n\) to be small. The log-factor here is suboptimal and can probably be improved. The paper particularly focuses on the impact of the model misspecification.
%An interesting effect appears when the parametric models are misspecified. 
We distinguish between slight and strong  misspecifications. 
Under the so called small modeling bias condition  \ref{itm:SmBHk} given in Section \ref{sect:ConditAddBoot} the bootstrap approximation is accurate. This condition roughly means that all the parametric models are close to the true distribution. If the \ref{itm:SmBHk} condition is not fulfilled, then the simultaneous bootstrap confidence set is still applicable, however, it becomes conservative. This property is nicely confirmed by the numerical experiments in Section \ref{sect:numer_examples}.

Let the random data %\(\Ym\eqdef \left(\Yv_{1},\dots,\Yv_{n}\right)^{\T}\)
\begin{EQA}
\label{def:Ymatr}
\Ym&\eqdef& \left(\Yv_{1},\dots,\Yv_{n}\right)^{\T}
\end{EQA}
consist of \emph{independent} observations \(Y_{i}\),
and belong to the probability space \(\left(\Omega, \mathcal{F},\P\right)\).
The sample size \(n\) is \emph{fixed}. \(\P\) is an \emph{unknown probability distribution} of the sample \(\Ym\). Consider \(\numK\) regular parametric families of probability distributions:
\begin{EQA}[c]
\label{def:Pk}
\left\{\P_{k}(\thetav)\right\}\eqdef \left\{\P_{k}(\thetav)\ll\mu_{0}, \thetav\in \Theta_{k}\subset \R^{\dimp_{k}}\right\},\quad
k=1,\dots,\numK.
\end{EQA}
% The number \(\numK\)  is  s.t. \(\log\numK \ll (n/\dimp_{\max}^{3})^{1/12}\).
 Each parametric family induces the quasi log-likelihood function for \(\thetav\in\Theta_{k}\subset \R^{\dimp_{k}}\)
\begin{align}
\label{def:Lk}
\begin{split}
L_{k}(\Ym,\thetav)&\eqdef
\log\left(\frac{d\P_{k}(\thetav)}{d\mu_{0}}(\Ym)\right)%=\quad \left(\thetav\in\Theta_{k}\subset \R^{\dimp_{k}}\right)
\\&=
\sum\nolimits_{i=1}^{n}
\log\left(
\frac{d\P_{k}(\thetav)}{d\mu_{0}}(\Yv_{i})
\right).
\end{split}
\end{align}
It is important that we \emph{do not require} that \(\P\) belongs to any of the known parametric families \(\left\{\P_{k}(\thetav)\right\}\), that is why the term \emph{quasi} log-likelihood is used here. Below in this section we consider two popular examples of simultaneous confidence sets in terms of the quasi log-likelihood functions \eqref{def:Lk}. Namely, the simultaneous confidence band for local constant regression, and multiple quantiles regression.
\begin{comment}
 Instead of that we consider the \emph{quasi log-likelihood} functions
\(L_{k}(\Ym,\thetav)\):
\begin{EQA}
\label{def:Lk2}
L_{k}(\thetav)\eqdef
L_{k}(\Ym,\thetav)&=&
\log\left\{\frac{d\P_{k}(\thetav)}{d\mu_{0}}(\Ym)\right\}
%\\&=&
=
\sum_{i=1}^{n}
\log\left\{
\frac{d\P_{k}(\thetav)}{d\mu_{0}}(\Yv_{i})
\right\}.
\end{EQA}
\end{comment}

The target of estimation for the misspecified log-likelihood \(L_{k}(\thetav)\) is such a parameter \(\thetavs_{k}\), that minimises the Kullback-Leibler distance between the unknown true measure \(\P\) and the parametric family \(\left\{\P_{k}(\thetav)\right\}\):
\begin{EQA}
\label{def:thetavs_k}
\thetavs_{k}&\eqdef&\argmax_{\thetav\in\Theta_{k}}\E L_{k}(\thetav).
\end{EQA}
%here and further on \(\E(\cdot)\) denotes by default the expectation operator w.r.t. the corresponding true measure \(\breve{\P}_{q}\) or a product of them. The same agreement   is used for the notation \(\P(\cdot)\).
The maximum likelihood estimator is defined as:
\begin{EQA}
\label{def:thetavt_k}
\thetavt_{k}&\eqdef& \argmax_{\thetav\in\Theta_{k}}L_{k}(\thetav).
\end{EQA}
The parametric sets \(\Theta_{k}\) have dimensions \(\dimp_{k}\), therefore, \(\thetavt_{k}, \thetavs_{k}\in \R^{\dimp_{k}}\).
For \(1\leq k,j\leq \numK\) and \(k\neq j\) the numbers \(\dimp_{k}\) and \(\dimp_{j}\) can be unequal.

The likelihood-based confidence set for the target parameter \(\thetavs_{k}\) is
\begin{EQA}
\label{def:CS_k}
\CS_{k}(\zz)&\eqdef&\left\{\thetav\in\Theta_{k}: L_{k}(\thetavt_{k})-L_{k}(\thetav)\leq \zz^{2}/2\right\}\subset\R^{\dimp_{k}}.
\end{EQA}
Let \(\zz_{k}(\alpha)\) denote the \((1-\alpha)\)-quantile of the corresponding square-root likelihood ratio statistic:
\begin{EQA}
\label{def:zzalpha_k}
\zz_{k}(\alpha)&\eqdef&
\inf\left\{\zz\geq 0: \P\left(L_{k}(\thetavt_{k})-L_{k}(\thetavs_{k})> \zz^{2}/2\right)\leq \alpha\right\}.
\end{EQA}
Together with \eqref{def:CS_k} this implies for each \(k=1,\dots,\numK\):
\begin{EQA}
\label{bound:pointwise}
\P\Bigl(\thetavs_{k}\in \CS_{k}\left(\zz_{k}(\alpha)\right)\Bigr)&\geq& 1-\alpha.
\end{EQA}
Thus \(\CS_{k}(\zz)\) and the quantile function \(\zz_{k}(\alpha)\) fully determine the marginal \((1-\alpha)\)-confidence set. The simultaneous confidence set requires a correction for multiplicity . Let \(\qqq(\alpha)\) denote a maximal number \(c \in (0,\alpha]\) s.t.
\begin{EQA}
\label{bound:simult_goal}
%\mcorrq(\alpha)&\eqdef&\min\left\{q\geq 1:
\P\left(\bigcup\nolimits_{k=1}^{\numK}
\Bigl\{
\sqrt{2L_{k}(\thetavt_{k})-2L_{k}(\thetavs_{k})}> \zz_{k}(c)
\Bigr\}
\right)
&\leq&
\alpha.
%\right\}.
\end{EQA}
This is equivalent to
\begin{EQA}[c]
\label{bound:simult}
\hspace{-0.7cm}
\qqq(\alpha)\eqdef%&\eqdef&
\sup\Biggl\{c\in(0,\alpha]:%\\
%&&\quad\quad\quad
\P\left(\max_{1\leq k \leq \numK}
\left\{
\sqrt{2L_{k}(\thetavt_{k})-2L_{k}(\thetavs_{k})}- \zz_{k}(c)
\right\}
>0
\right)
\leq
\alpha
\Biggr\}.
\end{EQA}
\begin{comment}
\begin{EQA}
\label{bound:simult}
\qqq(\alpha)&\eqdef&\max\left\{c\in(0,\alpha]:
\P\left(\bigcap\nolimits_{k=1}^{\numK}
\Bigl\{
\thetavs_{k}\in \CS_{k}\left(\zz_{k}(c)\right)
\Bigr\}
\right)
\geq
1-\alpha
\right\}.
\end{EQA}
\end{comment}
Therefore, taking the marginal confidence sets with the same  confidence levels \(1-\qqq(\alpha)\) yields the simultaneous confidence bound of the total level \(1-\alpha\). The value \(\qqq(\alpha)\in(0,\alpha]\) is the correction for multiplicity. In order to construct the simultaneous confidence set using this correction, one has to estimate the values \(\zz_{k}(\qqq(\alpha))\) for all \(k=1,\dots,\numK\). By its definition this problem splits into two subproblems:
\begin{enumerate}
\labitem{\(1 \)}{itm:sub1}\hspace{-0.27cm}.\,\,\,\textbf{Marginal step.}
 Estimation of the marginal quantile functions \(\zz_{1}(\alpha)\), \dots, \(\zz_{\numK}(\alpha)\) given in \eqref{def:zzalpha_k}.
\labitem{\(2 \)}{itm:sub2}\hspace{-0.27cm}.
\textbf{Correction for multiplicity.}
Estimation of the correction for multiplicity \(\qqq(\alpha)\) given in \eqref{bound:simult}.
\end{enumerate}
%\begin{comment}
If the  \ref{itm:sub1}-st problem is solved for any \(\alpha\in(0,1)\), the \ref{itm:sub2}-nd problem can be treated by calibrating the value \(\alpha\) s.t. \eqref{bound:simult} holds. It is important to take into account the correlation between the likelihood ratio statistics \(L_{k}(\thetavt_{k})-L_{k}(\thetavs_{k})\), \(k=1,\dots,\numK\), otherwise the estimate of the correction \(\qqq(\alpha)\) can be too conservative. For instance, %\end{comment}
the Bonferroni correction would lead to the marginal confidence level \(1-\alpha/\numK\), which may be very conservative if \(\numK\) is large and the statistics \(L_{k}(\thetavt_{k})-L_{k}(\thetavs_{k})\) are highly correlated.

In Section \ref{sect:bootstr_intro} we suggest a multiplier bootstrap procedure, which performs the steps \ref{itm:sub1} and \ref{itm:sub2} described above.
 %and it also considers the dependence structure between the likelihood ratio statistics.
  Theoretical justification of the procedure is given in Section \ref{sect:theorres}. The proofs are based on several approximation bounds: non-asymptotic square-root Wilks theorem, simultaneous Gaussian approximation for \(\ell_{2}\)-norms, Gaussian comparison, and simultaneous Gaussian anti-concentration inequality.

\cite{SpZh2014PMB} considered the \ref{itm:sub1}-st subproblem for the case of a single parametric model (\(\numK=1\)): a multiplier bootstrap procedure was applied for construction of a likelihood-based confidence set, and justified theoretically for a fixed sample size and for possibly misspecified parametric model. In the present paper we extend that approach for the case of simultaneously many parametric models.

Below we illustrate the definitions \eqref{def:Lk}-\eqref{bound:simult} of the simultaneous likelihood-based confidence sets  with two popular examples.
\par
\textbf{Example 1 (Simultaneous confidence band for local constant regression):}
Let \(Y_{1},\dots,Y_{n}\) be independent random scalar observations and \(X_{1},\dots,X_{n}\) some deterministic design points. Consider the following quadratic likelihood function reweighted with the kernel functions \(K(\cdot)\):
\begin{EQA}[c]
L(\thetav,x,h)\eqdef -\frac{1}{2}\sum\nolimits_{i=1}^{n} (Y_{i}-\thetav)^{2}w_{i}(x,h),\\
w_{i}(x,h)\eqdef K(\{x-X_{i}\}/h),\\
K(x)\in[0,1],\ \int_{\R}K(x)dx =1,\ K(x)=K(-x).
\end{EQA}
Here \(h>0\) denotes bandwidth, the local smoothing parameter. The target point and the local MLE read as:
\begin{EQA}[c]
\thetavs(x,h)\eqdef  \frac{ \sum\nolimits_{i=1}^{n}w_{i}(x,h)\E Y_{i} }{\sum\nolimits_{i=1}^{n}w_{i}(x,h)},~~~~
\thetavt(x,h)\eqdef \frac{ \sum\nolimits_{i=1}^{n}w_{i}(x,h) Y_{i} }{\sum\nolimits_{i=1}^{n}w_{i}(x,h)}.
\end{EQA}
\(\thetavt(x,h)\) is also known as Nadaraya-Watson estimate. Fix a bandwidth \(h\) and consider the range of points  \(x_{1},\dots,x_{\numK}\). They yield \(\numK\) local constant models with the target parameters \(\thetavs_{k}\eqdef\thetavs(x_{k},h)\) and the likelihood functions \(L_{k}(\thetav)\eqdef L(\thetav,x_{k},h)\) for \(k=1,\dots,\numK\).
The confidence intervals for each model are defined as
\begin{EQA}
\CS_{k}(\zz,h)&\eqdef&\left\{\thetav\in\Theta: L(\thetavt(x_{k},h),x_{k},h)-L(\thetav,x_{k},h)\leq \zz^{2}/2\right\},
\end{EQA}
for the quintile functions \(\zz_{k}(\alpha)\) and for the multiplicity correction \(\qqq(\alpha)\) from \eqref{def:zzalpha_k} and \eqref{bound:simult} they form the following simultaneous confidence band:
\begin{EQA}
\P\left(\bigcap\nolimits_{k=1}^{\numK}\Bigl\{\thetavs_{k}\in \CS_{k}\bigl(\zz_{k}\left(\qqq(\alpha)\right)\bigl)\Bigr\}\right)&\geq& 1-\alpha.
\end{EQA}
In Section \ref{sect:numer_examples} we provide results of numerical experiments for this model.

\par\textbf{Example 2 (Multiple quantiles regression):}
%!TEX root = simult_bootst_root.tex
%\newpage
%\subsection{%Example 2: Multiple quantiles regression}
%\label{sect:label_quant}
Quantile regression is an important method of statistical analysis, widely used in various applications. It aims at estimating conditional quantile functions of a response variable, see \cite{Koenker2005quantile}. Multiple quantiles regression model considers simultaneously several quantile regression functions based on a range of quantile indices, see e.g. 
\cite{LiuWu2011simultaneousmultiplequant,Qu2008testingregquant,He1997quantilecross}. 
Let \(Y_{1},\dots,Y_{n}\) be independent random scalar observations and \(X_{1},\dots,X_{n}\in\R^{\dimd}\) some deterministic design points, as in Example 1. 
 Consider the following quantile regression models for \(k=1,\dots,\numK\):
\begin{EQA}
Y_{i}&=& g_{k}(X_{i})+\varepsilon_{k,i},\quad i=1,\dots, n,
\end{EQA}
where \(g_{k}(\xv):\R^{\dimd}\mapsto\R\) are unknown functions, the random values \(\varepsilon_{k,1},\dots,\varepsilon_{k,n}\) are independent for each fixed \(k\), and
\begin{EQA}[c]
\P(\varepsilon_{k,i}<0)=\tau_{k} \quad\text{ for all } i=1,\dots, n.
\end{EQA}
The range of quantile indices \(\tau_{1},\dots,\tau_{\numK}\in (0,1)\) is known and fixed. We are interested in simultaneous parametric confidence sets for the functions \(g_{1}(\cdot),\dots,g_{\numK}(\cdot)\). %with the levels  \(\tau_{1},\dots,\tau_{\numK}\).
Let \(f_{k}(\xv,\thetav):\R^{\dimd}\times\R^{\dimp_{k}}\mapsto \R\) be known regression functions. Using the quantile regression approach by \cite{Koenker1978regression}, this problem can be treated with the quasi maximum likelihood method and the following  log-likelihood functions:
\begin{EQA}
L_{k}(\thetav)&=& -\sum\nolimits_{i=1}^{n}\rho_{\tau_{k}}\left(Y_{i}-f_{k}(X_{i},\thetav)\right),\\
\rho_{\tau_{k}}(x)&\eqdef& x\left(\tau_{k}-\Ind\left\{x<0\right\}\right).
\end{EQA}
 for \(k=1,\dots,\numK\).
This quasi log-likelihood function corresponds to the Asymmetric Laplace distribution with the density \(\tau_{k}(1-\tau_{k})\ex^{-\rho_{\tau_{k}}(x-a)}\). If \(\tau=1/2\),  then  \(\rho_{1/2}(x)= |x|/2\) and \(L(\thetav)=-\sum_{i=1}^{n}\left|Y_{i}-f_{k}(X_{i},\thetav)\right|/2\), which corresponds to the median regression.

The paper is organised as follows: Section \ref{sect:bootstr_intro} describes the multiplier bootstrap procedure, Section \ref{sect:theorres} explains the ideas of the theoretical approach and provides main results in Sections \ref{sect:scheme} and \ref{sect:mainres} correspondingly. All the necessary conditions are given in Section \ref{sect:conditions}. In Section \ref{typical_local} and in statements of the main theoretical results we  provide information about dependence of the involved terms on the sample size and parametric dimensions in the case of i.i.d. observations. Proofs of the main results are given in Section \ref{sect:proofs_all}. Statements from Sections \ref{sect:maxgar} and  \ref{sect:sqrootWilks} are used for the proofs in Section \ref{sect:proofs_all}.
% Section \ref{sect:maxgar} provides a result on non-asymptotic approximation of the joint distributions of \(\ell_{2}\)-norms of sums of independent vectors. Section \ref{sect:sqrootWilks} collects
Numerical experiments are described in Section \ref{sect:numer_examples}:
 we construct simultaneous confidence corridors for local constant and local quadratic regressions using both bootstrap and Monte Carlo procedures. The quality of the bootstrap procedure is checked by computing the effective simultaneous coverage probabilities of the bootstrap confidence sets.  We also compare the widths of the confidence bands and the values of multiplicity correction obtained with bootstrap and with Monte Carlo procedures. The experiments confirm that the multiplier bootstrap and the bootstrap multiplicity correction become conservative if the local parametric model is considerably misspecified.

 The results given here are valid on a random set of probability \(1-C\ex^{-\yy}\) for some explicit constant \(C>0\). The number \(\yy>0\) determines this dominating probability level. For the case of the i.i.d. observations (see Secion \ref{typical_local}) we take \(\yy= \CONST \log{n}\).
 Throughout the text \(\|\cdot\|\) denotes the Euclidean norm for a vector and spectral norm for a matrix.
\(\|\cdot\|_{\max}\) is the maximal absolute value of elements of a vector (or a matrix), \(\dimtotal\eqdef \dimp_{1}+\dots+\dimp_{\numK}\), \(\dimp_{\max}\eqdef\max\limits_{1\leq k\leq \numK}\dimp_{k}\).

%!TEX root = simult_bootst_root.tex
\section{The multiplier bootstrap procedure}
\label{sect:bootstr_intro}
Let \(\ell_{i,k}(\thetav)\) denote the log-density from the \(k\)-th parametric distribution family evaluated at the \(i\)-th observation:
\begin{EQA}[c]
\label{def:ell_ik}
	\ell_{i,k}(\thetav)\eqdef  \log\left(
\frac{d\P_{k}(\thetav)}{d\mu_{0}}(\Yv_{i})
\right),
\end{EQA}
then due to independence of \(\Yv_{1},\dots,\Yv_{n}\)
\begin{EQA}
L_{k}(\thetav)&=&\sum\nolimits_{i=1}^{n} \ell_{i,k}(\thetav) \quad \forall\, k=1,\dots,\numK.
\end{EQA}
Consider i.i.d. scalar random variables \(u_{i}\) independent of the data \(\Ym\), s.t. \(\E u_{i}=1\), \(\Var u_{i}=1\), \(\E\exp(u_{i})<\infty\) (e.g. \(u_{i}\sim \mathcal{N}(1,1)\) or \(u_{i}\sim exp(1)\) or \(u_{i}\sim 2Bernoulli(0.5)\)).
Multiply the summands of the likelihood function \(L_{k}(\thetav)\) with the new random variables:
\begin{EQA}[c]
\label{def:Lbk}
	\Lb_{k}(\thetav)\eqdef\sum\nolimits_{i=1}^{n} \ell_{i,k}(\thetav)u_{i},
\end{EQA}
then it holds
	\(\Eb\Lb_{k}(\thetav)=L_{k}(\thetav)\),
where \( \Eb \) stands for the conditional expectation given \(\Ym\).%:
%\begin{EQA}[c]
%	\Eb(\cdot)\eqdef \E(\cdot|\Ym),
%	\qquad
%	\Pb(\cdot)\eqdef \P(\cdot|\Ym).
%\end{EQA}

Therefore, the quasi MLE for the \(\Ym\)-world is a target parameter for the bootstrap world for each \(k=1,\dots,\numK\):
\begin{EQA}[c]
	\argmax\nolimits_{\thetav\in\Theta_{k}}\Eb \Lb_{k}(\thetav)
	=
	\argmax\nolimits_{\thetav\in\Theta_{k}}L_{k}(\thetav)
	=
	\tilde{\thetav}_{k}.
\end{EQA}
The corresponding bootstrap MLE is:% under the conditional measure \(\Pb\) is
\begin{EQA}[c]
\thetavbt_{k}\eqdef\argmax\nolimits_{\thetav\in\Theta_{k}}\Lb_{k}(\thetav).
\end{EQA}

The \(k\)-th likelihood ratio statistic in the bootstrap world equals to \(\Lb_{k}(\thetavbt_{k})-\Lb_{k}(\thetavt_{k})\), where all the elements: the function \(\Lb_{k}(\thetav)\) and the arguments \(\thetavbt_{k}\), \(\thetavt_{k}\) are known and available for  computation. This means, that given the data \(\Ym\), one can estimate the distribution or quantiles of the statistic \(\Lb_{k}(\thetavbt_{k})-\Lb_{k}(\thetavt_{k})\) by generating many independent samples of the bootstrap weights \(u_{1},\dots,u_{n}\) and computing with them the bootstrap likelihood ratio.

Let us introduce similarly to \eqref{def:zzalpha_k} the \((1-\alpha)\)-quantile for the bootstrap square-root likelihood ratio statistic:
\begin{EQA}
\label{def:zzbalpha_k}
\zzb_{k}(\alpha)&\eqdef&
\inf\left\{\zz\geq 0: \Pb\left(\Lb_{k}(\thetavbt_{k})-\Lb_{k}(\thetavt_{k})> \zz^{2}/2\right)\leq \alpha\right\},
\end{EQA}
here \(\Pb\) denotes probability measure conditional on the data \(\Ym\), therefore, \(\zzb_{k}(\alpha)\) is a random value dependent on \(\Ym\).

\cite{SpZh2014PMB} considered the case of a single parametric model (\(\numK=1\)), and showed that the bootstrap quantile \(\zzb_{k}(\alpha)\) is close to the true one \(\zz_{k}(\alpha)\) under a so called ``Small Modeling Bias'' (SmB) condition, which is fulfilled when the true distribution is close to the parametric family or when the observations are i.i.d. When the SmB condition does not hold, the bootstrap quantile is still valid, however, it becomes  conservative. Therefore, for each fixed \(k=1,\dots,\numK\) the bootstrap quantiles \(\zzb_{k}(\alpha)\) are rather good estimates for the true unknown ones \(\zz_{k}(\alpha)\), however, they are still ``pointwise'' in \(k\), i.e. the confidence bounds \eqref{bound:pointwise} hold for each \(k\) separately.
Our goal here is to estimate \(\zz_{1}(\alpha),\dots,\zz_{\numK}(\alpha)\) and \(\qqq(\alpha)\) according to \eqref{bound:simult_goal} and \eqref{bound:simult}. Let us introduce the bootstrap correction for multiplicity:
\begin{EQA}[c]
\label{def:mcorrqb}
\hspace{-0.8cm}
\qqqb(\alpha)\eqdef
\sup\left\{ c \in(0,\alpha]:
\Pb\left(\bigcup\nolimits_{k=1}^{\numK}
\Bigl\{
\sqrt{2\Lb_{k}(\thetavbt_{k})-2\Lb_{k}(\thetavt_{k})}> \zzb_{k}\left(c\right)
\Bigr\}
\right)
\leq
\alpha
\right\}.
\end{EQA}
By its definition \(\qqqb(\alpha)\) depends on the random sample \(\Ym\).

The  multiplier bootstrap procedure below explains how to estimate the bootstrap quantile functions \(\zzb_{k}\left(\qqqb(\alpha)\right)\) corrected for multiplicity.\\
\begin{comment}
\begin{pseudocode}[ruled]{Name}{   }%{x,b,n}
\COMMENT{ Compute $x^b \pmod{n}$}\\
z\GETS 1\\
\WHILE b > 0 \DO
\BEGIN
z \GETS z^2 \pmod{n} \\
\IF b\mbox{ is odd}
10\THEN z \GETS z \cdot x \pmod{n} \\
b \GETS \CALL{ShiftRight}{b}
\END\\
\RETURN{z}
\end{pseudocode}
\end{comment}
\rule[-0.05cm]{\textwidth}{0.15ex}\\
\makebox[\textwidth][l]{\textbf{The simultaneous bootstrap procedure:}}\\
\rule[0.25cm]{\textwidth}{0.15ex}
%\makebox[\textwidth][l]{
\vspace{-1.2cm}
\begin{itemize}
%\item[\textbf{The simultaneous bootstrap procedure:}]
\item[\textbf{Input:}\hspace{-1cm}]\hspace{1cm}
The data \(\Ym\) (as in \eqref{def:Ymatr}) and a fixed confidence level \((1-\alpha)\in (0,1).\)
\item[\textbf{Step 1:}\hspace{-1cm}]\hspace{1cm}
\makebox[\textwidth][l]{Generate \(B\) independent samples of i.i.d. bootstrap weights \(\{u_{1}^{(b)},\dots,u_{n}^{(b)}\}\),}\\
\makebox[\textwidth][l]{\hspace{1.1cm}\(b=1,\dots,B\). For the bootstrap likelihood processes}
\begin{EQA}[c]
L^{\sbt (b)}_{k}(\thetav)\eqdef\sum\nolimits_{i=1}^{n} \ell_{i,k}(\thetav)u_{i}^{(b)}.
\label{algo:Lbb}
 \end{EQA}
\makebox[\textwidth][l]{\hspace{1cm} compute the bootstrap likelihood ratios \(L^{\sbt(b)}_{k}({\thetav}^{\sbt(b)}_{k})-L^{\sbt(b)}_{k}(\thetavt_{k})\). For each}\\
\makebox[\textwidth][l]{\hspace{1cm}
fixed \(b\) the bootstrap likelihoods \(L^{\sbt(b)}_{1}(\thetav),\dots,L^{\sbt(b)}_{\numK}(\thetav)\) are computed using}\\
 \makebox[\textwidth][l]{\hspace{1cm}
 the same  bootstrap sample \(\{u_{i}^{(b)}\}\), s.t. the \(i\)-th
  summand \(\ell_{i,k}(\thetav)\) is always
 }\\
  \makebox[\textwidth][l]{\hspace{1cm}
 multiplied with the \(i\)-th weight \(u_{i}^{(b)}\) as in \eqref{algo:Lbb}.}
\item[\textbf{Step 2:}\hspace{-1cm}]\hspace{0.9cm}
\makebox[\textwidth][l]{
Estimate the marginal quantile functions \(\zzb_{k}(\alpha)\) defined in \eqref{def:zzbalpha_k} separately } \\
\makebox[\textwidth][l]{\hspace{1cm}
for each \(k=1,\dots,\numK\), using \(B\) bootstrap realisations of \(\Lb_{k}(\thetavbt_{k})-\Lb_{k}(\thetavt_{k})\)}\\
\makebox[\textwidth][l]{\hspace{1cm}
from \textbf{Step 1}.
}
\item[\textbf{Step 3:}\hspace{-1cm}]
\hspace{1cm} Find by an iterative procedure the maximum value
\(c\in (0,\alpha]\) s.t. %for \(k=1,\dots,\numK\) s.t.
\begin{EQA}[c]
\Pb\left(\bigcup\nolimits_{k=1}^{\numK}
\Bigl\{
\sqrt{2\Lb_{k}(\thetavbt_{k})-2\Lb_{k}(\thetavt_{k})}\geq \zzb_{k}\left(c\right)
\Bigr\}
\right)
\leq
\alpha.
%\\
%\alphaq\eqdef \alpha \mcorr(\alpha,\numK)
\end{EQA}
\item[\textbf{Otput:}\hspace{-1cm}]\hspace{1cm}
The resulting critical values are \(\zzb_{k}\left(c\right)\), \(k=1,\dots,\numK\).
\end{itemize}
%}\\
\rule[0.3cm]{\textwidth}{0.15ex}
%\vfill
\begin{newremark}
The requirement in Step 1  to use the same bootstrap sample \(\{u_{i}^{(b)}\}\) for generation of the bootstrap likelihood ratios \(L^{\sbt(b)}_{k}({\thetav}^{\sbt(b)}_{k})-L^{\sbt(b)}_{k}(\thetavt_{k})\), \(k=1,\dots,\numK\) allows to preserve the correlation structure between the ratios and, therefore, to make a sharper simultaneous adjustment in Step 3.
\end{newremark}

This procedure is justified theoretically in the next section. 

%!TEX root = simult_bootst_root.tex
%\newpage
\section{Theoretical justification of the bootstrap procedure}
\label{sect:theorres}
%The theoretical results in Section \ref{sect:mainres} show validity of the multiplier bootstrap procedure described in Section \ref{sect:bootstr_intro}
Before stating the main results in Section \ref{sect:mainres} we introduce in Section \ref{sect:scheme} the basic ingredients of the proofs. The general scheme of the theoretical approach here is taken from  \cite{SpZh2014PMB}. In the present work we extend that approach for the case of simultaneously many parametric models.

%We also introduce the condition ``Small Modeling Bias'' to distinguish between small and significant degrees of model misspecification.

\subsection{Overview of the theoretical approach}
\label{sect:scheme}
For justification of the described multiplier bootstrap procedure for simultaneous inference it has to be checked that the joint distributions of the sets of likelihood ratio statistics \(\left\{ L_{k}(\thetavt_{k})-L_{k}(\thetavs_{k}): k=1,\dots,\numK\right\}\) and \(\left\{\Lb_{k}(\thetavbt_{k})-\Lb_{k}(\thetavt_{k}): k=1,\dots,\numK\right\}\) are close to each other. These joint distributions are approximated using several non-asymptotic  steps given in the following scheme:
%The joint distributions of \(\LR\) and \(\LRb\) are approximated to each other using several non-asymptotic  steps. The following scheme shows these steps:
\begin{EQA}[lcccccr]
%\label{appscheme}
	&& \substack{\text{uniform}\vspace{0.09cm}\\\text{sq-Wilks}\\
	\text{theorem}}&&\substack{\text{joint Gauss.}\\ \text{approx. \& }\\\text{anti-concentr.}^{\ast}}&&
	\\
	\nquad\Ym\text{-\scriptsize{world: }}&\sqrt{ 2L_{k}(\thetavt_{k})-2L_{k}(\thetavs_{k})}
	 &~\underset{\substack{\vspace{0.01cm}\\\frac{\dimp_{k}+\log\numK}{\sqrt{n}}}}{\approx}~&
	\|\xiv_{k}\|
	&%{~~~\bigcap_{k=1}^{\numK}}
	{\approx}&\|\xivbr_{k}\|&
	\\
\label{rectangle_simult}
%\hspace{-0.5cm}{\Scale[0.95]{\bigcap\limits_{k=1}^{\numK}}}
&
%{\Scale[0.95]{\bigcap\limits_{k=1}^{\numK}}}
&
%{\Scale[0.95]{\bigcap\limits_{k=1}^{\numK}}}
%\bigcap_{1\leq k\leq \numK}
&
%{\Scale[0.95]{\bigcap\limits_{k=1}^{\numK}}}
{\Scale[1.2]{\bigcap\limits_{1\leq k\leq \numK}}}
&
%{\Scale[0.95]{\bigcap\limits_{k=1}^{\numK}}}
& ~~~\text{\rotatebox{90}{$\approx$}}~ \highw
	&\substack{\text{simultaneous}\vspace{0.1cm}\\\text{ Gauss.\,compar.}^{\ast\ast}\vspace{0.05cm}%\\%\text{\&}\\
	%\text{\& anti-concentr.}
	}\\%
%{\text{\small{\emph{w}}}}\\
\vspace{-0.11cm}
	\nquad\substack{\text{Bootstrap}\\
	\text{world: }}&\sqrt{2\Lb_{k}(\thetavbt_{k})-2\Lb_{k}(\thetavt_{k}) }
	 &\underset{\substack{\vspace{0.01cm}\\\frac{\dimp_{k}+\log\numK}{\sqrt{n}}}}{\approx }&
	\|\xivb_{k}\|
	&%{~~~\bigcap_{k=1}^{\numK}}
	{\approx}&\|{\xivbr}^{\sbt}_{k}\|,
\end{EQA}
\begin{itemize}
\item[\(^{\ast}\)] the accuracy of these approximating steps is \(\CONST\left\{\frac{\dimp_{\max}^{3}}{n}\log^{9}(K)\log^{3}(n\dimtotal)\right\}^{1/8}\);
\item[\(^{\ast\ast}\)] Gaussian comparison step yields an approximation error proportional to\\
\(\DeltaSmB^{2}\left(\frac{\dimp_{\max}^{3}}{n}\right)^{1/4}\dimp_{\max}\log^{2}(K)\log^{3/4}(n\dimtotal)\), where \(\DeltaSmB^{2}\) comes from condition \ref{itm:SmBHk},  see also \eqref{ineq:DeltaSmB_intro} below.
\end{itemize}
Here \(\xiv_{k}\) and \(\xivb_{k}\) denote normalized score vectors for the  \(\Ym\) and bootstrap likelihood processes:
\begin{EQA}[c]
\label{def:xivkxivbr}
\xiv_{k}\eqdef D^{-1}_{k}\nabla_{\thetav}L_{k}(\thetavs_{k}), \quad\quad \xivb_{k}\eqdef \xivb_{k}(\thetavs_{k})\eqdef D^{-1}_{k}\nabla_{\thetav}L_{k}(\thetavs_{k}),
\end{EQA}
\(D^{2}_{k}\) is the full Fisher information matrix for the corresponding \(k\)-th likelihood:
\begin{EQA}[c]
\label{def:Dk}
D^{2}_{k}\eqdef -\nabla_{\thetav}^{2}\E L_{k}(\thetavs_{k}).
\end{EQA}
%\ants to write about \(\xivb_{k}(\thetavt_{k})\)\\
\(\xivbr_{k}\sim \mathcal{N}(0,\Var\xiv_{k})\) and \(\xivbbr_{k}\sim \mathcal{N}(0,\Varb\xivb_{k})\) denote approximating Gaussian vectors, which have the same covariance matrices as \(\xiv\) and \(\xivb\). Moreover the vectors \(\left(\xivbr_{1}^{\T},\dots,\xivbr_{\numK}^{\T}\right)^{\T}\) and  \(\left({\xivbbr}_{1}^{\T},\dots,{\xivbbr}_{\numK}^{\T}\right)^{\T}\) are normally distributed and have the same covariance matrices as the vectors \(\left(\xiv_{1}^{\T},\dots,\xiv_{\numK}^{\T}\right)^{\T}\) and \(\left({\xivb}_{1}^{\T},\dots,{\xivb}_{\numK}^{\T}\right)^{\T}\) correspondingly. \(\Varb\) and \(\Covb\) denote variance and covariance operators w.r.t. the probability measure \(\Pb\) conditional on \(\Ym\).
% for all \(k_{1},k_{2}=1,\dots,\numK\)
%\begin{EQA}[c]
%\Cov\left(\xiv_{k_{1}},\xiv_{k_{2}}\right)=\Cov\left(\xivbr_{k_{1}},\xivbr_{k_{2}}\right),\quad
%\Covb\left(\xivb_{k_{1}},\xivb_{k_{2}}\right)=\Covb\left(\xivbbr_{k_{1}},\xivbbr_{k_{2}}\right),
%\end{EQA}

The first two approximating steps: square root Wilks and Gaussian approximations are performed in parallel for both \(\Ym\) and bootstrap  worlds, which is shown in the corresponding lines of the scheme \eqref{rectangle_simult}. The two worlds are connected in the last step: Gaussian comparison for \(\ell_{2}\)-norms of Gaussian vectors. All the approximations are performed simultaneously for \(\numK\) parametric models. %(this is indicated with the sign \(\bigcap_{1\leq k\leq \numK}\)).

Let us consider each step in more details. Non-asymptotic square-root Wilks approximation result had been obtained recently by \cite{Sp2012Pa,Spokoiny2013Bernstein}. It says that for a fixed sample size and misspecified parametric assumption: \(\P\notin\{\P_{k}\}\), it holds with exponentially high probablity:
\begin{EQA}
\label{Wilks_intro}
	\left|
		 \sqrt{2\bigl\{L_{k}(\thetavt_{k})-L_{k}(\thetavs_{k})\bigr\}}-\|\xiv_{k}\|
	\right|
	&\leq&
	\DeltakW \simeq \frac{\dimp_{k}}{\sqrt{n}},
\end{EQA}
here the index \(k\) is fixed, i.e. this statement is for one parametric model. The precise statement of this result is given in Section \ref{sect:FS}, and its simultaneous version -- in Section \ref{sect:unif_Wilks}. The approximating value \(\|\xiv_{k}\|\) is \(\ell_{2}\)-norm of the score vector \(\xiv_{k}\) given in \eqref{def:xivkxivbr}.
\begin{comment}
\\
The deviation bound by \cite{SPS2011} implies
\begin{EQA}
	\P\bigl( \|\xiv_{k}\|^{2} \geq \E \| \xiv_{k} \|^{2} + C \yy \bigr)
	& \leq &
	2\ex^{-\yy}, \quad(\yy \geq \sqrt{\dimp_{k}}).
\end{EQA}
The distribution of \(\| \xiv_{k} \|\) is generally unknown and the deviation bound above yields too conservative quantiles (the precise statement is given in Section \ref{sect:FS}). This explains the necessity of the bootstrap procedure for the likelihood ratio...\\
\end{comment}
The next approximating step is between the joint distributions of \(\|\xiv_{1}\|, \dots, \|\xiv_{\numK}\|\) and \(\|\xivbr_{1}\|, \dots, \|\xivbr_{\numK}\|\). This is done in Section \ref{sect:gar} for general centered random vectors under bounded exponential moments assumptions. The main tools for the simultaneous Gaussian approximation are: Lindeberg's telescopic sum, smooth maximum function and three times differentiable approximation of the indicator function \(\Ind\{x\in\R:x> 0\}\). The simultaneous anti-concentration inequality for the \(\ell_{2}\)-norms of Gaussian vectors is obtained in Section \ref{sect:ac}. The result is based on approximation of the \(\ell_{2}\)-norm with a maximum over a finite grid on a hypersphere, and on the  anti-concentration inequality for maxima of a Gaussian random vector by \cite{chernozhukov2012comparison}.
The same approximating steps are performed for the bootstrap world, the square-root bootstrap Wilks approximation is given in Sections \ref{sect:FStheory}, \ref{sect:unif_Wilks}.
The last step in the scheme \eqref{rectangle_simult} is comparison of the joint distributions of the sets of \(\ell_{2}\)-norms of Gaussian vectors: \(\|\xivbr_{1}\|, \dots, \|\xivbr_{\numK}\|\) and \(\|\xivbbr_{1}\|, \dots, \|\xivbbr_{\numK}\|\) by Slepian interpolation (see Section \ref{sect:Gausscompar} for the result in a general setting). The error of approximation is proportional to
\begin{EQA}[c]
\label{def:termcompar}
\max_{1\leq k_{1},k_{2}\leq \numK}
\left\|
\Cov(\xiv_{k_{1}},\xiv_{k_{2}})
-
\Covb(\xivb_{k_{1}},\xivb_{k_{2}})
\right\|_{\max}.
\end{EQA}
It is shown, using Bernstein matrix inequality (Sections \ref{sect:NCBI_k} and \ref{sect:proofsofmainres}), that the value \eqref{def:termcompar} is bounded from above (up to a constant) on a random set of dominating probability with
\begin{EQA}
\label{ineq:DeltaSmB_intro}
\max_{1\leq k\leq\numK}
\left\|
H^{-1}_{k}B_{k}^{2}
H^{-1}_{k}
\right\|
&\leq&
\DeltaSmB^{2}
\end{EQA}
for
\begin{align}
\label{def:Bsmbk}
\begin{split}
B_{k}^{2}&\eqdef\sum\nolimits_{i=1}
^{n}
\E\left\{\nabla_{\thetav}\ell_{i,k}(\thetavs_{k})\right\}
\E\left\{\nabla_{\thetav}\ell_{i,k}(\thetavs_{k})\right\}^{\T},\\
H_{k}^{2}&\eqdef\sum\nolimits_{i=1}
^{n}
\E\left\{\nabla_{\thetav}\ell_{i,k}(\thetavs_{k})\nabla_{\thetav}\ell_{i,k}(\thetavs_{k})^{\T}\right\}.
\end{split}
\end{align}
The value \(\left\|
H^{-1}_{k}B_{k}^{2}
H^{-1}_{k}
\right\|\) is responsible for the modelling bias of the \(k\)-th model. If the parametric family \(\left\{\P_{k}(\thetav)\right\}\) contains the true distribution \(\P\) or if the observations \(\Yv_{i}\) are i.i.d., then \(B_{k}^{2}\) equals to zero. Condition \ref{itm:SmBHk} assumes that all the values \(\left\|
H^{-1}_{k}B_{k}^{2}
H^{-1}_{k}
\right\|\) are rather small.
\subsection{Main results}
\label{sect:mainres}
The following theorem shows the closeness of the joint cumulative distribution functions (c.d.f-s.) of \(\Bigl\{\sqrt{2L_{k}(\thetavt_{k})-2L_{k}(\thetavs_{k})}, k=1,\dots,\numK\Bigr\}\) and \(\Bigl\{\sqrt{2\Lb_{k}(\thetavbt_{k})-2\Lb_{k}(\thetavt_{k})}, k=1,\dots,\numK\Bigr\}\). The approximating error term \(\Delta_{\total}\) equals to a sum of the errors from all the  steps in the scheme \eqref{rectangle_simult}.
\begin{theorem}%[Bootstrap validity for a small modelling bias]
\label{thm:mainres1}
Under the conditions of Section \ref{sect:conditions} it holds with probability \(\geq1-12\ex^{-\yy}\) for \(z_{k}\geq C\sqrt{\dimp_{k}},\ 1\leq C<2\)
\begin{EQA}
&&\hspace{-1cm}
\Biggl|
\P\left(\bigcup\nolimits_{k=1}^{\numK}
\Bigl\{
\sqrt{2L_{k}(\thetavt_{k})-2L_{k}(\thetavs_{k})}> z_{k}
\Bigr\}
\right)
\\
&&\hspace{2cm}
-\,
\Pb\left(\bigcup\nolimits_{k=1}^{\numK}
\Bigl\{
\sqrt{2\Lb_{k}(\thetavbt_{k})-2\Lb_{k}(\thetavt_{k})}> z_{k}
\Bigr\}
\right)
\Biggr|
\leq \Delta_{\total}.
\end{EQA}
The approximating total error \(\Delta_{\total}\geq 0\) is deterministic and in the case of i.i.d. observations (see Section  \ref{typical_local}) it holds:
\begin{EQA}
\label{ineq:Delta_total_statement}
\Delta_{\total}
&\leq&
\CONST\left(\frac{\dimp_{\max}^{3}}{n}\right)^{1/8}\log^{9/8}(K)\log^{3/8}(n\dimtotal)
\left\{\left(\bbgmu^{2}+\bbgmu_{B}^{2}\right)
\left(1+\deltabbVb^{2}(\yy)\right)
\right\}^{3/8},
\end{EQA}
where the deterministic terms \(\bbgmu^{2},\bbgmu_{B}^{2}\) and \(\deltabbVb^{2}(\yy)\) come from the conditions \ref{itm:Ik}, \ref{itm:IBk} and \ref{itm:SD0k}. \(\Delta_{\total}\) is defined in \eqref{def:Delta_total_1}.
\begin{comment}
\begin{EQA}
\Delta_{\total}\left(\{z_{k}\}\right)&\leq&
\CONST \left(\frac{\dimp_{\max}^{3}}{n}\right)^{1/8}\log^{3/8}(n\dimtotal) \log^{5/8}(\numK)\log^{3/8}\left(\zzb_{\max}(\alpha)\right)\ant
\\&&+\,
\CONST\max_{1\leq k\leq\numK}\frac{\dimp_{k}+\yy+\log(\numK)}{\sqrt{\dimp_{k}n}}\sqrt{\yy+\log(\numK)}\sqrt{\log(\numK)}
\\&&+\,\CONST\max_{1\leq k\leq\numK}\frac{\dimp_{k}+\yy+\log(\numK)}{\sqrt{\dimp_{k}n}}\sqrt{\yy+\log(\numK)}\log^{1/2}\left(\zzb_{\max}(\alpha)\right),\ant
\end{EQA}
where \(\zzb_{\max}(\alpha)\eqdef \max_{1\leq k\leq \numK}\zzb_{k}(\alpha) \ant\).
\end{comment}
\begin{comment}
\begin{EQA}[c]
\Delta_{\total}(\alpha,\yy)=
\Delta_{\operatorname{sq-Wilks}}(\xx)+\Delta_{\operatorname{bootst}}(),
\end{EQA}
where \(\Delta_{\operatorname{sq-Wilks}}(\xx)\) is ...
\end{comment}
\end{theorem}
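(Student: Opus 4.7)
The plan is to chain together the six approximation steps summarised in scheme (\ref{rectangle_simult}) via a telescoping of joint probabilities, collecting the resulting error contributions into the single bound $\Delta_{\total}$. For a threshold vector $(z_1,\dots,z_{\numK})$ I would insert four intermediate probabilities between the $\Ym$-world event $\bigcup_k \{\sqrt{2L_k(\thetavt_k) - 2L_k(\thetavs_k)} > z_k\}$ and its bootstrap counterpart, namely the events obtained by replacing the square-root likelihood ratio first with $\|\xiv_k\|$, then with $\|\xivbr_k\|$, then with $\|\xivbbr_k\|$, then with $\|\xivb_k\|$. Each consecutive difference is controlled by one of the ingredients already assembled in the paper: simultaneous square-root Wilks in each world, simultaneous Gaussian approximation for $\ell_2$-norms, Gaussian anti-concentration, and Gaussian comparison for $\ell_2$-norms.

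First I would apply the simultaneous square-root Wilks theorem from Section \ref{sect:unif_Wilks} to show $|\sqrt{2L_k(\thetavt_k) - 2L_k(\thetavs_k)} - \|\xiv_k\|| \leq \CONST(\dimp_k + \log \numK)/\sqrt{n}$ uniformly in $k$ on an event of probability $\geq 1 - c\ex^{-\yy}$, and analogously for the bootstrap ratios. To convert these pointwise threshold perturbations into a bound on the joint union-probability I would invoke the simultaneous anti-concentration inequality for $\ell_2$-norms of Gaussian vectors from Section \ref{sect:ac}, which via its reduction to a finite grid on the sphere contributes only logarithmic factors in $\numK$ and $n\dimtotal$. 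In parallel, the simultaneous Gaussian approximation of Section \ref{sect:gar}, obtained through Lindeberg telescoping applied to a smoothed maximum composed with a thrice-differentiable surrogate of the indicator, replaces $(\|\xiv_k\|)_{k=1}^{\numK}$ by $(\|\xivbr_k\|)_{k=1}^{\numK}$ and $(\|\xivb_k\|)_{k=1}^{\numK}$ by $(\|\xivbbr_k\|)_{k=1}^{\numK}$. These two parallel steps produce the $(\dimp_{\max}^3 \log^9(\numK) \log^3(n\dimtotal)/n)^{1/8}$ factor in $\Delta_{\total}$.

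The main obstacle is the central bridge between the two worlds: the simultaneous Gaussian comparison between $(\|\xivbr_k\|)_{k}$ and $(\|\xivbbr_k\|)_{k}$. Slepian interpolation (Section \ref{sect:Gausscompar}) reduces this to bounding
\begin{EQA}
\max_{1\leq k_1,k_2\leq \numK}\left\|\Cov(\xiv_{k_1},\xiv_{k_2}) - \Covb(\xivb_{k_1},\xivb_{k_2})\right\|_{\max}
\end{EQA}
on a random set of dominating probability. I would split this into (i) the conditional-on-$\Ym$ fluctuation of $\Covb(\xivb_{k_1},\xivb_{k_2})$ around its mean, which I would handle with the non-commutative Bernstein matrix inequality from Section \ref{sect:NCBI_k} applied to the summands $\nabla_{\thetav}\ell_{i,k_1}(\thetavs_{k_1})\nabla_{\thetav}\ell_{i,k_2}(\thetavs_{k_2})^{\T}$ together with a union bound over the $\numK^2$ pairs, and (ii) a deterministic modelling-bias term controlled by condition \ref{itm:SmBHk} via (\ref{ineq:DeltaSmB_intro}). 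The two together yield the secondary contribution $\DeltaSmB^2(\dimp_{\max}^3/n)^{1/4}\dimp_{\max}\log^2(\numK)\log^{3/4}(n\dimtotal)$ to $\Delta_{\total}$.

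Finally I would tally the at most twelve failure events arising across the six approximation steps (each step contributing an event in each of the two worlds), which gives the $12\ex^{-\yy}$ probability loss in the theorem, and then substitute the i.i.d.\ typical scales from Section \ref{typical_local} together with the constants $\bbgmu^2$, $\bbgmu_B^2$, $\deltabbVb^2(\yy)$ coming from conditions \ref{itm:Ik}, \ref{itm:IBk}, \ref{itm:SD0k} to collapse the combined deterministic error to the form displayed in (\ref{ineq:Delta_total_statement}). The most delicate bookkeeping is ensuring that no step costs a polynomial factor in $\numK$; this is precisely what the smoothed-maximum device in the Gaussian approximation and the spherical-grid argument in the anti-concentration inequality are designed for, and verifying that the polynomial-in-$p_{\max}$ error accumulation does not spoil the final rate is the only place where the details require genuine care.
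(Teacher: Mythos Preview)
Your proposal is essentially correct and follows the same route as the paper: simultaneous square-root Wilks on both ends, Gaussian approximation via Lindeberg with a smoothed maximum, Slepian-type Gaussian comparison, and anti-concentration via a spherical grid, with the covariance discrepancy split into a Bernstein fluctuation piece and the $\DeltaSmB^{2}$ modelling-bias piece. Two small points of difference are worth noting. First, your chain omits one link that the paper makes explicit: the bootstrap Wilks approximation (Lemma~\ref{lemma:simultWilks_gen}, part~2) delivers $\|\xivb_{k}(\thetavt_{k})\|$, not $\|\xivb_{k}(\thetavs_{k})\|$, so an additional transfer $\|\xivb_{k}(\thetavt_{k})\|\to\|\xivb_{k}(\thetavs_{k})\|$ is needed before Proposition~\ref{thm:Cum_norms} can be applied; the paper does this via Lemma~\ref{lemma:xivb_simult_gen}, contributing the extra $\Deltab_{\xi,k}$ term to the threshold shift $\delta_{z_k}$. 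Second, for the covariance fluctuation the paper applies the matrix Bernstein inequality once to the full stacked $\dimtotal\times\dimtotal$ matrix $\bbH^{-1}\bbVb^{2}\bbH^{-1}-\Id_{\dimtotal}$ (Theorem~\ref{lemma_ncbi}), rather than blockwise over the $\numK^{2}$ pairs with a union bound as you suggest; this yields a single $\log(\dimtotal)$ factor and slightly cleaner constants, though your variant would also work.
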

\begin{newremark}
The obtained approximation bound is mainly of theoretical interest, although it shows the impact of \(\dimp_{\max}\), \(\numK\) and \(n\) on the quality of the bootstrap procedure. For more details on the error term see Remark \ref{rem:Deltapp}.
\end{newremark}

\begin{comment}
\begin{theorem}
\label{thm:mainres1}
Under the conditions of Section \ref{sect:conditions} it holds with probability \(1-\ant\ex^{-\yy}\):
\begin{EQA}
\left|\P\left(\bigcup\nolimits_{k=1}^{\numK}
\Bigl\{
\sqrt{2L_{k}(\thetavt_{k})-2L_{k}(\thetavs_{k})}\geq \zzb_{k}(\alpha...)
\Bigr\}
\right)
-\alpha
\right|
&\leq &\Delta_{\total}
\end{EQA}
\end{theorem}
Due to definition \eqref{def:CS_k} this theorem implies
\begin{corollary}[Simultaneous coverage probability error]
Under the conditions of Theorem \ref{thm:mainres1} it holds:
\begin{EQA}
\left|
\P\left(\bigcap\nolimits_{k=1}^{\numK}
\Bigl\{
\thetavs_{k}\in\CS_{k}\left(\zzb_{k}(\alpha...)\right)
\Bigr\}
\right)
-(1-\alpha)
\right|
&\leq&\Delta_{\total}.
\end{EQA}
\end{corollary}
\end{comment}
\begin{comment}
Recall that \(\mcorrq(\alpha), \mcorrqb(\alpha)\geq 1\) are the \(\Ym\) and bootstrap corrections for multiplicity if they are  equal correspondingly to the minimal possible \(q\geq 1\) s.t.
\begin{align}
\label{def:mcorr}
\begin{split}
\P\left(\bigcup\nolimits_{k=1}^{\numK}
\Bigl\{
\sqrt{2L_{k}(\thetavt_{k})-2L_{k}(\thetavs_{k})}\geq \zz_{k}(\alpha/q)
\Bigr\}
\right)
&\leq\alpha,\\
\Pb\left(\bigcup\nolimits_{k=1}^{\numK}
\Bigl\{
\sqrt{2\Lb_{k}(\thetavbt_{k})-2\Lb_{k}(\thetavt_{k})}\geq \zzb_{k}(\alpha/q)
\Bigr\}
\right)
&\leq
\alpha.
\end{split}
\end{align}
Let us introduce the following functions, both taking values in the interval \((0,\alpha]\)
\begin{EQA}[c]
\label{def:qqq}
\qqq(\alpha)\eqdef \alpha /\mcorrq(\alpha),\quad
\qqqb(\alpha)\eqdef \alpha /\mcorrqb(\alpha).
\end{EQA}
\end{comment}
The next theorem justifies the bootstrap procedure under the \ref{itm:SmBHk} condition. The theorem says that the bootstrap quantile functions \(\zzb_{k}(\cdot)\) with the bootstrap-corrected for multiplicity confidence levels \(1-\qqqb(\alpha)\) can be used for construction of the simultaneous confidence set in the \(\Ym\)-world.
\begin{theorem}[Bootstrap validity for a small modeling bias]
\label{thm:mainres1_2}
Assume the conditions of Theorem \ref{thm:mainres1}, and \(\qqq(\alpha),0.5\qqqb(\alpha) \geq \Delta_{\full,\,\max}\),
 then for \(\alpha\leq 1-8\ex^{-\xx}\) it holds with probability \(1-12\ex^{-\xx}\)
\begin{EQA}
\P\left(\bigcup\nolimits_{k=1}^{\numK}
\Bigl\{
\sqrt{2L_{k}(\thetavt_{k})-2L_{k}(\thetavs_{k})}\geq \zzb_{k}\left({\qqqb}(\alpha)- 2\Delta_{\full,\,\max}\right)
\Bigr\}
\right)-\alpha &\leq& \Delta_{\zz,\,\total} ,\\
\P\left(\bigcup\nolimits_{k=1}^{\numK}
\Bigl\{
\sqrt{2L_{k}(\thetavt_{k})-2L_{k}(\thetavs_{k})}\geq \zzb_{k}\left({\qqqb}(\alpha)+ 2\Delta_{\full,\,\max}\right)
\Bigr\}
\right)-\alpha &\geq& - \Delta_{\zz,\,\total} ,
\end{EQA}
where \(\Delta_{\full,\,\max}\leq \CONST\{(\dimp_{\max}+\yy)^{3}/n\}^{1/8}\)  in the case of i.i.d. observations (see Section \ref{typical_local}), and \(\Delta_{\zz,\,\total}\leq 3\Delta_{\total}\); their explicit definitions are given in \eqref{def:Delta_full_max} and \eqref{def:Delta_z_total}.
Moreover
\begin{EQA}
{\qqqb}(\alpha)
 &\leq&
 {\qqq}\left(\alpha+\Delta_{\qqq}\right)
 +\Delta_{\full,\,\max},
 \\
{\qqqb}(\alpha)&\geq& {\qqq}\left(\alpha-\Delta_{\qqq}
\right)  -\Delta_{\full,\,\max},
\end{EQA}
for \(0\leq\Delta_{\qqq}\leq 2 \Delta_{\total}\), defined   in \eqref{def:Delta_qqq}.
\end{theorem}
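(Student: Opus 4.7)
The plan is to invoke Theorem \ref{thm:mainres1} --- which provides joint cdf closeness of the $\Ym$-world and bootstrap-world likelihood-ratio statistics at any fixed thresholds --- and combine it with the supremum definition of $\qqqb(\alpha)$ and a uniform quantile shift bound $|\zz_{k}(c) - \zzb_{k}(c)| \leq \Delta_{\full,\,\max}$. This latter bound is obtained by chaining the non-asymptotic square-root Wilks theorems for both worlds with the simultaneous Gaussian approximation and Gaussian comparison steps of the scheme \eqref{rectangle_simult}.

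For the first pair of inequalities, apply Theorem \ref{thm:mainres1} with the random thresholds $z_{k} = \zzb_{k}(\qqqb(\alpha) \mp 2\Delta_{\full,\,\max})$ to transfer $\P\bigl(\bigcup\nolimits_{k}\{\sqrt{2L_{k}(\thetavt_{k}) - 2L_{k}(\thetavs_{k})} \geq z_{k}\}\bigr)$ to its bootstrap counterpart $\Pb\bigl(\bigcup\nolimits_{k}\{\sqrt{2\Lb_{k}(\thetavbt_{k}) - 2\Lb_{k}(\thetavt_{k})} \geq z_{k}\}\bigr)$ at a cost of $\Delta_{\total}$. The hypothesis $z_{k} \geq C\sqrt{\dimp_{k}}$ of Theorem \ref{thm:mainres1} is in force because $\zzb_{k}(c)$ at small $c$ concentrates around $\sqrt{\dimp_{k}}$ by the bootstrap square-root Wilks theorem. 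For the upper inequality ($-$ sign), since $\qqqb(\alpha) - 2\Delta_{\full,\,\max} < \qqqb(\alpha)$ the quantile $\zzb_{k}$ is evaluated at a smaller level (hence is larger), and monotonicity of the bootstrap tail together with the supremum definition of $\qqqb(\alpha)$ yields $\Pb(\cdot) \leq \alpha$. The lower inequality ($+$ sign) is obtained symmetrically, using that immediately above $\qqqb(\alpha)$ the bootstrap tail probability exceeds $\alpha$.

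For the bracketing $\qqqb(\alpha) \in \bigl[\qqq(\alpha - \Delta_{\qqq}) - \Delta_{\full,\,\max},\, \qqq(\alpha + \Delta_{\qqq}) + \Delta_{\full,\,\max}\bigr]$, apply Theorem \ref{thm:mainres1} this time with the deterministic thresholds $z_{k} = \zz_{k}(c)$ to get $|F(c) - P^{\ast}(c)| \leq \Delta_{\total}$, where $F(c)$ is the $\Ym$-world tail at $\zz_{k}(c)$ and $P^{\ast}(c)$ its bootstrap counterpart at the same deterministic thresholds. Then convert $P^{\ast}(c)$ into the genuine bootstrap tail $\Pb\bigl(\bigcup\nolimits_{k}\{\sqrt{2\Lb_{k}(\thetavbt_{k}) - 2\Lb_{k}(\thetavt_{k})} \geq \zzb_{k}(c)\}\bigr)$ via the quantile shift $|\zz_{k}(c) - \zzb_{k}(c)| \leq \Delta_{\full,\,\max}$, with the residual probability mass on the interval between $\zz_{k}(c)$ and $\zzb_{k}(c)$ controlled by the simultaneous Gaussian anti-concentration inequality from Section \ref{sect:ac}. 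Inverting the resulting two-sided relation at $c = \qqq(\alpha \pm \Delta_{\qqq})$ then produces the stated bracketing.

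The hardest part is the uniform quantile shift $|\zz_{k}(c) - \zzb_{k}(c)| \leq \Delta_{\full,\,\max}$ simultaneously over $k = 1,\dots,\numK$ and over the relevant range of $c$ on a single event of probability $\geq 1 - 12\ex^{-\yy}$. Establishing it requires chaining the Wilks theorems for both worlds, the simultaneous Gaussian approximation and comparison bounds, and the anti-concentration estimates, while carefully tracking the Small Modeling Bias quantity $\DeltaSmB^{2}$ from Condition \ref{itm:SmBHk} (cf.\ \eqref{ineq:DeltaSmB_intro}); this is what controls the mismatch between the covariance structures of $\xiv_{k}$ and $\xivb_{k}$ entering the Gaussian comparison step. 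A secondary subtlety is the data-dependence of $\qqqb(\alpha)$: Theorem \ref{thm:mainres1} is uniform in any thresholds satisfying $z_{k} \geq C\sqrt{\dimp_{k}}$, so it applies unchanged to the random choice $z_{k} = \zzb_{k}(\qqqb(\alpha) \pm 2\Delta_{\full,\,\max})$, but this uniformity must be invoked explicitly on the high-probability event.
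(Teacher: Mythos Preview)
Your plan contains the right ingredients and is essentially correct, but the organization differs from the paper's in an instructive way.

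For the first pair of inequalities you take the most direct route: apply Theorem~\ref{thm:mainres1} at the (data-dependent) thresholds $z_{k}=\zzb_{k}\bigl(\qqqb(\alpha)\mp 2\Delta_{\full,\,\max}\bigr)$ and bound the resulting $\Pb$ term by $\alpha$ using only the supremum definition of $\qqqb(\alpha)$ and monotonicity. This works---Theorem~\ref{thm:mainres1} is indeed uniform over admissible thresholds on a single event---and actually delivers the error $\Delta_{\total}$ rather than $3\Delta_{\total}$; the shift $\pm 2\Delta_{\full,\,\max}$ plays no role in your argument. The paper proceeds in the opposite order: it \emph{first} derives the bracketing of $\qqqb(\alpha)$, and \emph{then} obtains the main inequalities by passing from $\zzb_{k}(\beta)$ to $\zz_{k}(\beta+\Delta_{\full,\,k})$ via the pointwise quantile shift, bounding the $\P$-tail by $\qqq^{-1}(\beta+\Delta_{\full,\,\max})$ up to an anti-concentration correction (a separate Lemma~\ref{lemma:lik_ac_joint} for the $\Ym$-world statistics), and finally inserting $\beta=\qqqb(\alpha)-2\Delta_{\full,\,\max}$ using the already-proved bracketing $\qqqb(\alpha)\le \qqq(\alpha+\Delta_{\qqq})+\Delta_{\full,\,\max}$. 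This is what produces both the shift $\pm 2\Delta_{\full,\,\max}$ and the factor $3$ in $\Delta_{\zz,\,\total}=\Delta_{\total}+2\DeltaacAT$.

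One correction concerns what you call the ``hardest part''. The pointwise bound $|\zz_{k}(c)-\zzb_{k}(c)|\le \Delta_{\full,\,k}$ is \emph{not} obtained by re-running the simultaneous scheme~\eqref{rectangle_simult}; the paper imports it directly from the single-model bootstrap result Theorem~\ref{thm:cumulat} (proved in \cite{SpZh2014PMB}), applied to each $k$ separately. This is why $\Delta_{\full,\,\max}=\max_{k}\Delta_{\full,k}\le \CONST\{(\dimp_{\max}+\yy)^{3}/n\}^{1/8}$ carries no $\log K$ factor. So the step you anticipate as the main difficulty is a one-line citation; the simultaneous machinery (Theorem~\ref{thm:mainres1} and the anti-concentration Lemma~\ref{lemma:lik_ac_joint}) enters only in comparing $\qqqb^{-1}$ with $\qqq^{-1}$.

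A small caveat on your lower inequality: the assertion ``immediately above $\qqqb(\alpha)$ the bootstrap tail exceeds $\alpha$'' needs a word when $\qqqb(\alpha)=\alpha$, since the supremum in \eqref{def:mcorrqb} ranges only over $(0,\alpha]$. The paper sidesteps this by working with $\qqq^{-1}$ and the pointwise shift rather than appealing to the sup-definition of $\qqqb$ directly.
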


The following theorem does not assume the \ref{itm:SmBHk} condition to be fulfilled. It turns out that in this case the bootstrap procedure becomes conservative, and the bootstrap critical values corrected for the multiplicity \(\zzb_{k}\left(\qqqb(\alpha)\right)\)  are increased with the modelling bias \(\sqrt{\tr\{D_{k}^{-1}H_{k}^{2}D_{k}^{-1}\}}-\sqrt{\tr\{D_{k}^{-1}(H_{k}^{2}-B_{k}^{2})D_{k}^{-1}\}}\), therefore, the confidence set based on the bootstrap estimates can be conservative. %It also important that the  bootstrap critical values grow with the modeling bias model-wise, i.e. for each of the \(\numK\) separately. However the correction for multiplicity \(\qqqb(\alpha)\)

\begin{theorem}[Bootstrap conservativeness for a large modeling bias]
\label{thm:mainres2_largesmb}
Under the conditions of Section \ref{sect:conditions} except for \ref{itm:SmBHk}  it holds with probability \(\geq1-14\ex^{-\yy}\) for \(z_{k}\geq C\sqrt{\dimp_{k}},\ 1\leq C<2\)
\begin{EQA}
&&\hspace{-0.5cm}
\P\left(\bigcup\nolimits_{k=1}^{\numK}
\Bigl\{
\sqrt{2L_{k}(\thetavt_{k})-2L_{k}(\thetavs_{k})}> z_{k}
\Bigr\}
\right)
\\
&&\hspace{2.5cm}
\leq
\Pb\left(\bigcup\nolimits_{k=1}^{\numK}
\Bigl\{
\sqrt{2\Lb_{k}(\thetavbt_{k})-2\Lb_{k}(\thetavt_{k})}> z_{k}
\Bigr\}
\right)+
\Delta_{\operatorname{b},\,\total}.
\end{EQA}
The deterministic value \(\Delta_{\operatorname{b},\,\total}\in[0,\Delta_{\total}]\) (see \eqref{ineq:Delta_total_statement} in the case \ref{typical_local}).
Moreover, the bootstrap-corrected for multiplicity confidence level \(1-\qqqb(\alpha)\) is conservative in comparison with the true corrected confidence level:
\begin{EQA}
\label{ineq:th2qqq}
1-\qqqb(\alpha) &\geq&
1-{\qqq}\left(\alpha+\Delta_{\operatorname{b},\qqq}\right)
 -\Delta_{\full,\,\max}
,
\end{EQA}
and it holds for all \(k=1,\dots,\numK\) and \(\alpha \leq 1-8\ex^{-\xx}\)  %and \(z_{k}\geq \max\left\{2,\sqrt{\dimp_{k}}+\Delta_{\varepsilon}\right\}\)\ants
\begin{EQA}
\zzb_{k}\left(\qqqb(\alpha)\right)&\geq&
\zz_{k}\left({\qqq}\left(\alpha+\Delta_{\operatorname{b},\qqq}\right)
 +\Delta_{\full,\,\max}\right)
\\&+&\sqrt{\tr\{D_{k}^{-1}H_{k}^{2}D_{k}^{-1}\}}-\sqrt{\tr\{D_{k}^{-1}(H_{k}^{2}-B_{k}^{2})D_{k}^{-1}\}}
 - \Deltaqqfkk,
% \\
% \zzb_{k}\left(\qqqb(\alpha)\right)&&\leq
%\zz_{k}\left({\qqq}\left(\alpha-\Delta_{\qqq}
%\right)  -\Delta_{\operatorname{bm},\,\full,\,k}\right)
%\\&&+\,\sqrt{\tr\{D_{k}^{-1}H_{k}^{2}D_{k}^{-1}\}}-\sqrt{\tr\{D_{k}^{-1}(H_{k}^{2}-B_{k}^{2})D_{k}^{-1}\}}
% + \Deltaqlfkk,
\end{EQA}
for \(0\leq\Delta_{\operatorname{b},\qqq}\leq 2 \Delta_{\total}\), defined   in \eqref{def:Delta_b_qqq}, and
%where \(\Delta_{\operatorname{bm},\,\full,\,k}\leq \CONST\{(\dimp_{k}+\yy)^{3}/n\}^{1/8}+
%\CONST\{(\dimp_{\max}+\yy)^{3}/n\}^{1/8}\) in the case %\ref{typical_local}, and it is defined in \eqref{def:Delta_bm_full_k}.
the positive value \(\Deltaqqfkk\) is
bounded from above with \((\gmu_{k}^{2}+\gmu_{B,k}^{2})(\sqrt{8\yy\dimp_{k}}+6\yy)\) for the constants \(\gmu_{k}^{2}>0,\, \gmu_{B,k}^{2}\geq0\) from  conditions \ref{itm:Ik}, \ref{itm:IBk}.
\end{theorem}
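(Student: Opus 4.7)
My plan is to follow the same diagram \eqref{rectangle_simult} as in Theorem~\ref{thm:mainres1}, but to replace the Gaussian comparison step by a \emph{one-sided}, Anderson-type argument that exploits a positive semi-definite (PSD) ordering of the two relevant covariance matrices rather than their closeness. First I would apply the uniform square-root Wilks approximation (Section~\ref{sect:unif_Wilks}) and the simultaneous Gaussian approximation (Section~\ref{sect:gar}) in parallel in the $\Ym$-world and the bootstrap world, exactly as in the proof of Theorem~\ref{thm:mainres1}. Neither step invokes \ref{itm:SmBHk}, and together they reduce the statement to a comparison of $\P(\bigcup_{k}\{\|\xivbr_{k}\|>z_{k}\})$ with $\Pb(\bigcup_{k}\{\|\xivbbr_{k}\|>z_{k}\})$, contributing the bulk of the error to $\Delta_{\operatorname{b},\,\total}\le\Delta_{\total}$.

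\textbf{Key one-sided comparison.} Because $\nabla L_{k}(\thetavs_{k}) = \sum_{i=1}^{n}\nabla\ell_{i,k}(\thetavs_{k})$ and the $\Yv_{i}$ are independent, a direct computation gives
\begin{EQA}
\E\Covb(\xivb_{k_{1}},\xivb_{k_{2}})-\Cov(\xiv_{k_{1}},\xiv_{k_{2}})
&=&
D_{k_{1}}^{-1}\Bigl(\sum\nolimits_{i=1}^{n}\E\nabla\ell_{i,k_{1}}(\thetavs_{k_{1}})\E\nabla\ell_{i,k_{2}}(\thetavs_{k_{2}})^{\T}\Bigr)D_{k_{2}}^{-1},
\end{EQA}
and the full $(k_{1},k_{2})$-block matrix of these differences is a Gram matrix over the sample index $i$, hence PSD. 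The Bernstein matrix inequality (Sections~\ref{sect:NCBI_k}, \ref{sect:proofsofmainres}; cf.\ \eqref{ineq:DeltaSmB_intro}) then shows that, on an event of probability at least $1-2\ex^{-\yy}$, the realised $\Covb$ deviates spectrally from its expectation by an amount $\varepsilon$ that can be absorbed into $\Delta_{\operatorname{b},\,\total}$, so that $\Covb\succeq\Cov-\varepsilon I$ in the block PSD order. Applying Anderson's inequality to the centrally symmetric convex set $\prod_{k=1}^{\numK}\{w\in\R^{\dimp_{k}}\colon\|w\|\le z_{k}\}$ then yields the one-sided domination
\begin{EQA}
\P\Bigl(\bigcup\nolimits_{k=1}^{\numK}\{\|\xivbr_{k}\|>z_{k}\}\Bigr)
&\le&
\Pb\Bigl(\bigcup\nolimits_{k=1}^{\numK}\{\|\xivbbr_{k}\|>z_{k}\}\Bigr)+\CONST\varepsilon,
\end{EQA}
and chaining with the two preceding approximations gives the first inequality of the theorem.

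\textbf{Consequences for $\qqqb(\alpha)$ and $\zzb_{k}(\qqqb(\alpha))$.} The bound $\qqqb(\alpha)\le\qqq(\alpha+\Delta_{\operatorname{b},\qqq})+\Delta_{\full,\,\max}$ follows, in parallel with the proof of Theorem~\ref{thm:mainres1_2}, by inserting $z_{k}=\zzb_{k}(c)$ into the first inequality, converting the bootstrap quantiles to true ones via the marginal bootstrap Wilks bound (which controls $|\zzb_{k}(c)-\zz_{k}(c)|$ up to the modelling-bias shift), and optimising $c$ in the definitions \eqref{bound:simult} and \eqref{def:mcorrqb}. The lower bound on $\zzb_{k}(\qqqb(\alpha))$ with the explicit modelling-bias gap is then a marginal analysis: for each fixed $k$, square-root Wilks and Gaussian approximation place $\zzb_{k}(c)$ and $\zz_{k}(c)$ within $\Deltaqqfkk$ of the $(1-c)$-quantiles of $\|\xivbbr_{k}\|$ and $\|\xivbr_{k}\|$ respectively. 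These Euclidean norms of centred Gaussian vectors concentrate sub-Gaussianly around $\sqrt{\tr D_{k}^{-1}H_{k}^{2}D_{k}^{-1}}$ and $\sqrt{\tr D_{k}^{-1}(H_{k}^{2}-B_{k}^{2})D_{k}^{-1}}$, with deviations of order $(\gmu_{k}^{2}+\gmu_{B,k}^{2})(\sqrt{8\yy\dimp_{k}}+6\yy)$ controlled by \ref{itm:Ik}, \ref{itm:IBk}; subtracting the two centres and combining with the monotonicity of $c\mapsto\zzb_{k}(c)$ and the bound on $\qqqb(\alpha)$ yields the claimed inequality.

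\textbf{Main obstacle.} The delicate point will be to propagate the PSD ordering $\E\Covb\succeq\Cov$ simultaneously through the realised random matrix $\Covb$ (via matrix Bernstein) and through the anti-concentration step required to convert a probability comparison into a quantile comparison, without degrading the one-sided comparison into a symmetric two-sided one that would destroy the sharp constant in front of $\sqrt{\tr D_{k}^{-1}H_{k}^{2}D_{k}^{-1}}-\sqrt{\tr D_{k}^{-1}(H_{k}^{2}-B_{k}^{2})D_{k}^{-1}}$. Exactly this bookkeeping is what gets encapsulated into $\Deltaqqfkk$ on the marginal, worst $k$, and keeping track of its dependence on $\yy$ and $\dimp_{k}$ uniformly over $k=1,\dots,\numK$ is the technical heart of the argument.
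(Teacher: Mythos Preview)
Your proposal is correct and the overall architecture (uniform Wilks $\to$ Gaussian approximation $\to$ one-sided Gaussian comparison $\to$ anti-concentration, then marginal quantile analysis for the last display) matches the paper's. The genuine difference is in the one-sided Gaussian comparison step. You use the joint PSD ordering $\Var\Xi\preceq\E\Varb\Xib$ (the difference being the full Gram block matrix $\bbD^{-1}\bbB^{2}\bbD^{-1}$) together with Anderson's inequality on the centrally symmetric convex product of balls, and then absorb the Bernstein fluctuation $\|\Varb\Xib-\E\Varb\Xib\|$ via the Slepian-type Lemma~\ref{lemma:GaussCOmpar_Slepian}. The paper instead avoids Anderson altogether: it introduces auxiliary vectors $\tilde{\xiv}_{k}=(D_{k}^{-1}H_{k}^{2}D_{k}^{-1})^{1/2}(\Var\xiv_{k})^{-1/2}\xiv_{k}$, uses the \emph{pathwise} inequality $\|\tilde{\xiv}_{k}\|\geq\|\xiv_{k}\|$ (which follows from $V_{k}^{2}\preceq H_{k}^{2}$ block by block), and then applies the already-proved two-sided Proposition~\ref{thm:Cum_norms} between $\{\tilde{\xiv}_{k}\}$ and $\{\xivb_{k}\}$, where the covariance discrepancy is now only the Bernstein term, so \ref{itm:SmBHk} is not needed. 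Your route is arguably cleaner, since the paper's construction has to reconcile the block-wise definition of $\tilde{\xiv}_{k}$ (needed for the pathwise norm inequality) with the joint-covariance statement $\Var\tilde{\Xi}=\bbD^{-1}\bbH^{2}\bbD^{-1}$ (needed for Proposition~\ref{thm:Cum_norms}); Anderson handles both aspects at once. For the last two displays the paper simply invokes the argument of~\eqref{ineq:qqq_1} in Theorem~\ref{thm:mainres1_2} together with the single-model Theorem~\ref{thm:resbias}; your marginal concentration sketch is exactly the content of that result.
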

The \ref{itm:SmBHk} condition is automatically fulfilled if all the parametric models are correct or in the case of i.i.d. observations. This condition is checked for generalised linear model and linear quantile regression in \cite{SpZh2014PMB} (the version of 2015).

%!TEX root = simult_bootst_root.tex
\section{Numerical experiments}
\label{sect:numer_examples}
%!TEX root = simult_bootst_root.tex
%\newpage
%\subsection{Effect of the modeling bias on a width of a bootstrap confidence band}
%\label{sect:Examp_localconst}

Here we check the performance of the bootstrap procedure by constructing  
 simultaneous confidence sets based on the local constant and local quadratic estimates, the former one is also known as Nadaraya-Watson estimate \cite{Nadaraya1964estimating,Watson1964smooth}.
Let \(Y_{1},\dots,Y_{n}\) be independent random scalar observations and \(X_{1},\dots,X_{n}\) some deterministic design points. In Sections \ref{sect:numer_localconst}-\ref{sect:simuldata} below we introduce the models and the data, Sections \ref{sect:numereffect}-\ref{sect:numercorrection} present the results of the experiments.
\subsection{Local constant regression}
\label{sect:numer_localconst}
Consider the following quadratic likelihood function reweighted with the kernel functions \(K(\cdot)\):
\begin{EQA}[c]
L(\thetav,x,h)\eqdef -\frac{1}{2}\sum\nolimits_{i=1}^{n} (Y_{i}-\thetav)^{2}w_{i}(x,h),\\
w_{i}(x,h)\eqdef K(\{x-X_{i}\}/h),\\
K(x)\in[0,1],\ \int_{\R}K(x)dx =1,\ K(x)=K(-x).
\end{EQA}
Here \(h>0\) denotes bandwidth, the local smoothing parameter. The target point and the local MLE read as:
\begin{EQA}[c]
\thetavs(x,h)\eqdef  \frac{ \sum\nolimits_{i=1}^{n}w_{i}(x,h)\E Y_{i} }{\sum\nolimits_{i=1}^{n}w_{i}(x,h)},~~~~
\thetavt(x,h)\eqdef \frac{ \sum\nolimits_{i=1}^{n}w_{i}(x,h) Y_{i} }{\sum\nolimits_{i=1}^{n}w_{i}(x,h)}.
\end{EQA}
Let us fix a bandwidth \(h\) and consider the range of points  \(x_{1},\dots,x_{\numK}\). They yield \(\numK\) local constant models with the target parameters \(\thetavs_{k}\eqdef\thetavs(x_{k},h)\) and the likelihood functions \(L_{k}(\thetav)\eqdef L(\thetav,x_{k},h)\) for \(k=1,\dots,\numK\). 
\begin{comment}
The confidence intervals for each model are defined as
\begin{EQA}
\CS_{k}(\zz,h)&\eqdef&\left\{\thetav\in\Theta: L(\thetavt(x_{k},h),x_{k},h)-L(\thetav,x_{k},h)\leq \zz^{2}/2\right\}.
\end{EQA}
\end{comment}

The bootstrap local likelihood function is defined similarly to the global one \eqref{def:Lbk}, by reweighting \(L(\thetav,x,h)\) with the bootstrap multipliers \(u_{1},\dots,u_{n}\):
\begin{EQA}[c]
\Lb_{k}(\thetav)\eqdef
\Lb(\thetav,x_{k},h)\eqdef -\frac{1}{2}\sum\nolimits_{i=1}^{n} (Y_{i}-\thetav)^{2}w_{i}(x_{k},h)u_{i},\\
%\end{EQA}
%and
%\begin{EQA}[c]
\thetavbt_{k}\eqdef
\thetavbt(x_{k},h)\eqdef
\frac{ \sum\nolimits_{i=1}^{n}w_{i}(x_{k},h)u_{i} Y_{i} }{\sum\nolimits_{i=1}^{n}w_{i}(x_{k},h)u_{i}}.
%\sum\limits_{i=1}^{n} \frac{ Y_{i} }{\sum\nolimits_{i=1}^{n}w_{i}(x_{k},h)u_{i}}w_{i}(x_{k},h)u_{i}.
\end{EQA}
\subsection{Local quadratic regression}
Here the local likelihood function reads as
\begin{EQA}[c]
L(\thetav,x,h)\eqdef -\frac{1}{2}\sum\nolimits_{i=1}^{n} (Y_{i}-\Psi_{i}^{\T}\thetav)^{2}w_{i}(x,h),\\
\thetav,\Psi_{i}\in\R^{3}, \quad \Psi_{i}\eqdef\left(1,X_{i},X_{i}^{2}\right)^{\T},
\end{EQA}
and
\begin{EQA}
\thetavs(x,h)&\eqdef&\left(\Psi W(x,h)\Psi^{\T}\right)^{-1}\Psi W(x,h) \E \Ym,\\
\thetavt(x,h)&\eqdef&\left(\Psi W(x,h)\Psi^{\T}\right)^{-1}\Psi W(x,h)\Ym,
\end{EQA}
where
\begin{EQA}[c]
\Ym\eqdef \left(Y_{1},\dots,Y_{n}\right)^{\T},\quad
\Psi\eqdef\left(\Psi_{1},\dots,\Psi_{n}\right)\in \R^{3\times n},\\
W(x,h)\eqdef\diag\left\{w_{1}(x,h),\dots,w_{n}(x,h)\right\}.
\end{EQA}
And similarly for the bootstrap objects
\begin{EQA}
\Lb(\thetav,x,h)&\eqdef& -\frac{1}{2}\sum\nolimits_{i=1}^{n} (Y_{i}-\Psi_{i}^{\T}\thetav)^{2}w_{i}(x,h)u_{i},\\
\thetavbt(x,h)&\eqdef& \left(\Psi U W(x,h)\Psi^{\T}\right)^{-1}\Psi U W(x,h)\Ym,
\end{EQA}
for \(U\eqdef \diag\left\{u_{1},\dots,u_{n}\right\}\).

\subsection{Simulated data}
\label{sect:simuldata}
In the numerical experiments we constructed two \(90\%\) simultaneous confidence bands: using Monte Carlo (MC) samples and bootstrap procedure with Gaussian weights (\(u_{i}\sim \mathcal{N}(1,1)\)), in each case we used \(10^{4}\) \(\{Y_{i}\}\) and \(10^{4}\) \(\{u_{i}\}\) independent samples. The sample size \(n=400\). \(K(x)\) is Epanechnikov's kernel function. The independent random observations \(Y_{i}\) are generated as follows:
\begin{EQA}[c]
\label{def:numer_Y}
Y_{i}= f(X_{i})+ \mathcal{N}(0,1),\quad
X_{i} \text{ are equidistant on } [0,1],\\
\label{def:numer_f}
f(x)=
\begin{cases}
5,& x\in [0,0.25]\cup[0.65,1];\\
5+3.8\{1-100(x-0.35)^2\},& x\in [0.25,0.45];\\
5-3.8\{1-100(x-0.55)^2\},& x\in [0.45,0.65].
\end{cases}
\end{EQA}
The number of local models \(\numK=71\), the points \(x_{1},\dots,x_{71}\) are equidistant on \([0,1]\). For the bandwidth we considered two cases: \(h=0.12\) and \(h=0.3\).

\subsection{Effect of the modeling bias on a width of a bootstrap confidence band}
\label{sect:numereffect}
The function \(f(x)\) defined in \eqref{def:numer_f} should yield a considerable modeling bias for both mean constant and mean quadratic estimators.
Figures \ref{fig:localMLE},  \ref{fig:localMLE_quadr} demonstrate that the bootstrap confidence bands become conservative (i.e. wider than the MC confidence band) when the local model is misspecified. The top graphs on Figures \ref{fig:localMLE}, \ref{fig:localMLE_quadr} show the \(90\%\) confidence bands, the middle graphs show their width, and the bottom graphs show the value of the modelling bias  for \(K=71\) local models (see formulas \eqref{eq:numer_smb} and \eqref{eq:numer_smb_quadr} below). For the local constant estimate (Figure \ref{fig:localMLE}) the width of the bootstrap confidence sets is considerably increased by the modeling bias when \(x\in [0.25,0.65]\). In this case case the expression for the modeling bias term for the \(k\)-th model (see also\ref{itm:SmBHk} condition) reads as:
\begin{align}
\label{eq:numer_smb}
\begin{split}
\left|
H^{-1}_{k}B_{k}^{2}
H^{-1}_{k}
\right|&=
\frac{\sum\nolimits_{i=1}^{n}\left\{\E Y_{i}-\thetavs(x_{k})\right\}^{2}w_{i}^{2}(x_{k},h) }
{\sum\nolimits_{i=1}^{n}\E \left\{Y_{i}-\thetavs(x_{k})\right\}^{2}w_{i}^{2}(x_{k},h) }
\\&=
1-\left(1+\frac{\sum\nolimits_{i=1}^{n}w_{i}^{2}(x_{k},h)\left\{f(X_{i})-\thetavs(x_{k})\right\}^{2}}{\sum\nolimits_{i=1}^{n}w_{i}^{2}(x_{k},h)}
\right)^{-1}.
\end{split}
\end{align}
And for the local quadratic estimate it holds:
\begin{align}
\label{eq:numer_smb_quadr}
%\begin{split}
\left\|
H^{-1}_{k}B_{k}^{2}
H^{-1}_{k}
\right\|&=\left\|\Id_{\dimp}-H^{-1}_{k}\left\{\sum\nolimits_{i=1}^{n}\Psi_{i}\Psi_{i}^{\T}w_{i}^{2}(x_{k},h)\right\}H^{-1}_{k}\right\|,
%\end{split}
\end{align}
where \(\Id_{\dimp}\) is the identity matrix of dimension \(\dimp\times\dimp\) (here \(\dimp=3\)), and
\begin{align}
\begin{split}
H^{2}_{k}&=\sum\nolimits_{i=1}^{n}\Psi_{i}\Psi_{i}^{\T}w_{i}^{2}(x_{k},h)\E \left\{Y_{i}-\thetavs(x_{k})\right\}^{2}
\\
&=
\sum\nolimits_{i=1}^{n}\Psi_{i}\Psi_{i}^{\T}w_{i}^{2}(x_{k},h)\left\{f(X_{i})-\thetavs(x_{k})\right\}^{2}
+\sum\nolimits_{i=1}^{n}\Psi_{i}\Psi_{i}^{\T}w_{i}^{2}(x_{k},h).
\end{split}
\end{align}
Therefore, if \(\max_{1\leq k \leq \numK}\left\{f(X_{i})-\thetavs(x_{k})\right\}^{2}= 0\), then \(\left\|H^{-1}_{k}B_{k}^{2}H^{-1}_{k}\right\|= 0\).
% The bottom graphs on Figure \ref{fig:localMLE} show the modeling bias from the formula \eqref{eq:numer_smb} for \(K=71\) local constant models.
On the Figure \ref{fig:localMLE} both the modelling bias and the difference between the widths of the bootstrap and MC confidence bands are close to zero in the regions where the true function \(f(x)\) is constant. On Figure \ref{fig:localMLE_quadr} the modelling bias for \(h=0.12\) is overall smaller than the corresponding value on Figure \ref{fig:localMLE}. For the bigger bandwidth \(h=0.3\) the modelling biases on Figures \ref{fig:localMLE} and \ref{fig:localMLE_quadr} are comparable with each other.

Thus the numerical experiment is consistent with the theoretical results from Section \ref{sect:mainres}, and confirm that in the case when a (local) parametric model is close to the true distribution the simultaneous bootstrap confidence set is valid. Otherwise the bootstrap procedure is conservative: the modelling bias widens the simultaneous bootstrap confidence set.

\begin{figure}[!htbp]%[!h]
\caption{\textbf{Local constant regression:}
\newline \hspace*{4.85em}
 Confidence bands,
their widths, and the modeling bias}
\label{fig:localMLE}
%\begin{table}[!h]
\begin{center}
  \begin{tabular}{ c c }
   % \vspace{0.2cm}
\hspace{-1.9cm}   {\Scale[0.85]{\text{bandwidth} \(=\,0.12\)}}
&
\hspace{-1.4cm}   {\Scale[0.85]{\text{bandwidth} \(=\,0.3\)}}\\
%   \multicolumn{2}{c}{Item}\\
\hspace{-1.9cm}  \includegraphics[width=0.522\textwidth]{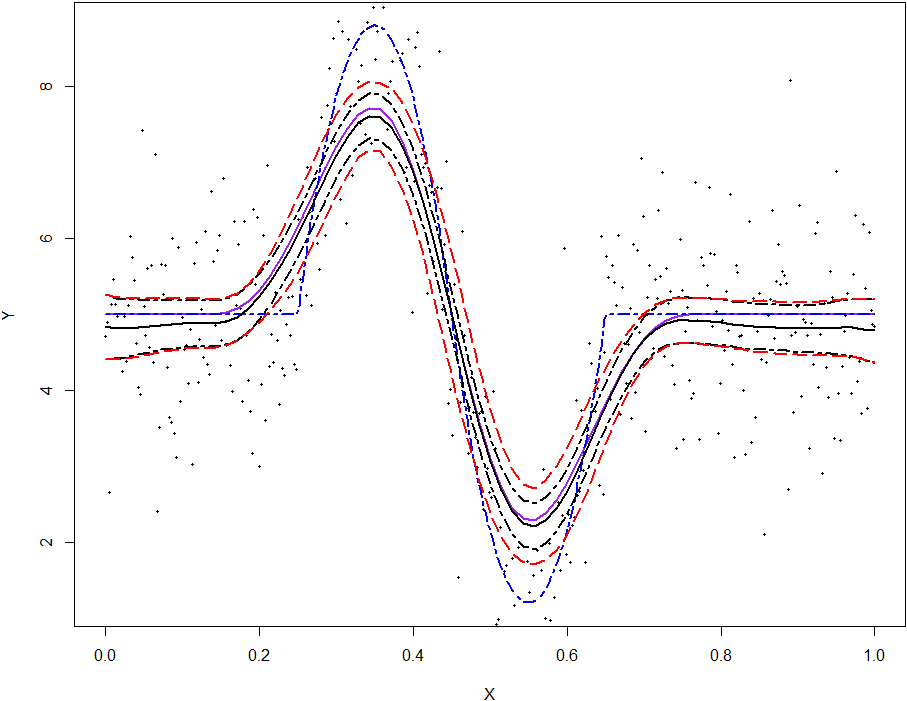}%{sin025_Lap_n=50_cut1.png}
&
\hspace{-1.4cm}
\includegraphics[width=0.5\textwidth]{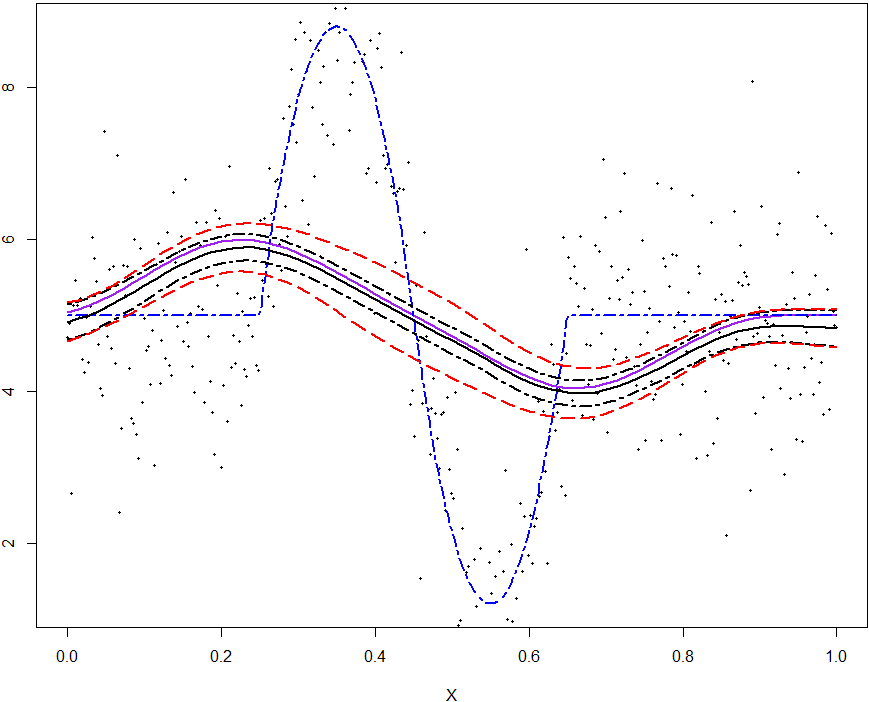}%{sin125_Lap_n=50_cut1.png}
  \\
\hspace{-1.9cm}
  \includegraphics[width=0.523\textwidth]{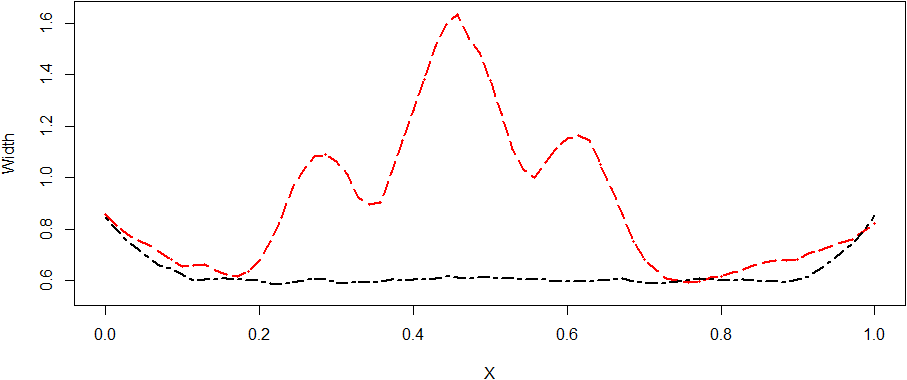}%{sin025_Lap_n=50_cut1.png}
&
\hspace{-1.4cm}
\includegraphics[width=0.5\textwidth]{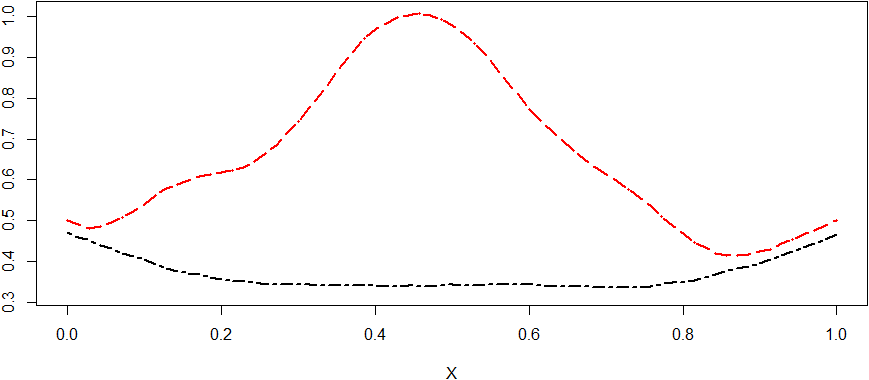}%{sin125_Lap_n=50_cut1.png}
\\
\hspace{-1.9cm}
  \includegraphics[width=0.523\textwidth]{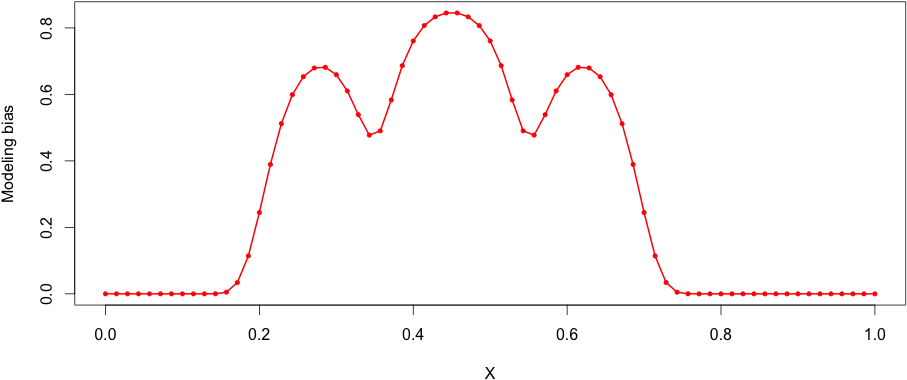}
  &
  \hspace{-1.4cm}
   \includegraphics[width=0.5\textwidth]{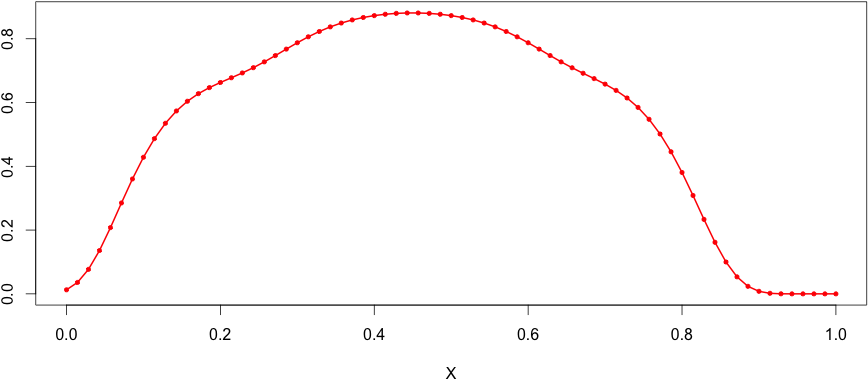}
  \\
    \multicolumn{2}{c}{
   {\Scale[0.85]{\text{Legend for the top graphs:}}}}\\
    \multicolumn{1}{l}{
%\begin{picture}(0,1)
%\thicklines
%\put(0,2){{\color{black}\line(1,0){15}}}
%\end{picture}
\includegraphics[width=0.08\textwidth]{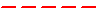}
%\hspace{0.7cm}
 {{\Scale[0.85]{ \(90\%\) \text{ bootstrap simultaneous confidence band}}}}
  }
 &
 \multicolumn{1}{l}{
~~ \includegraphics[width=0.08\textwidth]{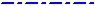}
  \,{\(\Scale[0.85]{\text{the true function } f(x)}\)}
  }
 \\
 \multicolumn{1}{l}{
%\begin{picture}(0,1)
%\thicklines
%\put(0,2){{\color{darkgray}\line(1,0){15}}}
%\end{picture}
%\hspace{0.7cm}
\includegraphics[width=0.08\textwidth]{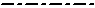}
 {\Scale[0.85]{\(90\%\)\text{ MC simultaneous confidence band}}}}
   &
 \multicolumn{1}{l}{
~~ \includegraphics[width=0.08\textwidth]{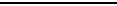}
  ~{\Scale[0.85]{\text{local constant MLE}}}
  }
  \\
 \multicolumn{2}{l}{
  \includegraphics[width=0.08\textwidth]{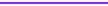}
  ~{\Scale[0.85]{\text{smoothed target function}}}}\\
  \multicolumn{2}{c}{
   {\Scale[0.85]{\text{Legend for the middle and the bottom graphs:}}}}\\
 \multicolumn{2}{l}{
\includegraphics[width=0.08\textwidth]{legend_boot}
%\hspace{0.7cm}
 {{\Scale[0.85]{\text{width of the } \(90\%\) \text{ bootstrap confidence bands from the upper graphs}}}}
  }\\
  \multicolumn{2}{l}{
\includegraphics[width=0.08\textwidth]{legend_MC}
%\hspace{0.7cm}
 {{\Scale[0.85]{ \text{width of the }\(90\%\)\text{ MC confidence bands from the upper graphs}}}}
  }\\
   \multicolumn{2}{l}{
\includegraphics[width=0.08\textwidth]{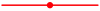}
%\hspace{0.7cm}
 {{\Scale[0.85]{ \text{modeling bias from the expression \eqref{eq:numer_smb}} }}}
  }
\\
\multicolumn{2}{l}{\( \)}
\\
\multicolumn{2}{l}{\( \)}
   \end{tabular}
\end{center}
%\end{table}
\end{figure}
%%%%%%%%%%%%%%%%%%%%%%%%%%%%%%%%%%%%%%%%%%%%%%%%%%%%%%%%%%%%
\begin{figure}[!htbp]%[!h]
\caption{\textbf{Local quadratic regression:} \newline \hspace*{4.9em}
 Confidence bands,
their widths, and the modeling bias}
\label{fig:localMLE_quadr}
%\begin{table}[!h]
\begin{center}
  \begin{tabular}{ c c }
   % \vspace{0.2cm}
\hspace{-1.9cm}   {\Scale[0.85]{\text{bandwidth} \(=\,0.12\)}}
&
\hspace{-1.4cm}   {\Scale[0.85]{\text{bandwidth} \(=\,0.3\)}}\\
%   \multicolumn{2}{c}{Item}\\
\hspace{-1.9cm}  \includegraphics[width=0.522\textwidth]{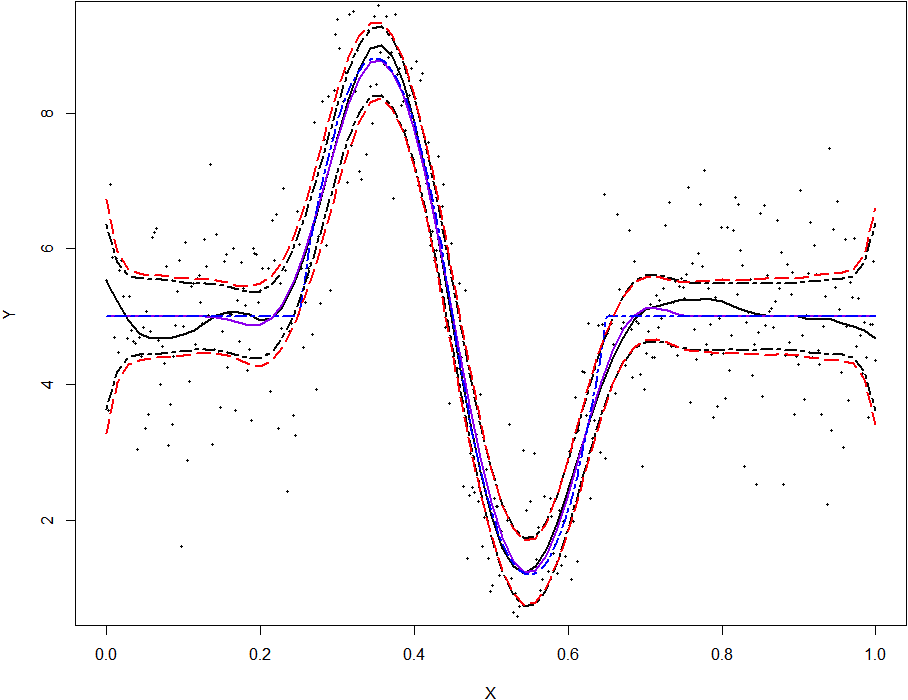}%
&
\hspace{-1.4cm}
\includegraphics[width=0.5\textwidth]{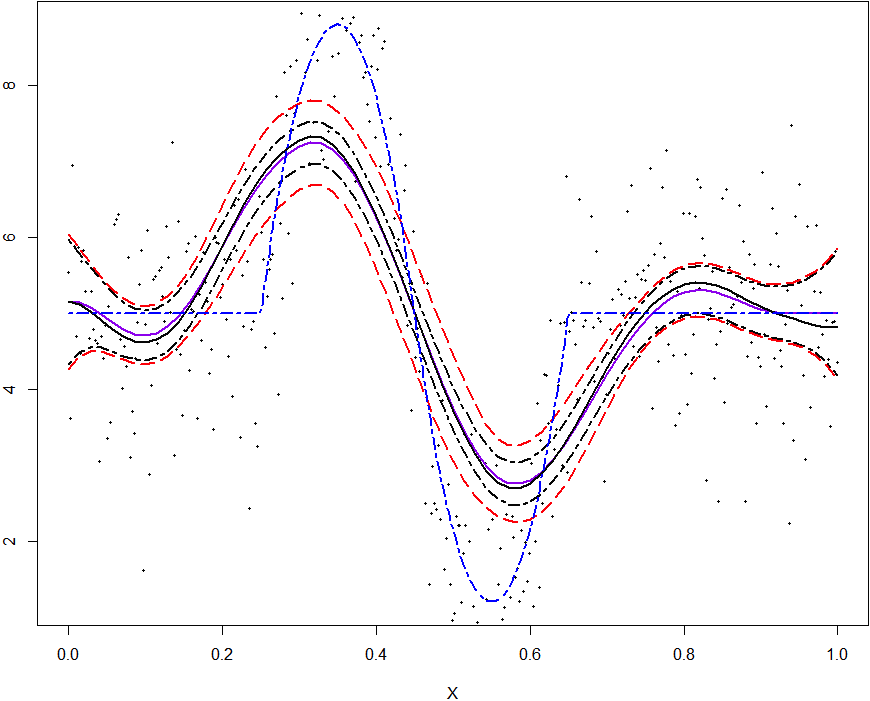}%{sin125_Lap_n=50_cut1.png}
  \\
\hspace{-1.9cm}
  \includegraphics[width=0.523\textwidth]{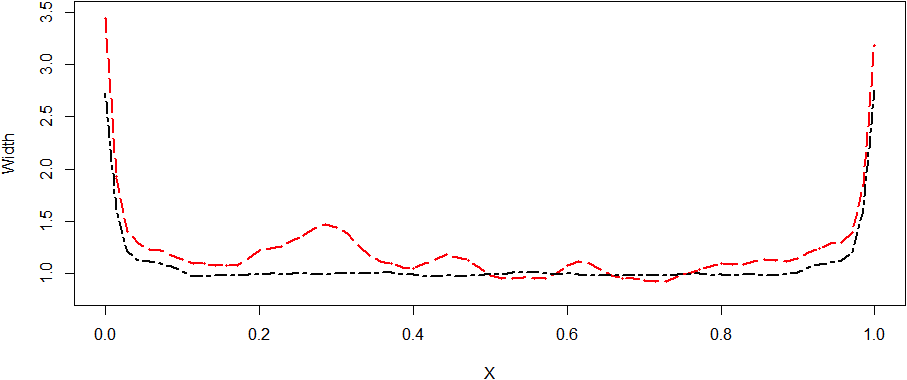}%{sin025_Lap_n=50_cut1.png}
&
\hspace{-1.4cm}
\includegraphics[width=0.5\textwidth]{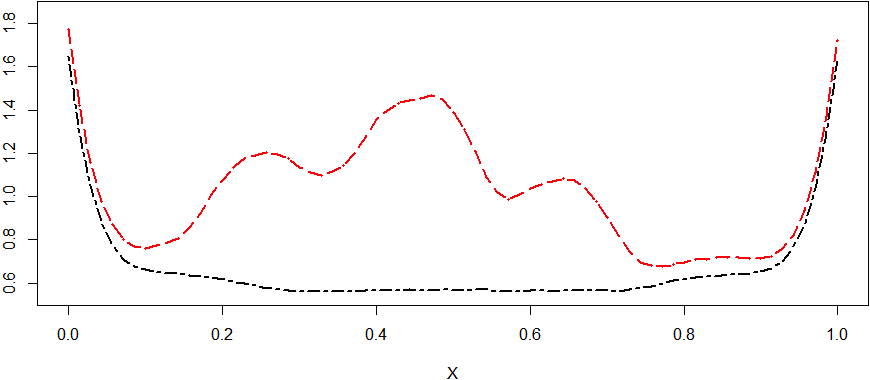}%{sin125_Lap_n=50_cut1.png}
\\
\hspace{-1.9cm}
  \includegraphics[width=0.523\textwidth]{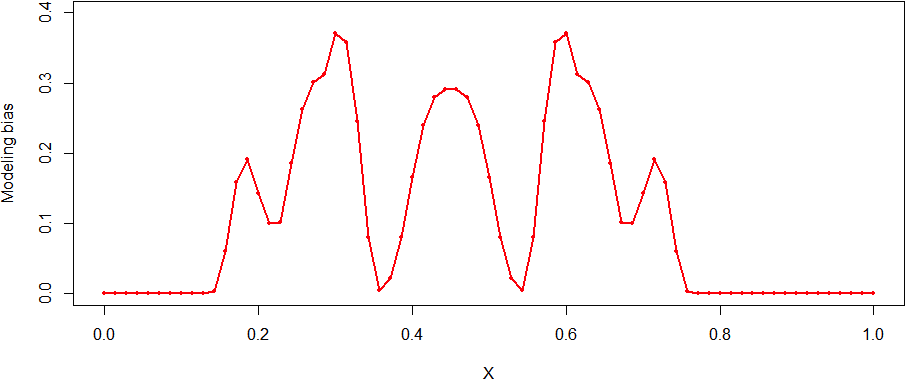}
  &
  \hspace{-1.4cm}
   \includegraphics[width=0.5\textwidth]{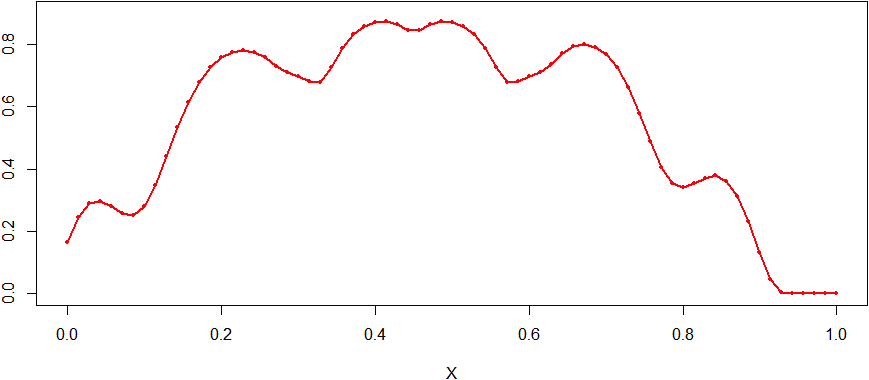}
  \\
    \multicolumn{2}{c}{
   {\Scale[0.85]{\text{Legend for the top graphs:}}}}\\
    \multicolumn{1}{l}{
%\begin{picture}(0,1)
%\thicklines
%\put(0,2){{\color{black}\line(1,0){15}}}
%\end{picture}
\includegraphics[width=0.08\textwidth]{legend_boot}
%\hspace{0.7cm}
 {{\Scale[0.85]{ \(90\%\) \text{ bootstrap simultaneous confidence band}}}}
  }
 &
 \multicolumn{1}{l}{
~~ \includegraphics[width=0.08\textwidth]{legend_true}
  \,{\(\Scale[0.85]{\text{the true function } f(x)}\)}
  }
 \\
 \multicolumn{1}{l}{
%\begin{picture}(0,1)
%\thicklines
%\put(0,2){{\color{darkgray}\line(1,0){15}}}
%\end{picture}
%\hspace{0.7cm}
\includegraphics[width=0.08\textwidth]{legend_MC}
 {\Scale[0.85]{\(90\%\)\text{ MC simultaneous confidence band}}}}
   &
 \multicolumn{1}{l}{
~~ \includegraphics[width=0.08\textwidth]{legend_localMLE}
  ~{\Scale[0.85]{\text{local constant MLE}}}
  }
  \\
 \multicolumn{2}{l}{
  \includegraphics[width=0.08\textwidth]{legend_smoothed}
  ~{\Scale[0.85]{\text{smoothed target function}}}}\\
  \multicolumn{2}{c}{
   {\Scale[0.85]{\text{Legend for the middle and the bottom graphs:}}}}\\
 \multicolumn{2}{l}{
\includegraphics[width=0.08\textwidth]{legend_boot}
%\hspace{0.7cm}
 {{\Scale[0.85]{\text{width of the } \(90\%\) \text{ bootstrap confidence bands from the upper graphs}}}}
  }\\
  \multicolumn{2}{l}{
\includegraphics[width=0.08\textwidth]{legend_MC}
%\hspace{0.7cm}
 {{\Scale[0.85]{ \text{width of the }\(90\%\)\text{ MC confidence bands from the upper graphs}}}}
  }\\
   \multicolumn{2}{l}{
\includegraphics[width=0.08\textwidth]{legend_bias}
%\hspace{0.7cm}
 {{\Scale[0.85]{ \text{modeling bias from the expression \eqref{eq:numer_smb_quadr} }}}}
  }
\\
\multicolumn{2}{l}{\( \)}
\\
\multicolumn{2}{l}{\( \)}
   \end{tabular}
\end{center}
%\end{table}
\end{figure}

\subsection{Effective coverage probability (local constant estimate)}
\label{sect:numercoverage}
In this part of the experiment we check the bootstrap validity by computing the effective coverage probability values. This requires to perform many independent experiments: for each of independent \(5000\) \(\{Y_{i}\}\sim \eqref{def:numer_Y}\) samples we took \(10^{4}\) independent bootstrap samples \(\{u_{i}\}\sim \mathcal{N}(1,1)\), and constructed simultaneous bootstrap confidence sets for a range of confidence levels. The second row of  Table \ref{fig:localMLE} contains this range  \((1-\alpha)=0.95, 0.9,\dots, 0.5\). % for which the simultaneous bootstrap confidence sets are constructed.
The third and the fourth rows of Table \ref{fig:localMLE} show the frequencies of the event
$$\max_{1\leq k \leq \numK}\left\{L_{k}(\thetavt_{k})-L_{k}(\thetavs_{k})-\zzb_{k}(\qqqb(\alpha))\right\}\leq 0$$
among \(5000\) data samples, for the bandwidths \(h=0.12, 0.3\), and for the range of \((1-\alpha)\). The results show that the bootstrap procedure is rather conservative for  both \(h=0.12\) and \(h=0.3\), however, the larger bandwidth yields bigger coverage probabilities.
\begin{table}[!h]
\caption{Effective coverage probabilities for the local constant regression}
\label{tab:cover_eff}
\begin{center}
\scalebox{0.9}{
  \begin{tabular}{| c | c|c|c|c|c |c|c|c|c|c| }  \cline{2-11}
  \multicolumn{1}{c|}{}&\multicolumn{10}{c|}{{{Confidence levels}}}\\\hline
%  \( n \)&\(\mathcal{L}(u_{i})\)&
\(h\)&$\scale[1]{\mathbf{0.95}}$ &\(\scale[1]{\mathbf{0.90}}\) &$\scale[1]{\mathbf{0.85}}$& \(\scale[1]{\mathbf{0.80}}\)& \(\scale[1]{\mathbf{0.75}}\)& \(\scale[1]{\mathbf{0.70}}\)&\(\scale[1]{\mathbf{0.65}}\)&\(\scale[1]{\mathbf{0.60}}\) & \(\scale[1]{\mathbf{0.55}}\)&\(\scale[1]{\mathbf{0.50}}\)\\
\hline\hline
%\multirow{2}[2]*{\(50\)}& \multirow{2}[2]*{\(\mathcal{N}(1,1)\)}&
\(\scale[1]{0.12}\)&
 \(0.971\) & \(0.947\)& \(0.917\) & \(0.888\) &\( 0.863\) &\(0.830\)&\(0.800\)&\( 0.769\)&\(0.738\)&\(0.702\)
  \\ \hline%\cline{3-9}%\hline
%&&
\(\scale[1]{0.3}\)&\(\scale[1]{0.982}\)&\(\scale[1]{0.963}\)&\(\scale[1]{0.942}\)&\(\scale[1]{0.918}\)&\(\scale[1]{0.895}\)&\(\scale[1]{0.868}\)&\(\scale[1]{0.842}\)&
\(\scale[1]{0.815}\)&\(\scale[1]{0.784}\)& \(\scale[1]{0.750}\)\\
\hline
\end{tabular}
}
%\vspace*{-0.3cm}
\end{center}
\end{table}

\subsection{Correction for multiplicity}
\label{sect:numercorrection}
Here we compare the \(\Ym\) and the bootstrap corrections for multiplicity, i.e. the values \(\qqq(\alpha)\) and
\(\qqqb(\alpha)\) defined in \eqref{bound:simult} and \eqref{def:mcorrqb}.
The numerical results in Tables \ref{tab:corr}, \ref{tab:corr_quadr} are based on  \(10^{4}\) \(\{Y_{i}\}\sim \eqref{def:numer_Y}\) independent samples and \(10^{4}\) independent bootstrap samples \(\{u_{i}\}\sim \mathcal{N}(1,1)\). The second line in Tables \ref{tab:corr}, \ref{tab:corr_quadr} contains the range of the nominal confidence levels  \((1-\alpha)=0.95, 0.9,\dots, 0.5\) (similarly to the Table \ref{tab:cover_eff}). The first column contains the values of the bandwidth \(h=0.12, 0.3\), and the second column -- the resampling scheme: Monte Carlo (MC) or bootstrap (B). The Monte Carlo experiment yields the corrected confidence levels \(1-\qqq(\alpha)\), and the bootstrap yields \(1-\qqqb(\alpha)\).
The lines 3--6 contain the average values of \(1-\qqq(\alpha)\) and \(1-\qqqb(\alpha)\) over all the experiments. The results show that for the smaller bandwidth both the MC and bootstrap corrections are bigger than the ones for the larger bandwidth.
In the case of a smaller bandwidth the local models have less intersections with each other, and hence, the corrections for multiplicity are closer to the Bonferroni's bound.
\begin{comment}
Another observation from the Table \ref{tab:corr} is that for the bigger bandwidth (and, therefore, for the bigger modeling bias) the difference between bootstrap and MC corrected confidence levels is larger, than in the case of a smaller bandwidth. This is consistent with the inequality \eqref{ineq:th2qqq} from Theorem \ref{thm:mainres2_largesmb}.
\end{comment}
\begin{table}[!h]
\caption{\textbf{Local constant regression:}\newline \hspace*{3.6em} MC vs Bootstrap confidence levels corrected for multiplicity}
%{\textbf{Local quadratic estimate:} \newline \hspace*{4.9em}
% Confidence bands,
%their widths, and the modeling bias}
\label{tab:corr}
\begin{center}
\scalebox{0.87}{
  \begin{tabular}{| c | c | c|c|c|c|c |c|c|c|c|c| }  \cline{3-12}
  \multicolumn{2}{c|}{}&\multicolumn{10}{c|}{{{Confidence levels}}}\\\hline
%  \( n \)&\(\mathcal{L}(u_{i})\)&
\(h\)&\(\scale[1]{\text{r.m.}}\)&$\scale[1]{\mathbf{0.95}}$ &\(\scale[1]{\mathbf{0.90}}\) &$\scale[1]{\mathbf{0.85}}$& \(\scale[1]{\mathbf{0.80}}\)& \(\scale[1]{\mathbf{0.75}}\)& \(\scale[1]{\mathbf{0.70}}\)&\(\scale[1]{\mathbf{0.65}}\)&\(\scale[1]{\mathbf{0.60}}\) & \(\scale[1]{\mathbf{0.55}}\)&\(\scale[1]{\mathbf{0.50}}\)\\
\hline\hline
%\multirow{2}[2]*{\(50\)}& \multirow{2}[2]*{\(\mathcal{N}(1,1)\)}&
\multirow{2}[2]*{\(0.12\)}&MC &
 \(0.997\) & \(0.994\)& \(0.989\) & \(0.985\) &\(0.980\) &\(0.975\)&\(0.969\)&\(0.963\)&\(0.956\)&\(0.949\)
   \\ \cline{2-12}
&B &
 \(0.998\) & \(0.995\)& \(0.991\) & \(0.988\) &\(0.984\) &\(0.979\)&\(0.975\)&\(0.969\)&\(0.963\)&\(0.957\)
  \\ \hline\hline%\cline{3-9}%\hline
%&&
\multirow{2}[2]*{\(0.3\)}&MC&\(\scale[1]{0.993}\)&\(\scale[1]{0.983}\)&\(\scale[1]{0.973}\)&\(\scale[1]{0.962}\)&\(\scale[1]{0.949}\)&\(\scale[1]{0.936 }\)&\(\scale[1]{0.922}\)&
\(\scale[1]{0.906}\)&\(\scale[1]{0.891}\)& \(\scale[1]{0.873}\)
\\ \cline{2-12}
&B&\(\scale[1]{0.994}\)&\(\scale[1]{0.986}\)&\(\scale[1]{0.977}\)&\(\scale[1]{0.968}\)&\(\scale[1]{0.958}\)&\(\scale[1]{ 0.947}\)&\(\scale[1]{0.935}\)&
\(\scale[1]{0.922}\)&\(\scale[1]{0.908}\)& \(\scale[1]{0.893}\)\\
\hline
\end{tabular}
}
%\vspace*{-0.3cm}
\end{center}
\end{table}

\begin{table}[!h]
\caption{\textbf{Local quadratic regression:}\newline \hspace*{3.6em} MC vs Bootstrap confidence levels corrected for multiplicity}
%{\textbf{Local quadratic estimate:} \newline \hspace*{4.9em}
% Confidence bands,
%their widths, and the modeling bias}
\label{tab:corr_quadr}
\begin{center}
\scalebox{0.87}{
  \begin{tabular}{| c | c | c|c|c|c|c |c|c|c|c|c| }  \cline{3-12}
  \multicolumn{2}{c|}{}&\multicolumn{10}{c|}{{{Confidence levels}}}\\\hline
%  \( n \)&\(\mathcal{L}(u_{i})\)&
\(h\)&\(\scale[1]{\text{r.m.}}\)&$\scale[1]{\mathbf{0.95}}$ &\(\scale[1]{\mathbf{0.90}}\) &$\scale[1]{\mathbf{0.85}}$& \(\scale[1]{\mathbf{0.80}}\)& \(\scale[1]{\mathbf{0.75}}\)& \(\scale[1]{\mathbf{0.70}}\)&\(\scale[1]{\mathbf{0.65}}\)&\(\scale[1]{\mathbf{0.60}}\) & \(\scale[1]{\mathbf{0.55}}\)&\(\scale[1]{\mathbf{0.50}}\)\\
\hline\hline
%\multirow{2}[2]*{\(50\)}& \multirow{2}[2]*{\(\mathcal{N}(1,1)\)}&
\multirow{2}[2]*{\(0.12\)}&MC &
 \(0.997\) & \(0.993\)& \(0.989\) & \(0.985\) &\(0.979\) &\(0.974\)&\(0.968\)&\(0.961\)&\(0.954\)&\(0.946\)
   \\ \cline{2-12}
&B &
 \(0.998\) & \(0.995\)& \(0.991\) & \(0.988\) &\(0.984\) &\(0.979\)&\(0.974\)&\(0.969\)&\(0.963\)&\(0.956\)
  \\ \hline\hline%\cline{3-9}%\hline
%&&
\multirow{2}[2]*{\(0.3\)}&MC&\(\scale[1]{0.993}\)&\(\scale[1]{0.983}\)&\(\scale[1]{0.973}\)&\(\scale[1]{0.961}\)&\(\scale[1]{0.949}\)&\(\scale[1]{0.936}\)&\(\scale[1]{0.921}\)&
\(\scale[1]{0.904}\)&\(\scale[1]{0.887}\)& \(\scale[1]{0.868}\)
\\ \cline{2-12}
&B&\(\scale[1]{0.996}\)&\(\scale[1]{0.991}\)&\(\scale[1]{0.985}\)&\(\scale[1]{0.978}\)&\(\scale[1]{0.971}\)&\(\scale[1]{0.963}\)&\(\scale[1]{0.954}\)&
\(\scale[1]{0.944}\)&\(\scale[1]{0.934}\)& \(\scale[1]{0.923}\)\\
\hline
\end{tabular}
}
%\vspace*{-0.3cm}
\end{center}
\end{table}
\begin{newremark}
The theoretical results of this paper can be extended to the case when a set of considered local models has cardinality of the continuum, and the confidence bands are uniform w.r.t. the local parameter. This extension would require  some uniform statements such as locally uniform square-root Wilks approximation (see e.g. \cite{SpZh2013uniform}). 
\end{newremark} 
\begin{newremark}
The use of the bootstrap procedure in the problem of choosing an optimal bandwidth is considered in \cite{SW2015}.
\end{newremark}

%!TEX root = simult_bootst_root.tex
\def\ED{E\!D}
\def\SD{S\!D}
\def\SmB{\operatorname{SmB}}
%\newpage
\section{Conditions}
\label{sect:conditions}
Here we show necessary conditions for the main results. The conditions in Section  \ref{sect:ConditGeneral} come from the general finite sample theory by \cite{Sp2012Pa}, they are required for the results of Sections \ref{sect:FS} and \ref{sect:FStheory}. %The similar list of (basic) conditions is given in Section 4.1 in \cite{SpZh2014PMB}; here we slightly change the notations according to the setting of this paper, which includes many parametric models. 
 The conditions in Section \ref{sect:ConditAddBoot} are necessary to prove the statements on multiplier bootstrap validity.

\subsection{Basic conditions}
\label{sect:ConditGeneral}
Introduce the stochastic part of the \(k\)-th likelihood process:
\(\zeta_{k}(\thetav)\eqdef L_{k}(\thetav)- \E L_{k}(\thetav)\), and its marginal summand: \(\zeta_{i,k}(\thetav)\eqdef\ell_{i,k}(\thetav)-\E\ell_{i,k}(\thetav)\) for \(\ell_{i,k}(\thetav)\) defined in \eqref{def:ell_ik}.
\begin{description}
\labitem{\( \bb{(\ED_{0})}\)}{itm:ED0k}
\emph{For each \(k=1,\dots,\numK\) there exist a positive-definite \(\dimp_{k}\times\dimp_{k}\) symmetric matrix \(V_{k}^{2}\) and constants \(\gm_{k}>0, \nu_{k}\geq1\) such that \(\Var\left\{\nabla_{\thetav}\zeta_{k}(\thetavs_{k})\right\}\leq V_{k}^{2} \) and}
\begin{EQA}[c]
\sup_{\gammav\in \R^{\dimp_{k}}}\ \log \E \exp \left\{
\lambda\frac{\gammav^{\T} \nabla_{\thetav}\zeta_{k}(\thetavs_{k})}{\|V_{k}\gammav\|}
\right\}\leq \nu_{k}^{2}\lambda^{2}/2, \quad\quad |\lambda|\leq\gm_{k}.
\end{EQA}
\end{description}
\begin{description}
\labitem{\( \bb{ (\ED_{2})}\)}{itm:ED2k}
\emph{For each \(k=1,\dots,\numK\) there exist a constant \(\omega_{k}> 0\) and for each \(\rr>0\) a constant \(\gm_{2,k}(\rr)\) such that it holds for all \(\thetav\in \Theta_{0,k}(\rr)\) and for \(j=1,2\)}
\begin{EQA}[c]
\sup_{\substack{\gammav_{j}\in \R^{\dimp_{k}}\\
\|\gammav_{j}\|\leq1}
}\ \log \E \exp \left\{
\frac{\lambda}{\omega_{k}}\gammav_{1}^{\T}D_{k}^{-1}\nabla_{\thetav}^{2}\zeta_{k}(\thetav)D_{k}^{-1}\gammav_{2}
\right\}
\leq \nu_{k}^{2}\lambda^{2}/2, \quad\quad |\lambda|\leq\gm_{2,k}(\rr).
\end{EQA}
\end{description}
\begin{description}
\labitem{\( \bb{(\LL_{0})}\)}{itm:L0k}
\emph{For each \(k=1,\dots,\numK\) and for each \(\rr>0\) there exists a constant \(\rddelta_{k}(\rr)\geq0\) such that for \(\rr\leq\rr_{0,k}\) (\(\rr_{0,k}\) come from condition \eqref{condit:concentr} of Theorem \ref{thm:concentr} in Section \ref{sect:FS}) \(\rddelta(\rr)\leq1/2\), and for all \(\thetav \in \Theta_{0,k}(\rr)\) it holds}
\begin{EQA}[c]
\|D_{k}^{-1}\check{D}_{k}^{2}(\thetav)D_{k}^{-1}-\Id_{\dimp_{k}}\|\leq \rddelta_{k}(\rr),
\end{EQA}
\emph {where \(\check{D}_{k}^{2}(\thetav)\eqdef -\nabla_{\thetav}^{2}\E L_{k}(\thetav)\) and \(\Theta_{0,k}(\rr)\eqdef\left\{\thetav\in\Theta_{k}: \|D_{k}(\thetav-\thetavs_{k})\|\leq \rr\right\}\).}
\end{description}
\begin{description}
\labitem{\(\bb{(\mathcal{I})}\)}{itm:Ik}
\emph{There exist constants \(\gmu_{k}>0\) for all \(k=1,\dots,\numK\) s.t.}
\begin{EQA}[c]
\gmu_{k}^{2} D_{k}^{2}\geq  V_{k}^{2}.
\end{EQA}
\emph{Denote \(\bbgmu^{2}\eqdef \max_{1\leq k\leq \numK}\gmu_{k}^{2}\).}
\end{description}

\begin{description}
\labitem{\(\bb{(\LL\rr)}\)}{itm:Lrk}
\emph{For each \(k=1,\dots,\numK\) and \(\rr\geq\rr_{0,k}\) there exists a value \(\gmi_{k}(\rr)>0\) s.t. \\\(\rr\gmi_{k}(\rr)\rightarrow \infty\) for \(\rr \rightarrow \infty\) and}
\(\forall \thetav\in\Theta_{k}: \|D_{k}(\thetav-\thetavs_{k})\|=\rr\) \emph{it holds}
\begin{EQA}[c]
%\forall \thetav: \|\DPc(\thetav-\thetavs)\|=\rr
%\\
-2\left\{\E L_{k}(\thetav)-\E L_{k}(\thetavs_{k})\right\}\geq\rr^{2}\gmi_{k}(\rr).
\end{EQA}
\end{description}
\subsection{Conditions required for the bootstrap validity}
\label{sect:ConditAddBoot}
\begin{description}
\labitem{\( \bb{(\barbar{\SmB})}\)}{itm:SmBHk}
\emph{ There exists a constant \(\DeltaSmB \geq 0\) %\lesssim (pn^{-3/2})^{1/2}\)
such that it holds for the matrices %\(H_{k_{1},k_{2}}^{2}\),
 \(B_{k}^{2}\) and \(H^{2}_{k}\) defined in \eqref{def:Bsmbk}:}
\begin{EQA}[c]
\max_{1\leq k\leq\numK}
\left\|
H^{-1}_{k}B_{k}^{2}
H^{-1}_{k}
\right\| \leq \DeltaSmB^{2},\\
\DeltaSmB^{2}\leq
 \CONST \left(\frac{n}{\dimp_{\max}^{13}}\right)^{1/8}
\log^{-7/8}(K)\log^{-3/8}(n\dimtotal).
%\CONST (\dimp_{\max}n)^{-3/8}\log^{3/8}(n\dimtotal) \log^{5/8}(\numK)\log^{3/8}\left(z_{\max}\numK^{z_{\min}^{-3/4}}\right).
%\|\Hc^{-1}\Bsmb^{2}\Hc^{-1}\|\leq  \deltasmb^{2}\leq \CONST \dimp n^{-1/2},
\end{EQA}

\begin{comment}
\emph{ There exists a constant \(\DeltaSmB \geq 0\) %\lesssim (pn^{-3/2})^{1/2}\)
such that it holds for all the matrices %\(H_{k_{1},k_{2}}^{2}\),
 \(B_{k_{1},k_{2}}^{2}\) and \(D^{2}_{k}\), \(k, k_{1},k_{2}=1,\dots,\numK\) defined in \eqref{def:Bsmbk} and \eqref{def:Dk}:}
\begin{EQA}[c]
\left\|
D^{-1}_{k_{1}}B_{k_{1},k_{2}}^{2}
D^{-1}_{k_{2}}
\right\|_{\max}\leq \DeltaSmB^{2},\\
\DeltaSmB^{2}\leq\CONST (\dimp_{\max}n)^{-3/8}\log^{3/8}(n\dimtotal) \log^{5/8}(\numK)\log^{3/8}\left(z_{\max}\numK^{z_{\min}^{-3/4}}\right).
%\|\Hc^{-1}\Bsmb^{2}\Hc^{-1}\|\leq  \deltasmb^{2}\leq \CONST \dimp n^{-1/2},
\end{EQA}
\end{comment}
\end{description}
\begin{description}
\labitem{\( \bb{ (\ED_{2m})}\)}{itm:ED2mk}
\emph{For each \(k=1,\dots,\numK\), \(\rr>0\), \(i=1,\dots,n\), \(j=1,2\) and for all \(\thetav \in \Theta_{0,k}(\rr)\) it holds for the values \(\omega_{k}\geq 0\) and \(\gm_{2,k}(\rr)\) from the condition \ref{itm:ED2k}: }
\begin{EQA}[c]
\sup_{\substack{\gammav_{j}\in \R^{\dimp_{k}}\\
\|\gammav_{j}\|\leq1}
}\ \log \E \exp \left\{
\frac{\lambda}{\omega_{k}}\gammav_{1}^{\T}D_{k}^{-1}
\nabla_{\thetav}^{2}\zeta_{i,k}(\thetav)
D_{k}^{-1}\gammav_{2}
\right\}
\leq \frac{\nunu^{2}\lambda^{2}}{2n}, \quad\quad |\lambda|\leq\gm_{2,k}(\rr),
\end{EQA}
\end{description}
\begin{description}
\labitem{\( \bb{(\LL_{0m})}\)}{itm:L0mk}
\emph{For each \(k=1,\dots,\numK\), \(\rr>0\), \(i=1,\dots,n\) and for all \(\thetav \in \Theta_{0,k}(\rr)\) there exists a value \(\CONST_{m,k}(\rr)\geq 0\) such that}%it holds for the value \(\rddelta(\rr)\geq 0\) from the condition \ref{itm:L0}:}
\begin{EQA}[c]
\|D_{k}^{-1}\nabla_{\thetav}^{2}\E\ell_{i,k}(\thetav)D_{k}^{-1}\|\leq {\CONST_{m,k}(\rr)}{n}^{-1}.
\end{EQA}
\end{description}
%
\begin{comment}
\begin{description}
\labitem{\( \bb{(\LL_{3m})}\)}{itm:L1}
\emph{For all \(k=1,\dots,\numK\), \(\thetav \in \Theta_{k}\) and \(i=1,\dots,n\) it holds for some constant \(\CONST\)}
\begin{EQA}[c]
\|D_{k}^{-1}\nabla_{\thetav}^{3}\E\ell_{i,k}(\thetav)D_{k}^{-1}\|\leq \CONST n^{-1}.
\end{EQA}
\end{description}
\end{comment}
%
\begin{description}
\labitem{\( \bb{(\mathcal{I}_{B})}\)}{itm:IBk}
\emph{For each \(k=1,\dots,\numK\) there exists a constant \(\gmu_{B,k}^{2}> 0\) s.t. }
\begin{EQA}[c]
\gmu_{B,k}^{2} D_{k}^{2}\geq B_{k}^{2}.
\end{EQA}
\emph{Denote \(\bbgmu_{B}^{2}\eqdef \max_{1\leq k\leq \numK}\gmu_{B,k}^{2}\).}
\end{description}
%
\begin{comment}
\begin{description}
\labitem{\(\bb{(\barbar{\mathcal{I}})}\)}{itm:ICov}
\emph{For all \(k_{1},k_{2}=1,\dots,\numK\) there exists a constant \(\gmu_{k_{1},k_{2}}^{2}>0\)  s.t.}
\begin{EQA}[c]
   \max_{1\leq k_{1},k_{2} \leq \numK}\bigl\|D_{k_{1}}^{-1} \Cov\bigl(\nabla_{\thetav}L_{k_{1}}(\thetavs_{k_{1}})^{\T},\nabla_{\thetav}L_{k_{2}}(\thetavs_{k_{2}})^{\T} \bigr) D_{k_{2}}^{-1}\bigr\|_{\max}\leq \gmu_{k_{1},k_{2}}^{2}.
\end{EQA}
\emph{Considering condition \ref{itm:I}, it can be taken \(\gmu_{k,k}^{2}=\gmu_{k}^{2}\) for all \(k=1,\dots,\numK\). Let also \(\bbgmu^{2}\eqdef \max_{1\leq k_{1},k_{2} \leq \numK} \gmu_{k_{1},k_{2}}^{2}\). }
%\emph{for \(\bbD^{2}\eqdef \diag\left\{D_{1}^{2},\dots,D_{\numK}^{2}\right\}, \bbV^{2}\eqdef \Var \left(\nabla_{\thetav}L_{1}(\thetavs_{1})^{\T},\dots,\nabla_{\thetav}L_{\numK}(\thetavs_{\numK})^{\T}\right)^{\T}\). This condition is stronger than  \ref{itm:I}, and it holds \(\bbgmu^{2}\geq\gmu_{k}^{2}\) for all \(k=1,\dots,\numK\).}
\end{description}
\end{comment}
%
\begin{description}
\labitem{\( \bb{(\barbar{\SD_{1}})}\)}{itm:SD0k}
\emph{There exists a constant \(0\leq\delta_{v^{\ast}}^{2}\leq \CONST\dimtotal/n\) such that it holds for all \(i=1,\dots,n\) with exponentially high probability}
\begin{EQA}[c]
\left\|\bbH^{-1}\left\{\bbg_{i}\bbg_{i}^{\T}- \E\left[\bbg_{i}\bbg_{i}^{\T}\right]\right\}\bbH^{-1}\right\|
\leq \delta_{v^{\ast}}^{2},%\leq \CONST\dimp/n.
\end{EQA}
\emph{where}
\begin{EQA}
\bbg_{i}&\eqdef& \left(\nabla_{\thetav}\ell_{i,1}(\thetavs_{1})^{\T},\dots,\nabla_{\thetav}\ell_{i,\numK}(\thetavs_{\numK})^{\T}\right)^{\T}\in \R^{\dimtotal},\\
\bbH^{2}&\eqdef&\sum\nolimits_{i=1}^{n}\E \left\{\bbg_{i}\bbg_{i}^{\T}\right\},\\
\dimtotal&\eqdef&\dimp_{1}+\dots+\dimp_{\numK}.
\end{EQA}
\end{description}
\begin{description}
\labitem{\(\bb{(Eb)}\)}{itm:Eb}
\emph{The i.i.d. bootstrap weights \(u_{i}\) are independent of \(\Ym\), and for all \(i=1,\dots,n\) it holds for some constants \(\gm_{k}>0, \nu_{k}\geq1\)}% \(\Eb u_{i} =1\), \(\Varb u_{i}=1\),}
\begin{EQA}[c]
\E u_{i} =1 ,\quad \Var u_{i}=1,\\
\log \E \exp \left\{
\lambda (u_{i}-1)
\right\}\leq \nunu^{2}\lambda^{2}/2, \quad\quad |\lambda|\leq\gm.
\end{EQA}
\end{description}

%!TEX root = simult_bootst_root.tex
\subsection{Dependence of the involved terms on the sample size and cardinality of the parameters' set }
\label{typical_local}
Here we consider the case of the i.i.d. observations \(Y_{1},\dots, Y_{n}\) and \(\yy= \CONST\log{n}\) in order to specify the dependence of the non-asymptotic bounds on \(n\) and \(\dimp\).
In the paper by \cite{SpZh2014PMB} (the version of 2015) this is done in detail for the i.i.d. case, generalized linear model and quantile regression.

Example 5.1 in \cite{Sp2012Pa} demonstrates that in this situation \(\gm_{k}=\CONST\sqrt{n}\) and \(\omega_{k}=\CONST /\sqrt{n}\).
then \(\ZZ_{k}(\yy)= \CONST\sqrt{\dimp_{k}+\yy}\) for some constant \(\CONST\geq 1.85\), for  the function \(\ZZ_{k}(\yy)\) given in \eqref{ZZ_k} in Section \ref{sect:FS}.
 Similarly it can be checked that \(\gm_{2,k}(\rr)\) from condition \ref{itm:ED2k}
 %\(\omega_{1}^{-1}\), \(\gm_{2}(\rr)\) and \(\gm_{3}(\rr)\)
is proportional to \(\sqrt{n}\): due to independence of the observations
\begin{EQA}
&&\nquad
\log \E \exp \left\{
\frac{\lambda}{\omega_{k}}\gammav_{1}^{\T}D_{k}^{-1}\nabla_{\thetav}^{2}\zeta_{k}(\thetav)D_{k}^{-1}\gammav_{2}
\right\}
\\&=&\sum\nolimits_{i=1}^{n}\log \E \exp \left\{
\frac{\lambda}{\sqrt{n}}\frac{1}{\omega_{k}\sqrt{n}}
\gammav_{1}^{\T}d_{k}^{-1}\nabla_{\thetav}^{2}\zeta_{i,k}(\thetav)d_{k}^{-1}\gammav_{2}
\right\}\\
&\leq& n\frac{\lambda^{2}}{n}\CONST\quad \text{for}~|\lambda|\leq \bar{\gm}_{2,k}(\rr)\sqrt{n},
\end{EQA}
where \(\zeta_{i,k}(\thetav)\eqdef\ell_{i,k}(\thetav)-\E\ell_{i,k}(\thetav)\), \(d_{k}^{2}\eqdef -\nabla^{2}_{\thetav}\E\ell_{i,k}(\thetavs_{k})\) and \(D_{k}^{2}=n d_{k}^{2}\) in the i.i.d. case. Function \(\bar{\gm}_{2,k}(\rr)\) denotes the marginal analog of \(\gm_{2,k}(\rr)\).

Let us show, that for the value \(\delta_{k}(\rr)\) from the condition \ref{itm:L0k} it holds \(\delta_{k}(\rr)=\CONST \rr/\sqrt{n}\). Suppose for all \(\thetav \in \Theta_{0,k}(\rr)\) and \(\gammav\in\R^{\dimp_{k}}:\|\gammav\|=1\) \(\|D_{k}^{-1}\gammav^{\T}\nabla_{\thetav}^{3}\E L_{k}(\thetav)D_{k}^{-1}\|\leq\CONST\), then it holds for some \(\bar{\thetav}\in \Theta_{0,k}(\rr)\):
\begin{EQA}
&&\nquad
\|D_{k}^{-1}D^{2}(\thetav)D_{k}^{-1}-\Id_{\dimp_{k}}\|=
\|D_{k}^{-1}(\thetavs_{k}-\thetav)^{\T}\nabla_{\thetav}^{3}\E L_{k}(\bar{\thetav})D_{k}^{-1}\|\\
&=&\|D_{k}^{-1}(\thetavs_{k}-\thetav)^{\T}D_{k}D_{k}^{-1}\nabla_{\thetav}^{3}\E L_{k}(\bar{\thetav})D_{k}^{-1}\|
\\&\leq& \rr\|D_{k}^{-1}\|\|D_{k}^{-1}\gammav^{\T}\nabla_{\thetav}^{3}\E L_{k}(\bar{\thetav})D_{k}^{-1}\|\leq\CONST\rr/\sqrt{n}.
\end{EQA}
Similarly \({\CONST_{m,k}(\rr)}\leq \CONST\rr/\sqrt{n}+\CONST\) in condition \ref{itm:L0mk}.

The next remark helps to check the global identifiability condition \ref{itm:Lrk} in many situations.
Suppose that the parameter domain \(\Theta_{k}\) is compact and \(n\) is sufficiently large, then the value \(\gmi_{k}(\rr)\) from condition \ref{itm:Lrk} can be taken as \(\CONST\{1-\rr/\sqrt{n}\}\approx \CONST \). Indeed, for \(\thetav: \|D_{k}(\thetav-\thetavs_{k})\|=\rr\)
\begin{EQA}
-2\left\{\E L_{k}(\thetav)-\E L_{k}(\thetavs_{k})\right\}
&\geq& \rr^{2}\left\{1-\rr\|D_{k}^{-1}\|\|D_{k}^{-1}\gammav^{\T}\nabla_{\thetav}^{3}\E L_{k}(\bar{\thetav})D_{k}^{-1}\|\right\}
\\&\geq&
\rr^{2}(1- \CONST \rr/\sqrt{n}).
\end{EQA}
 Due to the obtained orders, the conditions \eqref{condit:concentr} and \eqref{condit:large_dev} of Theorems \ref{thm:concentr} and \ref{thm:large_dev} on concentration of the MLEs \(\thetavt_{k},\, \thetavbt_{k}\) require \(\rr_{0,k} \geq \CONST\sqrt{\dimp_{k} +\yy}\).

\appendix

%!TEX root = simult_bootst_root.tex
%\newpage
\section{Approximation of the joint distributions of \(\ell_{2}\)-norms}
%{\large{\textbf{Joint Gaussian approximation of \(\ell_{2}\) norms}}}\\
\label{sect:maxgar}
%!TEX root = simult_bootst_root.tex
%\subsection{Notations}
%\label{sect:notations}

Let us previously introduce some notations:\\
\(\Idv_{\numK}\eqdef (1,\dots,1)^{\T}\in \R^{\numK}\);\\
\(\|\cdot\|\) is the Euclidean norm for a vector and spectral norm for a matrix;\\
\(\|\cdot\|_{\max}\) is the maximum of absolute values of elements of a vector or of a matrix; \\
\(\|\cdot\|_{1}\) is the sum of absolute values of elements of a vector or of a matrix.

Consider \(\numK\) random centered vectors \(\phiv_{k}\in \R^{\dimp_{k}}\) for \(k=1,\dots,\numK\). Each vector equals to a sum of \(n\) centered independent vectors:
\begin{align}
\begin{split}
\label{def:phivk}
\phiv_{k}=\phiv_{k,1}&+\dots+\phiv_{k,n},\\
\E\phiv_{k}=\E\phiv_{k,i}&=0\quad \forall\,1\leq i\leq n.
\end{split}
\end{align}
%Let also the vectors \(\phiv_{k,i}\) be s.t. \(\phiv_{k,i}\) is independent of all \(\phiv_{k,j}:1\leq k\leq\numK, 1\leq j\leq n, j\neq i\).
Introduce similarly the vectors \(\psiv_{k}\in \R^{\dimp_{k}}\) for \(k=1,\dots,\numK\):
\begin{align}
\begin{split}
\label{def:psivk}
\psiv_{k}=\psiv_{k,1}&+\dots+\psiv_{k,n},\\
\E\psiv_{k}=\E\psiv_{k,i}&=0\quad \forall\,1\leq i\leq n,
\end{split}
\end{align}
with the same independence properties as \(\phiv_{k,i}\), and also independent of all \(\phiv_{k,i}\).

The goal of this section is to compare the joint distributions of the \(\ell_{2}\)-norms of the sets of vectors \(\phiv_{k}\) and \(\psiv_{k}\), \(k=1,\dots, \numK\) (i.e. the probability laws \(\LL\left(\|\phiv_{1}\|,\dots, \|\phiv_{\numK}\|\right)\) and \(\LL\left(\|\psiv_{1}\|,\dots, \|\psiv_{\numK}\|\right)\)), assuming that their correlation structures are close to each other.

Denote
\begin{EQA}
\label{def:dimtotal}
\dimp_{\max}&\eqdef& \max_{1\leq k\leq\numK}\dimp_{k},\quad
\dimtotal\eqdef\dimp_{1}+\dots+\dimp_{\numK},
\\
\lambda_{\phi,\max}^{2}&\eqdef& \max_{1\leq k\leq\numK}\|\Var(\phiv_{j})\|,
\quad
\lambda_{\psi,\max}^{2}\eqdef \max_{1\leq k\leq\numK}\|\Var(\psiv_{j})\|,
\\
z_{\max}&\eqdef& \max_{1\leq k\leq\numK}z_{k},\quad
z_{\min}\eqdef \min_{1\leq k\leq\numK}z_{k},
\label{def:z_max}
\\
\delta_{z,\max}&\eqdef&\max_{1\leq k\leq\numK}\delta_{z_{k}},\quad
\delta_{z,\min}\eqdef\min_{1\leq k\leq\numK}\delta_{z_{k}},
\label{def:delta_z_max}
\end{EQA}
let also
\begin{EQA}
\label{def:Deltaeps}
\Delta_{\varepsilon}&\eqdef& %3^{1/4}
\left(\frac{\dimp_{\max}^{3}}{n}\right)^{1/8}\log^{9/16}(K)\log^{3/8}(n\dimtotal)z_{\min}^{1/8}\\
&& \times\max\left\{\lambda_{\phi,\max},\lambda_{\psi,\max}\right\}^{3/4}\log^{-1/8}(5n^{1/2}).
\end{EQA}
The following conditions are necessary for the Proposition \ref{thm:Cum_norms}
\begin{itemize}
\labitem{(C1)}{itm:A1}
\emph{For some \({\gm}_{k},\nu_{k},\cf_{\phi},\cf_{\psi}>0\)and for all \(i=1,\dots,n\), \(k=1,\dots,\numK\)}
\begin{EQA}
\sup_{\substack{\gammav_{k}\in\R^{\dimp_{k}},\\ \|\gammav_{k}\|=1}}\log\E\exp\left\{\lambda \sqrt{n}\gammav_{k}^{\T}\phiv_{k,i}/\cf_{\phi}\right\} &\leq&  \lambda^{2}\nu_{k}^{2}/2,\quad  |\lambda|<{\gm}_{k},\\
\sup_{\substack{\gammav_{k}\in\R^{\dimp_{k}},\\ \|\gammav_{k}\|=1}}\log\E\exp\left\{\lambda \sqrt{n}\gammav_{k}^{\T}\psiv_{k,i}/\cf_{\psi}\right\} &\leq&  \lambda^{2}\nu_{k}^{2}/2,\quad  |\lambda|<{\gm}_{k},
\end{EQA}
\emph{where \(\cf_{\phi}\geq \CONST\lambda_{\phi,\max}\) and  \(\cf_{\psi}\geq \CONST \lambda_{\phi,\max}\).}
\labitem{(C2)}{itm:A2}
\emph{For some \(\delta_{\Sigma}^{2}\geq 0\)}
\begin{EQA}[c]
\label{cond:deltaSigma}
 \max_{1\leq k_{1},\,k_{2}\leq\numK}\
  \left\|
  \Cov(\phiv_{k_{1}},\phiv_{k_{2}}) -\Cov(\psiv_{k_{1}},\psiv_{k_{2}})
  \right\|_{\max}
 \leq
\delta_{\Sigma}^{2}.
\end{EQA}
%\labitem{(A.3)}{itm:A3}
\end{itemize}
\begin{proposition}[Approximation of the joint distributions of \(\ell_{2}\)-norms]
\label{thm:Cum_norms}
Consider the centered random vectors \(\phiv_{1},\dots,\phiv_{\numK}\) and \(\psiv_{1},\dots,\psiv_{\numK}\) given in \eqref{def:phivk}, \eqref{def:psivk}. Let the conditions \ref{itm:A1} and \ref{itm:A2} be fulfilled,  and the values  \(z_{k}\geq\sqrt{\dimp_{k}+\Delta_{\varepsilon}}\) and \(\delta_{z_{k}}\geq 0\) be s.t. \(\CONST\max\{n^{-1/2}, \delta_{z,\max}\}
\leq
\Delta_{\varepsilon}\leq
\CONST z_{\max}^{-1}\), then it holds with dominating probability
\begin{EQA}
\P\left(\bigcup\nolimits_{k=1}^{\numK}\left\{\|\phiv_{k}\|>z_{k}\right\} \right)
-
\P\left(\bigcup\nolimits_{k=1}^{\numK}\left\{\|\psiv_{k}\|>z_{k}-\delta_{z_{k}}\right\} \right)&\geq& -\Deltapp\, ,\\
\P\left(\bigcup\nolimits_{k=1}^{\numK}\left\{\|\phiv_{k}\|>z_{k}\right\} \right)
-
\P\left(\bigcup\nolimits_{k=1}^{\numK}\left\{\|\psiv_{k}\|>z_{k}+\delta_{z_{k}}\right\} \right)&\leq& \Deltapp
\end{EQA}
for the deterministic non-negative value
\begin{EQA}
\Deltapp
&\leq&
12.5\CONST\left(\frac{\dimp_{\max}^{3}}{n}\right)^{1/8}\log^{9/8}(K)\log^{3/8}(n\dimtotal) \max\left\{\lambda_{\phi,\max},\lambda_{\psi,\max}\right\}^{3/4}
\\&&+\,3.2\CONST\delta_{\Sigma}^{2}\left(\frac{\dimp_{\max}^{3}}{n}\right)^{1/4}\dimp_{\max}z_{\min}^{1/2}
\log^{2}(K)\log^{3/4}(n\dimtotal)\max\left\{\lambda_{\phi,\max},\lambda_{\psi,\max}\right\}^{7/2}
\\&\leq&
25\CONST\left(\frac{\dimp_{\max}^{3}}{n}\right)^{1/8}\log^{9/8}(K)\log^{3/8}(n\dimtotal) \max\left\{\lambda_{\phi,\max},\lambda_{\psi,\max}\right\}^{3/4},
\label{ineq:Delta_l2}
\end{EQA}
where the last inequality holds for
\begin{EQA}
\delta_{\Sigma}^{2}
&\leq&4 \CONST \left(\frac{n}{\dimp_{\max}^{13}}\right)^{1/8}
\log^{-7/8}(K)\log^{-3/8}(n\dimtotal)\left(\max\left\{\lambda_{\phi,\max},\lambda_{\psi,\max}\right\}\right)^{-11/4}.
\end{EQA}
\end{proposition}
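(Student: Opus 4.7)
The plan is to realize, simultaneously in $k=1,\dots,\numK$, the four-link chain summarized in the scheme \eqref{rectangle_simult}: introduce Gaussian analogues $\phivbar_{k}\sim\mathcal{N}(0,\Var\phiv_{k})$ and $\psivbar_{k}\sim\mathcal{N}(0,\Var\psiv_{k})$ whose joint covariance matches those of the respective random vectors, and pass through
\[
\bigcup_{k}\bigl\{\|\phiv_{k}\|>z_{k}\bigr\}
\;\rightsquigarrow\;
\bigcup_{k}\bigl\{\|\phivbar_{k}\|>z_{k}\bigr\}
\;\rightsquigarrow\;
\bigcup_{k}\bigl\{\|\psivbar_{k}\|>z_{k}\bigr\}
\;\rightsquigarrow\;
\bigcup_{k}\bigl\{\|\psiv_{k}\|>z_{k}\pm\delta_{z_{k}}\bigr\}.
\]
Each arrow contributes one of the three pieces of $\Deltapp$: two Gaussian-approximation pieces (one per world), one Gaussian-comparison piece driven by $\delta_{\Sigma}^{2}$, and finally an anti-concentration piece that absorbs the quantile shift $\delta_{z_{k}}$ together with the smoothing bias.

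First, the Gaussian-approximation arrows. The event $\bigcup_{k}\{\|\phiv_{k}\|>z_{k}\}$ is a super-level set of $\max_{k}(\|\phiv_{k}\|-z_{k})$; I would replace the indicator by a $C^{3}$ sigmoid and the maximum by a soft-max with temperature $\beta$, and then rewrite $\|\phiv_{k}\|$ as a soft-max of $\gamma^{\T}\phiv_{k}$ over a $\varepsilon$-net on the unit sphere of $\R^{\dimp_{k}}$ (of cardinality $\leq (C/\varepsilon)^{\dimp_{k}}$). Lindeberg's telescopic swap of $\phiv_{k,i}$ for $\phivbar_{k,i}$, one index $i$ at a time, and a third-order Taylor expansion of the composite smooth functional give a per-step error of order $n^{-3/2}\cf_{\phi}^{3}$ times the third derivatives of the smoothed max-of-indicators; condition \ref{itm:A1} controls the required exponential moments. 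Balancing the smoothing scales and $\varepsilon$ against $n$ yields the $(\dimp_{\max}^{3}/n)^{1/8}$ rate, the $\log^{9/8}(K)$ factor from the soft-max Lipschitz constant, and the $\log^{3/8}(n\dimtotal)$ factor from the net cardinality. The symmetric argument handles $\psivbar\rightsquigarrow\psiv$.

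Second, Gaussian comparison. With the problem reduced to jointly Gaussian $(\phivbar_{k})$ and $(\psivbar_{k})$, I would interpolate $\Sigma_{t}\eqdef t\Cov(\phivbar)+(1-t)\Cov(\psivbar)$, differentiate the same smoothed soft-max functional along $t$, and apply Stein's identity as in the Slepian-type inequality of Section \ref{sect:Gausscompar}. The derivative is pointwise bounded by $\delta_{\Sigma}^{2}$ from \ref{itm:A2} multiplied by the Hessian of the smooth surrogate; the Hessian contributes $\dimp_{\max}$ from the sphere-net cardinality, $\log^{2}(K)$ from the soft-max, $\log^{3/4}(n\dimtotal)$ from the net resolution, and $z_{\min}^{1/2}$ from the derivative scale at the level set, yielding the second term of \eqref{ineq:Delta_l2}. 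Under the stated bound on $\delta_{\Sigma}^{2}$ this term is absorbed into the Gaussian-approximation one.

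Third, anti-concentration. To turn the smoothed-functional inequalities into genuine probability bounds at $\{>z_{k}\}$ versus $\{>z_{k}\pm\delta_{z_{k}}\}$, and to eat up the bias of the indicator/soft-max approximation (of size $\Delta_{\varepsilon}$), I would apply the simultaneous anti-concentration result for $\ell_{2}$-norms of Gaussian vectors from Section \ref{sect:ac}, which rests on the sphere-net reduction and the Chernozhukov--Chetverikov--Kato anti-concentration for Gaussian maxima. Under the hypothesis $\CONST\max\{n^{-1/2},\delta_{z,\max}\}\leq\Delta_{\varepsilon}\leq\CONST z_{\max}^{-1}$ the anti-concentration contribution is of order $\Delta_{\varepsilon}\log^{1/2}(K)$, again subsumed in the first term of $\Deltapp$.

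The bookkeeping of constants will be the main obstacle. The indicator smoothing width, the soft-max temperature $\beta$, and the sphere-net resolution $\varepsilon$ are coupled: they enter polynomially (through the third-order Taylor remainder) in the Gaussian-approximation error but only linearly-times-logarithmically in the anti-concentration error. Choosing them jointly so that the Lindeberg error matches the anti-concentration error at the announced rate $(\dimp_{\max}^{3}/n)^{1/8}\log^{9/8}(K)\log^{3/8}(n\dimtotal)$, and simultaneously verifying that the Gaussian-comparison contribution is dominated exactly when $\delta_{\Sigma}^{2}$ satisfies the assumed bound, is the delicate calibration that produces the final inequality.
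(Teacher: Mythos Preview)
Your four-link chain and the trio of ingredients (Lindeberg, Slepian interpolation, anti-concentration) are exactly what the paper uses, and your picture of the final balancing of smoothing parameters is correct. The one substantive difference is in the smoothing device for the Lindeberg and comparison steps.

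You propose to linearize each $\|\phiv_{k}\|$ as a (soft-)max of $\gamma^{\T}\phiv_{k}$ over an $\varepsilon$-net on the unit sphere, so that the whole problem becomes a smoothed max over $\sum_{k}(\CONST/\varepsilon)^{\dimp_{k}}$ scalar coordinates. The paper does not do this: it keeps the quadratic $\|\phiv_{j}\|^{2}$ intact and smooths directly via
\[
F_{\Delta,\beta}(\Phi,\zv)=g\!\left(\frac{1}{\Delta\beta}\log\sum_{j=1}^{\numK}\exp\!\Bigl[\beta\,\frac{\|\phiv_{j}\|^{2}-z_{j}^{2}}{2z_{j}}\Bigr]\right),
\]
then applies Lindeberg to $F_{\Delta,\beta}$ itself. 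Because the soft-max is over only $\numK$ terms and the inner functions are quadratic in $\phiv_{j}$, the third-derivative bound (Lemma~\ref{lemma:prop_F_beta_Delta}) reads $\|\nabla^{3}_{\Gamma}F_{\Delta,\beta}\|_{1}\leq\CONST(\Delta^{-3}+\beta\Delta^{-2}+\beta^{2}\Delta^{-1})\max_{j}\|\gammav_{j}\|^{3}$, with no sphere-net parameter entering. The factors $\dimp_{\max}^{3/2}$ and $\log^{3/2}(n\dimtotal)$ in $\delta_{3,\phi}$ then come not from a net but from moment bounds on $\max_{j}\|S_{j,i}+\phiv_{j,i}\|^{6}$ and $\|\Phi_{i}\|_{\max}^{6}$ (Lemmas~\ref{lemma:max_cube} and~\ref{lemma:max_phi3}). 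The Slepian comparison is likewise applied to $F_{\Delta,\beta}$ directly, so the $\dimp_{\max}$ in the $\delta_{\Sigma}^{2}$-term arises from $\tr\Var(\phivbar_{j})\leq\dimp_{\max}\lambda_{\phi,\max}^{2}$ in the second-derivative bound, not from a net.

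The sphere-net is used only where you correctly place it for anti-concentration (Lemma~\ref{lemma:anti_conc_nonsmooth}): there one must reduce $\|\phivbar_{j}\|$ to a finite max of Gaussians to invoke the Chernozhukov--Chetverikov--Kato bound. Your linearize-everywhere route could in principle be pushed through, but it introduces an extra tuning parameter at the Lindeberg stage and makes the attribution of the $\dimp_{\max}$ powers less transparent (your claim that the net contributes only $\log^{3/8}(n\dimtotal)$ would need justification, since $\log$ of the net cardinality is linear in $\dimp_{k}$). The paper's choice keeps the soft-max over $K$ terms only and pushes all the $\dimp$-dependence into moment bounds for the quadratic pieces. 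The calibration $\beta=\log(\numK)/\Delta$ and the balancing of $\delta_{3,\phi}+\delta_{3,\psi}$ against $\Deltaac$ then proceed exactly as you outline.
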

\begin{newremark}
\label{rem:Deltapp}
The approximating error term \(\Deltapp\) consists of three errors, which correspond to:   the Gaussian approximation result (Lemma \ref{lemma:gauss_appr_phiv}), Gaussian comparison
 (Lemma \ref{lemma:GaussCOmpar_Slepian}), and anti-concentration inequality (Lemma \ref{lemma:anti_conc_nonsmooth}).
%\end{newremark}
%\begin{newremark}
The bound on \(\Deltapp\) above implies that the number \(\numK\) of the random vectors \(\phiv_{1},\dots,\phiv_{\numK}\) should satisfy \(\log\numK \ll (n/\dimp_{\max}^{3})^{1/12}\) in order to keep the approximating error term \(\Deltapp\) small.
 This condition can be relaxed by using a sharper Gaussian approximation result.
 %The latter can be obtained,
 For instance, using in Lemma \ref{lemma:gauss_appr_phiv} the Slepian-Stein technique plus induction argument from the recent paper by \cite{Chernozhukov.et.al.(2014a)} instead of the Lindeberg's approach, would lead to the improved bound:
\(\CONST\left(\frac{\dimp_{\max}^{3}}{n}\right)^{1/6}\) multiplied by a logarithmic term.
\end{newremark}
%\input{umb_gar_cum_pk_1}

%!TEX root = simult_bootst_root.tex
%\newpage
\subsection{Joint Gaussian approximation of \(\ell_{2}\)-norm of sums of independent vectors by Lindeberg's method}
\label{sect:gar}
Introduce the following random vectors from \(\R^{\dimtotal}\):
\begin{align}
\begin{split}
\label{def:Phi}
\Phi&\eqdef\left(\phiv_{1}^{\T},\dots,\phiv_{\numK}^{\T}\right)^{\T},\quad
\Phi_{i}\eqdef\left(\phiv_{1,i}^{\T},\dots,\phiv_{\numK,i}^{\T}\right)^{\T},
~i=1,\dots,n,\\
\Phi&=\sum\nolimits_{i=1}^{n}\Phi_{i},\quad \quad\quad\quad\E\Phi=\E\Phi_{i}=0.
\end{split}
\end{align}
Define their Gaussian analogs as follows:
\begin{align}
\label{def:Phibar1}
%\begin{split}
\Phibar_{i}&\eqdef\left(\phivbar_{1,i}^{\T},\dots,\phivbar_{\numK,i}^{\T}\right)^{\T},
&
\Phibar&\eqdef\left(\phivbar_{1}^{\T},\dots,\phivbar_{\numK}^{\T}\right)^{\T}=
\sum\nolimits_{i=1}^{n}\Phibar_{i},\\
\label{def:Phibar2}
\Phibar_{i}&\sim \mathcal{N}(0,\Var\Phi_{i}),
&
\Phibar&\sim\mathcal{N}(0,\Var\Phi),\\
\label{def:Phibar3}
\phivbar_{k,i}&\sim\mathcal{N}(0,\Var\phiv_{k,i}),
&
\phivbar_{k}&\eqdef \sum\nolimits_{i=1}^{n}\phivbar_{k,i}\sim\mathcal{N}(0,\Var\phiv_{k}).
%\end{split}
\end{align}
\begin{lemma}[Joint GAR with equal covariance matrices]
\label{lemma:gauss_appr_phiv}
%\mbox{}\\
Consider the sets of random vectors \(\phiv_{j}\) and \(\phivbr_{j}\), \(j=1,\dots,\numK\) defined in \eqref{def:phivk}, and  \eqref{def:Phi}-- \eqref{def:Phibar3}. If the conditions of Lemmas \ref{lemma:max_phi3} are \ref{lemma:max_cube} are fulfilled, then it holds for all \(\Delta,\beta>0\), \(z_{j}\geq\max\left\{\Delta+\sqrt{\dimp_{j}},2.25\log(\numK)/\beta\right\}\) with dominating probability 
%\(\geq 1-6\ex^{-\yy}\) 
\begin{EQA}
\P\left(
\bigcup\nolimits_{j=1}^{\numK}\left\{\|\phiv_{j}\|> z_{j}\right\}
\right)
&\leq&
\P\left(
\bigcup\nolimits_{j=1}^{\numK}\left\{\|\phivbar_{j}\|> z_{j}-\Delta-\frac{3\log(\numK)}{2\beta}\right\}
\right)+\delta_{3,\phi}(\Delta,\beta)
,\\
%\end{EQA}
%\begin{EQA}
\P\left(\bigcup\nolimits_{j=1}^{\numK}\left\{\|\phiv_{j}\|>z_{j}\right\} \right)
&\geq&
\P\left(\bigcup\nolimits_{j=1}^{\numK}\left\{\|\phivbar_{j}\|>z_{j}+\Delta+ \frac{3\log(\numK)}{2\beta} \right\}\right)
-\delta_{3,\phi}(\Delta,\beta)
\end{EQA}
for \(\delta_{3,\phi}(\Delta,\beta)\leq  \CONST \left(\frac{1}{\Delta^{3}}
+
\frac{\beta}{\Delta^{2}}
+
\frac{\beta^{2}}{\Delta}
\right)\left\{\frac{\dimp_{\max}^{3}}{n}
\log(\numK)\log^{3}(n\dimtotal)\right\}^{1/2}\) given in \eqref{ineq:Efb_appr}.
\end{lemma}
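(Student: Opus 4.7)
The plan is to execute the standard Lindeberg replacement strategy, but with two layers of smoothing to convert the union-of-events probability into an expectation of a three-times continuously differentiable functional of $\Phi$. First I would rewrite
\begin{EQA}
\P\left(\bigcup\nolimits_{j=1}^{\numK}\{\|\phiv_j\|>z_j\}\right)
&=&
\E\max_{1\leq j\leq \numK}\Ind\{\|\phiv_j\|^2 - z_j^2 > 0\},
\end{EQA}
then replace the indicator $\Ind\{\|\phiv_j\|^2 - z_j^2 > 0\}$ by the $C^2$ cubic-spline surrogate $f_\Delta(\phiv_j,z_j) = g\bigl((\|\phiv_j\|^2 - z_j^2)/(2\Delta z_j)\bigr)$, and replace $\max$ by the smooth maximum $h_\beta(x_1,\dots,x_\numK) = \beta^{-1}\log\sum_j e^{\beta x_j}$. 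Because $f_\Delta$ takes the value $1$ for $\|\phiv_j\|\geq z_j+\Delta$ and $0$ for $\|\phiv_j\|\leq z_j$, the replacement costs on each side are controlled by the shifts $\Delta$ (from the $f_\Delta$ regularization) and $(\log\numK)/\beta$ (from the log-sum-exp sandwich), both picked up by widening the thresholds $z_j \mapsto z_j \pm \Delta \pm (3\log\numK)/(2\beta)$.

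Next I would apply Lindeberg's telescoping identity to the smooth functional $H_{\Delta,\beta}(\Phi,\zv) = h_\beta\bigl(f_\Delta(\phiv_1,z_1),\dots,f_\Delta(\phiv_\numK,z_\numK)\bigr)$: define the hybrids $\Phi^{(\ell)} = \sum_{i\leq \ell}\Phi_i + \sum_{i>\ell}\Phibar_i$ and write
\begin{EQA}
\E H_{\Delta,\beta}(\Phi,\zv) - \E H_{\Delta,\beta}(\Phibar,\zv)
&=&
\sum_{i=1}^{n}\Bigl\{\E H_{\Delta,\beta}(\Phi^{(i)},\zv) - \E H_{\Delta,\beta}(\Phi^{(i-1)},\zv)\Bigr\}.
\end{EQA}
For each summand I would Taylor-expand $H_{\Delta,\beta}$ to third order in the single block $\Phi_i$ versus $\Phibar_i$. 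The zero- and first-order terms agree trivially, and the second-order terms agree because $\Cov(\Phi_i) = \Cov(\Phibar_i)$ by construction \eqref{def:Phibar2}. The remainder therefore depends only on the third derivative of $H_{\Delta,\beta}$, evaluated along the independent blocks $\Phi_i$ and $\Phibar_i$.

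The heart of the proof is the third-order bound, and this is where I expect the main obstacle. By the chain rule the third derivative of $H_{\Delta,\beta}$ with respect to the coordinates of a single block decomposes into products of (i) derivatives of $h_\beta$ in its $\numK$ smooth-max arguments, contributing factors of $\beta$ per extra differentiation, and (ii) first-, second-, and third-order derivatives of each $f_\Delta(\phiv_j,z_j)$, whose sup-norms scale like $\Delta^{-1}$, $\Delta^{-2}$, $\Delta^{-3}$ (up to harmless factors of $\|\phiv_j\|/z_j$). The three combinatorial patterns $(3,0,0)$, $(2,1,0)$, $(1,1,1)$ in distributing the three derivatives across $f_\Delta$ factors produce the three summands $\Delta^{-3}$, $\beta\Delta^{-2}$, and $\beta^2\Delta^{-1}$ visible in the target bound. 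I would invoke Lemmas \ref{lemma:max_phi3} and \ref{lemma:max_cube} to control the expectations of the resulting supremum-type quantities $\sup_j \|\phiv_{j,i}\|^3$ and related cubes; this is where the $(\dimp_{\max}^3/n)^{1/2}$ factor arises, together with the $\log(n\dimtotal)$ and $\log(\numK)$ factors that enter through moment bounds applied to $\numK$ sub-Gaussian vectors of dimension at most $\dimp_{\max}$ under hypothesis \ref{itm:A1}. Summing over $i=1,\dots,n$, combining with the symmetric third-order bound for the Gaussian side, and adding the sandwich losses from the smoothings gives $\delta_{3,\phi}(\Delta,\beta)$ with the claimed form; the two inequalities of the lemma then follow directly by choosing the $\pm$ shifts in the thresholds.
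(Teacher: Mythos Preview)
Your overall plan---smooth both the indicator and the maximum, then run Lindeberg on the resulting $C^3$ functional and control the third-order remainder via Lemmas \ref{lemma:max_phi3} and \ref{lemma:max_cube}---matches the paper. But there is a genuine gap in the order of composition. You define $H_{\Delta,\beta}(\Phi,\zv)=h_\beta\bigl(f_\Delta(\phiv_1,z_1),\dots,f_\Delta(\phiv_\numK,z_\numK)\bigr)$, i.e.\ smooth indicator \emph{inside}, soft-max \emph{outside}. With this ordering the log-sum-exp sandwich \eqref{ineq:h_beta} gives $\max_j f_\Delta(\phiv_j,z_j)\le H_{\Delta,\beta}\le \max_j f_\Delta(\phiv_j,z_j)+\beta^{-1}\log\numK$, and since each $f_\Delta\in[0,1]$ the term $\beta^{-1}\log\numK$ is an \emph{additive error in the value of $H$}, hence in $\E H$, and therefore in the probability bound. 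It cannot be traded for a shift of the thresholds $z_j$, contrary to what you assert. You would end up with
\[
\P\!\left(\bigcup_j\{\|\phiv_j\|>z_j\}\right)\le \P\!\left(\bigcup_j\{\|\phivbar_j\|>z_j-\Delta\}\right)+\delta_{3,\phi}(\Delta,\beta)+\frac{\log\numK}{\beta},
\]
which is not the statement of the lemma.

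The paper uses the opposite composition: it first applies the soft-max to the unbounded quantities $(\|\phiv_j\|^2-z_j^2)/(2z_j)$ and \emph{then} feeds the result, divided by $\Delta$, into $g$, defining $F_{\Delta,\beta}(\Phi,\zv)=g\bigl(\Delta^{-1}h_\beta(\{(\|\phiv_j\|^2-z_j^2)/(2z_j)\})\bigr)$ as in \eqref{def:F_Delta_beta}. Because $g$ is monotone, the $\beta^{-1}\log\numK$ overshoot of $h_\beta$ becomes a shift of the \emph{argument} of $g$, which is precisely a shift of the thresholds; this is how the $\tfrac{3\log\numK}{2\beta}$ enters the radii in \eqref{ineq:F_Db_appr1}--\eqref{ineq:F_Db_appr2} rather than the error term. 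The third-derivative bookkeeping (Lemma \ref{lemma:prop_F_beta_Delta}) is then done for $F_{\Delta,\beta}$, and the factors $\Delta^{-3}$, $\beta\Delta^{-2}$, $\beta^2\Delta^{-1}$ arise from differentiating $g(\cdot/\Delta)$ and $h_\beta$ in that order, not from the Fa\`a di Bruno patterns you describe for your $H_{\Delta,\beta}$. Reversing the composition to match the paper fixes the gap; the rest of your outline (Lindeberg telescoping, matching first two moments, controlling the cubic remainder with the two auxiliary lemmas) then goes through as written.
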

\begin{proof}[Proof of Lemma \ref{lemma:gauss_appr_phiv}]
\begin{EQA}
%\begin{align}
\label{eq:maxind}
%\begin{split}
\P\left(
\bigcup\nolimits_{j=1}^{\numK}\left\{\|\phiv_{j}\|> z_{j}\right\}
\right)&=&
\E\Ind\bigl(
\max\nolimits_{1\leq j\leq\numK}\left\{
\|\phiv_{j}\|^{2}- z_{j}^{2}
\right\}>0
\bigr).%\\
%&=
%\E\max\nolimits_{1\leq j\leq\numK}\Ind\left(
%\|\phiv_{j}\|^{2}- z_{j}^{2}>0
%\right).
\end{EQA}
%\end{split}
%\end{align}
Let us approximate the \(\max_{1\leq j\leq\numK}\) function using the smooth maximum:
\begin{EQA}[c]
\label{def:h_beta}
h_{\beta}\left(\{x_{j}\}\right)\eqdef
\beta^{-1}\log\left(\sum\nolimits_{j=1}^{\numK} \ex^{\beta x_{j}}\right)~\text{for }\beta>0,\, x_{j}\in \R,\\
\label{ineq:h_beta}
h_{\beta}\left(\{x_{j}\}\right)-\beta^{-1}\log(\numK)\leq \max_{1\leq j\leq\numK}\{x_{j}\}
\leq h_{\beta}\left(\{x_{j}\}\right).\quad\quad
\end{EQA}
The indicator function \(\Ind\{x > 0\}\) is approximated with the three times  differentiable function \(g(x)\) growing monotonously from \(0\) to \(1\):
\begin{EQA}
\label{def:g}
g(x)&\eqdef&
\begin{cases}
0, & x \leq 0,\\
16x^{3}/3, & x\in [0,1/4],\\
0.5+2(x-0.5)-16(x-0.5)^{3}/3, & x\in [1/4,3/4],\\
1+16(x-1)^{3}/3, & x\in [3/4,1],\\
1, & x\geq 1.
\end{cases}
\end{EQA}
It holds for all \(x\in \R\) and \(\Delta>0\)
\begin{EQA}[c]
\Ind \left\{ x > \Delta\right\} \leq g(x/\Delta)\leq  \Ind \left\{ x/\Delta > 0\right\}
%\leq
%g((x+1)/\Delta)
.
\end{EQA}
Therefore
\begin{EQA}
&&\nquad
\P\left(
\max\limits_{1\leq j\leq\numK}\left\{
\|\phiv_{j}\|-z_{j}
\right\}
>\Delta
\right)
\\&\leq&
\E\Ind\left(
\max\limits_{1\leq j\leq\numK}\left\{
\frac{\|\phiv_{j}\|^{2}- z_{j}^{2}}{2z_{j}}
\right\}>\Delta
\right)
\\&\leq&
\E g\left(\max\limits_{1\leq j\leq\numK}\left\{
\frac{\|\phiv_{j}\|^{2}- z_{j}^{2}}{2z_{j}\Delta}
\right\}
\right)
\\
\label{eq:Eg1}
&\leq&
\E g\left(
\frac{1}{\Delta\beta}\log\left\{
\sum\nolimits_{j=1}^{\numK}
\exp\left[\beta\frac{\|\phiv_{j}\|^{2}- z_{j}^{2}}{2z_{j}}\right]
\right\}
\right)
\\&\leq&
\E g\left(\max\limits_{1\leq j\leq\numK}\left\{
\frac{\|\phiv_{j}\|^{2}- z_{j}^{2}}{2z_{j}\Delta}
\right\}+\frac{\log(\numK)}{\beta\Delta}
\right)
\\&\leq&
\E\Ind\left(
\max\limits_{1\leq j\leq\numK}\left\{
\frac{\|\phiv_{j}\|^{2}- z_{j}^{2}}{2z_{j}}
\right\}>-\frac{\log(\numK)}{\beta}
\right)
\\&\leq&
\P\left(
\max\limits_{1\leq j\leq\numK}\left\{
\|\phiv_{j}\|-z_{j}
\right\}
>
-1.5\frac{\log(\numK)}{\beta}
\right),
\label{eq:Eg2}
\end{EQA}
where the last inequality holds for \(z_{j}\geq 2.25\log(\numK)/\beta\).
\begin{comment}
\vspace{2cm}
And the lower bound reads as:
\begin{EQA}
&&\nquad
\E\Ind\left(
\max\limits_{1\leq j\leq\numK}\left\{
\frac{\|\phiv_{j}\|^{2}- z_{j}^{2}}{2z_{j}}
\right\}>-\frac{\log(\numK)}{\beta}
\right)
\\&\geq&
\E g\left(\max\limits_{1\leq j\leq\numK}\left\{
\frac{\|\phiv_{j}\|^{2}- z_{j}^{2}}{2z_{j}\Delta}
\right\}+\frac{\log(\numK)}{\beta\Delta}
\right)
\\&\geq&
\E g_{\Delta}\left(
\frac{1}{\Delta\beta}\log\left\{
\sum\nolimits_{j=1}^{\numK}
\exp\left[\beta\frac{\|\phiv_{j}\|^{2}- z_{j}^{2}}{2z_{j}}\right]
\right\}
%-\frac{\log(\numK)}{\beta\Delta}
\right).
\end{EQA}
\end{comment}
Denote
\begin{EQA}[c]
\label{def:zv}
\zv\eqdef\left(z_{1},\dots,z_{\numK}\right)^{\T}\in \R^{\numK}, ~ z_{j}>0.
\end{EQA}
Introduce the function \(F_{\Delta,\,\beta}(\Phi,\zv):\R^{\dimtotal}\times\R^{\numK}\mapsto \R\):
\begin{EQA}
\label{def:F_Delta_beta}
F_{\Delta,\,\beta}(\Phi,\zv)
 &\eqdef&
g\left(
\frac{1}{\Delta\beta}\log\left\{
\sum\nolimits_{j=1}^{\numK}
\exp\left[\beta\frac{\|\phiv_{j}\|^{2}- z_{j}^{2}}{2z_{j}}\right]
\right\}
\right)
\end{EQA}
Then by \eqref{eq:Eg1} and \eqref{eq:Eg2}
\begin{EQA}[c]
\hspace{-7cm}
\P\left(
\max\limits_{1\leq j\leq\numK}\left\{
\|\phiv_{j}\|-z_{j}
\right\}
>\Delta
\right)
\\
\label{ineq:F_Db_appr1}
\hspace{-2cm}
\leq
\E F_{\Delta,\,\beta}(\Phi,\zv)
\\
\hspace{5cm}
\leq
\P\left(
\max\limits_{1\leq j\leq\numK}\left\{
\|\phiv_{j}\|-z_{j}
\right\}
>
-\frac{3\log(\numK)}{2\beta}
\right).
\label{ineq:F_Db_appr2}
\end{EQA}
Lemma \ref{lemma:prop_F_beta_Delta} checks that \(F_{\Delta,\,\beta}\left(\cdot,\zv\right)\) admits applying the Lindeberg's telescopic sum device (see \cite{Lindeberg1922neue}) in order to approximate \(\E F_{\Delta,\,\beta}\left(\Phi,\zv\right)\) with  \(\E F_{\Delta,\,\beta}\left(\Phibar,\zv\right)\).
Define for \(q=2,\dots,n-1\) the following \(\R^{\dimtotal}\)-valued random sums:
\begin{EQA}[c]
S_{q}\eqdef \sum\limits_{i=1}^{q-1}\Phibar_{i}+\sum\limits_{i=q+1}^{n}\Phi_{i},
\quad\quad S_{1}\eqdef  \sum\limits_{i=2}^{n}\Phi_{i}, \quad\quad
S_{n}\eqdef \sum\limits_{i=1}^{n-1}\Phibar_{i}.
\end{EQA}
The difference \(F_{\Delta,\,\beta}\left(\Phi,\zv\right)-F_{\Delta,\,\beta}\left(\Phibar,\zv\right)\) can be represented as the telescopic sum:
\begin{EQA}
F_{\Delta,\,\beta}\left(\Phi,\zv\right)-F_{\Delta,\,\beta}\left(\Phibar,\zv\right)
&=&
\sum\nolimits_{i=1}^{n}
\left\{
F_{\Delta,\,\beta}(S_{i}+\Phi_{i},\zv)-F_{\Delta,\,\beta}(S_{i}+\Phibar_{i},\zv)
\right\}.
\end{EQA}
The third order Taylor expansions of \(F_{\Delta,\,\beta}(S_{i}+\Phi_{i},\zv)\) and \(F_{\Delta,\,\beta}(S_{i}+\Phibar_{i},\zv)\) w.r.t. the first argument at \(S_{i}\), and Lemma \ref{lemma:prop_F_beta_Delta} imply for each \(i=1,\dots,n\):
\begin{EQA}
&&
\Bigl|
F_{\Delta,\,\beta}(S_{i}+\Phi_{i},\zv)-F_{\Delta,\,\beta}(S_{i}+\Phibar_{i},\zv)-
\nabla_{\Phi} F_{\Delta,\,\beta}(S_{i},\zv)^{\T}(\Phi_{i}-\Phibar_{i})
\\&&\hspace{0.3cm}
-\,\frac{1}{2}(\Phi_{i}-\Phibar_{i})^{\T}\nabla_{\Phi}^{2} F_{\Delta,\,\beta}(S_{i},\zv) (\Phi_{i}+\Phibar_{i})
\Bigr|
\\&\leq& \frac{\CONST_{3}(\Delta,\beta)}{6}
\left(\max_{1\leq j \leq \numK}\left\{\|S_{j,i}+\phiv_{j,i}\|^{3}\right\}\|\Phi_{i}\|^{3}_{\max}+\max_{1\leq j \leq \numK}\left\{\|S_{j,i}+\phivbar_{j,i}\|^{3}\right\}\|\Phibar_{i}\|^{3}_{\max}\right),
\end{EQA}
where the value \(\CONST_{3}(\Delta,\beta)\) is defined in Lemma \ref{lemma:prop_F_beta_Delta}, and the random vectors \(S_{j,i}\in\R^{\dimp_{j}}\) for \(j=1,\dots,\numK\) are s.t. for all \(i=1,\dots,n\)
\begin{EQA}[c]
S_{i}=\left(S_{1,i}^{\T},S_{2,i}^{\T},\dots,S_{\numK,i}^{\T}\right)^{\T}.
\end{EQA}
By their construction \(S_{i}\) and \(\Phi_{i}-\Phibar_{i}\) are independent, \(\E\Phi_{i}=\E\Phibar_{i}=0\) and \(\Var\Phi_{i}=\Var\Phibar_{i}\), therefore
\begin{EQA}
&&\nquad
\left|\E F_{\Delta,\,\beta}(\Phi,\zv)-\E F_{\Delta,\,\beta}(\Phibar,\zv)\right|%\hspace{-0.5cm}
\\&=&
\left|
\sum\nolimits_{i=1}^{n}
\left\{
\E H_{\Delta}(S_{i}+\Phi_{i},\zv)-\E H_{\Delta}(S_{i}+\Phibar_{i},\zv)
\right\}
\right|\\
&\leq&
\frac{\CONST_{3}(\Delta,\beta)}{6}
\sum_{i=1}^{n}
\E
\left(\max_{1\leq j \leq \numK}\left\{\|S_{j,i}+\phiv_{j,i}\|^{3}\right\}\|\Phi_{i}\|^{3}_{\max}+\max_{1\leq j \leq \numK}\left\{\|S_{j,i}+\phivbar_{j,i}\|^{3}\right\}\|\Phibar_{i}\|^{3}_{\max}\right).
%&\leq&
%\frac{\CONST_{3}(\Delta,\beta)}{3}\frac{\dimp_{\max}^{3/2}}{n^{1/2}} \log... \ant
\end{EQA}
Lemma \ref{lemma:max_cube} implies for all \(i=1,\dots, n\) with probability \(\geq 1-2\ex^{-\xx}\)
\begin{EQA}[c]
\left(\E\max_{1\leq j \leq \numK}\left\{\|S_{j,i}+\phiv_{j,i}\|^{6}\right\}\right)^{1/2} \leq \CONST\nunu \max_{1\leq j \leq \numK}\|\Var^{1/2}(\phiv_{j})\|^{3}\sqrt{\dimp_{\max}\log(K)}(\dimp_{\max}+6\yy),
\end{EQA}
and the same bound holds for \(\left(\E\max_{1\leq j \leq \numK}\left\{\|S_{j,i}+\phivbar_{j,i}\|^{6}\right\}\right)^{1/2}\).
 Denote
\begin{EQA}
\label{def:deltamax}
\delta_{\max,\phi}&\eqdef&
\frac{1}{2}
\sum_{i=1}^{n}
\left\{\E\left(\|\Phi_{i}\|^{6}_{\max}\right)\right\}^{1/2}
+\left\{\E\left(\|\Phibar_{i}\|^{6}_{\max}\right)\right\}^{1/2}.
\end{EQA}
By Lemma \ref{lemma:max_phi3} it holds for \(t = \left(\yy+\log(\dimtotal)\right)^{3} \left(\sqrt{2}\cf_{\phi}\nunu\right)^{6}n^{-3}\) with probability \(\geq 1-\ex^{-\xx}\)
\begin{EQA}[c]
\|\Phi_{i}\|^{6}_{\max} \leq t, \quad
\|\Phibar_{i}\|^{6}_{\max} \leq t.
\end{EQA}

If \(\yy=\CONST \log n\), then the last bound on \(\left|\E F_{\Delta,\,\beta}(\Phi,\zv)-\E F_{\Delta,\,\beta}(\Phibar,\zv)\right|\) continues  with probability \(\geq 1-6\exp(-\xx)\) as follows
\begin{EQA}
&&\nquad
\left|\E F_{\Delta,\,\beta}(\Phi,\zv)-\E F_{\Delta,\,\beta}(\Phibar,\zv)\right|
\\&\leq&
\CONST\frac{\CONST_{3}(\Delta,\beta)}{3} \sqrt{\dimp_{\max}^{3}\log(K)}\delta_{\max,\phi}\max_{1\leq j \leq \numK}\|\Var^{1/2}(\phiv_{j})\|^{3}
\\&\leq&\frac{\CONST}{3} \left(\frac{1}{\Delta^{3}}
+
\frac{\beta}{\Delta^{2}}
+
\frac{\beta^{2}}{\Delta}
\right)
\frac{\dimp_{\max}^{3/2}}{n^{1/2}}
\log^{1/2}(\numK)
\log^{3/2}\left(n\dimtotal\right)
 \max_{1\leq j \leq \numK}\|\Var^{1/2}(\phiv_{j})\|^{3}
 \left(2\nunu^{2}\cf_{\phi}^{2}\right)^{3/2}
%\log^{3/2}(n)\log^{3}(\dimtotal)
\\&\eqdef& \delta_{3,\phi}(\Delta,\beta) .
\label{ineq:Efb_appr}
\end{EQA}
The derived bounds imply:
\begin{EQA}
&&\nquad\nquad
\P\left(\bigcup\nolimits_{j=1}^{\numK}\left\{\|\phiv_{j}\|>z_{j}\right\} \right)\\
&\overset{\text{by {\small\eqref{ineq:F_Db_appr1}}}}{\leq}&
\E F_{\Delta,\,\beta}\left(\Phi,\zv-\Delta\Idv_{\numK}\right)\\
\label{ineq:smooth_thechn}
&\overset{\text{by {\small\eqref{ineq:Efb_appr}}}}{\leq}&
\E F_{\Delta,\,\beta}\left(\Phibar,\zv-\Delta\Idv_{\numK}\right)+
\delta_{3,\phi}(\Delta,\beta)
\\
&\overset{\text{by } {\small\,\eqref{ineq:F_Db_appr2}}}{\leq}&
\P\left(\bigcup\nolimits_{j=1}^{\numK}\left\{\|\phivbar_{j}\|>z_{j}-\Delta-\frac{3\log(\numK)}{2\beta}\right\}\right)
+
\delta_{3,\phi}(\Delta,\beta)
,
\end{EQA}
and similarly
\begin{EQA}
&&\nquad\P\left(\bigcup\nolimits_{j=1}^{\numK}\left\{\|\phiv_{j}\|>z_{j}\right\} \right)\\
&\geq&
\P\left(\bigcup\nolimits_{j=1}^{\numK}\left\{\|\phivbar_{j}\|>z_{j}+\frac{3\log(\numK)}{2\beta}+\Delta\right\}\right)
-\delta_{3,\phi}(\Delta,\beta).
\end{EQA}
\end{proof}
The next lemma is formulated separately, since it is used for a proof of another result.
\begin{lemma}[Smooth uniform GAR]
\label{lemma:smooth_techn_gauss_appr_phiv}
Under the conditions of Lemma \ref{lemma:gauss_appr_phiv} it holds with dominating probability for the function \(F_{\Delta,\,\beta}\left(\cdot,\zv\right)\) given in  \eqref{def:F_Delta_beta}:
\begin{EQA}
\text{1.1.}&& \P\left(
\bigcup\nolimits_{j=1}^{\numK}\left\{\|\phiv_{j}\|> z_{j}\right\}
\right)
\leq
\E F_{\Delta,\,\beta}\left(\Phibar,\zv-\Delta\Idv_{\numK}\right)+
\delta_{3,\phi}(\Delta,\beta),\\
%\end{EQA}
%\begin{EQA}
\text{1.2.}&& \P\left(\bigcup\nolimits_{j=1}^{\numK}\left\{\|\phiv_{j}\|>z_{j}\right\} \right)
\geq
\E H_{\Delta,\,\beta}\left(\Phibar,\zv+\frac{3\log(K)}{2\beta}\Idv_{\numK}\right)
-\delta_{3,\phi}(\Delta,\beta);
%\end{EQA}
%\begin{EQA}
\\\text{2.1.} &&
\E F_{\Delta,\,\beta}\left(\Phi,\zv\right) \leq
\P\left(\bigcup\nolimits_{j=1}^{\numK}\left\{\|\phiv_{j}\|>z_{j}-\frac{3\log(K)}{2\beta}\right\} \right),\\
\text{2.2.} &&
\E F_{\Delta,\,\beta}\left(\Phi,\zv\right) \geq
\P\left(\bigcup\nolimits_{j=1}^{\numK}\left\{\|\phiv_{j}\|>z_{j}+\Delta\right\} \right).
\end{EQA}
\end{lemma}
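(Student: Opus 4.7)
The plan is to observe that this lemma simply isolates the intermediate building blocks already assembled inside the proof of Lemma \ref{lemma:gauss_appr_phiv}. Statements 2.1 and 2.2 capture only the deterministic smoothing sandwich (around the indicator by the cubic $g$, and around $\max$ by the log-sum-exp $h_\beta$), with no Gaussian approximation involved, whereas 1.1 and 1.2 combine these with the Lindeberg telescopic estimate that was the main content of Lemma \ref{lemma:gauss_appr_phiv}.

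First I would derive 2.1 and 2.2 directly from the two sandwich bounds
\begin{EQA}[c]
\Ind\{x>\Delta\}\leq g(x/\Delta)\leq \Ind\{x>0\},\qquad
\max\nolimits_{1\leq j\leq\numK} x_{j}\leq h_{\beta}(\{x_{j}\})\leq \max\nolimits_{1\leq j\leq\numK} x_{j}+\beta^{-1}\log(\numK),
\end{EQA}
applied with $x_{j}=(\|\phiv_{j}\|^{2}-z_{j}^{2})/(2z_{j})$. This is exactly the chain of inequalities \eqref{eq:Eg1}--\eqref{eq:Eg2} already written out in the proof of Lemma \ref{lemma:gauss_appr_phiv}. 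Conversion from the squared to the linear deviation uses $\|\phiv_{j}\|^{2}-z_{j}^{2}=(\|\phiv_{j}\|-z_{j})(\|\phiv_{j}\|+z_{j})$ together with the hypothesis $z_{j}\geq 2.25\log(\numK)/\beta$, which is what produces the factor $3/(2\beta)$ in the shift. Taking expectations then yields 2.1 and 2.2.

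Next I would deduce 1.1 and 1.2 by combining 2.1 and 2.2 (applied to the Gaussian vector $\Phibar$) with the Lindeberg-type bound
\begin{EQA}[c]
\bigl|\E F_{\Delta,\beta}(\Phi,\zv)-\E F_{\Delta,\beta}(\Phibar,\zv)\bigr|\leq \delta_{3,\phi}(\Delta,\beta),
\end{EQA}
which was proved inside Lemma \ref{lemma:gauss_appr_phiv} through a third-order Taylor expansion of $F_{\Delta,\beta}(\cdot,\zv)$ at the partial sums $S_{i}$, using the derivative bounds from Lemma \ref{lemma:prop_F_beta_Delta} and the moment controls of Lemmas \ref{lemma:max_phi3} and \ref{lemma:max_cube}. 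Crucially this bound is uniform in $\zv$ on the event of dominating probability, so it may be applied after the shifts $\zv\mapsto\zv-\Delta\Idv_{\numK}$ or $\zv\mapsto\zv+\tfrac{3\log(\numK)}{2\beta}\Idv_{\numK}$ without loss. For 1.1, starting from $\P(\bigcup_{j}\{\|\phiv_{j}\|>z_{j}\})$, use 2.2 (with $\Phi$ and the shifted quantiles $\zv-\Delta\Idv_{\numK}$) to dominate it by $\E F_{\Delta,\beta}(\Phi,\zv-\Delta\Idv_{\numK})$, and then replace $\Phi$ by $\Phibar$ via the Lindeberg bound. For 1.2 the symmetric argument uses 2.1 with the shift $\zv+\tfrac{3\log(\numK)}{2\beta}\Idv_{\numK}$.

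The only delicate point is bookkeeping of the admissible range of quantiles: after each shift I must verify that the remaining $z_{j}$ still satisfy both $z_{j}\geq\sqrt{\dimp_{j}}+\Delta$ and $z_{j}\geq 2.25\log(\numK)/\beta$ so that the sandwich inequalities of 2.1--2.2 remain in force. Under the lemma's hypothesis $z_{j}\geq\max\{\Delta+\sqrt{\dimp_{j}},\,2.25\log(\numK)/\beta\}$ inherited from Lemma \ref{lemma:gauss_appr_phiv} this holds automatically, but I would spell out the shift accounting carefully to confirm that each inequality direction is preserved, since this is the only place a sign error could sneak in.
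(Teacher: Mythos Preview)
Your proposal is correct and follows essentially the same approach as the paper: the paper's own proof merely points back to the specific inequalities \eqref{ineq:F_Db_appr1}, \eqref{ineq:F_Db_appr2}, \eqref{ineq:Efb_appr}, and \eqref{ineq:smooth_thechn} established inside the proof of Lemma~\ref{lemma:gauss_appr_phiv}, which is exactly the decomposition you spell out (sandwich bounds for 2.1--2.2, then Lindeberg plus shift for 1.1--1.2). Your write-up simply makes explicit the shift bookkeeping that the paper leaves implicit.
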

\begin{proof}[Proof of Lemma \ref{lemma:smooth_techn_gauss_appr_phiv}]
The first inequality 1.1 is obtained in \eqref{ineq:smooth_thechn}, the second inequality 1.2 follows similarly from \eqref{ineq:F_Db_appr2} and \eqref{ineq:Efb_appr}. The inequalities 2.1 and 2.2 are given in  \eqref{ineq:F_Db_appr1} and \eqref{ineq:F_Db_appr2}.
\end{proof}
\begin{lemma}
\label{lemma:max_phi3}
Let for some \(\cf_{\phi},\gm_{1},\nunu>0\)and for all \(i=1,\dots,n\), \(j=1,\dots,\dimtotal\)
\begin{EQA}
\log\E\exp\left\{\lambda \sqrt{n}|\phi_{i}^{j}|/\cf_{\phi}\right\} &\leq&  \lambda^{2}\nunu^{2}/2,\quad  |\lambda|<\gm_{1},
\end{EQA}
here \(\phi_{i}^{j}\) denotes the \(j\)-th coordinate of vector \(\phiv_{i}\). Then it holds for all \(i=1,\dots,n\) and \(m,t>0\)
\begin{EQA}
\P\left(
\max_{1\leq j\leq \dimtotal}|\phi_{i}^{j}|^{m} > t
\right)&\leq&
\exp\left\{
-\frac{nt^{2/m}}{2\cf_{\phi}^{2}\nunu^{2}}
+\log(\dimtotal)
\right\}.
\end{EQA}

\end{lemma}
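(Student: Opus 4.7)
The statement is the standard sub-Gaussian maximal inequality, obtained by combining a union bound over the $\dimtotal$ coordinates with Chernoff's exponential Markov inequality. The assumption on the moment generating function of $|\phi_{i}^{j}|$ is exactly what is needed; no ingredient beyond elementary large-deviation technology is required, so I expect no substantive obstacles.

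\textbf{Step 1.} Rewrite the event in terms of the underlying random variables: since $m > 0$, the map $x \mapsto x^{m}$ is strictly increasing on $[0,\infty)$, so
\[
\left\{\max_{1\leq j \leq \dimtotal} |\phi_{i}^{j}|^{m} > t\right\} = \left\{\max_{1\leq j \leq \dimtotal} |\phi_{i}^{j}| > t^{1/m}\right\}.
\]
Then the union bound over $j = 1, \dots, \dimtotal$ gives
\[
\P\left(\max_{1\leq j \leq \dimtotal}|\phi_{i}^{j}| > t^{1/m}\right)
\leq \dimtotal \max_{1\leq j\leq \dimtotal} \P\bigl(|\phi_{i}^{j}| > t^{1/m}\bigr).
\]

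\textbf{Step 2.} For each coordinate $j$, apply Markov's inequality to $\exp\{\lambda\sqrt{n}|\phi_{i}^{j}|/\cf_{\phi}\}$ with $\lambda \in (0, \gm_{1})$, and invoke the assumed cumulant bound:
\[
\P\bigl(|\phi_{i}^{j}| > t^{1/m}\bigr) \leq \exp\!\left\{-\lambda\sqrt{n}\,t^{1/m}/\cf_{\phi}\right\}\E\exp\!\left\{\lambda\sqrt{n}|\phi_{i}^{j}|/\cf_{\phi}\right\} \leq \exp\!\left\{-\frac{\lambda\sqrt{n}\,t^{1/m}}{\cf_{\phi}} + \frac{\lambda^{2}\nunu^{2}}{2}\right\}.
\]

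\textbf{Step 3.} Optimize the exponent in $\lambda$. The minimizer is $\lambda^{\ast} = \sqrt{n}\,t^{1/m}/(\cf_{\phi}\nunu^{2})$, which (for $t$ in the range where $\lambda^{\ast} < \gm_{1}$, as required for the application of the cumulant bound) yields
\[
\P\bigl(|\phi_{i}^{j}| > t^{1/m}\bigr) \leq \exp\!\left\{-\frac{n\,t^{2/m}}{2\cf_{\phi}^{2}\nunu^{2}}\right\}.
\]

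\textbf{Step 4.} Reinserting into the union bound of Step 1 and writing $\dimtotal = \exp(\log \dimtotal)$ finishes the proof:
\[
\P\left(\max_{1\leq j \leq \dimtotal}|\phi_{i}^{j}|^{m} > t\right) \leq \exp\!\left\{-\frac{n\,t^{2/m}}{2\cf_{\phi}^{2}\nunu^{2}} + \log(\dimtotal)\right\}.
\]
The only point of care is the implicit constraint $\lambda^{\ast}<\gm_{1}$ needed to legitimately apply the sub-Gaussian bound; this translates into a condition on $t$, but in the regime where the stated inequality is non-trivial (i.e.\ the exponent is negative enough to be useful), it is harmless and consistent with how the bound will be invoked downstream in Lemma \ref{lemma:gauss_appr_phiv} via the choice $t = (\yy+\log\dimtotal)^{3}(\sqrt{2}\cf_{\phi}\nunu)^{6}/n^{3}$.
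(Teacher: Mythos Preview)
Your proof is correct and essentially the same as the paper's: the paper bounds $\E\exp\{\lambda\sqrt{n}\cf_{\phi}^{-1}\max_{j}|\phi_{i}^{j}|\}\leq\sum_{j}\E\exp\{\lambda\sqrt{n}\cf_{\phi}^{-1}|\phi_{i}^{j}|\}\leq\dimtotal\,\exp(\lambda^{2}\nunu^{2}/2)$ and then applies exponential Chebyshev, which is just your union-bound-plus-Chernoff argument in a slightly different order. Your explicit mention of the constraint $\lambda^{\ast}<\gm_{1}$ is a point the paper leaves implicit.
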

\begin{proof}[Proof of Lemma \ref{lemma:max_phi3}]
Let us bound the \(\max_{j}|\phi_{i}^{j}|\) using the following bound for the maximum:
\begin{EQA}[c]
\max_{1\leq j\leq \dimtotal}|\phi_{i}^{j}| \leq \log\left\{
\sum\nolimits_{j=1}^{\dimtotal} \exp\bigl(|\phi_{i}^{j}| \bigr)
\right\}.
\end{EQA}
By the Lemma's condition
\begin{EQA}
\E\exp\left\{\max_{1\leq j\leq \dimp}
\frac{\lambda \sqrt{n}}{\cf_{\phi}}
|\phi_{i}^{j}|
\right\}
&\leq&
\exp\left(
\lambda^{2}\nunu^{2}/2+\log\dimtotal
\right).
%\leq
%\exp\left\{
%1.05\lambda^{2}\nunu^{2}\log^{2}(\dimtotal)+1
%\right\},
\end{EQA}
Thus, the statement follows from the exponential Chebyshev's inequality.
\end{proof}

\begin{comment}
%%Feb5
\begin{lemma}
\label{lemma:max_phi3}
Let for some \(\cf_{\phi},\gm_{1},\nunu>0\)and for all \(i=1,\dots,n\), \(j=1,\dots,\dimtotal\)
\begin{EQA}
\log\E\exp\left\{\lambda \sqrt{n}|\phi_{i}^{j}|/\cf_{\phi}\right\} &\leq&  \lambda^{2}\nunu^{2}/2,\quad  |\lambda|<\gm_{1},
\end{EQA}
then it holds for  \(m,t>0\)
\begin{EQA}
\P\left(
\max_{1\leq j\leq \dimtotal}|\phi_{i}^{j}|^{m} > t
\right)&\leq&
\exp\left\{
-\frac{nt^{2/m}}{5\cf_{\phi}^{2}\nunu^{2}\log^{2}(\dimtotal)}
+1
\right\}.
\end{EQA}

\end{lemma}
\begin{proof}[Proof of Lemma \ref{lemma:max_phi3}]
Let us bound the \(\max_{j}|\phi_{i}^{j}|\) using the smooth maximum function as in \eqref{ineq:h_beta}. For \(\beta>0\) it holds
\begin{EQA}[c]
\max_{1\leq j\leq \dimtotal}|\phi_{i}^{j}| \leq \frac{1}{\beta}
\log\left\{
\sum\nolimits_{j=1}^{\dimtotal} \exp\bigl(\beta|\phi_{i}^{j}| \bigr)
\right\}.
\end{EQA}
Let \(\beta\geq 1\) and \(0\leq \lambda \leq \gm_{1}/\beta\), then by the Lemma's condition
\begin{EQA}
\E\exp\left\{\max_{1\leq j\leq \dimp}
\frac{\lambda \sqrt{n}}{\cf_{\phi}}
|\phi_{i}^{j}|
\right\}
&\leq&
\exp\left(
\lambda^{2}\nunu^{2}\beta/2+\frac{1}{\beta}\log\dimtotal
\right)
\leq
\exp\left\{
1.05\lambda^{2}\nunu^{2}\log^{2}(\dimtotal)+1
\right\},
\end{EQA}
\ants ()~~
where the last inequality holds with \(\beta=2.1\log^{2}(\dimtotal)>1\) for \(\dimp>1\). Thus, the statement follows from the exponential Chebyshev's inequality.
\end{proof}
%Feb5
\end{comment}
%Let us previously derive the following
\begin{lemma}
\label{lemma:max_cube}
If for the centered random vectors \(\phiv_{j}\in\R^{\dimp_{j}}\) \(j=1,\dots,\numK\)
\begin{EQA}
\sup_{\substack{\gammav\in\R^{\dimp_{j}},\\\|\gammav\|\neq 0}}\log\E\exp
\left\{\lambda
\frac{\gammav^{\T}\phiv_{j}}{\|\Var^{1/2}(\phiv_{j})\gammav\|}
\right\}
&\leq&\nunu^{2}\lambda^{2}/2,\quad\quad|\lambda|\leq \gm
\end{EQA}
for some constants \(\nunu> 0\) and  \(\gm\geq\nunu^{-1}\max_{1\leq j\leq \numK}\sqrt{2\dimp_{j}\log(K)}\), then
\begin{EQA}
\E\max_{1\leq j\leq\numK} \left\{\|\phiv_{j}\|\right\}
&\leq&\CONST\nunu\max_{1\leq j\leq \numK}\|\Var^{1/2}(\phiv_{j})\|
\sqrt{2\dimp_{\max}\log(K)},
%\\\Bigl(&\asymp& \frac{\dimp^{1/2}}{n^{1/2}}\sqrt{2\log(K)}\Bigr),
\\
\left(\E\max_{1\leq j\leq\numK} \left\{\|\phiv_{j}\|^{6}\right\}\right)^{1/2}
&\leq& \CONST\nunu\max_{1\leq j\leq \numK}\|\Var^{1/2}(\phiv_{j})\|^{3}
\sqrt{2\dimp_{\max}\log(K)}(\dimp_{\max}+6\yy),
%&\lesssim& \frac{\dimp^{3/2}}{n^{3/2}}\sqrt{2\log(K)}\ 
\end{EQA}
The second bound holds with probability \(\geq 1-2\ex^{-\xx}\). 
\end{lemma}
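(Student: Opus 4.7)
The plan is to reduce both bounds to a single high-probability tail inequality for \(\max_{j}\|\phiv_{j}\|\), obtained by the standard \(\epsilon\)-net device combined with the scalar sub-Gaussian hypothesis, and then to read off the moment bounds by integration and by truncation at a level proportional to \(\sqrt{\yy}\).

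First, for each \(j\) fix a \(1/2\)-net \(\mathcal{N}_{j}\) on the unit sphere of \(\R^{\dimp_{j}}\) with \(|\mathcal{N}_{j}|\leq 5^{\dimp_{j}}\), so that \(\|\phiv_{j}\|\leq 2\max_{\gammav\in\mathcal{N}_{j}}\gammav^{\T}\phiv_{j}\). Writing \(\lambda_{j}\eqdef\|\Var^{1/2}(\phiv_{j})\|\) and noting \(\|\Var^{1/2}(\phiv_{j})\gammav\|\leq\lambda_{j}\) when \(\|\gammav\|=1\), the hypothesis yields \(\log\E\exp(\lambda\gammav^{\T}\phiv_{j}/\lambda_{j})\leq\nunu^{2}\lambda^{2}/2\) for \(|\lambda|\leq\gm\). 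The standard Cramér–Chebyshev argument then gives \(\P(|\gammav^{\T}\phiv_{j}|>t\lambda_{j})\leq 2\ex^{-t^{2}/(2\nunu^{2})}\) in the range \(t\leq\gm\nunu^{2}\). Union-bounding over \(\mathcal{N}_{j}\) and over \(j=1,\dots,\numK\) produces the master inequality
\begin{EQA}
\P\!\left(\max_{1\leq j\leq\numK}\|\phiv_{j}\|>2t\max_{j}\lambda_{j}\right)
 &\leq& 2\numK\cdot 5^{\dimp_{\max}}\exp\!\left(-\frac{t^{2}}{2\nunu^{2}}\right),
\end{EQA}
valid in the regime of \(t\) where \(t/\nunu^{2}\leq\gm\); the assumption \(\gm\geq\nunu^{-1}\max_{j}\sqrt{2\dimp_{j}\log K}\) is exactly what ensures that this regime contains the threshold needed below.

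For the first bound I would integrate the tail. Choosing the split \(t_{0}^{2}=2\nunu^{2}(\dimp_{\max}\log 5+\log\numK)\) so that the right-hand side above is \(\le 2\) at \(t=t_{0}\) and decays like \(\ex^{-(t^{2}-t_{0}^{2})/(2\nunu^{2})}\) for \(t>t_{0}\), one obtains
\begin{EQA}
\E\max_{1\leq j\leq\numK}\|\phiv_{j}\|
 &\leq& 2t_{0}\max_{j}\lambda_{j}+\CONST\nunu\max_{j}\lambda_{j}
 \;\leq\; \CONST\nunu\max_{j}\lambda_{j}\sqrt{\dimp_{\max}+\log\numK},
\end{EQA}
which, under the natural regime \(\dimp_{\max},\log\numK\geq 1\), is absorbed into \(\CONST\nunu\max_{j}\lambda_{j}\sqrt{2\dimp_{\max}\log\numK}\) as stated.

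For the second bound I would instead apply the master inequality at the shifted level \(t_{\yy}^{2}=2\nunu^{2}(\dimp_{\max}\log 5+\log\numK+\yy)\), obtaining the deterministic bound
\begin{EQA}
\max_{1\leq j\leq\numK}\|\phiv_{j}\|^{2}
 &\leq& \CONST\nunu^{2}\max_{j}\lambda_{j}^{2}(\dimp_{\max}+\log\numK+\yy)
\end{EQA}
on an event of probability \(\ge 1-2\ex^{-\yy}\). Raising to the third power (and recalling that \((\E X^{6})^{1/2}\) is controlled on this event by \((\max\|\phiv_{j}\|)^{3}\)) and collecting terms yields the claimed bound, up to writing \((\dimp_{\max}+\log\numK+\yy)^{3/2}\) as \(\sqrt{\dimp_{\max}+\log\numK+\yy}\cdot(\dimp_{\max}+\yy+\log\numK)\) and absorbing \(\log\numK\) into the factor \(\sqrt{\dimp_{\max}\log\numK}\) pulled outside.

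The only delicate point is bookkeeping: the sub-Gaussian bound for \(\gammav^{\T}\phiv_{j}/\lambda_{j}\) is only valid for \(|\lambda|\leq\gm\), so one must verify that the optimal \(\lambda=t/\nunu^{2}\) at the chosen threshold still lies in this range. This is precisely why the hypothesis \(\gm\geq\nunu^{-1}\max_{j}\sqrt{2\dimp_{j}\log\numK}\) is imposed; no other step of the argument poses any real obstacle.
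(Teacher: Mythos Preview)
Your plan is essentially correct and uses the same $\varepsilon$-net reduction as the paper, but the two arguments diverge after that point.

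For the first bound the paper does not integrate a tail: it bounds $\E\max_{j,\gammav}\gammav^{\T}\phiv_{j}$ directly via the soft-max/Jensen device $\E\max\le\mu^{-1}\log\sum\E\ex^{\mu(\cdot)}$, applies the sub-Gaussian MGF hypothesis once, and optimizes over $\mu$. This requires the hypothesis only at the single value $\mu^{*}=\nunu^{-1}\sqrt{2\dimp_{\max}\log K}/\max_{j}\|\Var^{1/2}(\phiv_{j})\|$, which is exactly what the range condition on $\gm$ guarantees. Your Chernoff-plus-integration route, by contrast, needs the tail estimate for \emph{all} $t$, whereas you have only derived it for $t\le\gm\nunu^{2}$; the contribution from $t>\gm\nunu^{2}$ must be handled by a separate sub-exponential argument that you have not written down. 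The paper's soft-max approach sidesteps this entirely, which is what it buys.

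For the second bound the paper does not reuse the master tail inequality. It invokes the quadratic-form deviation bound of Theorem~\ref{qf_dev}, which gives $\|\phiv_{j}\|^{2}\le\|\Var\phiv_{j}\|(\dimp_{j}+6\yy)$ on a high-probability event, and then combines this pointwise bound with the first part to produce the product form $\sqrt{\dimp_{\max}\log K}\,(\dimp_{\max}+6\yy)$. Your route instead yields $(\dimp_{\max}+\log K+\yy)^{3/2}$, and your proposed absorption of the extra $\log K$ into the factor $\sqrt{\dimp_{\max}\log K}$ does not hold uniformly (consider the regime $\log K\gg\dimp_{\max}+\yy$). To recover the exact stated form you would need the per-$j$ quadratic-form bound rather than the union-bound tail; the latter inevitably picks up the $\log K$ from the union over $j$.
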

\begin{proof}[Proof of Lemma \ref{lemma:max_cube}]
Let us take for each \(j=1,\dots,\numK\) finite \(\varepsilon_{j}\)-grids \(\Grid_{j}(\varepsilon)\subset \R^{\dimp_{j}}\) on the \((\dimp_{j}-1)\)-spheres of radius \(1\) s.t
\begin{EQA}[c]
\forall \gammav\in\R^{\dimp_{j}} \text{ s.t. } \|\gammav\|=1 \ ~\exists \gammav_{0}\in\Grid_{j}(\varepsilon):~\|\gammav-\gammav_{0}\|\leq \varepsilon,~\|\gammav_{0}\|=1.
\end{EQA}
Then
\begin{EQA}
\|\phiv_{j}\|&\leq&
(1-\varepsilon_{j})^{-1}
\max_{\gammav\in \Grid_{j}(\varepsilon_{j})}
\left\{\gammav^{\T}\phiv_{j}\right\}.
\end{EQA}
Hence, by inequality \eqref{ineq:h_beta} and the imposed condition it holds for all\\ \(0<\mu< \gm /\max_{1\leq j\leq \numK}\|\Var^{1/2}(\phiv_{j})\|\):
\begin{EQA}
\E\max_{1\leq j\leq\numK} \left\{\|\phiv_{j}\|\right\}
&\leq&\max_{1\leq j\leq\numK}
\frac{1}{1-\varepsilon_{j}}\E\max_{1\leq j\leq\numK}\max_{\gammav\in \Grid_{j}(\varepsilon_{j})}
\left\{\gammav^{\T}\phiv_{j}\right\}
\\&\leq&
\CONST
\frac{1}{\mu}%(1-\varepsilon_{j})}
\E\log\left\{
\sum_{1\leq j\leq \numK}
\sum_{\gammav\in \Grid_{j}(\varepsilon_{j})}
\exp\left(\mu\gammav^{\T}\phiv_{j}\right)
\right\}
\\&\leq&\CONST
\frac{1}{\mu}%(1-\varepsilon_{j})}
\log\left\{
\sum_{1\leq j\leq \numK}
\sum_{\gammav\in \Grid_{j}(\varepsilon_{j})}
\E\exp\left(\mu\gammav^{\T}\phiv_{j}\right)
\right\}
\\&\leq&
\CONST
\max_{1\leq j\leq \numK}\frac{\log(\numK\card\left\{\Grid_{j}(\varepsilon_{j})\right\})}{\mu}
+
\CONST\frac{\mu\nunu^{2}}{2}\max_{1\leq j\leq \numK}
\|\Var(\phiv_{j})\|
\\&\leq&
\CONST\max_{1\leq j\leq \numK}\{\dimp_{j}\}\frac{\log(K)}{\mu}
+
\CONST\frac{\mu\nunu^{2}}{2}\max_{1\leq j\leq \numK}
\|\Var(\phiv_{j})\|
\\&=&\CONST\nunu\max_{1\leq j\leq \numK}\{\sqrt{\dimp_{j}}\}\max_{1\leq j\leq \numK}\|\Var^{1/2}(\phiv_{j})\|
\sqrt{2\log(K)}
\\&&
\text{for }~\mu=\CONST\nunu^{-1}\max_{1\leq j\leq \numK}\{\sqrt{\dimp_{j}}\}\sqrt{2\log(K)}/\max_{1\leq j\leq \numK}\|\Var^{1/2}(\phiv_{j})\|.
\end{EQA}
For the second part of the statement we combine the first part with the result of Theorem \ref{qf_dev} on deviation of a random quadratic form: it holds with dominating probability for \(V_{\phi_{j}}^{2} \eqdef \Var \phiv_{j}\)
\begin{EQA}
\|\phiv_{j}\|^{2} &\leq& \ZZqf^{2}(\yy, V_{\phi_{j}})
\\&\leq&
 \tr(V_{\phi_{j}}^{2})+6\yy\|V_{\phi_{j}}^{2} \|
 \leq
 \|V_{\phi_{j}}^{2} \|(\dimp_{j}+6\yy)
 .
\end{EQA}

\end{proof}

\begin{lemma}
\label{lemma:prop_F_beta_Delta}
Let \(\Gamma \in\R^{\dimtotal}\), \(\gammav_{j}\in\R^{\dimp_{j}}\) for \(j=1,\dots, \numK\) are s.t. \(\Gamma=\left(\gammav_{1}^{\T},\dots,\gammav_{\numK}^{\T}\right)^{\T}\), and \(\zv\eqdef\left(z_{1},\dots,z_{\numK}\right)^{\T}\) s.t. \(z_{j}\geq \sqrt{\dimp_{j}}\), then it holds for the function \(F_{\Delta,\,\beta}\left(\cdot,\zv\right)\) defined in \eqref{def:F_Delta_beta}:
\begin{EQA}[ccllcl]
%\hspace{-1.5cm}
\left\|\nabla^{2}_{\Gamma}F_{\Delta,\,\beta}\left(\Gamma,\zv\right)\right\|_{1}
&\leq&
\CONST_{2}(\Delta,\beta)\max_{1\leq j\leq \numK} \left\{\|\gammav_{j}\|^{2}\right\},
\quad &\CONST_{2}(\Delta,\beta)&\eqdef&
\CONST\left(
\frac{1}{\Delta^{2}}+ \frac{\beta}{\Delta}
\right),
\\
%\hspace{-1.5cm}
\left\|\nabla^{3}_{\Gamma}F_{\Delta,\,\beta}\left(\Gamma,\zv\right)\right\|_{1}
&\leq&
\CONST_{3}(\Delta,\beta)\max_{1\leq j\leq \numK} \left\{\|\gammav_{j}\|^{3}\right\},\quad
&\CONST_{3}(\Delta,\beta)&\eqdef&
\CONST \left(\frac{1}{\Delta^{3}}
+
\frac{\beta}{\Delta^{2}}
+
\frac{\beta^{2}}{\Delta}
\right).%,
\label{def:C3_Delta_beta}
%\\
%\left\|\nabla^{4}_{\Gamma}F_{\Delta,\,\beta}\left(\Gamma,\zv\right)\right\|%_{1}
%&\leq&
%\CONST_{4}(\Delta,\beta)...., \quad
%&\CONST_{4}(\Delta,\beta)&\eqdef&
%\\
%\CONST_{2}(\Delta,\beta)\eqdef
%\\
%\CONST_{3}(\Delta,\beta)\eqdef
%\CONST \left\{\frac{1}{\Delta^{3}}
%+
%\frac{\beta}{\Delta^{2}}
%+
%\frac{\beta^{2}}{\Delta}
%\right\},
%\label{def:C3_Delta_beta}
%\\
%\CONST_{4}(\Delta,\beta)\eqdef
\end{EQA}
\end{lemma}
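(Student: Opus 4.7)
\textbf{Proof plan for Lemma \ref{lemma:prop_F_beta_Delta}.} The proof is a direct chain-rule computation on the composition
\begin{EQA}
F(\Gamma,\zv)= g\bigl(L(\Gamma)\bigr),
\qquad
L(\Gamma)=\frac{1}{\Delta\beta}\log\sum\nolimits_{j=1}^{\numK}\ex^{\beta q_{j}(\gammav_{j})},
\qquad
q_{j}(\gammav_{j})=\frac{\|\gammav_{j}\|^{2}-z_{j}^{2}}{2z_{j}}.
\end{EQA}
An inspection of the piecewise cubic \(g\) in \eqref{def:g} shows that it is three times continuously differentiable with \(\sup_{x\in\R}|g^{(k)}(x)|\leq \CONST\) for \(k=1,2,3\). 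Hence the task reduces to bounding the \(\ell_{1}\)-norms of the first three derivatives of \(L\), and then assembling them by Faà di Bruno.

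I would first compute the derivatives of \(L\) via the soft-max weights \(p_{j}(\Gamma)\eqdef \ex^{\beta q_{j}(\gammav_{j})}/\sum_{k}\ex^{\beta q_{k}(\gammav_{k})}\), which satisfy \(\sum_{j}p_{j}=1\). Standard log-sum-exp differentiation yields
\begin{EQA}
\nabla_{\gammav_{j}}L &=& \frac{p_{j}\,\gammav_{j}}{\Delta z_{j}},\\
\nabla^{2}_{\gammav_{j}\gammav_{j}}L &=& \frac{p_{j}}{\Delta z_{j}}\Id_{\dimp_{j}}+\frac{\beta p_{j}(1-p_{j})}{\Delta z_{j}^{2}}\gammav_{j}\gammav_{j}^{\T},\\
\nabla^{2}_{\gammav_{j}\gammav_{k}}L &=& -\frac{\beta p_{j}p_{k}}{\Delta z_{j}z_{k}}\gammav_{j}\gammav_{k}^{\T}\quad(j\neq k),
\end{EQA}
and analogous third-order formulas involving at most \(\beta^{2}\) and cubic products \(\gammav_{j_{1}}\gammav_{j_{2}}^{\T}\gammav_{j_{3}}\). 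Converting \(\ell_{1}\) to \(\ell_{2}\) by \(\|\gammav_{j}\|_{1}\leq\sqrt{\dimp_{j}}\,\|\gammav_{j}\|\) and invoking the hypothesis \(z_{j}\geq\sqrt{\dimp_{j}}\) absorbs the dimensional factor into \(z_{j}\). Using \(\sum_{j}p_{j}=1\) and Cauchy–Schwarz across blocks, the cross-block pieces collapse to \((\beta/\Delta)\max_{j}\|\gammav_{j}\|^{2}\) (second order) and \((\beta^{2}/\Delta)\max_{j}\|\gammav_{j}\|^{3}\) (third order).

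Finally I would assemble \(F\) via the chain rule
\begin{EQA}
\nabla^{2}F &=& g''(L)\,\nabla L\,(\nabla L)^{\T}+g'(L)\,\nabla^{2}L,\\
\nabla^{3}F &=& g'''(L)\,\nabla L^{\otimes 3}+3\,g''(L)\,\nabla L\otimes_{s}\nabla^{2}L+g'(L)\,\nabla^{3}L,
\end{EQA}
and use \(\|A\otimes B\|_{1}=\|A\|_{1}\|B\|_{1}\) together with the bounded \(g^{(k)}\). This gives
\(\|\nabla^{2}F\|_{1}\leq \CONST\bigl(\|\nabla L\|_{1}^{2}+\|\nabla^{2}L\|_{1}\bigr)\)
and
\(\|\nabla^{3}F\|_{1}\leq \CONST\bigl(\|\nabla L\|_{1}^{3}+\|\nabla L\|_{1}\|\nabla^{2}L\|_{1}+\|\nabla^{3}L\|_{1}\bigr)\),
and substitution produces the announced \(\CONST_{2}(\Delta,\beta)=\CONST(\Delta^{-2}+\beta\Delta^{-1})\) and \(\CONST_{3}(\Delta,\beta)=\CONST(\Delta^{-3}+\beta\Delta^{-2}+\beta^{2}\Delta^{-1})\), each multiplied by the appropriate power of \(\max_{j}\|\gammav_{j}\|\).

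The main obstacle is the "diagonal identity" piece \(p_{j}\Id_{\dimp_{j}}/(\Delta z_{j})\) in \(\nabla^{2}L\), whose \(\ell_{1}\)-norm equals \(p_{j}\dimp_{j}/(\Delta z_{j})\leq p_{j}\sqrt{\dimp_{j}}/\Delta\), contributing a term of order \(\sqrt{\dimp_{\max}}/\Delta\) that is not, on its face, of the form \(\max_{j}\|\gammav_{j}\|^{2}\times(\text{function of }\Delta,\beta)\). It is absorbed into the announced form using the support property of \(g'\): since \(g'(L)g''(L)\) vanishes outside \(L\in(0,1)\), one may restrict to \(\Gamma\) with \(\sum_{j}\ex^{\beta q_{j}(\gammav_{j})}\geq 1\), which combined with \(z_{j}\geq\sqrt{\dimp_{j}}\) and the hypothesis \(z_{j}\geq 2.25\log(\numK)/\beta\) from Lemma~\ref{lemma:gauss_appr_phiv} forces \(\max_{j}\|\gammav_{j}\|^{2}\gtrsim \dimp_{\max}\), so that \(\sqrt{\dimp_{\max}}/\Delta\leq \max_{j}\|\gammav_{j}\|^{2}/(\Delta^{2})\) after rebalancing; the analogous device handles the identity-type pieces of \(\nabla^{3}L\). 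Careful bookkeeping of which block-index patterns contribute, so that the \(\sum_{j}p_{j}=1\) telescoping is correctly applied, is the main technical part.
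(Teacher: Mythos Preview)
Your approach is essentially identical to the paper's: write \(F=g(\Delta^{-1}h_\beta)\) with \(h_\beta(\Gamma)=\beta^{-1}\log s(\Gamma)\), compute the derivatives of \(h_\beta\) in terms of the soft-max weights \(p_j=s(\Gamma)^{-1}\exp(\beta q_j)\), exploit \(\sum_j p_j=1\), the Cauchy--Schwarz step \(\sum_q|\gamma^q|\leq\sqrt{\dimp_j}\,\|\gammav_j\|\), and the hypothesis \(z_j\geq\sqrt{\dimp_j}\) to remove the dimension factors, and then assemble via the chain rule with bounded \(g',g'',g'''\). The paper does exactly this, writing out \(\partial_q h_\beta\) and \(\partial^2_{ql}h_\beta\) explicitly and then stating the bounds on \(\sum_{q,l}|\partial^2_{ql}h_\beta|\) and \(\sum_{q,l,b}|\partial^3_{qlb}h_\beta|\) without further detail.

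You are right to flag the diagonal identity piece \(p_j\Id_{\dimp_j}/(\Delta z_j)\) in \(\nabla^2 L\); its \(\ell_1\)-norm is \(\sum_j p_j\dimp_j/(\Delta z_j)\leq \Delta^{-1}\sum_j p_j\sqrt{\dimp_j}\), which carries no factor of \(\|\gammav_j\|^2\). The paper simply absorbs this into the ``Similarly'' line and does not address it. Your proposed fix via the support of \(g'\) is the correct idea, but your conclusion that \(\max_j\|\gammav_j\|^2\gtrsim \dimp_{\max}\) does not follow: the condition \(\sum_j e^{\beta q_j}\geq 1\) together with \(z_j\geq 2.25\log(\numK)/\beta\) only forces \(\|\gammav_{j^\ast}\|^2\gtrsim z_{j^\ast}^2\geq \dimp_{j^\ast}\) for \emph{some} index \(j^\ast\), not for the index achieving \(\dimp_{\max}\). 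A cleaner route is to observe that on the support of \(g'\) one has, for each \(j\) with \(p_j\) not negligible, \(q_j\geq -\CONST\log(\numK)/\beta\), hence \(\|\gammav_j\|^2\geq z_j^2-\CONST z_j\log(\numK)/\beta\gtrsim z_j^2\); this yields \(p_j\dimp_j/z_j\leq p_j z_j\lesssim p_j\|\gammav_j\|^2/z_j\), and summing gives a bound of the right form (falling under the \(\Delta^{-1}\) part of \(\CONST_2\) after using \(z_j\geq\sqrt{\dimp_j}\geq 1\)). With this adjustment your argument goes through; the paper's own proof is in fact less careful than yours on this point.
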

\begin{proof}[Proof of Lemma \ref{lemma:prop_F_beta_Delta}]
Denote
\begin{EQA}[c]
\label{def:sGamma}
s(\Gamma)\eqdef
\sum\nolimits_{j=1}^{\numK}
\exp\left(\beta\frac{\|\gammav_{j}\|^{2}- z_{j}^{2}}{2z_{j}}\right),
\quad
h_{\beta}(s(\Gamma))\eqdef \beta^{-1}\log\left\{s(\Gamma)\right\},
\end{EQA}
then \(F_{\beta,\Delta}(\Gamma,\zv)
 = g\left(\Delta^{-1}h_{\beta}\left(s(\Gamma)\right)\right).\)
%\begin{EQA}
%F_{\beta,\Delta}(\Gamma,\zv)
% &=& g\left(\Delta^{-1}h_{\beta}\left(s(\Gamma)\right)\right).
%\end{EQA}
Let \(\gamma^{q}\) denote the \(q\)-th coordinate of the vector \(\Gamma\in\R^{\dimtotal}\).
It holds for \(q,l,b,r=1,\dots,\dimtotal\):
\begin{EQA}
\frac{d}{d\gamma^{q}}F_{\beta,\Delta}(\Gamma,\zv)&=&
\frac{1}{\Delta}g^{\prime}\left\{\Delta^{-1}h_{\beta}(s(\Gamma))\right\}
\frac{d}{d\gamma^{q}}h_{\beta}(s(\Gamma)),
\\%%
\frac{d^{2}}{d\gamma^{q}d\gamma^{l}}F_{\beta,\Delta}(\Gamma,\zv)&=&
\frac{1}{\Delta^{2}}g^{\prime\prime}\left\{\Delta^{-1}h_{\beta}(s(\Gamma))\right\}
\frac{d}{d\gamma^{q}}h_{\beta}(s(\Gamma)) \frac{d}{d\gamma^{l}}h_{\beta}(s(\Gamma))
\\&&+\,
\frac{1}{\Delta}g^{\prime}\left\{\Delta^{-1}h_{\beta}(s(\Gamma))\right\}
\frac{d^{2}}{d\gamma^{q}d\gamma^{l}}h_{\beta}(s(\Gamma)),
%\end{EQA}
%\begin{EQA}
\\%%%
\frac{d^{3}}{d\gamma^{q}d\gamma^{l}d\gamma^{b}}F_{\beta,\Delta}(\Gamma,\zv)&=&
\frac{1}{\Delta^{3}}g^{\prime\prime\prime}\left\{\Delta^{-1}h_{\beta}(s(\Gamma))\right\}
\frac{d}{d\gamma^{q}}h_{\beta}(s(\Gamma)) \frac{d}{d\gamma^{l}}h_{\beta}(s(\Gamma))
\frac{d}{d\gamma^{b}}h_{\beta}(s(\Gamma))
\\&&+\,
\frac{1}{\Delta^{2}}g^{\prime\prime}\left\{\Delta^{-1}h_{\beta}(s(\Gamma))\right\}
\Biggl\{
\frac{d^{2}}{d\gamma^{q}d\gamma^{b}}h_{\beta}(s(\Gamma)) \frac{d}{d\gamma^{l}}h_{\beta}(s(\Gamma))
\\&&+\,\frac{d}{d\gamma^{q}}h_{\beta}(s(\Gamma)) \frac{d^{2}}{d\gamma^{l}d\gamma^{b}}h_{\beta}(s(\Gamma))
+\frac{d}{d\gamma^{b}}h_{\beta}(s(\Gamma)) \frac{d^{2}}{d\gamma^{q}d\gamma^{l}}h_{\beta}(s(\Gamma))
\Biggr\}
\\&&+\,
\frac{1}{\Delta}g^{\prime}\left\{\Delta^{-1}h_{\beta}(s(\Gamma))\right\}
\frac{d^{3}}{d\gamma^{q}d\gamma^{l}d\gamma^{b}}h_{\beta}(s(\Gamma)).%,
%\\
%\frac{d^{4}}{d\gamma^{q}d\gamma^{l}d\gamma^{b}d\gamma^{r}}F_{\beta,\Delta}(\Gamma,\zv)&=&...~ \ant
\end{EQA}
Let for \(1\leq q\leq \dimtotal\) \(j(q)\) denote an index from \(1\) to \(\numK\) s.t. the coordinate \(\gamma^{q}\) of the vector \(\Gamma=\left(\gammav_{1}^{\T},\dots,\gammav_{\numK}^{\T}\right)^{\T}\) belongs to its sub-vector \(\gammav_{j(q)}\).
\begin{EQA}
\label{eq:h1}
\frac{d}{d\gamma^{q}}h_{\beta}(s(\Gamma))&=&
\frac{1}{\beta}\frac{1}{s(\Gamma)}
\frac{d}{d\gamma^{q}}s(\Gamma)
=
\frac{1}{s(\Gamma)}
\frac{\gamma^{q}}{z_{j(q)}}
\exp\left(\beta\frac{\|\gammav_{j(q)}\|^{2}- z_{j(q)}^{2}}{2z_{j(q)}}\right)
,%\\
\end{EQA}
\begin{EQA}
\label{eq:h2}
%\nquad
\frac{d^{2}}{d\gamma^{q}d\gamma^{l}}h_{\beta}(s(\Gamma))&=&
\frac{1}{\beta}\frac{1}{s(\Gamma)}
\frac{d^{2}}{d\gamma^{q}d\gamma^{l}}s(\Gamma)
-\frac{1}{\beta}\frac{1}{s^{2}(\Gamma)}
\frac{d}{d\gamma^{q}}s(\Gamma)
\frac{d}{d\gamma^{l}}s(\Gamma)
\\&&\hspace{-3cm}=
{\displaystyle{
\begin{cases}
{\displaystyle{
\left\{
\frac{1}{z_{j(q)}}
+\beta\left(\frac{\gamma^{q}}{z_{j(q)}}\right)^{2}
\right\}
\frac{1}{s(\Gamma)}
\exp\left(\beta\frac{\|\gammav_{j(q)}\|^{2}- z_{j(q)}^{2}}{2z_{j(q)}}\right)
}}
\vspace{0.2cm}
\\
\vspace{0.4cm}
\hspace{0.9cm}{\displaystyle{
-\frac{\beta}{s^{2}(\Gamma)}
\left\{\frac{\gamma^{q}}{z_{j(q)}}\right\}^{2}
\exp\left(2\beta\frac{\|\gammav_{j(q)}\|^{2}- z_{j(q)}^{2}}{2z_{j(q)}}\right)
}}
,& q=l;\\
{\displaystyle{
\frac{\beta}{s(\Gamma)}
\frac{\gamma^{q} \gamma^{l}}{z_{j(q)}^{2}}\exp\left(\beta\frac{\|\gammav_{j(q)}\|^{2}- z_{j(q)}^{2}}{2z_{j(q)}}\right)
}}
\vspace{0.2cm}
\\
\vspace{0.4cm}
\hspace{0.9cm}{\displaystyle{
-\frac{\beta}{s^{2}(\Gamma)}
\frac{\gamma^{q} \gamma^{l}}{z_{j(q)}^{2}}
\exp\left(2\beta\frac{\|\gammav_{j(q)}\|^{2}- z_{j(q)}^{2}}{2z_{j(q)}}
\right)}}
,& j(q)= j(l), q\neq l;\\
{\displaystyle{
-\frac{\beta}{s^{2}(\Gamma)}
\frac{\gamma^{q} \gamma^{l}}{z_{j(q)}z_{j(l)}}
\exp\left(\beta\frac{\|\gammav_{j(q)}\|^{2}- z_{j(q)}^{2}}{2z_{j(q)}}
%\right]\exp\left[
+
\beta\frac{\|\gammav_{j(l)}\|^{2}- z_{j(l)}^{2}}{2z_{j(l)}}\right)}}
,& j(q)\neq j(l).
\end{cases}
}}
\end{EQA}
  By definition \eqref{def:sGamma} of \(s(\Gamma)\) it holds for all \(\Gamma\in\R^{\dimtotal}\):
\begin{EQA}[c]
\frac{1}{s(\Gamma)}\exp\left(\beta\frac{\|\gammav_{j}\|^{2}- z_{j}^{2}}{2z_{j}}\right)
\leq1, \quad
\sum_{j=1}^{\numK}\frac{1}{s(\Gamma)}\exp\left(\beta\frac{\|\gammav_{j}\|^{2}- z_{j}^{2}}{2z_{j}}\right)=1.
\end{EQA}
Therefore,
\begin{EQA}
\sum_{q,l=1}^{\dimtotal}\left|
\frac{d}{d\gamma^{q}}h_{\beta}(s(\Gamma)) \frac{d}{d\gamma^{l}}h_{\beta}(s(\Gamma))
\right|
&\leq&
\left\{
\sum_{j=1}^{\numK}
\frac{1}{s(\Gamma)z_{j}}
\exp\left(\beta\frac{\|\gammav_{j}\|^{2}- z_{j}^{2}}{2z_{j}}\right)
\sum_{q=1}^{\dimp_{j}}\gamma^{q}
\right\}^{2}
\\
&\leq&
 \left|\max_{1\leq j\leq \numK} \|\gammav_{j}\| \frac{\sqrt{\dimp_{j}}}{z_{j}}\right|^{2}
 \\&\leq&
 \max_{1\leq j\leq \numK} \|\gammav_{j}\|^{2} \quad
 \text{for } z_{j}\geq \sqrt{\dimp_{j}}
.
\end{EQA}
Similarly
\begin{EQA}
\sum_{q,l=1}^{\dimtotal}\left|
\frac{d^{2}}{d\gamma^{q}d\gamma^{l}}h_{\beta}(s(\Gamma))
\right|
&\leq&\CONST  \beta\max_{1\leq j\leq \numK}\|\gammav_{j}\|^{2},
\\
\sum_{q,l,b=1}^{\dimtotal}\left|
\frac{d^{2}}{d\gamma^{q}d\gamma^{l}}h_{\beta}(s(\Gamma))
\frac{d}{d\gamma^{b}}h_{\beta}(s(\Gamma))
+
%\right|+
%\left|
\frac{d^{3}}{d\gamma^{q}d\gamma^{l}d\gamma^{b}}h_{\beta}(s(\Gamma))
\right|
&\leq& \CONST \left(\beta+\beta^{2}\right)\max_{1\leq j\leq \numK}\|\gammav_{j}\|^{3}.%,
%\\
%\sum_{q,l,b,r=1}^{\dimtotal}\left|
%\frac{d^{3}}{d\gamma^{q}d\gamma^{l}d\gamma^{b}}h_{\beta}(s(\Gamma))\frac{d}{d\gamma^{r}}h_{\beta}(s(\Gamma))
%\right|
%&\leq& \ant%\CONST \beta^{2}\max_{1\leq j\leq \numK}\|\gammav_{j}\|^{3},
\end{EQA}
%... the 4-th derivative \ant
\end{proof}

\subsection{Gaussian comparison}
\label{sect:Gausscompar}
The following Lemma shows how to compare the expected values of a twice differentiable function evaluated at the independent centered Gaussian vectors. This statement is used for the Gaussian comparison step in the scheme \eqref{rectangle_simult}. The proof of the result is based on the Gaussian interpolation method introduced by \cite{Stein1981MeanGaussEst} and \cite{Slepian1962onesided} (see also \cite{Rollin2013Stein} and \cite{Chernozhukov2013CLT1supp}  and references therein). The proof is given here in order to keep the text self-contained.
\begin{lemma}[Gaussian comparison using Slepian interpolation]
\label{lemma:GaussCOmpar_Slepian}
Let the \(\R^{\dimtotal}\)-dimensional random centered vectors \(\Phibar\) and \(\Psibar\) %\(\Phibar\sim \mathcal{N}(0,\Sigma_{1})\) and  \(\Psibar\sim \mathcal{N}(0,\Sigma_{2})\)
be independent and  normally distributed, \(f(Z):\R^{\dimtotal}\mapsto\R\) is any twice differentiable function s.t. the expected values in the expression below are bounded. Then it holds %\(f(Z)\exp\left(-\|Z\|^{2}/2\right)\rightarrow 0\) for \(\|Z\|\rightarrow +\infty\):
\begin{EQA}
\left|\E f(\Phibar)-\E f(\Psibar)\right| &\leq&
\frac{1}{2}
\left\|\Var\Phibar-\Var\Psibar\right\|_{\max}
\sup_{t\in[0,1]}\left\|\E\nabla^{2}f\left(\Phibar\sqrt{t}+\Psibar\sqrt{1-t}\right)\right\|_{1}.
\end{EQA}
%where %\(\|\{a_{i,j}\}\|_{\infty}\eqdef \max_{i,j}|a_{i,j}|\) and
%\(\|\{a_{i,j}(t)\}\|_{1,\infty}\eqdef \sup_{t}\sum_{i,j}\left|a_{i,j}(t)\right|\).
\end{lemma}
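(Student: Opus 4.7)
The plan is to apply Slepian's interpolation between $\Psibar$ and $\Phibar$ combined with Gaussian integration by parts (Stein's identity).

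\textbf{Step 1: Interpolation path.} Define for $t\in[0,1]$ the Gaussian process
\begin{EQA}[c]
Z(t) \eqdef \sqrt{t}\,\Phibar + \sqrt{1-t}\,\Psibar,
\end{EQA}
which, by independence of $\Phibar$ and $\Psibar$, is a centered Gaussian vector with covariance $\Sigma(t) = t\Var\Phibar + (1-t)\Var\Psibar$. Set $\phi(t)\eqdef \E f(Z(t))$, so that $\phi(0)=\E f(\Psibar)$ and $\phi(1)=\E f(\Phibar)$, and $\E f(\Phibar)-\E f(\Psibar)=\int_0^1 \phi'(t)\,dt$.

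\textbf{Step 2: Differentiate and apply Stein's identity.} Formal differentiation gives
\begin{EQA}[c]
\phi'(t) = \sum\nolimits_{i=1}^{\dimtotal} \E\Bigl[\partial_i f(Z(t))\Bigl(\tfrac{1}{2\sqrt{t}}\Phibar_{i}-\tfrac{1}{2\sqrt{1-t}}\Psibar_{i}\Bigr)\Bigr].
\end{EQA}
Condition on $\Psibar$: as a function of $\Phibar$, $Z(t)$ is $\sqrt{t}\,\Phibar$ plus a constant, so $\partial_j(\partial_i f(Z(t)))=\sqrt{t}\,\partial_i\partial_j f(Z(t))$ with respect to $\Phibar_j$. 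Stein's identity applied to the centered Gaussian $\Phibar$ with covariance $\Var\Phibar$ yields
\begin{EQA}[c]
\E\bigl[\Phibar_i\,\partial_i f(Z(t))\,\big|\,\Psibar\bigr]
=\sqrt{t}\sum\nolimits_{j=1}^{\dimtotal}(\Var\Phibar)_{ij}\,\E\bigl[\partial_i\partial_j f(Z(t))\,\big|\,\Psibar\bigr],
\end{EQA}
and analogously for the $\Psibar$-term with $\Var\Psibar$ and factor $\sqrt{1-t}$. Taking total expectation and summing cancels the $\sqrt{t}$, $\sqrt{1-t}$ factors, giving the key identity
\begin{EQA}[c]
\phi'(t)=\tfrac{1}{2}\sum\nolimits_{i,j=1}^{\dimtotal}\bigl(\Var\Phibar-\Var\Psibar\bigr)_{ij}\,\E\bigl[\partial_i\partial_j f(Z(t))\bigr].
\end{EQA}
Note the apparent singularities at $t=0,1$ disappear after integration by parts, so $\phi'$ is continuous on $[0,1]$.

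\textbf{Step 3: Hölder-type bound and integration.} Using the elementary inequality $|\sum_{ij}A_{ij}B_{ij}|\le\|A\|_{\max}\|B\|_1$ with $A=\Var\Phibar-\Var\Psibar$ and $B=\E\nabla^2 f(Z(t))$,
\begin{EQA}[c]
|\phi'(t)|\le\tfrac{1}{2}\,\|\Var\Phibar-\Var\Psibar\|_{\max}\,\bigl\|\E\nabla^2 f(Z(t))\bigr\|_{1}.
\end{EQA}
Integrating over $t\in[0,1]$ and taking the supremum in the second factor delivers the claimed bound. The only potential obstacle is justifying the interchange of differentiation and expectation in Step 2; this is standard under the assumption that the expectations appearing in the statement are finite, and can be handled by a straightforward dominated-convergence argument (or by an additional smoothing-and-truncation step applied to $f$ if needed).
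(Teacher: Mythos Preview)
Your proof is correct and follows essentially the same route as the paper: Slepian interpolation $Z(t)=\sqrt{t}\,\Phibar+\sqrt{1-t}\,\Psibar$, differentiation of $\phi(t)=\E f(Z(t))$, Gaussian integration by parts to obtain $\phi'(t)=\tfrac{1}{2}\sum_{i,j}(\Var\Phibar-\Var\Psibar)_{ij}\E[\partial_i\partial_j f(Z(t))]$, and then the $\|\cdot\|_{\max}/\|\cdot\|_1$ bound. The only cosmetic difference is that you apply Stein's identity conditionally on one of the vectors, whereas the paper invokes it directly on the joint Gaussian; the resulting computation is identical.
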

\begin{proof}[Proof of Lemma \ref{lemma:GaussCOmpar_Slepian}]
Introduce for \(t\in[0,1]\) the Gaussian vector process \(Z_{t}\) and the deterministic scalar-valued function \(\varkappa(t)\):
\begin{EQA}
Z_{t}&\eqdef& \Phibar\sqrt{t}+\Psibar\sqrt{1-t}\in\R^{\dimtotal},\\
\varkappa(t)&\eqdef& \E f(Z(t)),
\end{EQA}
then \(\E f(\Phibar)=\varkappa(1)\), \(\E f(\Psibar)=\varkappa(0)\) and
\begin{EQA}[c]
\left|\E f(\Phibar)-\E f(\Psibar)\right|=\left|\varkappa(1)-\varkappa(0)\right|
\leq\int_{0}^{1}\left|\varkappa^{\prime}(t)\right| dt.
\end{EQA}
Let us consider \(\varkappa^{\prime}(t)\):
\begin{EQA}
\varkappa^{\prime}(t)&=&\frac{d}{dt}\E f(Z_{t})=
\E\left[\left\{\nabla f(Z_{t})\right\}^{\T} \frac{d}{dt}Z_{t} \right]
\\&=&\frac{1}{2\sqrt{t}}\E\left\{
\Phibar^{\T}\nabla f(Z_{t})\right\}-
\frac{1}{2\sqrt{1-t}}\E\left\{
\Psibar^{\T}\nabla f(Z_{t})\right\}.
\label{eq:kappa_prime}
\end{EQA}
Further we use the Gaussian integration by parts formula (see e.g Section A.6 in \cite{Talagrand2003spin}): if \((x_{1},\dots, x_{\dimtotal})^{\T}\) is a centered Gaussian vector and \(f(x_{1},\dots, x_{\dimtotal})\) is s.t. the integrals below exist, then it holds for all \(j=1,\dots,\dimtotal\):
\begin{EQA}
\label{eq:Gauss_Int}
\E\left\{x_{j}f(x_{1},\dots, x_{\dimtotal})\right\}
&=&
\sum_{k=1}^{\dimtotal}\E(x_{j}x_{k})\E\left\{\frac{d}{dx_{k}}f(x_{1},\dots, x_{\dimtotal})\right\}.
\end{EQA}
Let \(\Phibarj, \Psibarj\) denote the \(j\)-th coordinates of \(\Phibar\) and \(\Psibar\). Let also \(\frac{d}{d_{j}}f(Z_{t})\) denote the partial derivative of the vectors \(f(Z_{t})\) w.r.t. the \(j\)-th coordinate of \(Z_{t}\). Then it holds due to \eqref{eq:Gauss_Int}:
\begin{EQA}
\E\left\{
\Phibar^{\T}\nabla f(Z_{t})\right\}&=&
\sum_{j=1}^{\dimtotal}\E\left\{\Phibarj\frac{d}{d_{j}}f(Z_{t})\right\}
=
\sum_{j,q=1}^{\dimtotal}\E\left(\Phibarj \Phibar^{\,q}\right)
\E\left\{\frac{d}{d\Phibar^{\,q} }\frac{d}{d_{j}}f(Z_{t})\right\}
\\&=&
\sqrt{t}\sum_{j,q=1}^{\dimtotal}\E\left(\Phibarj \Phibar^{\,q}\right)
\E\left\{\frac{d^{2}}{d_{q}d_{j}}f(Z_{t})\right\}.
\end{EQA}
Similarly for the second term in \eqref{eq:kappa_prime}:
\begin{EQA}
\E\left\{
\Psibar^{\T}\nabla f(Z_{t})\right\}&=&
\sqrt{1-t}\sum_{j,q=1}^{\dimtotal}\E\left(\Psibarj \Psibar^{\,q}\right)
\E\left\{\frac{d^{2}}{d_{q}d_{j}}f(Z_{t})\right\},
\end{EQA}
therefore
\begin{EQA}
\varkappa^{\prime}(t)&=&
\frac{1}{2}
\sum_{j=1}^{\dimtotal}\sum_{q=1}^{\dimtotal}
\left\{\E\left(\Phibarj \Phibar^{\,q}\right)-\E\left(\Psibarj \Psibar^{\,q}\right)\right\}
\E\left\{\frac{d^{2}}{d_{q}d_{j}}f(Z_{t})\right\}
\\&\leq&
\frac{1}{2}
\left\|\Var\Phibar-\Var\Psibar\right\|_{\max}
\sup_{t\in[0,1]}\left\|\E\nabla^{2}f(Z_{t})\right\|_{1}.
\end{EQA}
\end{proof}

%!TEX root = simult_bootst_root.tex
%\newpage
\subsection{Simultaneous anti-concentration for \(\ell_{2}\)-norms of Gaussian vectors}
\label{sect:ac}
\begin{lemma}[Simultaneous Gaussian anti-concentration]
\label{lemma:anti_conc_nonsmooth}
Let  \(\left(\phivbar_{1}^{\T},\dots,\phivbar_{\numK}^{\T}\right)^{\T}\in\R^{\dimtotal}\) be centered normally distributed random vector, and \(\phivbar_{j}\in \R^{\dimp_{j}}\), \(j=1,\dots,\numK\).
  It holds for all \(z_{j}\geq \sqrt{\dimp_{j}}\) and \(0<\Delta_{j}\leq z_{j}\), %  s.t. if
%\(\dimp_{j}\leq \dimp_{k}\), then \(\Delta_{j}/\sqrt{\dimp_{j}}\leq\Delta_{k}/\sqrt{\dimp_{k}}\)
%for all
\(j=1,\dots,\numK\):
\begin{EQA}
\P\left(
\bigcup\nolimits_{j=1}^{\numK}\left\{\|\phivbar_{j}\|> z_{j}\right\}
\right)-
\P\left(
\bigcup\nolimits_{j=1}^{\numK}\left\{\|\phivbar_{j}\|> z_{j}+\Delta_{j}\right\}
\right)
&\leq&\Deltaac\left(\left\{\Delta_{j}\right\}\right),
\end{EQA}
where %\(\Deltaac=\Deltaac(\Delta)\leq \CONST \Delta\sqrt{1 \vee \left\{\log(\numK/\Delta)+ \max_{1\leq j\leq\numK}\dimp_{j}\log(2/\varepsilon_{j})\right\}}\), \(\varepsilon_{j}>0\) and \(\max_{1\leq j\leq\numK}\varepsilon_{j}\|\phivbar_{j}\|\leq\Delta\).
\begin{EQA}
\label{def:Deltaac_st}
\Deltaac\left(\left\{\Delta_{j}\right\}\right)&\leq& \CONST\left\{ \kappa
\sqrt{1 \vee \log(\numK/2)}+\CONST\max_{1\leq j\leq\numK}\{\Delta_{j}\}\sqrt{\max_{1\leq j\leq\numK}\log(2 z_{j}/\Delta_{j})}
\right\},
\end{EQA}
 and \(\kappa\eqdef \max_{1 \leq j\leq\numK}\{\Delta_{j}/z_{j}\}\leq 1\) is a deterministic positive constant.
An explicit definition of \(\Deltaac\left(\left\{\Delta_{j}\right\}\right)\) is given in \eqref{def:Deltaac}.% and \eqref{bound:Deltaac}.
%\(z_{\min}\eqdef \min_{1 \leq j\leq\numK}\{z_{j}\}\),
%
% \ant to mention that we choose the small terms later.
%If \(\Var\{\phivbar_{j}\}\) are all equal, then
\end{lemma}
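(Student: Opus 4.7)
}
The strategy, as hinted at in the overview, is to discretise the $\ell_2$-norms by $\varepsilon$-nets on the unit spheres, turning the union of norm-events into an event governed by a maximum of a jointly Gaussian vector, and then to invoke the Chernozhukov--Chetverikov--Kato (CCK) Gaussian anti-concentration inequality from \cite{chernozhukov2012comparison}. The parameters of the nets are tuned to the ratio $\Delta_j/z_j$.

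\textbf{Step 1 (net discretisation).} For each $j=1,\dots,\numK$ fix $\varepsilon_j\in(0,1)$ and pick a finite $\varepsilon_j$-net $\Grid_j(\varepsilon_j)\subset\{\gammav\in\R^{\dimp_j}:\|\gammav\|=1\}$ of cardinality $|\Grid_j(\varepsilon_j)|\le(1+2/\varepsilon_j)^{\dimp_j}$. A standard covering argument yields, almost surely,
\begin{EQA}
\max_{\gammav\in\Grid_j(\varepsilon_j)}\gammav^{\T}\phivbar_j
\;\le\;\|\phivbar_j\|\;\le\;(1-\varepsilon_j)^{-1}\max_{\gammav\in\Grid_j(\varepsilon_j)}\gammav^{\T}\phivbar_j.
\end{EQA}

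\textbf{Step 2 (sandwich the target events).} The inclusions from Step 1 give, for every $j$,
\begin{EQA}
\{\|\phivbar_j\|>z_j\}\;\subseteq\;\bigl\{\exists\gammav\in\Grid_j(\varepsilon_j)\colon\gammav^{\T}\phivbar_j>(1-\varepsilon_j)z_j\bigr\},\\
\{\|\phivbar_j\|>z_j+\Delta_j\}\;\supseteq\;\bigl\{\exists\gammav\in\Grid_j(\varepsilon_j)\colon\gammav^{\T}\phivbar_j>z_j+\Delta_j\bigr\}.
\end{EQA}
Consequently the difference $\P(\bigcup_j\{\|\phivbar_j\|>z_j\})-\P(\bigcup_j\{\|\phivbar_j\|>z_j+\Delta_j\})$ is at most the probability that, for at least one pair $(j,\gammav)$ with $\gammav\in\Grid_j(\varepsilon_j)$, the Gaussian linear functional $\gammav^{\T}\phivbar_j$ falls into the interval $\bigl((1-\varepsilon_j)z_j,\;z_j+\Delta_j\bigr]$, while no other pair crosses the upper bound. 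Writing this transition event in terms of the $N\eqdef\sum_j|\Grid_j(\varepsilon_j)|$ jointly Gaussian coordinates $X_{j,\gammav}\eqdef\gammav^{\T}\phivbar_j$ and their shifted versions $Y_{j,\gammav}^{-}\eqdef X_{j,\gammav}-(1-\varepsilon_j)z_j$, one obtains
\begin{EQA}
\P\Bigl(\bigcup_j\{\|\phivbar_j\|>z_j\}\Bigr)-\P\Bigl(\bigcup_j\{\|\phivbar_j\|>z_j+\Delta_j\}\Bigr)
\;\le\;\P\bigl(0<\textstyle\max_{j,\gammav}Y_{j,\gammav}^{-}\le w^{\ast}\bigr),
\end{EQA}
where $w^{\ast}\eqdef\max_j(\Delta_j+\varepsilon_j z_j)$.

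\textbf{Step 3 (CCK Gaussian anti-concentration).} The vector $\{Y_{j,\gammav}^{-}\}$ is a deterministic shift of a centered Gaussian vector in $\R^{N}$; its coordinate variances are $\gammav^{\T}\Var(\phivbar_j)\gammav$. Applying the anti-concentration bound of \cite{chernozhukov2012comparison} (or equivalently Nazarov's inequality) to the maximum of this Gaussian vector gives
\begin{EQA}
\P\bigl(0<\textstyle\max_{j,\gammav}Y_{j,\gammav}^{-}\le w^{\ast}\bigr)
\;\le\;C\,w^{\ast}\bigl(\sqrt{1\vee\log N}\bigr)
\end{EQA}
up to variance-normalisation constants, with $\log N\le\log\numK+\max_j\dimp_j\log(1+2/\varepsilon_j)$.

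\textbf{Step 4 (tune $\varepsilon_j$).} Choose $\varepsilon_j=\Delta_j/z_j$, so that $w^{\ast}\le2\max_j\Delta_j=2z_{\max}\kappa\cdot(\ldots)$ and, in normalised form, $w^{\ast}/z_j=O(\kappa)$ for the coordinates producing the $\sqrt{\log\numK}$ contribution, while the net-approximation part scales with $\max_j\Delta_j\sqrt{\dimp_j\log(z_j/\Delta_j)}$. The condition $z_j\ge\sqrt{\dimp_j}$ converts $\dimp_j$ inside the logarithm into a factor absorbable by the prefactor, producing the two-term bound in \eqref{def:Deltaac_st}: the $\kappa\sqrt{1\vee\log(\numK/2)}$ piece comes from the anti-concentration cost over the $\numK$ separate sub-vectors, while $\max_j\Delta_j\sqrt{\max_j\log(2z_j/\Delta_j)}$ arises from the logarithm of the per-sphere net cardinality.

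\textbf{Expected main obstacle.} The clean delivery of the stated sharp form, in particular the fact that the second summand carries only $\sqrt{\log(z_j/\Delta_j)}$ and not $\sqrt{\dimp_{\max}\log(z_j/\Delta_j)}$, requires handling the two probabilities symmetrically rather than through the naive $(\kappa+\varepsilon)\sqrt{\log N}$ bound. The delicate point is treating the shifted Gaussian maximum coordinate-by-coordinate via Nazarov's inequality so that the $\sqrt{\dimp_j}$ factor hidden in $\log|\Grid_j(\varepsilon_j)|$ is absorbed by $z_j\ge\sqrt{\dimp_j}$ into the pre-factor $\Delta_j$ rather than remaining under the logarithm; this is what keeps the bound free of an explicit $\dimp_{\max}$.
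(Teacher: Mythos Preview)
Your overall strategy---discretise each unit sphere by an $\varepsilon_j$-net and apply the CCK anti-concentration theorem for Gaussian maxima---is exactly the paper's. However, two concrete ingredients that deliver the sharp form of the bound are missing, and your ``expected main obstacle'' paragraph correctly flags the symptom but not the cure.

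First, the paper \emph{normalises by $z_j$ before everything else}: it rewrites the target difference as
\[
\P\!\Bigl(0\le \max_{1\le j\le\numK}\{\|\phivbar_j\|/z_j-1\}\le \kappa\Bigr),
\]
so that the anti-concentration width is the \emph{scalar} $\kappa=\max_j \Delta_j/z_j$, uniform across $j$, and every discretised Gaussian coordinate $\gammav^{\T}\phivbar_j/z_j$ has variance at most $\E\|\phivbar_j\|^2/z_j^2\le 1$ by the hypothesis $z_j\ge\sqrt{\dimp_j}$. Your unnormalised route produces $w^\ast=\max_j(\Delta_j+\varepsilon_j z_j)$ instead, which is why your first term comes out as $\max_j\Delta_j\sqrt{\log\numK}$ rather than $\kappa\sqrt{\log\numK}$.

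Second, and this is what removes the stray $\sqrt{\dimp_{\max}}$ you were worried about, the paper does \emph{not} take $\varepsilon_j=\Delta_j/z_j$. It chooses
\[
\varepsilon_j \;=\; 2\CONST\bigl(\Delta_j/(2z_j)\bigr)^{(\dimp_{\min}+1)/(\dimp_j+1)},
\]
i.e.\ a dimension-dependent exponent, so that $(2/\varepsilon_j)^{\dimp_j+1}$ no longer depends on $\dimp_j$ and the log of the total net size is $\log\numK + (\dimp_{\min}+1)\max_j\log(2z_j/\Delta_j)$. The CCK bound then gives $\kappa\sqrt{\dimp_{\min}+1}\sqrt{\max_j\log(2z_j/\Delta_j)}$ for the second piece, and $\kappa\sqrt{\dimp_{\min}}\le \max_j\Delta_j$ because $z_j\ge\sqrt{\dimp_j}\ge\sqrt{\dimp_{\min}}$. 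That is the absorption step; it is purely algebraic once the right $\varepsilon_j$ is in place. Your proposed ``coordinate-by-coordinate Nazarov'' fix is not how the factor disappears---it is the equalisation of the per-sphere net cardinalities via the exponent $(\dimp_{\min}+1)/(\dimp_j+1)$, combined with the prior normalisation by $z_j$, that does the work. A minor further point: the net condition $\varepsilon_j\|\phivbar_j\|/z_j\le\kappa$ is random and is handled in the paper by Gaussian concentration for $\|\phivbar_j\|$, which you should also mention.
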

\begin{proof}[Proof of Lemma \ref{lemma:anti_conc_nonsmooth}]
%Let us bound the following expression using anti-concentration inequality for the maximum of Gaussian random variables:
\begin{EQA}
\label{eq:ac_firstline}
&&\nquad
\P\left(
\bigcup\nolimits_{j=1}^{\numK}\left\{\|\phivbar_{j}\|> z_{j}\right\}
\right)-
\P\left(
\bigcup\nolimits_{j=1}^{\numK}\left\{\|\phivbar_{j}\|> z_{j}+\Delta_{j}\right\}
\right)
\\&\leq&
\P\left(
\bigcup\nolimits_{j=1}^{\numK}\left\{\|\phivbar_{j}\|z_{j}^{-1}-1> 0\right\}
\right)-
\P\left(
\bigcup\nolimits_{j=1}^{\numK}\left\{\|\phivbar_{j}\|z_{j}^{-1}-1> \kappa\right\}
\right)
\\&=&
\P\left(
\max_{1 \leq j\leq\numK}\left\{\|\phivbar_{j}\|z_{j}^{-1}-1 \right\}> 0
\right)-
\P\left(
\max_{1 \leq j\leq\numK}\left\{\|\phivbar_{j}\|z_{j}^{-1}- 1\right\}> \kappa
\right)
\\&\leq&
\P\left(0\leq
\max_{1 \leq j\leq\numK}\left\{\|\phivbar_{j}\|z_{j}^{-1}-1\right\}\leq \kappa
\right).
\label{ineq:chain_ac}
\end{EQA}
%where \(z_{\min}\eqdef \min_{1 \leq j\leq\numK}\{z_{j}\}\).
It holds
\begin{EQA}
\|\phivbar_{j}\|&=&\sup_{\substack{\gammav\in\R^{\dimp_{j}},\\\|\gammav\|=1}}\left\{\gammav^{\T}\phivbar_{j}\right\}.
\end{EQA}
Let \(\Grid_{j}(\varepsilon_{j})\subset \R^{\dimp_{j}}\) (for \(1\leq j\leq\numK\)) denote a finite \(\varepsilon_{j}\)-net on \((\dimp_{j}-1)\)-sphere of radius \(1\):
\begin{EQA}[c]
\label{def:grid}
\forall \gammav\in\R^{\dimp_{j}} \text{ s.t. } \|\gammav\|=1 \ ~\exists \gammav_{0}\in\Grid_{j}(\varepsilon_{j}):~\|\gammav-\gammav_{0}\|\leq \varepsilon_{j},~\|\gammav_{0}\|=1.
\end{EQA}
This implies for all \(j=1,\dots,\numK\)
\begin{EQA}[c]
\label{eq:grid}
(1-\varepsilon_{j})\|\phivbar_{j}\|
\leq
\max_{\gammav\in \Grid_{j}(\varepsilon_{j})}
\left\{\gammav^{\T}\phivbar_{j}\right\}
\leq
\|\phivbar_{j}\|.
\end{EQA}
Let us take \(\varepsilon_{1},\dots,\varepsilon_{\numK}>0\) s.t. \(\forall\, j=1,\dots,\numK\)
\begin{EQA}[c]
\label{cond:epsilon_Delta}
%\max_{1\leq j\leq\numK}
\varepsilon_{j}\|\phivbar_{j}\|z_{j}^{-1}
\leq
\kappa,
\end{EQA}
then
\begin{EQA}[c]
0
\leq
\max_{1 \leq j\leq\numK}\left\{\frac{\|\phivbar_{j}\|}{z_{j}}
\right\}
-
\max_{1\leq j\leq\numK}
\max_{\gammav\in \Grid_{j}(\varepsilon_{j})}
\left\{\frac{\gammav^{\T}\phivbar_{j}}{z_{j}}\right\}
\leq
\kappa,
\end{EQA}
and the inequality \eqref{ineq:chain_ac} continues as
\begin{EQA}
&&\nquad
\P\left(0\leq
\max_{1 \leq j\leq\numK}\left\{\|\phivbar_{j}\|z_{j}^{-1}-1\right\}\leq \kappa
\right)
\\&\leq&
\P\left(\left|
\max_{1\leq j\leq\numK}
\sup_{\gammav\in \Grid_{j}(\varepsilon_{j})}
\left\{\frac{\gammav^{\T}\phivbar_{j}}{z_{j}}\right\}
-1
\right|\leq \kappa
\right).
\end{EQA}
The random values \(\gammav^{\T}\phivbar_{j}z_{j}^{-1}\sim \mathcal{N}(0,z_{j}^{-2}\Var\{\gammav^{\T}\phivbar_{j}\})\).
The anti-concentration inequality by \cite{chernozhukov2012comparison} for the maximum of a centered high-dimensional Gaussian vector (see Theorem \ref{theorem:anti_conc_ChChK} below), applied to \(\max_{1\leq j\leq\numK}
\sup_{\gammav\in \Grid_{j}(\varepsilon_{j})}
\left\{\gammav^{\T}\phivbar_{j}z_{j}^{-1}\right\}
\), %\(\{\gammav^{\T}\phivbar_{j}z_{j}^{-1},\, \gammav\in \Grid_{j}(\varepsilon_{j}),\, j=1,\dots,\numK\}\)
implies
\begin{EQA}
&&\nquad\nquad
\P\left(\left|
\max_{1\leq j\leq\numK}
\sup_{\gammav\in \Grid_{j}(\varepsilon_{j})}
\left\{\frac{\gammav^{\T}\phivbar_{j}}{z_{j}}\right\}
-1
\right|\leq \kappa
\right)
\\&\leq&
\Deltaac
\eqdef
\CONSTac \kappa\sqrt{1 \vee
\log\left(\kappa^{-1}\sum\nolimits_{j=1}^{\numK}\left\{2/\varepsilon_{j}\right\}^{\dimp_{j}}\right)
},
\label{def:Deltaac}
\end{EQA}
where the constant \(\CONSTac\) depends on \(\min\) and \(\max\) of \(\Var\{\gammav^{\T}\phivbar_{j}z_{j}^{-1}\}\leq\E\|\phivbar_{j}\|^{2}z_{j}^{-2}\leq 1\); the sum \(\sum\nolimits_{j=1}^{\numK}\left\{2/\varepsilon_{j}\right\}^{\dimp_{j}}\) is proportional to cardinality of the set \(\{\gammav^{\T}\phivbar_{j}z_{j}^{-1},\, \gammav\in \Grid_{j}(\varepsilon_{j}),\, j=1,\dots,\numK\}\).
If one takes \(\varepsilon_{j}=2\CONST\left\{\Delta_{j}/(2z_{j})\right\}^{\frac{\dimp_{\min}+1}{\dimp_{j}+1}}\), then %\(\varepsilon_{1},\dots,\varepsilon_{\numK}\) s.t.
\eqref{cond:epsilon_Delta} holds with exponentially high probability due to Gaussianity of the vectors \(\phivbar_{j}\) and Theorem  1.2 in \cite{SPS2011}, hence
\begin{EQA}
\Deltaac &\leq&
\CONSTac \kappa\sqrt{1 \vee
\CONST\log\left(\frac{1}{2}\sum\nolimits_{j=1}^{\numK}\left\{2/\varepsilon_{j}\right\}^{\dimp_{j}+1}\right)
}
\\&\leq&
\CONSTac\left\{ \kappa
\sqrt{1 \vee \log(\numK/2)}+\CONST\max_{1\leq j\leq\numK}\{\Delta_{j}\}\sqrt{\max_{1\leq j\leq\numK}\log(2 z_{j}/\Delta_{j})}
\right\}.
\label{bound:Deltaac}
\end{EQA}
\end{proof}

\begin{theorem}[Anti-concentration inequality for maxima of a Gaussian random vector, \cite{chernozhukov2012comparison}]
\label{theorem:anti_conc_ChChK}
%\mbox{}\\
Let \(\left(X_{1},\dots,X_{p}\right)^\T\) be a centered Gaussian random vector with \(\sigma_{j}^{2}\eqdef \E X_{j}^{2}>0\) for all \(1\leq j\leq p\). Let \(\sigmau\eqdef \min_{1\leq j\leq p}\sigma_{j}\), \(\sigmao\eqdef \max_{1\leq j\leq p}\sigma_{j}\). Then for every \(\epsilon>0\)%, and \(a_{p}\eqdef\E\left\{\max_{1\leq j\leq p}(X_{j}/\sigma_{j})\right\}\).
\begin{EQA}
\sup_{x\in\R}\P\left(\bigl|\max_{1\leq j\leq p}X_{j}-x\bigr|\leq\epsilon\right) &\leq&
\CONSTac\epsilon\sqrt{1\vee\log(p/\epsilon)},
\end{EQA}
where \(\CONSTac\) depends only on \(\sigmau\) and \(\sigmao\). When the variances are all equal, namely \(\sigmau=\sigmao=\sigma\), \(\log(p/\epsilon)\) on the right side can be replaced by \(\log{p}\).
\end{theorem}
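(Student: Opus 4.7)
\textbf{Proof sketch of Theorem \ref{theorem:anti_conc_ChChK}.} The plan is to follow the strategy of Chernozhukov, Chetverikov and Kato, reducing the anti-concentration question to a pointwise bound on the density of $M \eqdef \max_{1 \leq j\leq p} X_{j}$. By rescaling I may take $\sigmao = 1$, so $\sigmau$ only enters through a multiplicative constant. First I would replace $M$ by the log-sum-exp smoothing $\phi_{\beta}(X) \eqdef \beta^{-1}\log \sum_{j=1}^{p} e^{\beta X_{j}}$ already used in \eqref{def:h_beta}, exploiting the sandwich $M \leq \phi_{\beta}(X) \leq M + \beta^{-1}\log p$. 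This transfers the problem: a bound on $\sup_{x}\P(|\phi_{\beta}(X) - x|\leq \epsilon)$ implies the same bound for $M$ (up to the smoothing error) provided $\beta$ is chosen of order $\epsilon^{-1}\log p$ at the end.

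Next I would bound the density $f_{\phi_{\beta}}$ of the smooth maximum. The gradient of $\phi_{\beta}$ consists of the softmax weights $w_{j} = e^{\beta X_{j}}/\sum_{k} e^{\beta X_{k}}$, which satisfy $w_{j}\in (0,1)$ and $\sum_{j}w_{j} = 1$, so $\|\nabla \phi_{\beta}(X)\| \leq 1$ deterministically. Thus $\phi_{\beta}$ is $1$-Lipschitz in the standardised Gaussian coordinates, and the Borell--Tsirelson--Ibragimov--Sudakov inequality gives a sub-Gaussian tail of scale $\sigmao$ around its mean. Combining this Lipschitz property with Gaussian integration by parts (Stein's identity, in the same form as in the proof of Lemma \ref{lemma:GaussCOmpar_Slepian}) produces a differential inequality for the Laplace transform $\E e^{t \phi_{\beta}(X)}$ whose inversion yields the pointwise density bound $f_{\phi_{\beta}}(x) \leq \CONST\,\sigmau^{-1}\sqrt{1\vee \log p}$ uniformly in $x$.

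Integrating this density bound across an interval of length $2\epsilon$ gives $\sup_{x}\P(|\phi_{\beta}(X) - x|\leq \epsilon) \leq 2\CONST\,\sigmau^{-1}\epsilon \sqrt{1\vee \log p}$. Transferring back to $M$ via the sandwich and choosing $\beta$ of order $\epsilon^{-1}\log p$ introduces the logarithmic factor $\sqrt{1\vee\log(p/\epsilon)}$, since the smoothing error $\beta^{-1}\log p$ must be balanced against the resolution $\epsilon$. For the equal-variance refinement $\sigmau = \sigmao = \sigma$, the $\epsilon$-dependence inside the logarithm is removed because the scale of $\phi_{\beta}$ is then determined purely by $\sigma$, so $\beta$ can be absorbed into the multiplicative constant and the sharper factor $\sqrt{\log p}$ follows.

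The main obstacle is the density bound for $\phi_{\beta}(X)$ in the second step: a naive Gaussian tail estimate gives only $f_{\phi_{\beta}}(x) \leq \CONST/\sigmau$ near the mode, which is too crude, and extracting the $\sqrt{\log p}$ factor requires delicate use of the interaction between the softmax weights and the Gaussian log-Sobolev inequality. A back-up route, avoiding the density bound entirely, would be to apply Nazarov's inequality on Gaussian measures of thin slabs intersecting convex sets after embedding the event $\{x\leq \max_{j}X_{j}\leq x+\epsilon\}$ into such a slab; this gives the same order $\epsilon\sqrt{\log p}$ and is more robust in the unequal-variance case.
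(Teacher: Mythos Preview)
The paper does not prove Theorem~\ref{theorem:anti_conc_ChChK}; it is quoted verbatim from \cite{chernozhukov2012comparison} and used as a black box inside the proof of Lemma~\ref{lemma:anti_conc_nonsmooth}. There is therefore no ``paper's own proof'' to compare your sketch against.

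As for the sketch itself: the overall architecture---replace $M$ by a smooth surrogate, bound the density of the surrogate via Gaussian integration by parts, then integrate over an interval of length $2\epsilon$---is indeed the strategy of Chernozhukov--Chetverikov--Kato. Two points, however, are not quite right as written. First, the appearance of $\log(p/\epsilon)$ rather than $\log p$ in the unequal-variance case is not caused by the smoothing parameter $\beta$; in the original proof it arises from a case split depending on whether $\epsilon$ is small relative to the spread of the variances, and the $\epsilon$ enters the logarithm through a covering/union-bound step, not through balancing $\beta^{-1}\log p$ against $\epsilon$. Second, your claim that the density bound $f_{\phi_\beta}(x)\leq \CONST\sigmau^{-1}\sqrt{1\vee\log p}$ follows from a ``differential inequality for the Laplace transform'' is too vague to stand as the key step; the actual argument goes through a careful application of Stein's identity to $\E[\mathbf{1}\{M\leq x\}]$ differentiated in $x$, and the $\sqrt{\log p}$ factor comes from bounding $\E[M]$ (or rather the conditional expectation of the maximizing coordinate), not from any Laplace-transform manipulation. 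Your Nazarov-inequality back-up is a genuine alternative and would work, but note that it too requires the unequal-variance case to be handled separately.
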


%!TEX root = simult_bootst_root.tex
%\newpage
\subsection{Proof of Proposition \ref{thm:Cum_norms}}
\begin{proof}[Proof of Proposition \ref{thm:Cum_norms}]
Let \(\Phi\eqdef \left(\phiv_{1}^{\T},\dots,\phiv_{\numK}^{\T}\right)^{\T}\in \R^{\dimtotal}\) for \(\dimtotal\eqdef\dimp_{1}+\dots+\dimp_{\numK}\) (as in \eqref{def:Phi}), and similarly \(\Psi\eqdef \left(\psiv_{1}^{\T},\dots,\psiv_{\numK}^{\T}\right)^{\T}\in \R^{\dimtotal}\). Let also \(\Phibar\sim \mathcal{N}(0,\Var\Phi)\) and \(\Psibar\sim \mathcal{N}(0,\Var\Psi)\).
Introduce the following value, which comes from Lemma \ref{lemma:GaussCOmpar_Slepian} on Gaussian comparison:
\begin{EQA}
\delta_{2}(\Delta,\beta)&\eqdef& \CONST_{2}(\Delta,\beta)\max_{1\leq j\leq \numK} \sup_{t\in[0,1]}\left\{\E\|\phivbar_{j}\sqrt{t}+\psivbar_{j}\sqrt{1-t}\|^{2}\right\}
\\&\leq&
\CONST_{2}(\Delta,\beta)\max_{1\leq j\leq \numK} \max\left\{ \tr\Var(\phivbar_{j}), \tr\Var(\psivbar_{j})\right\}.
\label{def:delta2}
\end{EQA}

It holds
\begin{EQA}
&&\hspace{-2cm}
\P\left(\bigcup\nolimits_{j=1}^{\numK}\left\{\|\phiv_{j}\|>z_{j}\right\} \right)
\\&\overset{\text{by L.\,\ref{lemma:smooth_techn_gauss_appr_phiv}}}{\geq}&
\E H_{\Delta,\,\beta}\left(\Phibar,\zv+\frac{3\log(K)}{2\beta}\Idv_{\numK}\right)
-\delta_{3,\phi}(\Delta,\beta)
\\&\overset{\text{by L.\,\ref{lemma:GaussCOmpar_Slepian},\,\ref{lemma:prop_F_beta_Delta}}}{\geq}&
\E H_{\Delta,\,\beta}\left(\Psibar,\zv+\frac{3\log(K)}{2\beta}\Idv_{\numK}\right)
-\frac{1}{2}\delta_{\Sigma}^{2}\delta_{2}(\Delta,\beta)
-\delta_{3,\phi}(\Delta,\beta)
\\&\overset{\text{by L.\,\ref{lemma:smooth_techn_gauss_appr_phiv}}}{\geq}&
\P\left(\bigcup\nolimits_{j=1}^{\numK}\left\{\|\psivbar_{j}\|>z_{j}+\Delta+\frac{3\log(K)}{2\beta}\right\} \right)
-\frac{1}{2}\delta_{\Sigma}^{2}\delta_{2}(\Delta,\beta)
-\delta_{3,\phi}(\Delta,\beta)
\\&\overset{\text{by L.\,\ref{lemma:anti_conc_nonsmooth}}}{\geq}&
\P\left(\bigcup\nolimits_{j=1}^{\numK}\left\{\|\psivbar_{j}\|>z_{j}-\delta_{z_{j}}-\Delta\right\} \right)
-\frac{1}{2}\delta_{\Sigma}^{2}\delta_{2}(\Delta,\beta)-\delta_{3,\phi}(\Delta,\beta)
\\&&
-\,2\Deltaac\left(\left\{\delta_{z_{j}}\right\}+2\Delta+\frac{3\log(K)}{\beta}\right)
\label{ineq:ac_use}
\\&\overset{\text{by L.\,\ref{lemma:gauss_appr_phiv}}}{\geq}&
\label{ineq:Delta_minus}
\P\left(\bigcup\nolimits_{j=1}^{\numK}\left\{\|\psiv_{j}\|>z_{j}-\delta_{z_{j}}\right\} \right)
-\frac{1}{2}\delta_{\Sigma}^{2}\delta_{2}(\Delta,\beta)\\&&
-\,\delta_{3,\phi}(\Delta,\beta)-\delta_{3,\psi}(\Delta,\beta)
-2\Deltaac\left(\left\{\delta_{z_{j}}\right\}+2\Delta+\frac{3\log(K)}{\beta}\right),
\end{EQA}
where \(\delta_{3,\psi}(\Delta,\beta)\) is defined similarly to \(\delta_{3,\phi}(\Delta,\beta)\) in \eqref{ineq:Efb_appr}:
\begin{EQA}
\label{def:delta3psi}
\delta_{3,\psi}(\Delta,\beta)&\eqdef&
\frac{\CONST_{3}(\Delta,\beta)}{3}\frac{\dimp_{\max}^{3/2}}{n^{1/2}}
\log^{1/2}(\numK)
\log^{3/2}\left(n\dimtotal\right)
 \left(2\nunu^{2}\cf_{\psi}^{2}\lambda_{\psi,\max}^{2}\right)^{3/2}.
\end{EQA}
By Lemma \ref{lemma:anti_conc_nonsmooth} inequality \eqref{ineq:ac_use} requires the following:
%\begin{EQA}[c]
\(\delta_{z_{j}}+2\Delta+\frac{3\log(K)}{\beta}\leq z_{j}\).%\ants
%\delta_{z_{j}}+2\Delta+\frac{3\log(K)}{\beta}
%\leq
%\left\{\delta_{z_{k}}+2\Delta+\frac{3\log(K)}{\beta}\right\}\sqrt{\frac{\dimp_{j}}{\dimp_{k}}},
%~~\dimp_{j}\leq \dimp_{k}.
%\end{EQA}
The bound in the inverse direction is derived similarly.
Denote the approximating error term obtained in \eqref{ineq:Delta_minus} as
\begin{EQA}
\label{def:Deltapp}
\hspace{-1.2cm}
\Deltapp&\eqdef&
\frac{1}{2}\delta_{\Sigma}^{2}\delta_{2}(\Delta,\beta)+\delta_{3,\phi}(\Delta,\beta)+\delta_{3,\psi}(\Delta,\beta) +2\Deltaac\left(\left\{\delta_{z_{j}}\right\}+2\Delta+\frac{3\log(K)}{\beta}\right).
\end{EQA}
Consider this term in more details, by inequality \eqref{bound:Deltaac}
\begin{EQA}
&&
\nquad
\Deltaac\left(\left\{\delta_{z_{j}}\right\}+2\Delta+\frac{3\log(K)}{\beta}\right)
%\\&\leq&
\leq \max_{1\leq j\leq\numK}
\left(\delta_{z_{j}}+2\Delta+\frac{3\log(K)}{\beta}\right)
\\&&\times\,\left\{\CONST\frac{\log^{1/2}(\numK)}{z_{j}}+\log^{1/2}\left(2 z_{\max}\right)
-\log^{1/2}\left(\delta_{z_{j}}+2\Delta+\frac{3\log(K)}{\beta}\right)
\right\}.
%\\
%&\leq&\max_{1\leq j\leq\numK}
%\CONST\left(\delta_{z_{j}}+2\Delta+\frac{3\log(K)}{\beta}\right)\left(1+\log^{1/2}\left(2 z_{\max}\right)+\frac{\log^{1/2}(\numK)}{z_{j}}\right).
\end{EQA}
Let us take \(\beta=\frac{\log(\numK)}{\Delta}\),
%\begin{EQA}
%\beta&=&\frac{\log(\numK)}{\Delta},
%\end{EQA}
 then
\begin{EQA}
\Deltaac&\leq&
5\CONST\Delta\frac{\log^{1/2}(\numK)}{z_{\min}}
+\CONST\max_{1\leq j\leq\numK}\frac{\delta_{z_{j}}}{z_{j}}\log^{1/2}(\numK)
\\&&+\,
\CONST\left(5\Delta+\delta_{z,\max}\right)\left(\log^{1/2}\left(2z_{\max}\right)
+\sqrt{-\log\left(\delta_{z,\min}+5\Delta\right)}\right)
,
\\
&\leq&
5\CONST\Delta\frac{\log^{1/2}(\numK)}{z_{\min}}
+\CONST\max_{1\leq j\leq\numK}\frac{\delta_{z_{j}}}{z_{j}}\log^{1/2}(\numK)
\\&&+\,
2\CONST\left(5\Delta+\delta_{z,\max}\right)
\sqrt{-\log\left(\delta_{z,\min}+5\Delta\right)}
\\
&\leq&
5\CONST\Delta\frac{\log^{1/2}(\numK)}{z_{\min}}
+\CONST\max_{1\leq j\leq\numK}\frac{\delta_{z_{j}}}{z_{j}}\log^{1/2}(\numK)
+
2\CONST\left(5\Delta+\delta_{z,\max}\right)
\sqrt{-\log\left(5\Delta\right)}
\\
&\leq&
5\CONST\Delta\Bigl\{\frac{\log^{1/2}(\numK)}{z_{\min}}
+2.4\log^{1/2}\bigl(5n^{1/2}\bigr)
\Bigr\}
+\CONST\max_{1\leq j\leq\numK}\frac{\delta_{z_{j}}}{z_{j}}\log^{1/2}(\numK)
%+12\CONST\Delta\log^{1/2}\bigl(5n^{1/2}\bigr),
\\
&\leq&
6\CONST\Delta\Bigl\{\frac{\log^{1/2}(\numK)}{z_{\min}}
+0.4\log^{1/2}\bigl(5n^{1/2}\bigr)
\Bigr\},
\label{ineq:summ1}
\end{EQA}
where the second inequality holds for \(\delta_{z,\min}+5\Delta\leq 1/(2z_{\max})\), and
the last one holds for \(\delta_{z,\max}\leq \Delta\) and \(\Delta\geq n^{-1/2}\). %\ants
\begin{EQA}
\label{ineq:summ2}
\hspace{-1cm}
\delta_{3,\phi}(\Delta,\beta)+\delta_{3,\psi}(\Delta,\beta) &\overset{{\small\text{by\,\eqref{def:delta3psi}}}}{\leq}&
\CONST\frac{\log^{5/2}(\numK)}{\Delta^{3}}
\frac{\dimp_{\max}^{3/2}}{n^{1/2}}
\log^{3/2}(n\dimtotal)
\left(\lambda_{\phi,\max}^{3}+\lambda_{\psi,\max}^{3}\right)
,
\\
\delta_{\Sigma}\delta_{2}(\Delta,\beta)
&\overset{{\small\text{by\,\eqref{def:delta2}}}}{\leq}&
\CONST\delta_{\Sigma}^{2}\frac{\log(\numK)}{\Delta^{2}}
\max_{1\leq j\leq \numK} \max\left\{ \tr\Var(\phivbar_{j}), \tr\Var(\psivbar_{j})\right\}
\\&\leq& \CONST\delta_{\Sigma}^{2}\frac{\log(\numK)}{\Delta^{2}}\dimp_{\max}\max\left\{\lambda_{\phi,\max}^{2},\lambda_{\psi,\max}^{2}\right\}.
\end{EQA}
%--------------------------\\
After minimizing the sum of the expressions \eqref{ineq:summ1} and  \eqref{ineq:summ2} w.r.t \(\Delta\), we have
\begin{EQA}
\Deltapp
&\leq&
12.5\CONST\left(\frac{\dimp_{\max}^{3}}{n}\right)^{1/8}\log^{9/8}(K)\log^{3/8}(n\dimtotal) \max\left\{\lambda_{\phi,\max},\lambda_{\psi,\max}\right\}^{3/4}
\\&&+\,3.2\CONST\delta_{\Sigma}^{2}\dimp_{\max}z_{\min}^{1/2}\left(\frac{\dimp_{\max}^{3}}{n}\right)^{1/4}
\log^{2}(K)\log^{3/4}(n\dimtotal)\max\left\{\lambda_{\phi,\max},\lambda_{\psi,\max}\right\}^{7/2}
\\&\leq&
25\CONST\left(\frac{\dimp_{\max}^{3}}{n}\right)^{1/8}\log^{9/8}(K)\log^{3/8}(n\dimtotal) \max\left\{\lambda_{\phi,\max},\lambda_{\psi,\max}\right\}^{3/4},
\end{EQA}
where the last inequality holds for
\begin{EQA}
\delta_{\Sigma}^{2}
&\leq&4 \CONST \dimp_{\max}^{-1}z_{\min}^{-1/2}\left(\frac{\dimp_{\max}^{3}}{n}\right)^{-1/8}
\log^{-7/8}(K)\log^{-3/8}(n\dimtotal)\left(\max\left\{\lambda_{\phi,\max},\lambda_{\psi,\max}\right\}\right)^{-11/4}.
\end{EQA}
\end{proof}

%!TEX root = simult_bootst_root.tex
%\newpage
\section{Square-root Wilks approximations}
\label{sect:sqrootWilks}
This section's goal is to derive square root Wilks approximations  simultaneously for \(\numK\) parametric models,  for the \(\Ym\) and bootstrap worlds. This is done in Section \ref{sect:unif_Wilks} below.  Both of the results are used in the approximating scheme \eqref{rectangle_simult} for the bootstrap justification. In order to make the text self-contained we recall in Section \ref{sect:FS} some results from the general finite sample theory by \cite{Sp2012Pa,SPS2011,Spokoiny2013Bernstein}. In Section \ref{sect:FStheory} we recall similar finite sample results for the bootstrap world for a single parametric model, obtained in \cite{SpZh2014PMB}.

%!TEX root = simult_bootst_root.tex
\subsection{Finite sample theory}
\label{sect:FS}
Let us use the notations given in the introduction: \(L_{k}(\thetav)\), \(k=1,\dots, \numK\) are the log-likelihood processes, which depend on the data \(\Ym\)  and correspond to the regular parametric families of probability distributions \(\{\P_{k}(\thetav),\thetav\in\Theta_{k}\subset\R^{\dimp_{k}}\}\). The general finite sample approach by \cite{Sp2012Pa} does not require that the true distribution \(\P\) of the data \(\Ym\) belongs to any of the parametric families \(\{\P_{k}(\thetav)\}\).
The target parameters \(\thetavs_{k}\) are defined as in \eqref{def:thetavs_k} by projection of the true measure \(\P\) on  \(\{\P_{k}(\thetav)\}\).
Let \(D^{2}_{k}\) denote the full Fisher information \(\dimp_{k}\times\dimp_{k}\) matrices, which are deterministic, symmetric and positive-definite:
\begin{EQA}[c]
D^{2}_{k}\eqdef -\nabla_{\thetav}^{2}\E L_{k}(\thetavs_{k}).
\end{EQA}
Centered \(\dimp_{k}\)-dimensional random vectors \(\xiv_{k}\) denote the normalised scores:
\begin{EQA}[c]
\xiv_{k}\eqdef D_{k}^{-1}\nabla_{\thetav}L_{k}(\thetavs_{k}).
\end{EQA}
Introduce the following elliptic vicinities around the true points \(\thetavs_{k}\):
\begin{EQA}[c]
\label{def:Thetas}
\Theta_{0,k}(\rr)\eqdef\left\{\thetav\in \Theta_{k}: \|D_{k}(\thetav-\thetavs_{k})\|\leq \rr\right\}.
\end{EQA}
Let \(1\leq k\leq \numK\) be fixed. The non-asymptotic Wilks approximating bound by \cite{Sp2012Pa,Spokoiny2013Bernstein} requires that the  maximum likelihood estimate \(\thetavt_{k}\) gets into the local vicinity \(\Theta_{0,k}(\rr_{0,k})\) of some radius \(\rr_{0,k}>0\) with probability \(\geq 1-3\ex^{-\yy}\), \(\yy>0\). This is guaranteed by the following concentration result:
\begin{theorem}[Concentration of the MLE, \cite{Spokoiny2013Bernstein}]
\label{thm:concentr}
%\mbox{}\\
Let the conditions \ref{itm:ED0k}, \ref{itm:ED2k}, \ref{itm:L0k}, \ref{itm:Ik} and \ref{itm:Lrk} be fulfilled. If for each \(k=1,\dots,\numK\) for the constants \(\rr_{0,k}>0\) and for the functions \(\gmi_{k}(\rr)\) from \ref{itm:Lrk} holds:
\begin{EQA}[c]
\label{condit:concentr}
\gmi_{k}(\rr)\rr\geq2\left\{\ZZqf(\yy,\BB_{k})+6\omega_{k}\nu_{k}\ZZ_{k}(\yy+\log(2\rr/\rr_{0,k}))
\right\},\quad \rr>\rr_{0,k}
\end{EQA}
where the functions \(\ZZ_{k}(\yy)\) and \(\ZZqf(\yy,\BB_{k})\) are defined in \eqref{ZZ_k} and \eqref{ZZqf_k} respectively,
 then it holds for all \(k=1,\dots,\numK\)
\begin{EQA}[c]
\P\left(
\thetavt_{k}\notin \Theta_{0,k}(\rr_{0,k})
\right)\leq 3\ex^{-\yy}.
\end{EQA}
The constants \(\omega_{k}, \nu_{k}\) and \(\gmu_{k}\) come from the imposed conditions  \ref{itm:ED0k}--\ref{itm:Ik} (from Section \ref{sect:conditions}). In the case  \ref{typical_local} \(\rr_{0,k}\geq \CONST\sqrt{\dimp_{k}+\yy}\).
\end{theorem}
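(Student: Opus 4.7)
The plan is the standard non-asymptotic contraction argument for the MLE: uniformly bound the stochastic part of the log-likelihood on dyadic shells around \(\thetavs_{k}\), then combine with the deterministic identifiability of \ref{itm:Lrk} to preclude \(\thetavt_{k}\) from escaping \(\Theta_{0,k}(\rr_{0,k})\). Since \(\thetavt_{k}\) maximises \(L_{k}\), the escape event \(\{\thetavt_{k}\notin\Theta_{0,k}(\rr_{0,k})\}\) guarantees the existence of some \(\thetav\) with \(\rr\eqdef\|D_{k}(\thetav-\thetavs_{k})\|\geq\rr_{0,k}\) and \(L_{k}(\thetav)\geq L_{k}(\thetavs_{k})\). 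Decomposing \(L_{k}=\E L_{k}+\zeta_{k}\) and applying \(-2\{\E L_{k}(\thetav)-\E L_{k}(\thetavs_{k})\}\geq\rr^{2}\gmi_{k}(\rr)\) from \ref{itm:Lrk}, the escape event implies
\begin{EQA}[c]
\sup\nolimits_{\thetav:\,\|D_{k}(\thetav-\thetavs_{k})\|=\rr}2\{\zeta_{k}(\thetav)-\zeta_{k}(\thetavs_{k})\}\;\geq\;\rr^{2}\gmi_{k}(\rr)\quad\text{for some }\rr\geq\rr_{0,k}.
\end{EQA}

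First I would slice the exterior of \(\Theta_{0,k}(\rr_{0,k})\) into dyadic shells at scales \(\rr_{j}=2^{j-1}\rr_{0,k}\), \(j\geq 1\), and on each shell Taylor-expand \(\zeta_{k}(\thetav)-\zeta_{k}(\thetavs_{k})\) into the linear score piece \(\xiv_{k}^{\T}D_{k}(\thetav-\thetavs_{k})\) and a quadratic Hessian remainder \(\tfrac12(\thetav-\thetavs_{k})^{\T}\nabla^{2}_{\thetav}\zeta_{k}(\bar{\thetav})(\thetav-\thetavs_{k})\) with \(\bar\thetav\in\Theta_{0,k}(\rr)\). Condition \ref{itm:ED0k} together with \ref{itm:Ik} grants sub-Gaussian control of the score, so the Bernstein bound for sub-exponential quadratic forms from \cite{SPS2011} yields \(\|\xiv_{k}\|\leq\ZZqf(\xx,\BB_{k})\) with probability \(\geq 1-\ex^{-\xx}\), dominating the linear piece on the shell by \(\ZZqf(\xx,\BB_{k})\cdot\rr\). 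Conditions \ref{itm:ED2k} and \ref{itm:L0k} furnish uniform control of the normalised Hessian \(D_{k}^{-1}\nabla^{2}_{\thetav}\zeta_{k}(\thetav)D_{k}^{-1}\) on \(\Theta_{0,k}(\rr)\) via a chaining argument summarised by the process-sup function \(\ZZ_{k}\), with the shell's log-covering entropy absorbed into the argument \(\ZZ_{k}(\xx+\log(2\rr/\rr_{0,k}))\) and the Orlicz constants from \ref{itm:ED2k} appearing as the multiplier \(6\omega_{k}\nu_{k}\).

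Combining these pieces, the shell-level exceedance probability is at most \(2\ex^{-\xx-\log(2\rr/\rr_{0,k})}\) precisely under hypothesis \eqref{condit:concentr}, which is engineered so that the sum of the two stochastic bounds equals the lower bound \(\rr^{2}\gmi_{k}(\rr)/2\) supplied by \ref{itm:Lrk}. Summing the geometric series \(\sum_{j\geq 1}2\ex^{-\xx}\cdot 2^{-j}=2\ex^{-\xx}\) and adding the single \(\ex^{-\xx}\) from the quadratic-form event delivers the advertised \(3\ex^{-\xx}\) bound. The main obstacle is obtaining uniform Hessian control across all scales simultaneously—a naive union bound over the dyadic grid would produce an unavoidable number-of-shells factor—which is why the \(\log(2\rr/\rr_{0,k})\) must enter \emph{inside} \(\ZZ_{k}\) rather than multiplicatively; this reflects the Bernstein/chaining argument for sub-exponential matrix-valued suprema that is the technical core of \cite{Sp2012Pa,Spokoiny2013Bernstein}, to whose hypotheses the present conditions \ref{itm:ED0k}--\ref{itm:Lrk} and \eqref{condit:concentr} are directly adapted.
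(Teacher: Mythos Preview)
The paper does not supply its own proof of this theorem: it is quoted verbatim from \cite{Spokoiny2013Bernstein} in Section~\ref{sect:FS} as background, so there is nothing in the paper to compare against. Your sketch is the correct outline of the argument in the cited reference---the split \(L_{k}=\E L_{k}+\zeta_{k}\), the score bound \(\|\xiv_{k}\|\leq\ZZqf(\yy,\BB_{k})\) from Theorem~\ref{qf_dev}, the uniform Hessian control from \ref{itm:ED2k} via chaining producing the \(6\omega_{k}\nu_{k}\ZZ_{k}(\cdot)\) term, and the dyadic slicing with the \(\log(2\rr/\rr_{0,k})\) absorbed into the argument of \(\ZZ_{k}\) so that the geometric sum closes to \(3\ex^{-\yy}\).

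One small point to be careful about when you write it out in full: your second-order Taylor remainder \(\tfrac12(\thetav-\thetavs_{k})^{\T}\nabla^{2}_{\thetav}\zeta_{k}(\bar\thetav)(\thetav-\thetavs_{k})\) scales like \(\rr^{2}\) times the Hessian norm, whereas the condition \eqref{condit:concentr} compares \(\gmi_{k}(\rr)\rr\) to \(\ZZqf+6\omega_{k}\nu_{k}\ZZ_{k}(\cdot)\) without an extra \(\rr\) on the second summand. In Spokoiny's argument the stochastic fluctuation \(\zeta_{k}(\thetav)-\zeta_{k}(\thetavs_{k})\) is bounded by \(\rr\) times \(\sup_{\thetav'\in\Theta_{0,k}(\rr)}\|D_{k}^{-1}\nabla_{\thetav}\zeta_{k}(\thetav')\|\), and it is this gradient supremum (not the Hessian norm itself) that is controlled by \(\ZZqf(\yy,\BB_{k})+6\omega_{k}\nu_{k}\ZZ_{k}(\cdot)\); the Hessian enters only to pass from the score at \(\thetavs_{k}\) to the score elsewhere in the ball. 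Keeping that bookkeeping straight is what makes the factors in \eqref{condit:concentr} line up.
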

%The following result is one of the central in the general finite sample theory by.% and is crucial for the present study due to the scheme \eqref{rectangle}:
\begin{theorem}[Wilks approximation, \cite{Spokoiny2013Bernstein}]
\label{Wilks}
%\mbox{}\\
Under the conditions of Theorem \ref{thm:concentr} for
some \(\rr_{0,k}>0\)
 s.t. \eqref{condit:concentr} is fulfilled, it holds for each \(k=1,\dots,\numK\)
with probability \(\geq 1- 5\ex^{-\yy}\)
\begin{EQA}
\left|
2\Bigl\{L_{k}(\thetavt_{k})-L_{k}(\thetavs_{k})\Bigr\}-\|\xiv_{k}\|^{2}\right|&\leq&
\DeltakWsq(\rr_{0,k},\yy),\\
\Bigl|
\sqrt{ 2\Bigl\{L_{k}(\thetavt_{k})-L_{k}(\thetavs_{k})\Bigr\}}-\|\xiv_{k}\|\Bigr|&\leq&
\DeltakW(\rr_{0,k},\yy)
\end{EQA}
for
\begin{EQA}
\label{def:diamondk}
\DeltakW(\rr,\yy)&\eqdef&
3\rr\left\{
\delta(\rr)+6\nu_{k}\ZZ_{k}(\yy)\omega_{k}
\right\},\\
\label{def:diamondsqk}
\DeltakWsq(\rr,\yy)&\eqdef&\frac{2}{3}\left\{2\rr+\ZZqf(\yy,\BB_{k})\right\}\DeltakW(\rr,\yy),\\
\ZZ_{k}(\yy)&\eqdef& 2\sqrt{\dimp_{k}}+\sqrt{2\yy}+4\dimp_{k}(\yy\gm_{k}^{-2}+1)\gm_{k}^{-1}.
\label{ZZ_k}
\end{EQA}
In the case \ref{typical_local} it holds for \(\rr\leq\rr_{0,k}\):
\begin{EQA}[c]
\DeltakW(\rr,\yy)\leq\CONST\, \frac{\dimp_{k}+\yy}{\sqrt{n}},\quad
\DeltakWsq(\rr,\yy)\leq\CONST\sqrt{\frac{(\dimp_{k}+\yy)^{3}}{{n}}}.
\end{EQA}
The constants \(\gm_{k}\) and \(\delta_{k}(\rr)\) come from the imposed conditions \ref{itm:ED0k}, \ref{itm:L0k} (from Section \ref{sect:conditions}). The function \(\ZZqf(\yy,\BB_{k})\), defined in \eqref{ZZqf_k}, corresponds to the quantile function of deviations of the approximating random value \(\|\xiv_{k}\|\) (see Theorem \ref{qf_dev} below).
\end{theorem}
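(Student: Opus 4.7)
The strategy is to work on the high-probability concentration event furnished by Theorem~\ref{thm:concentr} and reduce the problem to a uniform quadratic expansion of $L_k$ on the local elliptic vicinity $\Theta_{0,k}(\rr_{0,k})$. On the event $\mathcal{A}_k\eqdef\{\thetavt_k\in\Theta_{0,k}(\rr_{0,k})\}$ (of probability at least $1-3\ex^{-\yy}$), split $L_k=\E L_k+\zeta_k$ and Taylor-expand each summand to second order around $\thetavs_k$. For the deterministic part, the first-order term vanishes since $\nabla_\thetav\E L_k(\thetavs_k)=0$, and condition \ref{itm:L0k} yields
\begin{EQA}
\bigl|-2\{\E L_k(\thetav)-\E L_k(\thetavs_k)\}-\|D_k(\thetav-\thetavs_k)\|^2\bigr|\leq \rddelta_k(\rr_{0,k})\|D_k(\thetav-\thetavs_k)\|^2.
\end{EQA}
For the stochastic part, use $\nabla\zeta_k(\thetavs_k)=D_k\xiv_k$ together with a uniform bound on $\sup_{\thetav\in\Theta_{0,k}(\rr_{0,k})}\|D_k^{-1}\nabla^2\zeta_k(\thetav)D_k^{-1}\|$, which follows from \ref{itm:ED2k} by the usual covering/chaining of the unit sphere and exponential Chebyshev; the resulting deviation factor is $\omega_k\nu_k\ZZ_k(\yy)$ with $\ZZ_k$ as in \eqref{ZZ_k}.

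Combining the two expansions, one obtains on $\mathcal{A}_k$ the quasi-quadratic identity
\begin{EQA}
\label{eq:planqq}
2\{L_k(\thetav)-L_k(\thetavs_k)\}=2\xiv_k^{\T}D_k(\thetav-\thetavs_k)-\|D_k(\thetav-\thetavs_k)\|^2+R_k(\thetav),
\end{EQA}
with $|R_k(\thetav)|\leq \rr\{\delta_k(\rr)+\CONST\nu_k\omega_k\ZZ_k(\yy)\}\cdot\|D_k(\thetav-\thetavs_k)\|$ uniformly on $\Theta_{0,k}(\rr_{0,k})$. Since the right-hand side of \eqref{eq:planqq} without $R_k$ is a strictly concave quadratic maximised at the point where $D_k(\thetav-\thetavs_k)=\xiv_k$, with maximal value $\|\xiv_k\|^2$, a simple sandwich argument applied both at $\thetav=\thetavt_k$ and at $\thetav=\thetavs_k+D_k^{-1}\xiv_k$ (the latter lies in $\Theta_{0,k}(\rr_{0,k})$ once one controls $\|\xiv_k\|$ via the quadratic-form deviation bound with $\ZZqf(\yy,\BB_k)$ from Theorem~\ref{qf_dev}, absorbing another $2\ex^{-\yy}$ of probability) yields
\begin{EQA}
\bigl|2\{L_k(\thetavt_k)-L_k(\thetavs_k)\}-\|\xiv_k\|^2\bigr|\leq \tfrac{2}{3}\{2\rr_{0,k}+\ZZqf(\yy,\BB_k)\}\DeltakW(\rr_{0,k},\yy),
\end{EQA}
which is exactly $\DeltakWsq(\rr_{0,k},\yy)$. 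The square-root version follows from the elementary inequality $|\sqrt{a}-\sqrt{b}|\leq |a-b|/(\sqrt{a}+\sqrt{b})$, choosing the maximiser of the quadratic surrogate to cancel the denominator and recover the sharper bound $\DeltakW$ rather than $\DeltakWsq/\|\xiv_k\|$.

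\textbf{Main obstacle.} The delicate step is the \emph{uniform} control of the stochastic Hessian $\nabla^2\zeta_k(\thetav)$ over the whole local set $\Theta_{0,k}(\rr_{0,k})$, rather than at a single point. Condition \ref{itm:ED2k} gives only a pointwise exponential moment; turning this into a supremum bound with the specific rate $\nu_k\omega_k\ZZ_k(\yy)$ requires a covering argument on the unit sphere of $\R^{\dimp_k}$ combined with the continuity modulus of $\nabla^2\zeta_k$, and careful tracking of the entropy factor $\sqrt{\dimp_k}$ inside $\ZZ_k(\yy)$. The specialization to the i.i.d. case (last display of the theorem) then reduces to substituting $\gm_k\asymp\sqrt{n}$, $\omega_k\asymp 1/\sqrt{n}$, and $\delta_k(\rr)\asymp \rr/\sqrt{n}$ from Section~\ref{typical_local} into the explicit expressions for $\DeltakW$ and $\DeltakWsq$, together with $\ZZqf(\yy,\BB_k)\asymp\sqrt{\dimp_k+\yy}$ from Theorem~\ref{qf_dev}. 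All of this mirrors the template of \cite{Sp2012Pa,Spokoiny2013Bernstein}, to which I would ultimately refer for the full technical details.
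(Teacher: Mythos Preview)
The paper does not give its own proof of this theorem: it is stated in Section~\ref{sect:FS} as a recalled result from \cite{Sp2012Pa,Spokoiny2013Bernstein}, with no argument supplied beyond the reference. Your sketch is a faithful outline of the argument in those references (local quadratic expansion of $\E L_k$ via \ref{itm:L0k}, uniform stochastic-Hessian control via \ref{itm:ED2k} plus chaining, sandwiching the maximised quadratic against $\|\xiv_k\|^2$, and the deviation bound of Theorem~\ref{qf_dev} to place $\thetavs_k+D_k^{-1}\xiv_k$ inside the local set), and you correctly identify the uniform Hessian control as the nontrivial step. Since you already defer the full details to \cite{Sp2012Pa,Spokoiny2013Bernstein}, your proposal and the paper's treatment coincide: both simply point to the cited source.
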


The following theorem characterizes the tail behaviour of the approximating terms \(\|\xiv_{k}\|^{2}\). It means that with bounded exponential moments  of the vectors \(\xiv_{k}\) (conditions \ref{itm:ED0k}, \ref{itm:Ik}) its squared Euclidean norms \(\|\xiv_{k}\|^{2}\) have three regimes of deviations: sub-Gaussian, Poissonian and large-deviations' zone.
\begin{theorem}[Deviation bound for a random quadratic form, \cite{SPS2011}]
\label{qf_dev}
%\mbox{}\\
Let condition \ref{itm:ED0k} be fulfilled, then for \(\gm_{k}\geq \sqrt{2\tr(\BB_{k}^{2})}\) it holds for each \(k=1,\dots,\numK\):
\begin{EQA}[c]
\P\left(
\|\xiv_{k}\|^{2}\geq \ZZqf^{2}(\yy,\BB_{k})
\right) \leq 2\ex^{-\yy}+8.4\ex^{-\yy_{c,k}},
\end{EQA}
where \(\BB_{k}^{2}\eqdef D_{k}^{-1}V_{k}^{2}D_{k}^{-1}\), \(\lambda(\BB_{k})\) is a maximum eigenvalue of \(\BB_{k}^{2}\),
\begin{EQA}
\hspace{-1cm}\ZZqf^{2}(\yy,\BB_{k})&\eqdef&
\begin{cases}
\label{ZZqf_k}
\tr(\BB_{k}^{2})+\sqrt{8\tr(\BB_{k}^{4})\yy},&%\\\hspace{2.5cm}
\hspace{-1.3cm}
 \yy\leq\sqrt{2\tr(\BB_{k}^{4})}/\{18\lambda(\BB_{k})\},\\
\tr(\BB_{k}^{2})+6\yy\lambda(\BB_{k}),& %\\\hspace{2.5cm}
\hspace{-1.3cm}
\sqrt{2\tr(\BB_{k}^{4})}/\{18\lambda(\BB_{k})\}<\yy\leq\yy_{c,k},\\
\left|\mathtt{z}_{c,k}+2(\yy-\yy_{c,k})/\gm_{c,k}\right|^{2}\lambda(\BB_{k})
,& %\hspace{-3.5cm}
\yy>\yy_{c,k},
\end{cases}
\end{EQA}
\begin{EQA}
%\lambda(\BB)&& \hspace{-0.5cm}\text{is a maximum eigenvalue of}~ \BB^{2},\\
\label{yyc}
2\yy_{c,k}&\eqdef&2\yy_{c,k}(\BB_{k})\eqdef \mu_{c}\mathtt{z}_{c,k}^{2}
+\log{\det\left(
\Id_{\dimp_{k}}-\mu_{c}\BB_{k}^{2}/\lambda(\BB_{k})
\right)}
,\\
\mathtt{z}_{c,k}^{2}&\eqdef&\left\{\gm_{k}^{2}/\mu_{c}^{2}-\tr{(\BB_{k}^{2})}/\mu_{c}\right\}/\lambda(\BB_{k})
,\\
\gm_{c,k}&\eqdef&\sqrt{\gm_{k}^{2}-\mu_{c}\tr{(\BB_{k}^{2})}}/\sqrt{\lambda(\BB_{k})} ,\\
\mu_{c}&\eqdef&2/3.
\end{EQA}
The matrices \(V_{k}^{2}\) come from condition \ref{itm:ED0k} and can be defined as
\begin{EQA}[c]
\label{def:VPc}
V_{k}^{2}\eqdef \Var\left\{\nabla_{\thetav}L_{k}(\thetavs_{k})\right\}.
\end{EQA}
By condition \ref{itm:Ik} \(\tr(\BB_{k}^{2})\leq \gmu_{k}^{2}\dimp_{k}\), \(\tr(\BB^{4})\leq\gmu_{k}^{4}\dimp_{k} \) and \(\lambda(\BB_{k})\leq\gmu_{k}^{2}\). In the case \ref{typical_local} \(\gm_{k}=\CONST \sqrt{n}\), hence \(\yy_{c,k}= \CONST {n}\), and for \(\yy\leq \yy_{c,k}\) it holds:
\begin{EQA}[c]
\label{ZZqf_short}
\ZZqf^{2}(\yy,\BB_{k})\leq \gmu_{k}^{2}(\dimp_{k} + 6\yy).
\end{EQA}
\end{theorem}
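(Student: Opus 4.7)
The plan is to bound \(\P(\|\xiv_k\|^{2}\ge z^{2})\) by the exponential Chebyshev inequality \(e^{-\mu z^{2}/2}\E\exp(\mu\|\xiv_k\|^{2}/2)\) and to attack the moment generating function of the quadratic form via a Gaussian decoupling device: for an auxiliary \(g\sim\mathcal{N}(0,\Id_{\dimp_k})\) drawn independently of \(\xiv_k\), Fubini gives
\[
\E\exp\bigl(\mu\|\xiv_k\|^{2}/2\bigr)
\;=\;\E_{g}\,\E\exp\bigl(\sqrt{\mu}\,g^{\T}\xiv_k\bigr).
\]
This turns the problem into controlling the MGF of a \emph{linear} form \(g^{\T}\xiv_k = (D_k^{-1}g)^{\T}\nabla_{\thetav}L_k(\thetavs_k)\), which is exactly what \ref{itm:ED0k} is designed for. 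Taking \(\gammav=D_k^{-1}g\) in \ref{itm:ED0k} yields the bound \(\E\exp(\sqrt{\mu}\,g^{\T}\xiv_k)\le\exp(\nu_k^{2}\mu\,g^{\T}\BB_k^{2}g/2)\) on the event \(\mathcal{G}\eqdef\{g:\mu g^{\T}\BB_k^{2}g\le\gm_k^{2}\}\).

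Next I would integrate the Gaussian density on \(\mathcal{G}\),
\[
\E_g\,\Ind(\mathcal{G})\exp\bigl(\nu_k^{2}\mu\,g^{\T}\BB_k^{2}g/2\bigr)
\;\le\;\det\bigl(\Id_{\dimp_k}-\nu_k^{2}\mu\,\BB_k^{2}\bigr)^{-1/2},
\]
valid for \(\nu_k^{2}\mu\,\lambda(\BB_k)<1\); a second-order expansion of the log-determinant produces an upper bound of the form \(\exp\bigl(\tfrac12\nu_k^{2}\mu\,\tr(\BB_k^{2})+C\nu_k^{4}\mu^{2}\tr(\BB_k^{4})\bigr)\) in the interior of the admissible range. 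Combining with \(e^{-\mu z^{2}/2}\) and minimizing in \(\mu\) yields two of the three branches of \(\ZZqf^{2}(\yy,\BB_k)\): optimization in the bulk produces the Gaussian regime \(\tr(\BB_k^{2})+\sqrt{8\tr(\BB_k^{4})\yy}\), while pushing \(\mu\) toward the boundary \(\mu_{c}/(\nu_k^{2}\lambda(\BB_k))\) produces the Poissonian regime \(\tr(\BB_k^{2})+6\yy\,\lambda(\BB_k)\).

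The main obstacle is the complementary event \(\mathcal{G}^{c}\), on which the sub-Gaussian bound from \ref{itm:ED0k} is simply unavailable. My approach there is to freeze \(\mu\) at its largest admissible value \(\mu_{c}/(\nu_k^{2}\lambda(\BB_k))\) and to control the Gaussian mass of \(\{g^{\T}\BB_k^{2}g>\gm_k^{2}/\mu\}\) via a Laurent--Massart-type tail inequality for Gaussian quadratic forms; this truncation correction is precisely what contributes the additive \(8.4\,\ex^{-\yy_{c,k}}\) term in the final bound. With \(\mu\) pinned at the boundary, the factor \(e^{-\mu z^{2}/2}\) is only linear in \(z^{2}\) rather than exponential in a quadratic of \(\yy\), and this is the mechanism that forces the third, large-deviation branch \(|\mathtt{z}_{c,k}+2(\yy-\yy_{c,k})/\gm_{c,k}|^{2}\lambda(\BB_k)\); requiring the Poissonian branch to meet this linear branch continuously at \(\yy=\yy_{c,k}\) with \(\mu_{c}=2/3\) is what fixes the auxiliary constants \(\mathtt{z}_{c,k}\), \(\gm_{c,k}\) and the threshold \(\yy_{c,k}\) given in \eqref{yyc}. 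The hypothesis \(\gm_k\ge\sqrt{2\tr(\BB_k^{2})}\) is exactly what is needed to ensure \(\yy_{c,k}>0\), so that the Gaussian/Poissonian regime is non-trivial before the truncation correction takes over.
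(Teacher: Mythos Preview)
The paper does not give its own proof of this theorem; it is quoted verbatim from \cite{SPS2011} as background in Section~\ref{sect:FS}. Your sketch reproduces essentially the argument of that reference: exponential Chebyshev, the Gaussian decoupling identity \(\E\exp(\mu\|\xiv_k\|^{2}/2)=\E_{g}\E\exp(\sqrt{\mu}\,g^{\T}\xiv_k)\), invocation of \ref{itm:ED0k} on the event \(\{\mu\,g^{\T}\BB_k^{2}g\le\gm_k^{2}\}\), the log-determinant Gaussian integral, optimization in \(\mu\) to separate the sub-Gaussian and Poissonian regimes, and a tail bound for the Gaussian quadratic form \(g^{\T}\BB_k^{2}g\) on the complementary event to produce the additive \(8.4\,\ex^{-\yy_{c,k}}\) correction and the large-deviation branch. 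So your proposal is correct and matches the approach of the cited source.
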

%\begin{newremark}
%\label{rem:qf2exp}
%\ants to write that we forget the 3-d regime 
%\end{newremark}

%!TEX root = simult_bootst_root.tex
\subsection{Finite sample theory for the bootstrap world}
\label{sect:FStheory}
 Introduce for each \(k=1,\dots,\numK\) the bootstrap score vectors at the point \(\thetav\in\Theta_{k}\):
\begin{EQA}
\label{def:xivb}
\xivb_{k}(\thetav)&\eqdef&D_{k}^{-1}\nabla_{\thetav}\zetavb_{k}(\thetav)\\
&=&\sum_{i=1}^{n}D_{k}^{-1}\nabla_{\thetav}\ell_{i,k}(\thetav)(u_{i}-1).
\end{EQA}
\begin{theorem}[Bootstrap Wilks approximation,
 \cite{SpZh2014PMB}]
\label{Wilks_boots}
%\mbox{}\\
Under the conditions of Theorems \ref{thm:concentr} and \ref{thm:large_dev} %\ref{itm:ED0b}, \ref{itm:ED2b}, \ref{itm:L0b}  and \ref{itm:Ib}
for each \(k=1,\dots,\numK\) and 
some \(\rr_{0,k}^{2}\geq0\) s.t. \eqref{condit:concentr} and \eqref{condit:large_dev} are fulfilled, it holds for each \(k\) with \(\P\)-probability \(\geq 1- 5\ex^{-\yy}\)
\begin{EQA}[c]
\Pb\left(
\Bigl|\sup_{\thetav\in\Theta_{k}}2\left\{\Lb_{k}(\thetav)-
\Lb_{k}(\thetavt_{k})\right\}
-\|\xivb_{k}(\thetavt_{k})\|^{2}
\Bigr|
\leq \DeltabkWsq(\rr_{0,k},\yy)%=2\Deltab(\rups,\yy)
\right)\geq1-4\ex^{-\yy},\\
\Pb\left(
\Bigl|
\sqrt{\sup_{\thetav\in\Theta_{k}}2\left\{\Lb_{k}(\thetav)-\Lb_{k}(\thetavb_{k})\right\}}-\|\xivb_{k}(\thetavb_{k})\|\Bigr|\leq
\DeltabkW(\rr_{0,k},\yy)%=6\errbb(\rups,\yy)
\right)\geq1-4\ex^{-\yy}.
\end{EQA}
where the error terms \(\DeltabkW(\rr,\yy), \DeltabkWsq(\rr,\yy)\) are deterministic and
\begin{align}
\label{def:DeltabkW}
\begin{split}
\DeltabkW(\rr,\yy)&\eqdef2\DeltakW(\rr,\yy)+36\nu_{k}\rr\omegabk(\rr,\yy)\ZZ_{k}(\yy),\\
\DeltabkWsq(\rr,\yy)&\eqdef
\frac{1}{18}\left\{12\rr\DeltabkW(\rr,\yy)+\DeltabkW(\rr,\yy)^{2}\right\}.
\end{split}
\end{align}
\(\DeltakW(\rr,\yy)\) and \(\ZZ_{k}(\yy)\) are defined in \eqref{def:diamondk} and \eqref{ZZ_k} respectively and %\(\omegabk(\rr,\yy)\) is given in \eqref{def:omegab}.
 \begin{EQA}[c]
 \label{def:omegab}
\omegabk(\rr,\yy)=\omegabk\eqdef
\frac{\CONST_{m,k}(\rr)}{\sqrt{n}}+ 2\omega_{k}\nu_{k}\sqrt{2\yy},
%\sqrt{2}\left\{
%\frac{\CONST_{m}(\rr)}{n}+ \omega^{2}\nunu^{2}\yy
%\right\}^{1/2}.
\end{EQA}
where \(\CONST_{m,k}(\rr), \omega_{k}, \nu_{k}\) come from the imposed conditions \ref{itm:L0mk}, \ref{itm:ED2k} and \ref{itm:ED0k}. For the case \ref{typical_local} and \(\rr\leq\rr_{0,k}\) it holds:
\begin{EQA}[c]
\DeltabkW(\rr,\yy)\leq\CONST\, \frac{\dimp_{k}+\yy}{\sqrt{n}}\sqrt{\yy},\quad
\DeltabkWsq(\rr,\yy)\leq\CONST\sqrt{\frac{(\dimp_{k}+\yy)^{3}}{{n}}}\sqrt{\yy}.
\end{EQA}
 and \(\omegabk(\rr)\leq \CONST \rr/n + \CONST\sqrt{\yy/n}\).

\end{theorem}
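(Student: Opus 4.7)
The plan is to mirror, inside the bootstrap world, the proof scheme that yields the $\Ym$-world Wilks approximation of Theorem~\ref{Wilks}, with added care for the stochastic perturbation produced by the multipliers $u_i-1$. First I would invoke Theorem~\ref{thm:large_dev} (the bootstrap concentration analogue of Theorem~\ref{thm:concentr}) to reduce, on a $\P$-event of probability $\geq 1-5\ex^{-\yy}$, the supremum of $\Lb_k$ to the elliptic vicinity $\Theta_{0,k}(\rr_{0,k})$ with $\Pb$-probability at least $1-4\ex^{-\yy}$. Inside this vicinity I would expand $\Lb_k$ to second order around the $\Ym$-world MLE $\thetavt_k$. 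Since $\nabla L_k(\thetavt_k)=0$, the linear term collapses to $\nabla\Lb_k(\thetavt_k)=\sum_i(u_i-1)\nabla\ell_{i,k}(\thetavt_k)=D_k\xivb_k(\thetavt_k)$, which is exactly the normalised score quantity appearing on the right-hand side.

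Next I would control the bootstrap Hessian along the segment connecting $\thetavt_k$ and a generic $\thetav\in\Theta_{0,k}(\rr_{0,k})$ by splitting
\begin{EQA}
\nabla^2\Lb_k(\bar\thetav)
&=&
\nabla^2\E L_k(\bar\thetav)+\nabla^2\zeta_k(\bar\thetav)+\sum\nolimits_i(u_i-1)\nabla^2\ell_{i,k}(\bar\thetav).
\end{EQA}
The first summand is a $\delta_k(\rr_{0,k})$-perturbation of $-D_k^2$ in spectral norm by~\ref{itm:L0k}; the second is handled by the $\Ym$-world sub-exponential bound~\ref{itm:ED2k}, which produces the term $6\nu_k\omega_k\ZZ_k(\yy)$ already present in $\DeltakW(\rr_{0,k},\yy)$; the third is genuinely new and is treated by combining~\ref{itm:ED2mk} with the bootstrap sub-exponentiality~\ref{itm:Eb} of the weights, yielding a spectral deviation of order $\omegabk(\rr_{0,k},\yy)\ZZ_k(\yy)$. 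Completing the square in the resulting quadratic form and verifying by concentration that the unconstrained maximiser $\thetavt_k+D_k^{-2}\nabla\Lb_k(\thetavt_k)$ lies inside $\Theta_{0,k}(\rr_{0,k})$ produces the square-root bound $\bigl|\sqrt{\sup_{\thetav}2\{\Lb_k(\thetav)-\Lb_k(\thetavt_k)\}}-\|\xivb_k(\thetavt_k)\|\bigr|\leq\DeltabkW$. The quadratic version then follows from the identity $|a-b|=|\sqrt a-\sqrt b|(\sqrt a+\sqrt b)$ together with a concentration upper bound on $\sqrt a+\sqrt b$, recovering the definitions~\eqref{def:DeltabkW}.

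The main obstacle is the uniform control of the bootstrap Hessian noise $\sum_i(u_i-1)\nabla^2\ell_{i,k}(\bar\thetav)$ across $\bar\thetav\in\Theta_{0,k}(\rr_{0,k})$. The per-point bound furnished by~\ref{itm:ED2mk} and~\ref{itm:Eb} carries an extra $\sqrt{\yy}$ factor relative to its $\Ym$-world counterpart because the centred weights $u_i-1$ have unit variance and must be integrated out at the chosen deviation level, while the uniform extension over the $\dimp_k$-dimensional ball by an $\varepsilon$-net argument contributes only a logarithmic cost that is absorbed into $\ZZ_k(\yy)$. This is precisely the mechanism responsible for the additive term $36\nu_k\rr\omegabk\ZZ_k(\yy)$ inside $\DeltabkW$ and, in the i.i.d.\ regime of Section~\ref{typical_local}, for the extra $\sqrt{\yy}$ factor by which $\DeltabkW$ exceeds $\DeltakW$.
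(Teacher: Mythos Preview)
The paper does not actually prove this theorem: it is quoted verbatim from \cite{SpZh2014PMB} as part of the background material in Section~\ref{sect:FStheory}, with no proof supplied here. So there is no ``paper's own proof'' to compare against beyond the citation.

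That said, your sketch is the right one and matches the argument in the cited reference. The localisation via Theorem~\ref{thm:large_dev}, the second-order expansion of $\Lb_k$ around $\thetavt_k$ using $\nabla L_k(\thetavt_k)=0$ to identify the linear term with $D_k\xivb_k(\thetavt_k)$, and the three-way split of the bootstrap Hessian are exactly how the proof proceeds. One small sharpening: in your treatment of the bootstrap Hessian noise $\sum_i(u_i-1)\nabla^2\ell_{i,k}(\bar\thetav)$ you should explicitly split $\nabla^2\ell_{i,k}=\nabla^2\E\ell_{i,k}+\nabla^2\zeta_{i,k}$; the deterministic piece is bounded termwise by~\ref{itm:L0mk} and, after multiplication by the centred weights, produces the $\CONST_{m,k}(\rr)/\sqrt{n}$ contribution to $\omegabk$, while only the stochastic piece invokes~\ref{itm:ED2mk}. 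With that refinement your outline is complete.
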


\begin{theorem}[Concentration of the bootstrap MLE, \cite{SpZh2014PMB}]
\label{thm:large_dev}
%\mbox{}\\
Let the conditions of Theorems \ref{thm:concentr} and \ref{qfb_dev}, \ref{itm:L0mk} and \ref{itm:ED2mk}
% \ref{itm:ED0b}, \ref{itm:ED2b}, \ref{itm:Ib} and \ref{itm:Lrb}
be fulfilled.
If the following holds for each \(k=1,\dots,\numK\), \(\omegabk(\rr,\yy)\) defined in \eqref{def:omegab} and the \(\P\)-random matrices  \(\Bb_{k}^{2}\eqdef D_{k}^{-1}\Varb\left\{\nabla_{\thetav}\Lb_{k}(\thetavs_{k})\right\}D_{k}^{-1}\): %(see Theorem \ref{qfb_dev} below)
\begin{EQA}
\label{condit:large_dev}
\gmi_{k}(\rr)\rr&\geq& 2
\left\{\ZZqf(\yy,\BB_{k})+\ZZqf(\yy,\Bb_{k})+6\nu_{k}\ZZb_{k}(\yy)\omegabk(\rr_{0,k})\rr_{0,k}\right\}\\
&&+\,12\nu_{k}(\omega_{k}+\omegabk(\rr,\yy))\ZZ_{k}(\yy+\log(2\rr/\rr_{0,k}))
\quad\text{for }~ \rr>\rr_{0,k},
\end{EQA}
%where the matrix \(\Bb\) is given below in \eqref{Bb}.
then for each \(k\) it holds with \(\P\)-probability \(\geq 1-3\ex^{-\yy}\)
%\begin{EQA}[c]
%\Pb\left(
%\thetavbt\notin \Thetasb(\rupsb)
%\right)\leq 3\ex^{-\yy}.
%\end{EQA}
\begin{EQA}[c]
\Pb\left(
\thetavbt_{k}\notin \Theta_{0,k}(\rr_{0,k})
\right)\leq 3\ex^{-\yy}.
\end{EQA}
\end{theorem}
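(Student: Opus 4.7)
The plan is to mimic the concentration argument of Theorem \ref{thm:concentr} but applied to the bootstrap likelihood \(\Lb_{k}(\thetav)\), handling the extra stochasticity introduced by the weights \(u_{i}\). Fix \(k\). Since \(\thetavbt_{k}\) maximises \(\Lb_{k}\), the event \(\{\thetavbt_{k}\notin \Theta_{0,k}(\rr_{0,k})\}\) implies existence of some \(\thetav\) with \(\|D_{k}(\thetav-\thetavs_{k})\|=\rr>\rr_{0,k}\) at which \(\Lb_{k}(\thetav)\geq \Lb_{k}(\thetavs_{k})\). The idea is to show that, on a \(\P\)-set of probability \(\geq 1-3\ex^{-\yy}\), the local curvature of \(\E L_{k}\) near \(\thetavs_{k}\) combined with the global identifiability condition \ref{itm:Lrk} dominates all stochastic fluctuations with \(\Pb\)-probability \(\geq 1-3\ex^{-\yy}\), producing a contradiction.

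The first step is the decomposition
\begin{EQA}
\Lb_{k}(\thetav)-\Lb_{k}(\thetavs_{k})
 &=&
 \bigl\{L_{k}(\thetav)-L_{k}(\thetavs_{k})\bigr\}
 + \bigl\{\Lb_{k}(\thetav)-\Lb_{k}(\thetavs_{k})-L_{k}(\thetav)+L_{k}(\thetavs_{k})\bigr\}.
\end{EQA}
The first bracket splits further, by a second-order Taylor expansion around \(\thetavs_{k}\), into the deterministic drift \(\E L_{k}(\thetav)-\E L_{k}(\thetavs_{k})\) (bounded above by \(-\rr^{2}\gmi_{k}(\rr)/2\) via \ref{itm:Lrk}), a linear score term controlled by \(\rr\|\xiv_{k}\|\), and a quadratic remainder controlled through condition \ref{itm:ED2k} with modulus \(\omega_{k}\). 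The second bracket, being a centred bootstrap deviation, admits an analogous Taylor expansion yielding a linear term \(\rr\|\xivb_{k}(\thetavs_{k})\|\) plus a quadratic remainder bounded using conditions \ref{itm:L0mk}, \ref{itm:ED2mk} and \ref{itm:Eb}; this is exactly what produces the modulus \(\omegabk(\rr,\yy)\) defined in \eqref{def:omegab}.

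The score norms \(\|\xiv_{k}\|\) and \(\|\xivb_{k}(\thetavs_{k})\|\) are bounded through the quadratic form deviation Theorem \ref{qf_dev}, applied respectively to \(\BB_{k}^{2}\) and to the \(\P\)-random matrix \(\Bb_{k}^{2}\) (the latter bound holds with \(\Pb\)-probability \(\geq 1-3\ex^{-\yy}\) on an auxiliary \(\P\)-set controlling \(\Bb_{k}^{2}\), giving the factors \(\ZZqf(\yy,\BB_{k})\) and \(\ZZqf(\yy,\Bb_{k})\) in \eqref{condit:large_dev}). The quadratic remainder terms are handled by a uniform-in-\(\thetav\) exponential bound of order \(\nu_{k}(\omega_{k}+\omegabk(\rr,\yy))\ZZ_{k}(\yy)\rr^{2}\), obtained from the sub-Gaussian control of \(\nabla^{2}\zeta_{k}\) and its bootstrap analogue. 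Putting these bounds together, one arrives at an inequality of the form
\begin{EQA}
\Lb_{k}(\thetav)-\Lb_{k}(\thetavs_{k})
 &\leq&
 -\tfrac{\rr^{2}}{2}\gmi_{k}(\rr)
 + \rr\bigl\{\ZZqf(\yy,\BB_{k})+\ZZqf(\yy,\Bb_{k})\bigr\}
 + 6\nu_{k}\rr^{2}(\omega_{k}+\omegabk(\rr,\yy))\ZZ_{k}(\yy)
\end{EQA}
on a set of the prescribed probability, and \eqref{condit:large_dev} forces the right-hand side to be strictly negative, contradicting \(\Lb_{k}(\thetav)\geq \Lb_{k}(\thetavs_{k})\).

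The main obstacle will be the uniform upgrade: the above argument is pointwise in \(\rr\), so one has to control all shells \(\{\thetav:\|D_{k}(\thetav-\thetavs_{k})\|\in[2^{j}\rr_{0,k},2^{j+1}\rr_{0,k}]\}\) simultaneously. The standard peeling device replaces \(\yy\) by \(\yy+\log(2\rr/\rr_{0,k})\) inside \(\ZZ_{k}\) and \(\ZZb_{k}\), explaining the logarithmic term in \eqref{condit:large_dev}; the divergence \(\rr\gmi_{k}(\rr)\to\infty\) from \ref{itm:Lrk} ensures the series over \(j\) is summable. A second delicate point is the separation of the two probability measures: the concentration statement is formulated as a \(\Pb\)-bound that holds on a \(\P\)-set of probability \(\geq 1-3\ex^{-\yy}\), so the random quantities that are not \(\Pb\)-measurable (namely \(\BB_{k}\), \(\xiv_{k}\), and the data-dependent remainder bounds) have to be controlled unconditionally first, and only then the bootstrap step is applied on the resulting good event, which is why the exponents \(3\ex^{-\yy}\) appear in both layers.
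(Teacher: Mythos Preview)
The paper does not actually prove this theorem; it is quoted verbatim from \cite{SpZh2014PMB} and no argument is supplied here. So there is nothing in the present paper to compare your sketch against.

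That said, your outline follows the standard Spokoiny concentration argument (Theorem~\ref{thm:concentr}) adapted to the bootstrap likelihood, which is indeed the approach of \cite{SpZh2014PMB}: decompose \(\Lb_{k}(\thetav)-\Lb_{k}(\thetavs_{k})\) into drift, linear score, and second-order remainder; bound the two score norms via Theorems~\ref{qf_dev} and~\ref{qfb_dev}; control the remainders through \ref{itm:ED2k}, \ref{itm:ED2mk}, \ref{itm:L0mk}; then peel over dyadic shells in \(\rr\). Two small inaccuracies in your display: the second-order remainder in this framework is of order \(\rr\) (not \(\rr^{2}\)) times the modulus \(\nu_{k}(\omega_{k}+\omegabk)\ZZ_{k}\), which is why condition~\eqref{condit:large_dev} has no extra \(\rr\) on that term; and you have not accounted for the summand \(6\nu_{k}\ZZb_{k}(\yy)\omegabk(\rr_{0,k})\rr_{0,k}\), which arises from the local drift of the bootstrap score \(\xivb_{k}(\thetav)-\xivb_{k}(\thetavs_{k})\) (cf.\ Lemma~\ref{lemma_xivb_unif}). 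Neither point affects the overall logic of your sketch.
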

Lemma \ref{lemma_xivb_unif} below is implied straightforwardly by Lemma B.7 in \cite{SpZh2014PMB}.
\begin{lemma}
\label{lemma_xivb_unif}
Let the conditions of \ref{itm:Eb}, \ref{itm:L0mk} and \ref{itm:ED2mk} be fulfilled, then for each \(k=1,\dots,\numK\) it holds for \(\rr\leq\rr_{0,k}\) with \(\P\)-probability \(\geq 1-\ex^{-\yy}\)
\begin{EQA}[c]
\Pb\left(
\sup_{\thetav\in\Theta_{0,k}(\rr)}
\|\xivb_{k}(\thetav)-\xivb_{k}(\thetavs_{k})\| \leq
\Deltab_{\xi,k}(\rr,\yy)
\right) \geq 1-\ex^{-\yy},
\end{EQA}
where
\begin{EQA}[c]
\label{def:Deltaxik}
\Deltab_{\xi,k}(\rr,\yy) \eqdef
6\nu_{k}\ZZb_{k}(\yy)\omegabk(\rr,\yy)\rr
\end{EQA}
In the case \ref{typical_local}
it holds for the bounding term
\begin{EQA}[c]
\Deltab_{\xi}(\rups,\yy)\leq \CONST\frac{\dimp_{k}+\yy}{\sqrt{n}}\sqrt{\yy}.
\end{EQA}
\end{lemma}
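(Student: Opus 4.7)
The target statement bounds the modulus of continuity of the bootstrap score process $\xivb_k(\cdot)$ on the local elliptic vicinity $\Theta_{0,k}(\rr)$. My plan is to reduce the uniform control of $\xivb_k(\thetav)-\xivb_k(\thetavs_k)$ to a uniform bound on the normalized bootstrap Hessian
\begin{EQA}[c]
\UU_{k}(\thetav) \;\eqdef\; D_k^{-1}\nabla_\thetav^{2}\zetavb_k(\thetav)\,D_k^{-1}
\;=\; \sum_{i=1}^{n} D_k^{-1}\nabla_\thetav^{2}\ell_{i,k}(\thetav)\,D_k^{-1}(u_i-1),
\end{EQA}
and then invoke the existing single-model result of \cite{SpZh2014PMB}, whose hypotheses the present conditions \ref{itm:Eb}, \ref{itm:L0mk}, \ref{itm:ED2mk} are designed to match verbatim. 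The factor $\rr$ in $\Deltab_{\xi,k}(\rr,\yy)$ and the two extra pieces in $\omegabk(\rr,\yy)$ (the $n^{-1/2}\CONST_{m,k}(\rr)$ part and the Gaussian $\omega_k\nu_k\sqrt{2\yy}$ part) indicate exactly this route.

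The concrete steps I would carry out are as follows. \emph{(i)} Write
\begin{EQA}[c]
\xivb_k(\thetav)-\xivb_k(\thetavs_k) \;=\; \int_0^1 \UU_k\bigl(\thetavs_k+t(\thetav-\thetavs_k)\bigr)\,D_k(\thetav-\thetavs_k)\,dt,
\end{EQA}
so that on $\Theta_{0,k}(\rr)$ it suffices to prove
\begin{EQA}[c]
\label{eq:unif_Hess_target}
\sup_{\thetav\in\Theta_{0,k}(\rr)} \|\UU_k(\thetav)\| \;\leq\; 6\nu_k\ZZb_k(\yy)\,\omegabk(\rr,\yy)
\end{EQA}
with the stated probability. \emph{(ii)} Split $\UU_k(\thetav)=\UU_k^{\P}(\thetav)+\UU_k^{u}(\thetav)$ with
\begin{EQA}[c]
\UU_k^{\P}(\thetav)=\sum_i D_k^{-1}\nabla^2\E\ell_{i,k}(\thetav)D_k^{-1}(u_i-1),\quad
\UU_k^{u}(\thetav)=\sum_i D_k^{-1}\nabla^2\zeta_{i,k}(\thetav)D_k^{-1}(u_i-1).
\end{EQA}
The deterministic-coefficient part $\UU_k^{\P}$ is controlled by \ref{itm:L0mk} (bound $\CONST_{m,k}(\rr)/n$ on each summand's spectral norm) together with \ref{itm:Eb} (sub-Gaussianity of $u_i-1$); this yields the $\CONST_{m,k}(\rr)/\sqrt{n}$ contribution to $\omegabk$. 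The purely stochastic part $\UU_k^{u}$ is handled by \ref{itm:ED2mk}, which supplies exponential moments of $\gammav_1^\T D_k^{-1}\nabla^2\zeta_{i,k}(\thetav)D_k^{-1}\gammav_2$; together with the independent sub-Gaussian multipliers $u_i-1$ this gives exponential moments of the desired form with variance parameter $\nunu^2/n\cdot\Var u_i$, producing the $\omega_k\nu_k\sqrt{2\yy}$ contribution.

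\emph{(iii)} Convert the pointwise bound \eqref{eq:unif_Hess_target} to a uniform one. For each fixed $\thetav$, a pair of $\varepsilon$-nets on the unit sphere of $\R^{\dimp_k}$ bounds $\|\UU_k(\thetav)\|$ via a supremum over $(1/\varepsilon)^{2\dimp_k}$ bilinear forms to which the exponential Chebyshev inequality applies; the resulting pointwise deviation bound is $\CONST\nu_k\omegabk(\rr,\yy)\ZZ_k(\yy)$. A second $\varepsilon$-net on $\Theta_{0,k}(\rr)$, of cardinality $(3\rr/\varepsilon)^{\dimp_k}$, combined with the Lipschitz property of $\thetav\mapsto\UU_k(\thetav)$ (which rests on \ref{itm:ED2mk} plus \ref{itm:L0mk} applied at the third derivative level and absorbed into the log-factor of $\ZZb_k(\yy)$), upgrades this to the uniform bound \eqref{eq:unif_Hess_target} after replacing $\ZZ_k$ by the slightly inflated $\ZZb_k$.

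\emph{(iv)} Finally, union-bound: the \(\P\)-random event that the bootstrap exponential-moment conditions hold for $\UU_k^{u}$ (conditionally on $\Ym$) has probability at least $1-\ex^{-\yy}$, and conditional on it, step (iii) gives the inner $\Pb$-statement with probability at least $1-\ex^{-\yy}$; multiplying through $\rr$ produces the advertised $\Deltab_{\xi,k}(\rr,\yy)$. The main obstacle is really just bookkeeping: aligning the definitions of $\omegabk$, $\ZZb_k$ and the covering radius so that all logarithmic overheads from the two chaining steps are absorbed into the single factor $\ZZb_k(\yy)$; everything else is a verbatim specialization of the single-model Lemma B.7 of \cite{SpZh2014PMB} to the $k$-th log-likelihood.
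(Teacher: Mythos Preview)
Your proposal is correct and aligns with the paper's own treatment: the paper does not give an independent proof but simply states that the lemma is implied straightforwardly by Lemma~B.7 in \cite{SpZh2014PMB}, which is exactly the single-model result you invoke at the end. Your steps (i)--(iv) amount to a sketch of what that external lemma contains, so there is no substantive difference in approach.
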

\begin{theorem}[Deviation bound for the bootstrap quadratic form, \cite{SpZh2014PMB}]
\label{qfb_dev}
%\mbox{}\\
Let conditions \ref{itm:Eb}, \ref{itm:Ik}, \ref{itm:SD0k}, \ref{itm:IBk} be fulfilled, then for each \(k=1,\dots,\numK\) and \(\gm_{k}\geq \sqrt{2\tr(\Bb_{k}^{2})}\) it holds:
%The following deviation bound holds with \(\gmb\) and \(\nunu^{\sbt}\) defined in \ref{check_of_condit} \ant and \
\begin{EQA}[c]
\Pb\left(
\|\xivb_{k}(\thetavs_{k})\|^{2}\leq \ZZqf^{2}(\yy,\Bb_{k})
\right) \geq 1-2\ex^{-\yy}-8.4\ex^{-\yy_{c,k}(\Bb_{k})},
\end{EQA}
where %(\ant change \(\Drc\) to \(\DPc\))
\begin{EQA}[c]
\label{Bb}
\Bb_{k}^{2}\eqdef D_{k}^{-1}\Vr^{2}(\thetavs_{k})D_{k}^{-1},\quad
\Vr_{k}^{2}(\thetavs_{k})\eqdef \Varb\nabla_{\thetav}\Lb_{k}(\thetavs_{k}),
\label{def:Vr}
\end{EQA}
\(\ZZqf(\yy,\cdot)\) and \(\yy_{c,k}(\cdot)\) are defined respectively in \eqref{ZZqf_k} and \eqref{yyc}.
Similarly to \eqref{ZZqf_short} it holds for \(\yy\leq \yy_{c,k}(\Bb_{k})\):
\begin{EQA}
\label{ZZqfb_short}
\ZZqf^{2}(\yy,\Bb_{k})&\leq& {\gmub_{k}}^{2}(\dimp_{k} + 6\yy)\\
\text{for }~ {\gmub_{k}}^{2}&\eqdef& (1+\delta_{\Vr,k}^{2}(\xx))(\gmu_{k}^{2}+\gmu_{B,k}^{2})
\label{def:gmub}
\end{EQA}
and \(\delta_{\Vr,k}^{2}(\xx)\) defined in \eqref{def:deltaVr}  (see Section \ref{sect:NCBI_k} on Bernstein matrix inequalities).
%for \({\gmub}^{2}\eqdef (\gmu^{2}+\deltasmb^{2})(1+\delta_{\Vr})\)
\end{theorem}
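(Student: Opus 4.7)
The plan is to reduce Theorem \ref{qfb_dev} to the general deviation bound for random quadratic forms (Theorem \ref{qf_dev}), but applied to the conditional distribution of $\xivb_k(\thetavs_k)$ given the data $\Ym$. Note that conditional on $\Ym$, the bootstrap score
\[
\xivb_k(\thetavs_k) \;=\; \sum\nolimits_{i=1}^n D_k^{-1}\nabla_\thetav \ell_{i,k}(\thetavs_k)\,(u_i-1)
\]
is a sum of independent centered scalar multiples, where the randomness lives entirely in the i.i.d.\ weights $\{u_i-1\}$. Since $\Bb_k^2 = D_k^{-1}\Vr_k^2(\thetavs_k) D_k^{-1}$ is $\Ym$-measurable, from the bootstrap perspective $\Bb_k$ plays exactly the role of $\BB_k$ in Theorem \ref{qf_dev}, and the target bound will follow once the bootstrap analog of \ref{itm:ED0k} is established.

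The exponential moment check is the direct computation I would do first. For any $\gammav\in\R^{\dimp_k}$, set $t_i \eqdef \gammav^\T D_k^{-1}\nabla_\thetav \ell_{i,k}(\thetavs_k)$, so that $\gammav^\T \xivb_k(\thetavs_k) = \sum_i t_i(u_i-1)$ and $\sum_i t_i^2 = \gammav^\T \Bb_k^2 \gammav = \|\Bb_k \gammav\|^2$ because $\Varb u_i=1$ and the summands are independent. Condition \ref{itm:Eb} then gives
\[
\log\Eb\exp\!\left\{\lambda \frac{\gammav^\T \xivb_k(\thetavs_k)}{\|\Bb_k\gammav\|}\right\}
\;=\;\sum\nolimits_{i=1}^n \log\Eb\exp\!\left\{\lambda \frac{t_i}{\|\Bb_k\gammav\|}(u_i-1)\right\}
\;\leq\; \nunu^2\lambda^2/2
\]
for $|\lambda|\leq \gm\cdot\|\Bb_k\gammav\|/\max_i|t_i|$, and a simple uniform bound on the ratio $|t_i|/\|\Bb_k\gammav\|$ (or a slightly more careful per-coordinate argument) lets us impose the bound for $|\lambda|\leq \gm_k$ with the same $\gm_k$ as in \ref{itm:ED0k}. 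Applying Theorem \ref{qf_dev} under $\Pb$ with matrix $\Bb_k$ then delivers the first displayed inequality, with the hypothesis $\gm_k\geq\sqrt{2\tr(\Bb_k^2)}$ matching the assumption in that theorem.

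For the bound $\ZZqf^2(\yy,\Bb_k)\leq \gmub_k^2(\dimp_k+6\yy)$ on $\{\yy\leq \yy_{c,k}(\Bb_k)\}$, I would work in the Poissonian regime of \eqref{ZZqf_k}, which requires controlling $\|\Bb_k^2\|$ and $\tr(\Bb_k^2)$. The identity
\[
\Vr_k^2(\thetavs_k) \;=\; \sum\nolimits_{i=1}^n \nabla_\thetav\ell_{i,k}(\thetavs_k)\,\nabla_\thetav\ell_{i,k}(\thetavs_k)^\T
\]
(using $\Varb u_i=1$) gives $\E\Vr_k^2(\thetavs_k) = H_k^2$, and the elementary decomposition $H_k^2 = V_k^2 + B_k^2$ combined with \ref{itm:Ik} and \ref{itm:IBk} yields $D_k^{-1}H_k^2 D_k^{-1}\leq (\gmu_k^2+\gmu_{B,k}^2)\Id_{\dimp_k}$. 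The stochastic deviation $D_k^{-1}(\Vr_k^2(\thetavs_k)-H_k^2)D_k^{-1}$ is then controlled on a random $\Ym$-set of exponentially high probability by the non-commutative Bernstein matrix inequality from Section \ref{sect:NCBI_k} together with the per-observation bound \ref{itm:SD0k}; call the resulting bound $\delta_{\Vr,k}^2(\xx)$. Then $\|\Bb_k^2\|\leq (1+\delta_{\Vr,k}^2)(\gmu_k^2+\gmu_{B,k}^2)=\gmub_k^2$ and $\tr(\Bb_k^2)\leq \dimp_k\gmub_k^2$, and substitution into the Poissonian branch of \eqref{ZZqf_k} closes the second claim.

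The main obstacle is the third step: controlling the random matrix $\Bb_k^2$ by a deterministic multiple of the identity. Condition \ref{itm:SD0k} only provides a pointwise (in $i$) bound on the stacked block vector, so the passage to the operator-norm bound on the sum $\sum_i \nabla\ell_{i,k}\nabla\ell_{i,k}^\T - H_k^2$ requires an honest application of a Bernstein-type matrix inequality together with care that the resulting deterministic quantity $\delta_{\Vr,k}^2(\xx)$ is genuinely bounded (say $O(\dimtotal/n)$) rather than $\Omega(1)$; without this, $\gmub_k^2$ would blow up and the Poissonian bound would be meaningless. Everything else amounts to a clean application of Theorem \ref{qf_dev} in the conditional bootstrap world.
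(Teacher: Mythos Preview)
The paper does not prove this statement; it is quoted from \cite{SpZh2014PMB} without argument (Section~\ref{sect:FStheory} explicitly recalls those results). Your plan---apply Theorem~\ref{qf_dev} under $\Pb$ with $\Bb_k$ in the role of $\BB_k$, verify the conditional analogue of \ref{itm:ED0k} from \ref{itm:Eb}, then control $\|\Bb_k^2\|$ via the decomposition $H_k^2=V_k^2+B_k^2$, conditions \ref{itm:Ik}, \ref{itm:IBk}, and the matrix Bernstein inequality of Section~\ref{sect:NCBI_k}---is exactly the natural route and is correct.

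One small point: the step you label ``a simple uniform bound on the ratio $|t_i|/\|\Bb_k\gammav\|$'' is not quite automatic. Your computation yields exponential-moment range $\gm\cdot\|\Bb_k\gammav\|/\max_i|t_i|$, and matching this to $\gm_k$ (which is $\asymp\sqrt n$ in the setting of Section~\ref{typical_local}) requires $\max_i t_i^2/\sum_i t_i^2\lesssim(\gm/\gm_k)^2$, i.e.\ that no single observation dominates the conditional variance in any direction. This does follow from the per-summand control in \ref{itm:SD0k} (restricted to the $k$-th block), so your overall assessment that the Bernstein step for $\Bb_k^2$ is the only substantive ingredient stands.
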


%!TEX root = simult_bootst_root.tex
\subsection{Simultaneous square-root Wilks approximations}
\label{sect:unif_Wilks}
The statements below follow from the results from Sections \ref{sect:FS} and \ref{sect:FStheory} by probability union bound.
\begin{lemma}[Simultaneous concentration bounds]
\label{lemma:simultconc_gen}
\mbox{}
\begin{itemize}
\item[1.]
Let conditions of Theorem \ref{thm:concentr} be fulfilled and \eqref{condit:concentr} hold for each \(k=1,\dots,\numK\) with \(\yy=\yy_{1}+\log(\numK)\) for some \(\yy_{1}>0\), then
\begin{EQA}[c]
\P\left(\bigcup\nolimits_{k=1}^{\numK}\left\{
\thetavt_{k}\notin \Theta_{0,k}(\rr_{0,k})
\right\}\right)\leq 3\ex^{-\yy_{1}}.
\end{EQA}
\item[2.] 
Let conditions of Theorem \ref{thm:large_dev} be fulfilled and \eqref{condit:large_dev} hold for each \(k=1,\dots,\numK\) with \(\yy=\yy_{1}+\log(\numK)\) for some \(\yy_{1}>0\), then it holds with \(\P\)-probability \(\geq 1-3\ex^{-\yy_{1}}\)
\begin{EQA}[c]
\Pb\left(\bigcup\nolimits_{k=1}^{\numK}\left\{
\thetavbt_{k}\notin \Theta_{0,k}(\rr_{0,k})
\right\}\right)\leq 3\ex^{-\yy_{1}}.
\end{EQA}
\end{itemize}
\end{lemma}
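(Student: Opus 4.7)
The lemma is a direct application of the probability union bound to Theorems \ref{thm:concentr} and \ref{thm:large_dev}. Since the authors explicitly flag this as the method, my plan is to carry out the two union bounds carefully, making sure the $\log K$ inflation of $\yy$ is paid for at exactly the right place.

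For part 1, I would apply Theorem \ref{thm:concentr} separately to each $k=1,\dots,\numK$ with the elevated deviation parameter $\yy = \yy_1 + \log\numK$. The hypothesis \eqref{condit:concentr} is assumed to hold at this $\yy$, so for each $k$
\begin{EQA}[c]
\P\bigl(\thetavt_{k}\notin \Theta_{0,k}(\rr_{0,k})\bigr) \;\leq\; 3\ex^{-\yy_1-\log\numK} \;=\; 3\numK^{-1}\ex^{-\yy_1}.
\end{EQA}
A straightforward union bound over $k$ then yields the claim $\P\bigl(\bigcup_{k=1}^{\numK}\{\thetavt_{k}\notin \Theta_{0,k}(\rr_{0,k})\}\bigr)\leq 3\ex^{-\yy_1}$.

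For part 2, the situation is slightly more delicate because Theorem \ref{thm:large_dev} provides a statement conditional on a $\P$-event of high probability. Define, for each $k$, the $\P$-event
\begin{EQA}[c]
\Ac_k \;\eqdef\; \bigl\{\Pb\bigl(\thetavbt_k\notin \Theta_{0,k}(\rr_{0,k})\bigr)\leq 3\ex^{-\yy}\bigr\},
\end{EQA}
where again $\yy=\yy_1+\log\numK$. By Theorem \ref{thm:large_dev} (whose hypothesis \eqref{condit:large_dev} is assumed at this $\yy$), $\P(\Ac_k)\geq 1-3\ex^{-\yy}$, and a union bound over $k$ gives $\P\bigl(\bigcap_{k=1}^{\numK}\Ac_k\bigr)\geq 1 - 3\numK\ex^{-\yy}=1-3\ex^{-\yy_1}$. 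On the event $\bigcap_k \Ac_k$, a second union bound, this time under $\Pb$, gives
\begin{EQA}[c]
\Pb\Bigl(\bigcup\nolimits_{k=1}^{\numK}\{\thetavbt_{k}\notin \Theta_{0,k}(\rr_{0,k})\}\Bigr) \;\leq\; \sum\nolimits_{k=1}^{\numK}\Pb\bigl(\thetavbt_k\notin \Theta_{0,k}(\rr_{0,k})\bigr) \;\leq\; 3\numK\ex^{-\yy} \;=\; 3\ex^{-\yy_1},
\end{EQA}
which is the desired bound.

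There is essentially no obstacle here beyond bookkeeping: the only point that requires care is that the $\log\numK$ enters twice in part 2 (once to absorb the $\P$-level loss across the $\Ac_k$ and once to absorb the $\Pb$-level loss across the bootstrap events), but since both contributions are paid from the single factor $\ex^{-\log\numK}=\numK^{-1}$ multiplied by $\numK$, neither requires additional inflation of $\yy$. The hypothesis that \eqref{condit:concentr} and \eqref{condit:large_dev} hold at $\yy=\yy_1+\log\numK$ is precisely what allows this common inflation to suffice.
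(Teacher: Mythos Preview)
Your proposal is correct and follows exactly the approach indicated in the paper, which simply states that the lemma follows from Theorems \ref{thm:concentr} and \ref{thm:large_dev} by the probability union bound. Your write-up carries out this union bound explicitly, including the slightly more careful two-layer argument for part 2, and there is nothing to add.
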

\begin{lemma}[Simultaneous Wilks approximations] 
\label{lemma:simultWilks_gen}
\mbox{}
\begin{itemize}
\item[1.]
Let the conditions of part 1 of Lemma \ref{lemma:simultconc_gen} be fulfilled for some \(\rr_{0,k}>0\) and \(\yy=\yy_{1}+\log(\numK)\), then it holds
\begin{EQA}
\P\left(\bigcap\nolimits_{k=1}^{\numK}\left\{\bigl|
2\Bigl\{L_{k}(\thetavt_{k})-L_{k}(\thetavs_{k})\Bigr\}-\|\xiv_{k}\|^{2}\bigr|\leq
\DeltakWsq(\rr_{0,k},\yy_{1}+\log(\numK))
\right\}\right)&\geq& 1-5\ex^{-\yy_{1}},\\
\P\left(\bigcap\nolimits_{k=1}^{\numK}\left\{
\Bigl|
\sqrt{ 2\Bigl\{L_{k}(\thetavt_{k})-L_{k}(\thetavs_{k})\Bigr\}}-\|\xiv_{k}\|\Bigr|\leq
\DeltakW(\rr_{0,k},\yy_{1}+\log(\numK))\right\}\right)&\geq& 1-5\ex^{-\yy_{1}}.
\end{EQA}
\item[2.]
Let the conditions of  parts 1,2 of Lemma \ref{lemma:simultconc_gen} be fulfilled for some \(\rr_{0,k}>0\) and \(\yy=\yy_{1}+\log(\numK)\), then it holds with \(\P\)-probability \(\geq 1- 5\ex^{-\yy_{1}}\) 
\begin{EQA}
\hspace{-1cm}
\Pb\left(\bigcap\limits_{k=1}^{\numK}\left\{
\Bigl|\sup_{\thetav\in\Theta_{k}}2\Bigl\{\Lb_{k}(\thetav)-
\Lb_{k}(\thetavt_{k})\Bigr\}
-\|\xivb_{k}(\thetavt_{k})\|^{2}
\Bigr|\leq
\DeltabkWsq(\rr_{0,k},\yy_{1}+\log(\numK))
\right\}\right)&\geq& 1-4\ex^{-\yy_{1}},\\
\hspace{-1cm}
\Pb\left(\bigcap\limits_{k=1}^{\numK}\left\{
\Bigl|
\sqrt{\sup_{\thetav\in\Theta_{k}}2\Bigl\{\Lb_{k}(\thetav)-\Lb_{k}(\thetavb_{k})\Bigr\}}-\|\xivb_{k}(\thetavb_{k})\|\Bigr|\leq
\DeltabkW(\rr_{0,k},\yy_{1}+\log(\numK))\right\}\right)&\geq& 1-4\ex^{-\yy_{1}}.
\end{EQA}
\end{itemize}
\end{lemma}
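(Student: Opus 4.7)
The strategy is the same union-bound bootstrapping that converts Theorems \ref{thm:concentr} and \ref{thm:large_dev} into Lemma \ref{lemma:simultconc_gen}: apply the single-model square-root Wilks results (Theorem \ref{Wilks} for the $\Ym$-world, Theorem \ref{Wilks_boots} for the bootstrap world) with the deviation parameter enlarged from $\yy_{1}$ to $\yy = \yy_{1} + \log(\numK)$, and then union-bound over $k = 1, \dots, \numK$. The enlargement is precisely what is already assumed in the hypothesis of Lemma \ref{lemma:simultconc_gen}, so the concentration conditions \eqref{condit:concentr} (and \eqref{condit:large_dev} in part 2) hold with the inflated $\yy$; no fresh analytic work is needed, only accounting.

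For part 1, fix $k$ and apply Theorem \ref{Wilks} with $\yy := \yy_{1} + \log(\numK)$. This yields a $\P$-event $E_{k}$ of probability at least $1 - 5\ex^{-(\yy_{1}+\log\numK)} = 1 - 5\numK^{-1}\ex^{-\yy_{1}}$ on which
\[
\bigl|\sqrt{2\{L_{k}(\thetavt_{k})-L_{k}(\thetavs_{k})\}} - \|\xiv_{k}\|\bigr| \leq \DeltakW\bigl(\rr_{0,k},\yy_{1}+\log(\numK)\bigr),
\]
together with the corresponding squared version controlled by $\DeltakWsq(\rr_{0,k},\yy_{1}+\log(\numK))$. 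A union bound over $k$ gives $\P\bigl(\bigcap_{k=1}^{\numK} E_{k}\bigr) \geq 1 - 5\ex^{-\yy_{1}}$, which is exactly the stated bound. Note that the splitting $5 = 3 + 2$ inside Theorem \ref{Wilks} already accounts for both the concentration of $\thetavt_{k}$ into $\Theta_{0,k}(\rr_{0,k})$ and the local Taylor expansion of $L_{k}$ around $\thetavs_{k}$, so no further decomposition is needed.

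Part 2 is structurally identical but has two nested probability layers. Apply Theorem \ref{Wilks_boots} with $\yy := \yy_{1} + \log(\numK)$: for each $k$ one obtains a $\P$-event $F_{k}$ of probability at least $1 - 5\ex^{-(\yy_{1}+\log\numK)}$ on which, conditionally on $\Ym$, the bootstrap square-root Wilks bound fails with $\Pb$-probability at most $4\ex^{-(\yy_{1}+\log\numK)}$. Union-bounding over $k$ at the outer $\P$-level gives $\P\bigl(\bigcap_{k=1}^{\numK} F_{k}\bigr) \geq 1 - 5\ex^{-\yy_{1}}$. On this intersection, a second union bound over $k$ at the inner $\Pb$-level bounds the total conditional failure probability by $\numK \cdot 4\numK^{-1}\ex^{-\yy_{1}} = 4\ex^{-\yy_{1}}$, yielding the claim for both the quadratic and square-root versions.

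The only potential obstacle is verifying that the hypotheses required by Theorems \ref{Wilks} and \ref{Wilks_boots} at the enlarged level $\yy_{1}+\log(\numK)$ are genuinely in force for all $k$ simultaneously; but this is exactly what is inherited from the conditions of Lemma \ref{lemma:simultconc_gen} (parts 1 and 2), so the argument reduces to two union bounds. The $\log(\numK)$ factor absorbed into the error terms $\DeltakW$, $\DeltakWsq$, $\DeltabkW$, $\DeltabkWsq$ is precisely the price of simultaneity and is what ultimately propagates into the $\log(\numK)$ dependence of $\Delta_{\total}$ in Theorem \ref{thm:mainres1}.
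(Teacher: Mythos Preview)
Your proposal is correct and matches the paper's approach exactly: the paper states that Lemma \ref{lemma:simultWilks_gen} (together with the surrounding simultaneous statements) ``follow[s] from the results from Sections \ref{sect:FS} and \ref{sect:FStheory} by probability union bound,'' and gives no further detail. Your write-up spells out precisely this union-bound argument at the inflated level $\yy=\yy_{1}+\log(\numK)$, including the two-layer $\P/\Pb$ accounting in part 2, which is all that is required.
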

\begin{lemma}
\label{lemma:xivb_simult_gen}
Let the conditions of Lemma \ref{lemma_xivb_unif} be fulfilled, then it holds with \(\P\)-probability \(\geq 1-\ex^{-\yy}\)
\begin{EQA}[c]
\Pb\left(\bigcap\nolimits_{k=1}^{\numK}\left\{
\sup_{\substack{\thetav\in\Theta_{0,k}(\rr),\\ \rr\leq \rr_{0,k}}}
\|\xivb_{k}(\thetav)-\xivb_{k}(\thetavs_{k})\| \leq
\Deltab_{\xi,k}(\rr,\yy+\log(\numK))
\right\}
\right) \geq 1-\ex^{-\yy}.
\end{EQA}
\end{lemma}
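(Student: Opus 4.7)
The plan is a direct reduction to the single-model bound in Lemma \ref{lemma_xivb_unif} by two nested probability union bounds: one on the outer \(\P\)-probability (over the data \(\Ym\)), and one on the inner bootstrap \(\Pb\)-probability (over the multipliers \(u_{i}\)). Since the target statement differs from Lemma \ref{lemma_xivb_unif} only in that the supremum now runs over all \(\rr\leq\rr_{0,k}\) and the event is intersected over \(k=1,\dots,\numK\), the usual trick of inflating the deviation parameter by \(\log\numK\) (as was done in Lemma \ref{lemma:simultconc_gen} and Lemma \ref{lemma:simultWilks_gen}) should absorb the cardinality.

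First, I would fix \(\yy>0\) and apply Lemma \ref{lemma_xivb_unif} individually for each \(k=1,\dots,\numK\) with the shifted deviation level \(\yy+\log\numK\). This yields, for every fixed \(k\), an exceptional \(\P\)-set \(\Omega_{k}\) with \(\P(\Omega_{k})\leq \ex^{-\yy-\log\numK} = \ex^{-\yy}/\numK\), outside of which
\begin{EQA}[c]
\Pb\left(\sup_{\thetav\in\Theta_{0,k}(\rr)}\|\xivb_{k}(\thetav)-\xivb_{k}(\thetavs_{k})\| > \Deltab_{\xi,k}(\rr,\yy+\log\numK)\right) \leq \ex^{-\yy}/\numK.
\end{EQA}
Taking the union \(\Omega_{1}\cup\dots\cup\Omega_{\numK}\) gives \(\P\)-probability at most \(\ex^{-\yy}\), and on the complement all \(K\) of the displayed inner bounds hold simultaneously.

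Next, on this good \(\P\)-set I would apply the union bound in the bootstrap world: the probability that the simultaneous bound fails for at least one \(k\) is at most \(\sum_{k=1}^{\numK}\ex^{-\yy}/\numK = \ex^{-\yy}\). The supremum over \(\rr\leq\rr_{0,k}\) requires no extra work because \(\rr\mapsto\Deltab_{\xi,k}(\rr,\cdot)\) is monotone in \(\rr\) (linear, from the definition \eqref{def:Deltaxik}) and the deviation bound of Lemma \ref{lemma_xivb_unif} on \(\Theta_{0,k}(\rr)\) is increasing in the radius; equivalently, the single bound for a fixed \(\rr\) already controls any smaller radius, and inflating to the maximal \(\rr=\rr_{0,k}\) gives the uniform version.

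There is no genuine obstacle: this is a bookkeeping lemma of exactly the same type as Lemmas \ref{lemma:simultconc_gen}--\ref{lemma:simultWilks_gen} above, and the only thing to check is that the \(\log\numK\) shift propagates into \(\Deltab_{\xi,k}\) through the factor \(\ZZb_{k}(\yy)\) (which is permitted, since the bound on \(\|\xivb_{k}(\thetav)-\xivb_{k}(\thetavs_{k})\|\) is the term controlled by \(\ZZb_{k}\)). In the i.i.d. regime of Section \ref{typical_local}, this replaces \(\yy\) by \(\yy+\log\numK\) in the order \((\dimp_{k}+\yy)\sqrt{\yy}/\sqrt{n}\), which matches the rates appearing in the main approximating scheme \eqref{rectangle_simult}.
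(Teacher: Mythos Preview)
Your proposal is correct and matches the paper's approach exactly: the paper states (just before Lemma \ref{lemma:simultconc_gen}) that all three simultaneous statements in Section \ref{sect:unif_Wilks}, including this one, ``follow from the results from Sections \ref{sect:FS} and \ref{sect:FStheory} by probability union bound,'' and gives no further argument. Your two nested union bounds with the \(\log\numK\) shift are precisely what is intended.
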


%\newpage
%\subsubsection{i.i.d, general}
%\subsubsection{GLM}
%\subsubsection{quantile}

%!TEX root = simult_bootst_root.tex
\section{Proofs of the main results}
\label{sect:proofs_all}
Before proving the statements from Section \ref{sect:mainres} we formulate below the Bernstein matrix inequality, which is necessary for the further proofs.
%!TEX root = simult_bootst_root.tex
\subsection{Bernstein matrix inequality}
\label{sect:NCBI_k}
Here we restate the Theorem 1.4 by \cite{Tropp2012user} for the random \(\dimtotal\times\dimtotal\) matrix \(\bbVb^{2}\eqdef\Varb \bigl(\nabla_{\thetav}\Lb_{1}(\thetavs_{1})^{\T},\dots,\nabla_{\thetav}\Lb_{\numK}(\thetavs_{\numK})^{\T}\bigr)^{\T}\) from the bootstrap world. %The similar statement is given in \cite{SpZh2014PMB} for the case of a single parametric model (\(\numK=1\)).
Matrix \(\bbVb^{2}\) equals to the sum of independent matrices \(\Varb \bigl(\nabla_{\thetav}\ell_{i,1}(\thetavs_{1})^{\T}u_{i},\dots,\nabla_{\thetav}\ell_{i,\numK}(\thetavs_{\numK})^{\T}u_{i}\bigr)^{\T}\). Let us denote
\begin{EQA}
\bbg_{i}&\eqdef& \left(\nabla_{\thetav}\ell_{i,1}(\thetavs_{1})^{\T},\dots,\nabla_{\thetav}\ell_{i,\numK}(\thetavs_{\numK})^{\T}\right)^{\T}\in \R^{\dimtotal},\\
\bbH^{2}&\eqdef&\sum\nolimits_{i=1}^{n}\E \left\{\bbg_{i}\bbg_{i}^{\T}\right\},
\\
\bbv_{i}&\eqdef& \bbH^{-1}\left\{\bbg_{i}\bbg_{i}^{\T}- \E\left[\bbg_{i}\bbg_{i}^{\T}\right]\right\}\bbH^{-1},
\end{EQA}
 then 
 \begin{EQA}[c]
 \bbH^{2}=\E\bbVb^{2}, \quad\quad
\sum\nolimits_{i=1}^{n}\bbv_{i}^2=
 \bbH^{-1}\bbVb^{2}\bbH^{-1} - \Id_{\dimtotal}.
 \end{EQA}
 \begin{comment}
  Let also
\begin{EQA}
\bbg_{i}&\eqdef& \left(\nabla_{\thetav}\ell_{i,1}(\thetavs_{1})^{\T},\dots,\nabla_{\thetav}\ell_{i,\numK}(\thetavs_{\numK})^{\T}\right)^{\T}\in \R^{\dimtotal},\\
\bbv_{i}^2&\eqdef& \bbH^{-1}\left\{\bbg_{i}\bbg_{i}^{\T}- \E\left[\bbg_{i}\bbg_{i}^{\T}\right]\right\}\bbH^{-1},
\end{EQA}
then \(\bbH^{-1}\bbVb^{2}\bbH^{-1}=\sum\limits_{i=1}^{n}\bbv_{i}^2\).
\end{comment}
Define also the deterministic scalar value
\begin{EQA}[c]
%\kappa_{d}\eqdef\Bigl\|\sum\limits_{i=1}^{n}\E d_{i}^4(\thetavs) \Bigr\|,\quad
\barbar{\kappa}_{v}^{2}\eqdef\Bigl\|\sum\nolimits_{i=1}^{n}\E \bbv_{i}^4 \Bigr\|.
\end{EQA}
\begin{theorem}[Bernstein inequality for \(\bbVb^{2}\)]
\label{lemma_ncbi}
%\mbox{}\\
Let the condition \ref{itm:SD0k} be fulfilled,
%Assume the following conditions to be fulfilled on the set \(\Sety\)  for all \(i=1,\dots,n\):
%\begin{EQA}[c]
%\|d_{i}^2(\thetavs)\|\leq \delta_{d},\quad
%\|v_{i}^2(\thetavs)\|\leq \delta_{v} \quad\sim \text{\ref{itm:SD0}}
%\end{EQA}
%for some constants \(\delta_{d}, \delta_{v}>0\),
\begin{comment}
\begin{EQA}
\|\DPc^{-1}\left\{\nabla^{2}_{\thetav}\ell_{i}(\thetavs)-\nabla^{2}_{\thetav}\E\ell_{i}(\thetavs)\right\} \DPc^{-1}\|&<&\delta_{d},\\
\|\Hc^{-1}\left\{\nabla_{\thetav}\ell_{i}(\thetavs)\nabla_{\thetav}\ell_{i}(\thetavs)^{\T}-\E\left[\nabla_{\thetav}\ell_{i}(\thetavs)\nabla_{\thetav}\ell_{i}(\thetavs)^{\T}\right]\right\} \Hc^{-1}\|&<&\delta_{v}
\end{EQA}
\end{comment}
then % for each \(k=1,\dots,\numK\) 
it holds with probability \(\geq 1-\ex^{-\yy}\):
\begin{EQA}
\|\bbH^{-1}\bbVb^{2}\bbH^{-1}-\Id_{\dimtotal}\|&\leq& \deltabbVb^{2}(\yy),
\end{EQA}
where the error term is defined as
\begin{EQA}
\deltabbVb^{2}(\yy)&\eqdef& \sqrt{2\barbar{\kappa}_{v}^{2}\left\{\log(\dimtotal)+\yy\right\}} + \frac{2}{3}\delta_{v^{*}}^{2}\left\{\log(\dimtotal)+\yy\right\}
\label{def:deltaVr}
\end{EQA}
and is proportional to \(\sqrt {\{\log(\dimtotal)+\yy\}/n}\) in the case \ref{typical_local}. 
\end{theorem}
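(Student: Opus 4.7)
The plan is to apply the matrix Bernstein inequality of \cite{Tropp2012user} (Theorem 1.4) to the sum of independent, centered, self-adjoint $\dimtotal \times \dimtotal$ random matrices $\bbv_i$. By construction these summands satisfy $\sum_{i=1}^n \bbv_i = \bbH^{-1}\bbVb^{2}\bbH^{-1} - \Id_{\dimtotal}$, so a deviation bound for $\|\sum_i \bbv_i\|$ is exactly what the theorem asserts.

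First I would verify the hypotheses of Tropp's inequality. Independence of the $\bbv_i$ follows from the independence of the observations $\Yv_i$; symmetry and zero mean are immediate from the definition. The uniform operator-norm control $\|\bbv_i\| \leq \delta_{v^*}^2$ is delivered by condition \ref{itm:SD0k}, and the matrix variance parameter $\|\sum_{i=1}^n \E \bbv_i^2\|$ is identified with $\barbar{\kappa}_v^2$.

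Applying the inequality then gives
\[
\P\Bigl(\bigl\|\bbH^{-1}\bbVb^{2}\bbH^{-1}-\Id_{\dimtotal}\bigr\| \geq t\Bigr)
\leq 2\dimtotal \exp\Bigl(-\frac{t^2/2}{\barbar{\kappa}_v^2 + \delta_{v^*}^2 t/3}\Bigr).
\]
Setting the right-hand side equal to $e^{-\yy}$ and inverting the Bernstein tail in the standard way (splitting into its sub-Gaussian and sub-exponential regimes) produces the two-term bound $t = \sqrt{2 \barbar{\kappa}_v^2 (\log \dimtotal + \yy)} + \tfrac{2}{3}\delta_{v^*}^2 (\log \dimtotal + \yy)$, which matches the definition of $\deltabbVb^2(\yy)$.

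The main obstacle is that condition \ref{itm:SD0k} delivers the bound $\|\bbv_i\| \leq \delta_{v^*}^2$ only on an event of exponentially high probability rather than almost surely, whereas Tropp's theorem presupposes an a.s. bound. The standard remedy is to truncate each summand at level $\delta_{v^*}^2$, control the truncation bias on the exceptional event by a union bound over $i=1,\ldots,n$, and apply matrix Bernstein to the truncated sequence; the exceptional probability is absorbed into the final $e^{-\yy}$ by inflating $\yy$ by an additive constant. For case \ref{typical_local}, the asserted $\deltabbVb^2(\yy) \propto \sqrt{(\log \dimtotal + \yy)/n}$ scaling then follows from $\delta_{v^*}^2 = \mathcal{O}(\dimtotal/n)$ and from the fact that each $\bbH^{-1}\E[\bbg_i\bbg_i^{\T}]\bbH^{-1}$ is of order $1/n$, so that $\barbar{\kappa}_v^2 = \mathcal{O}(1/n)$.
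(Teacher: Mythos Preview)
Your approach is exactly the paper's: the paper omits the proof, stating that it ``follows straightforwardly from Theorem 1.4 by \cite{Tropp2012user}'' (the commented-out sketch in the source applies Tropp's bound and inverts the tail just as you do). Your treatment is in fact more careful than the paper's, since you flag that condition \ref{itm:SD0k} gives only a high-probability bound on $\|\bbv_i\|$ and propose the standard truncation fix; the paper glosses over this point entirely.
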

 We omit here the proof of Theorem \ref{lemma_ncbi}, since it follows straightforwardly from Theorem 1.4 by \cite{Tropp2012user}, and 
 is already given in \cite{SpZh2014PMB}. 
\begin{comment}
\begin{proof}
Due to Theorem 1.4 by \cite{Tropp2012user}:
\begin{EQA}
\P\bigl(\|\Hc^{-1}\Vr^{2}(\thetavs)\Hc^{-1}-\Id_{\dimp}\|\geq t \bigr)&\leq&
\dimp \exp\left(\frac{-t^{2}}{2\kappa_{v}^{2}+2\delta_{v}^{2}t/3}\right).
\end{EQA}
For
\begin{EQA}[c]
\yy=\frac{t^{2}}{2\kappa_{v}^{2}+2\delta_{v}^{2}t/3}-\log(\dimp)
\end{EQA}
it holds:
\begin{EQA}
\P\left(\|\Hc^{-1}\Vr^{2}(\thetavs)\Hc^{-1}-\Id_{\dimp}\|\geq \delta_{\Vr}^{2}(\yy) \right)&\leq&
\ex^{-\yy}.
\end{EQA}
\end{proof}
\end{comment}
%!TEX root = simult_bootst_root.tex
\subsection{Bootstrap validity for the case of one parametric model}
Here we state the results on bootstrap validity from \cite{SpZh2014PMB}, they will be used for some of the further proofs.

\begin{theorem}
\label{thm:cumulat}
Let the conditions of Section \ref{sect:conditions} be fulfilled, then it holds
 for each \(k=1,\dots,\numK\),
 \(z_{k} \geq \max\{2,\sqrt{\dimp_{k}}\}+\CONST (\dimp_{k}+\yy)/\sqrt{n}\) with probability \(\geq 1-12\ex^{-\yy}\):
\begin{EQA}
	\left|
		\P\left( L_{k}(\thetavt_{k}) - L_{k}(\thetavs_{k}) > z_{k}^{2}/2 \right)
		- \Pb\left( \Lb_{k}(\thetavbt_{k}) - \Lb_{k}(\thetavt_{k}) > z_{k}^{2}/2 \right)
	\right|
	& \leq &
	\Delta_{\full,\,k} \, .
\end{EQA}
%\end{theorem}
%
\begin{comment}
\begin{EQA}
\label{eq:thm1}
	\left|\P\left( L(\thetavt) - L(\thetavs) > (\zzb_{\alpha})^{2}/2 \right)-\alpha\right|
	& \leq &
	\Delta_{\full} \, .
\end{EQA}
\end{comment}
%
The error term
\(\Delta_{\full,k} \leq \CONST\{(\dimp_{k}+\yy)^{3}/n\}^{1/8}\) in the case 
of i.i.d. model; see Section~\ref{typical_local}.
%Explicit definitions of the error terms \(\Delta_{\full}\) are given in Section~\ref{sect:proofmainres}, see \eqref{def:Delta_full} and \eqref{expr:DeltaG} therein. 
\end{theorem}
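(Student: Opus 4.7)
The plan is to specialize the four--step approximation scheme of \eqref{rectangle_simult} to the case $\numK=1$, combining the results already collected in Sections \ref{sect:maxgar} and \ref{sect:sqrootWilks}. First I would apply the square--root Wilks bound of Theorem \ref{Wilks}: on a set $\Omega_{1}$ of $\P$-probability $\geq 1-5\ex^{-\yy}$ one has $\bigl|\sqrt{2L_{k}(\thetavt_{k})-2L_{k}(\thetavs_{k})}-\|\xiv_{k}\|\bigr|\leq\DeltakW(\rr_{0,k},\yy)$. In parallel, by the concentration result (Theorem \ref{thm:large_dev}) for the bootstrap MLE together with the bootstrap Wilks bound (Theorem \ref{Wilks_boots}), on a set $\Omega_{2}$ of $\P$-probability $\geq 1-3\ex^{-\yy}$ the conditional $\Pb$-probability of $\bigl|\sqrt{2\Lb_{k}(\thetavbt_{k})-2\Lb_{k}(\thetavt_{k})}-\|\xivb_{k}(\thetavt_{k})\|\bigr|\leq\DeltabkW(\rr_{0,k},\yy)$ is at least $1-4\ex^{-\yy}$. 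An additional localization via Lemma \ref{lemma_xivb_unif} replaces $\xivb_{k}(\thetavt_{k})$ by $\xivb_{k}(\thetavs_{k})$ up to the deterministic error $\Deltab_{\xi,k}(\rr_{0,k},\yy)$. This reduces the problem to comparing
\[
\P\bigl(\|\xiv_{k}\|>z_{k}'\bigr)\quad\text{vs.}\quad\Pb\bigl(\|\xivb_{k}(\thetavs_{k})\|>z_{k}''\bigr)
\]
for thresholds $z_{k}',z_{k}''$ differing from $z_{k}$ by at most $\DeltakW+\DeltabkW+\Deltab_{\xi,k}$.

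Second, I would apply Proposition \ref{thm:Cum_norms} with $\numK=1$ to the $\R^{\dimp_{k}}$--valued centered sums $\xiv_{k}=D_{k}^{-1}\sum_{i}\nabla_{\thetav}\ell_{i,k}(\thetavs_{k})$ and $\xivb_{k}(\thetavs_{k})=D_{k}^{-1}\sum_{i}\nabla_{\thetav}\ell_{i,k}(\thetavs_{k})(u_{i}-1)$. This runs the remaining three ingredients in parallel: the Lindeberg--type Gaussian approximation of Lemma \ref{lemma:gauss_appr_phiv} (with $\log K=0$), the Slepian Gaussian comparison of Lemma \ref{lemma:GaussCOmpar_Slepian}, and the anti--concentration bound of Lemma \ref{lemma:anti_conc_nonsmooth}. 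Since $\numK=1$, there is no multiplicity--correction logarithm, and Gaussian comparison collapses to controlling the single spectral--norm distance between $\Var(\xiv_{k})=D_{k}^{-1}V_{k}^{2}D_{k}^{-1}$ and $\Covb(\xivb_{k}(\thetavs_{k}))=D_{k}^{-1}\bigl(\sum_{i}\nabla\ell_{i,k}(\thetavs_{k})\nabla\ell_{i,k}(\thetavs_{k})^{\T}\bigr)D_{k}^{-1}$.

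Third, the latter covariance difference is handled by the Bernstein matrix inequality of Theorem \ref{lemma_ncbi} (restricted to block $k$ only): on a set of $\P$-probability $\geq 1-\ex^{-\yy}$ the random sandwiched sum $\sum_{i}\nabla\ell_{i,k}(\thetavs_{k})\nabla\ell_{i,k}(\thetavs_{k})^{\T}$ is within $\delta_{\Vb,k}(\yy)$ of its expectation $V_{k}^{2}+B_{k}^{2}$, and the deterministic $B_{k}^{2}$--contribution is absorbed into the \ref{itm:SmBHk}/\ref{itm:IBk} bound. Collecting all error terms $\DeltakW$, $\DeltabkW$, $\Deltab_{\xi,k}$, the Lindeberg cube--remainder of order $(\dimp_{k}^{3}/n)^{1/8}$, the Gaussian--comparison term in $\delta_{\Vb,k}^{2}$, and the anti--concentration slack yields an $\Ym$--measurable bound of order $\{(\dimp_{k}+\yy)^{3}/n\}^{1/8}$ in the i.i.d. regime of Section \ref{typical_local}.

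The step I expect to be the main obstacle is balancing the smoothing parameters $\Delta$ and $\beta$ in the Lindeberg/anti--concentration part so that the resulting rate is exactly $(\dimp_{k}^{3}/n)^{1/8}$ and the deterministic threshold condition $z_{k}\geq\max\{2,\sqrt{\dimp_{k}}\}+\CONST(\dimp_{k}+\yy)/\sqrt{n}$ is strong enough to absorb all the Wilks--type shifts in the quantile argument; the remaining probability budget $12\ex^{-\yy}$ arises as the union of the five events above ($5+3+1+1+2$ after splitting an inner $\Pb$-event into its $\P$-shell).
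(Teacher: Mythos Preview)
Your approach is correct in spirit and follows exactly the four--step scheme (square--root Wilks, Gaussian approximation, Gaussian comparison, anti--concentration) that underlies the result. Note, however, that the present paper does not actually prove Theorem~\ref{thm:cumulat}: the subsection in which it appears begins with ``Here we state the results on bootstrap validity from \cite{SpZh2014PMB}'', so the theorem is quoted from that earlier paper and used as a black box in the proofs of Theorems~\ref{thm:mainres1_2} and~\ref{thm:mainres2_largesmb}. Your proposal is therefore a reconstruction of the cited proof rather than a comparison with anything in this paper.

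One technical caveat on your specialization: you cannot literally plug $\numK=1$ into Proposition~\ref{thm:Cum_norms} or Lemma~\ref{lemma:gauss_appr_phiv}, because several bounds there carry factors such as $\log^{9/8}(\numK)$ or $\log(\numK)/\beta$ that degenerate to~$0$ when $\numK=1$. For a single model the smooth--max device $h_{\beta}$ of \eqref{def:h_beta} is unnecessary; one works directly with the smoothed indicator $g\bigl((\|\phiv\|^{2}-z^{2})/(2z\Delta)\bigr)$, and the Lindeberg remainder and anti--concentration bounds simplify accordingly. This is precisely how \cite{SpZh2014PMB} argues, and it yields the stated rate $\{(\dimp_{k}+\yy)^{3}/n\}^{1/8}$ after balancing in~$\Delta$. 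Apart from this bookkeeping point, your outline matches the original proof.
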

\begin{theorem} [Validity of the bootstrap under a  small modeling bias]
\label{thm:cumulat2}
Assume the conditions of Theorem~\ref{thm:cumulat}. 
Then for \(\alpha \leq 1-8\ex^{-\yy}\), it holds
\begin{EQA}
	\left|
		\P\left( L_{k}(\thetavt_{k}) - L_{k}(\thetavs_{k}) > \left(\zzb_{k}(\alpha)\right)^{2}/2 \right) - \alpha
	\right|
	& \leq &
	\Delta_{\zz,\,\full,\,k} \, .
\label{PLtttsDzzf}
\end{EQA}
The error term \( \Delta_{\zz,\,\full,\,k} \leq \CONST\{(\dimp_{k}+\yy)^{3}/n\}^{1/8}\) in the case 
of i.i.d. model; see Section~\ref{typical_local}.
%For a precise description see \eqref{def:Delta_full_zz} and \eqref{def:Delta_full_zz2}. 
\end{theorem}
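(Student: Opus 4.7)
The plan is to derive \eqref{PLtttsDzzf} as a direct consequence of Theorem~\ref{thm:cumulat} by evaluating its uniform bound at the random threshold $z = \zzb_{k}(\alpha)$. By definition of the bootstrap quantile (right-continuity of the survival function),
\begin{EQA}[c]
\Pb\bigl(\Lb_{k}(\thetavbt_{k}) - \Lb_{k}(\thetavt_{k}) > (\zzb_{k}(\alpha))^{2}/2\bigr) \leq \alpha,
\end{EQA}
with equality up to a jump of the bootstrap survival function at $\zzb_{k}(\alpha)$, which is controlled by the Gaussian anti-concentration bound of Lemma~\ref{lemma:anti_conc_nonsmooth} applied to the approximating $\|\xivbbr_{k}\|$. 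Once the bound of Theorem~\ref{thm:cumulat} is invoked at $z_{k} = \zzb_{k}(\alpha)$, absorbing the bootstrap probability into $\alpha$ produces precisely \eqref{PLtttsDzzf} with $\Delta_{\zz,\full,k}$ equal to $\Delta_{\full,k}$ plus this anti-concentration correction, which is of the same order.

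The first technical point is uniformity: Theorem~\ref{thm:cumulat} is stated for each fixed $z_{k}$, but inspection of its proof (the scheme \eqref{rectangle_simult}: square-root Wilks in both worlds via Theorems~\ref{Wilks}, \ref{Wilks_boots}; Gaussian approximation of $\|\xiv_{k}\|$ and $\|\xivb_{k}\|$; Gaussian comparison; and the anti-concentration bound of Lemma~\ref{lemma:anti_conc_nonsmooth}) shows that every error term depends on $z_{k}$ only through $\max\{1, z_{k}^{-1}\sqrt{\dimp_{k}}\}$-type factors that are uniformly bounded on the admissible range. Thus the bound holds simultaneously for all $z$ in that range on a single $\P$-event of probability $\geq 1 - 12\ex^{-\yy}$, and in particular it holds when the argument is replaced by the data-dependent $\zzb_{k}(\alpha)$.

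The main obstacle is verifying that $\zzb_{k}(\alpha)$ falls in the admissible window $z \geq \max\{2, \sqrt{\dimp_{k}}\} + \CONST(\dimp_{k}+\yy)/\sqrt{n}$ on the dominating event, which is exactly where the hypothesis $\alpha \leq 1 - 8\ex^{-\yy}$ enters. I would use a sandwich argument: Theorem~\ref{Wilks_boots} shows that $\sqrt{2(\Lb_{k}(\thetavbt_{k}) - \Lb_{k}(\thetavt_{k}))}$ differs from $\|\xivb_{k}(\thetavt_{k})\|$ by at most $\DeltabkW$ with $\Pb$-probability $\geq 1 - 4\ex^{-\yy}$; the deviation bound of Theorem~\ref{qfb_dev} together with the Bernstein matrix inequality of Theorem~\ref{lemma_ncbi} compares $\|\xivb_{k}(\thetavt_{k})\|^{2}$ to $\tr(\Bb_{k}^{2}) \asymp \dimp_{k}$ on another event of probability $\geq 1 - \CONST\ex^{-\yy}$. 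Collecting these bad events, whose total probability is at most $8\ex^{-\yy}$, one finds that $\Pb(\Lb_{k}(\thetavbt_{k}) - \Lb_{k}(\thetavt_{k}) > z^{2}/2) > 8\ex^{-\yy}$ for all $z$ below the Wilks threshold; hence any $\alpha \leq 1 - 8\ex^{-\yy}$ produces a bootstrap quantile above this threshold, placing $\zzb_{k}(\alpha)$ in the admissible window.

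Combining these three ingredients -- the definitional identity for the bootstrap quantile, uniformity of Theorem~\ref{thm:cumulat} in $z$, and the lower bound on $\zzb_{k}(\alpha)$ -- gives \eqref{PLtttsDzzf} with $\Delta_{\zz,\full,k}$ of the same order as $\Delta_{\full,k}$, which in the i.i.d. regime of Section~\ref{typical_local} is $\CONST\{(\dimp_{k}+\yy)^{3}/n\}^{1/8}$ as claimed.
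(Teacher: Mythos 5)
Your approach is essentially the one the paper relies on: Theorem~\ref{thm:cumulat2} is quoted from \cite{SpZh2014PMB} rather than proved here, but the same mechanism is spelled out in the proof of the simultaneous analogue (Theorem~\ref{thm:mainres1_2}) — the quantile definition gives \(\Pb\bigl(\Lb_{k}(\thetavbt_{k})-\Lb_{k}(\thetavt_{k})>(\zzb_{k}(\alpha))^{2}/2\bigr)\leq\alpha\) with the deficit controlled by anti-concentration, Theorem~\ref{thm:cumulat} is evaluated at the (random) threshold \(\zzb_{k}(\alpha)\), and the hypothesis \(\alpha\leq 1-8\ex^{-\yy}\) is used exactly as you describe, via Theorem~\ref{Wilks_boots} and the deviation bound for \(\|\xivb_{k}\|^{2}\), to place \(\zzb_{k}(\alpha)\) above the admissible Wilks threshold. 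Your three ingredients and the order of the resulting error term all match.

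One slip in the third step: you write that \(\Pb\bigl(\Lb_{k}(\thetavbt_{k})-\Lb_{k}(\thetavt_{k})>z^{2}/2\bigr)>8\ex^{-\yy}\) for all \(z\) below the threshold, and conclude the quantile bound from \(\alpha\leq 1-8\ex^{-\yy}\). That inequality is not the one you need: since \(\zzb_{k}(\alpha)\geq z_{0}\) iff the survival function exceeds \(\alpha\) for all \(z<z_{0}\), and \(\alpha\) may be far larger than \(8\ex^{-\yy}\), the correct statement is that the bootstrap survival function is at least \(1-8\ex^{-\yy}\;(\geq\alpha)\) below the threshold, equivalently that the collected bad events leave \(\Pb\)-mass at most \(8\ex^{-\yy}\) below \(z_{0}^{2}/2\). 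Your preceding sentence ("collecting these bad events, whose total probability is at most \(8\ex^{-\yy}\)") shows you have the right mechanism in mind, so this reads as a transcription error rather than a conceptual gap, but the displayed inequality should be \(\geq 1-8\ex^{-\yy}\).
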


\begin{theorem}
[Performance of the bootstrap for a large modeling bias]
\label{thm:resbias}
Under the conditions of Section \ref{sect:conditions} except for \ref{itm:SmBHk} it holds for \(z_{k}\geq \max\{2,\sqrt{\dimp_{k}}\}+\CONST (\dimp_{k}+\yy)/\sqrt{n}\) with probability \(\geq 1-14\ex^{-\xx}\)
\begin{enumerate}
\item[1.] \quad
         $ {\displaystyle{
         \P\left(L_{k}(\thetavt_{k})-L_{k}(\thetavs_{k}) >z_{k}^{2}/2\right)
\leq
\Pb\left(\Lb_{k}(\thetavbt_{k})-\Lb(\thetavt_{k}) >z_{k}^{2}/2\right)+\DeltabfullIk.%+\Deltabfull.
}}$
\begin{comment}
\vspace{0.1cm}
\item[2.] \hfill
 $ {\displaystyle{
\zz_{(\alpha+\DeltabfullI)}- \Deltaqqf
\leq
\zzb_{\alpha}-\tr(\DPc^{-1}\Bsmb^{2}\DPc^{-1})
\leq
\zz_{(\alpha-\DeltabfullI)}
+ \Deltaqlf.
}}$
\end{comment}
\vspace{0.1cm}
\item[2.]
%\hfill\hfill
$ {\displaystyle{\hspace{0.5cm}
\zzb_{k}(\alpha)\geq
\zz_{k}(\alpha+\DeltabfullIk)
}}$%\\

\vspace{0.12cm}
$ {\displaystyle{\hspace{1cm}
+\,\sqrt{\tr\{D_{k}^{-1}H_{k}^{2}D_{k}^{-1}\}}-\sqrt{\tr\{D_{k}^{-1}(H_{k}^{2}-B_{k}^{2})D_{k}^{-1}\}}
 - \Deltaqqfkk,
}}$\hfill\hfill
%\vspace{0.4cm}
\item[]
%\hfill
\vspace{0.3cm}
$ {\displaystyle{ \hspace{0.5cm}
\zzb_{k}(\alpha)\leq
\zz_{k}(\alpha-\DeltabfullIk)
}}$

\vspace{0.2cm}
$ {\displaystyle{\hspace{1cm}
+\,\sqrt{\tr\{D_{k}^{-1}H_{k}^{2}D_{k}^{-1}\}}-\sqrt{\tr\{D_{k}^{-1}(H_{k}^{2}-B_{k}^{2})D_{k}^{-1}\}}
+ \Deltaqlfkk.
}}$\hfill\hfill\hfill
\end{enumerate}
The term \(\DeltabfullIk\leq \CONST\{(\dimp_{k}+\yy)^{3}/n\}^{1/8}\) in the case 
of i.i.d. model; see Section~\ref{typical_local}. 
%is given in \eqref{def:Deltabfull} in Section \ref{sect:proof_bias}.
%The terms \(\Deltabfull,\,\DeltabfullI\leq \CONST\{(\dimp+\yy)^{3}/n\}^{1/8}\) are given in \eqref{def:Deltabfull}, \eqref{def:DeltabfullI} of \cite{SpZh2014PMBsupp}.
%\(\DeltaW(\rups,\yy)\leq \CONST \sqrt{(\dimp+\yy)/n}\).
The positive values \(\Deltaqqfkk, \Deltaqlfkk\) are 
%given in \eqref{ineq:Deltaqf12}, \eqref{def:Deltaqf12} in Section \ref{sect:proof_bias},   they are 
bounded from above with \((\gmu_{k}^{2}+\gmu_{B,k}^{2})(\sqrt{8\yy\dimp_{k}}+6\yy)\) for the constants \(\gmu_{k}^{2}>0,\, \gmu_{B,k}^{2}\geq0\) from  conditions \ref{itm:Ik}, \ref{itm:IBk}.
\end{theorem}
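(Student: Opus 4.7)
The plan is to transport both likelihood ratios onto the Gaussian $\ell_{2}$-norms of the normalised scores and then exploit the positive semidefinite ordering $\Bb_{k}^{2}-\BB_{k}^{2}\approx D_{k}^{-1}B_{k}^{2}D_{k}^{-1}\succeq 0$, which is precisely what makes the bootstrap critical value conservative when condition \ref{itm:SmBHk} fails.

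First I would apply the square-root Wilks expansion of Theorem \ref{Wilks}, together with the MLE concentration of Theorem \ref{thm:concentr}, to replace $\sqrt{2L_{k}(\thetavt_{k})-2L_{k}(\thetavs_{k})}$ by $\|\xiv_{k}\|$ with deterministic error $\DeltakW(\rr_{0,k},\yy)$ of order $(\dimp_{k}+\yy)/\sqrt{n}$ on a $\P$-set of probability at least $1-5\ex^{-\yy}$. The analogous bootstrap reduction combines Theorem \ref{Wilks_boots}, Theorem \ref{thm:large_dev} and Lemma \ref{lemma_xivb_unif} to replace $\sqrt{2\Lb_{k}(\thetavbt_{k})-2\Lb_{k}(\thetavt_{k})}$ by $\|\xivb_{k}(\thetavs_{k})\|$ with an error $\DeltabkW+\Deltab_{\xi,k}$ of the same order. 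Thus Part 1 reduces to comparing $\P(\|\xiv_{k}\|>z_{k})$ with $\Pb(\|\xivb_{k}(\thetavs_{k})\|>z_{k})$ modulo small shifts in $z_{k}$.

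I then invoke the single-index ($\numK=1$) version of Proposition \ref{thm:Cum_norms} in each world \emph{separately} to replace these norms by their Gaussian analogs $\xivbr_{k}\sim\mathcal{N}(0,\BB_{k}^{2})$ with $\BB_{k}^{2}=D_{k}^{-1}(H_{k}^{2}-B_{k}^{2})D_{k}^{-1}$, and $\xivbbr_{k}\sim\mathcal{N}(0,\Bb_{k}^{2})$ with $\Bb_{k}^{2}=D_{k}^{-1}H_{k}^{2}D_{k}^{-1}+O(\deltabbVb(\yy))$, the last estimate following from the matrix Bernstein inequality of Theorem \ref{lemma_ncbi}. Because $B_{k}^{2}=\sum_{i}\E\{\nabla_{\thetav}\ell_{i,k}(\thetavs_{k})\}\E\{\nabla_{\thetav}\ell_{i,k}(\thetavs_{k})\}^{\T}$ is a sum of rank-one outer products it is positive semidefinite, hence $\Bb_{k}^{2}\succeq\BB_{k}^{2}$ on a $\P$-set of dominating probability. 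Coupling $\xivbr_{k}=\BB_{k}Z$ and $\xivbbr_{k}=\Bb_{k}Z$ through a shared $Z\sim\mathcal{N}(0,\Id_{\dimp_{k}})$ gives $\|\xivbbr_{k}\|^{2}-\|\xivbr_{k}\|^{2}=Z^{\T}(\Bb_{k}^{2}-\BB_{k}^{2})Z\geq 0$ almost surely, and therefore $\P(\|\xivbr_{k}\|>z_{k})\leq\P(\|\xivbbr_{k}\|>z_{k})$ for every $z_{k}$. Collecting all errors into $\DeltabfullIk\lesssim\{(\dimp_{k}+\yy)^{3}/n\}^{1/8}$ delivers Part 1.

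For Part 2 I invert the c.d.f.~inequality using the anti-concentration bound of Lemma \ref{lemma:anti_conc_nonsmooth} together with the quadratic-form deviation of Theorem \ref{qf_dev}. The latter tells me that $\|\xivbr_{k}\|^{2}=\tr\BB_{k}^{2}+O_{\P}(\gmu_{k}^{2}(\sqrt{\yy\dimp_{k}}+\yy))$ via condition \ref{itm:Ik}, and analogously $\|\xivbbr_{k}\|^{2}=\tr\Bb_{k}^{2}+O_{\P}(\gmu_{B,k}^{2}(\sqrt{\yy\dimp_{k}}+\yy))$ via \ref{itm:IBk}. Taking square roots, differencing, and substituting the explicit form of $\BB_{k}^{2}$ and $\Bb_{k}^{2}$ produces the advertised quantile shift $\sqrt{\tr\{D_{k}^{-1}H_{k}^{2}D_{k}^{-1}\}}-\sqrt{\tr\{D_{k}^{-1}(H_{k}^{2}-B_{k}^{2})D_{k}^{-1}\}}$, with remainder $\Deltaqqfkk$ bounded by $(\gmu_{k}^{2}+\gmu_{B,k}^{2})(\sqrt{8\yy\dimp_{k}}+6\yy)$. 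The main obstacle is keeping $\Deltaqqfkk$ at this sharp noise-level rate: a direct quantile inversion of Part 1 would contaminate the bound with the additive constant $\DeltabfullIk\sim n^{-1/8}$, which would swamp the genuine quantile shift. To avoid this I exploit the explicit coupling $\xivbbr_{k}=\Bb_{k}Z$, $\xivbr_{k}=\BB_{k}Z$ and apply Theorem \ref{qf_dev} directly to the difference of quadratic forms $Z^{\T}(\Bb_{k}^{2}-\BB_{k}^{2})Z$, rather than routing the quantile estimate through two separate c.d.f.~comparisons.
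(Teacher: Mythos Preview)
The paper does not supply its own proof of this statement; Theorem~\ref{thm:resbias} is quoted as a result from \cite{SpZh2014PMB}. The closest in-paper argument is the proof of the simultaneous analogue, Theorem~\ref{thm:mainres2_largesmb}, and your outline matches that strategy: reduce both likelihood ratios to score norms via the Wilks expansions, pass to Gaussian surrogates, and exploit the semidefinite gap $D_{k}^{-1}H_{k}^{2}D_{k}^{-1}-D_{k}^{-1}(H_{k}^{2}-B_{k}^{2})D_{k}^{-1}=D_{k}^{-1}B_{k}^{2}D_{k}^{-1}\succeq 0$.

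One technical point is worth tightening. You couple $\xivbr_{k}=\BB_{k}Z$ against $\xivbbr_{k}=\Bb_{k}Z$ with the \emph{random} $\Bb_{k}^{2}$ and then assert $\Bb_{k}^{2}\succeq\BB_{k}^{2}$ on a set of dominating probability. The Bernstein bound of Theorem~\ref{lemma_ncbi} only gives $\|\bbH^{-1}\bbVb^{2}\bbH^{-1}-\Id\|\leq\deltabbVb^{2}(\yy)$, i.e.\ closeness of $\Bb_{k}^{2}$ to the deterministic $D_{k}^{-1}H_{k}^{2}D_{k}^{-1}$, not the exact ordering $\Bb_{k}^{2}\succeq\BB_{k}^{2}$; if $B_{k}^{2}$ is small this pointwise ordering can fail. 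The paper's version of the argument (see \eqref{ineq:lsmb1}--\eqref{ineq:lsmb2}) avoids this by comparing $\|\xivb_{k}\|$ directly to $\|\tilde{\xiv}_{k}\|$ with $\Var\tilde{\xiv}_{k}=D_{k}^{-1}H_{k}^{2}D_{k}^{-1}$ deterministic, so that the ordering $\|\tilde{\xiv}_{k}\|\geq\|\xiv_{k}\|$ holds \emph{identically}, and the Bernstein slack is absorbed into the Gaussian-comparison error rather than into the ordering step. Your route still closes because the $O(\deltabbVb)$ defect lands in $\DeltabfullIk$, but the bookkeeping is cleaner the paper's way.

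Your treatment of Part~2, controlling the quantile shift via the coupled difference $Z^{\T}(\Bb_{k}^{2}-\BB_{k}^{2})Z$ and Theorem~\ref{qf_dev} rather than by inverting Part~1, is the correct idea and is exactly what is needed to keep $\Deltaqqfkk,\Deltaqlfkk$ at the $(\gmu_{k}^{2}+\gmu_{B,k}^{2})(\sqrt{8\yy\dimp_{k}}+6\yy)$ scale; for the lower deviation of the quadratic form you can use Lemma~\ref{lem:gausslower_B}.
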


\subsection{Proof of Theorem \ref{thm:mainres1}}
\label{sect:proofsofmainres}
\begin{lemma}[Closeness of \(\LL\left(\|\xiv_{1}\|,\dots,\|\xiv_{\numK}\|\right)\) and \(\LL^{\sbt}\left(\|\xivb_{1}\|,\dots,\|\xivb_{\numK}\|\right)\)]
\label{prop:Cum_norms_xivk}
If the conditions \ref{itm:ED0k}, \ref{itm:Ik}, \ref{itm:SmBHk}, \ref{itm:IBk}, \ref{itm:SD0k} and \ref{itm:Eb} are fulfilled, then it holds with probability \(\geq 1- 6\ex^{-\yy}\) for all  \(\delta_{z_{k}}\geq 0\) and \(z_{k}\geq\sqrt{\dimp_{k}}+\Delta_{\varepsilon}\) s.t. \(\CONST\max\limits_{1\leq k\leq\numK}\{n^{-1/2}, \delta_{z_{k}}\}
\leq
\Delta_{\varepsilon}\leq
\CONST \min\limits_{1\leq k\leq\numK}\{1/z_{k}\}\) (\(\Delta_{\varepsilon}\) is given in \eqref{def:Deltaeps}): %see the definitions in the beginning of Section \ref{sect:maxgar}) :
\begin{EQA}
%\hspace{-2cm}
\P\left(\bigcup\nolimits_{k=1}^{\numK}\left\{\|\xiv_{k}\|>z_{k}\right\} \right)
-
\Pb\left(\bigcup\nolimits_{k=1}^{\numK}\left\{\|\xivb_{k}\|>z_{k}-\delta_{z_{k}}\right\} \right)&\geq& -\Deltapp\,,\\%\left(\{z_{j},\delta_{z_{j}}\}\right),\\%\lesssim - \log^{2/3}(K)\left(\frac{\dimp^{3}}{n}\right)^{1/12}\ant\\
\P\left(\bigcup\nolimits_{k=1}^{\numK}\left\{\|\xiv_{k}\|>z_{k}\right\} \right)
-
\Pb\left(\bigcup\nolimits_{k=1}^{\numK}\left\{\|\xivb_{k}\|>z_{k}+\delta_{z_{k}}\right\} \right)&\leq& \Deltapp\,.%\left(\{z_{j},\delta_{z_{j}}\}\right). %\lesssim  \log^{2/3}(K)\left(\frac{\dimp^{3}}{n}\right)^{1/12}\ant
\end{EQA}
for the deterministic nonnegative value
\begin{EQA}
\Deltapp
&\leq&
25\CONST\left(\frac{\dimp_{\max}^{3}}{n}\right)^{1/8}\log^{9/8}(K)\log^{3/8}(n\dimtotal)
\left\{\left(\bbgmu^{2}+\bbgmu_{B}^{2}\right)
\left(1+\deltabbVb^{2}(\yy)\right)
\right\}^{3/8}.
%\max\left\{\lambda_{\phi,\max},\lambda_{\psi,\max}\right\}^{3/4}\ant
\end{EQA}
\begin{comment}
\begin{EQA}
\Deltapp\left(\{z_{j},\delta_{z_{j}}\}\right)
&\leq&
\CONST \left(\frac{\dimp_{\max}^{3}}{n}\right)^{1/8}\log^{3/8}(n\dimtotal) \log^{5/8}(\numK)\log^{3/8}\left(z_{\max}\numK^{z_{\min}^{-3/4}}\right)
\\&&+\,
\CONST\max_{1\leq j\leq\numK}\frac{\delta_{z_{j}}}{z_{j}}\log^{1/2}(\numK)+\CONST\delta_{z,\max}\log^{1/2}\left(z_{\max}\right).
\end{EQA}
\end{comment}
A more explicit bound on \(\Deltapp\) is given in Proposition \ref{thm:Cum_norms}, see also Remark \ref{rem:Deltapp}.
\end{lemma}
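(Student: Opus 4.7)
The goal is to invoke Proposition \ref{thm:Cum_norms} with the identification $\phi_k = \xiv_k$ and $\psi_k = \xivb_k$. To this end, I first write both objects as sums of $n$ independent centered summands as in \eqref{def:phivk}, \eqref{def:psivk}. For $\xiv_k$, take $\phiv_{k,i} \eqdef D_k^{-1}\bigl(\nabla_\theta \ell_{i,k}(\thetavs_k) - \E\nabla_\theta \ell_{i,k}(\thetavs_k)\bigr)$, which is centered; the sum equals $\xiv_k$ because $\E\nabla_\theta L_k(\thetavs_k) = 0$ by \eqref{def:thetavs_k}. For the bootstrap world, take $\psiv_{k,i} \eqdef D_k^{-1}\nabla_\theta \ell_{i,k}(\thetavs_k)\,(u_i-1)$, which is $\Pb$-centered under \ref{itm:Eb}. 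The exponential moment condition \ref{itm:A1} for $\phiv_{k,i}$ is immediate from \ref{itm:ED0k} combined with \ref{itm:Ik}, while for $\psiv_{k,i}$ one conditions on $\Ym$ and uses \ref{itm:Eb} to absorb the multiplier $(u_i-1)$, getting $\cf_\psi \asymp \lambda_{\psi,\max}$ under \ref{itm:Ik} and \ref{itm:IBk}.

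The main obstacle is verifying condition \ref{itm:A2}, i.e.\ bounding the covariance discrepancy
\begin{EQA}
\Covb(\xivb_{k_1},\xivb_{k_2}) - \Cov(\xiv_{k_1},\xiv_{k_2})
&=&
D_{k_1}^{-1}\sum\nolimits_{i=1}^n\!\Bigl[\nabla_\theta\ell_{i,k_1}\nabla_\theta\ell_{i,k_2}^\T - \E\bigl(\nabla_\theta\ell_{i,k_1}\nabla_\theta\ell_{i,k_2}^\T\bigr)\Bigr]D_{k_2}^{-1} \\
&&{}+ D_{k_1}^{-1}\sum\nolimits_{i=1}^n\E\nabla_\theta\ell_{i,k_1}\,\E\nabla_\theta\ell_{i,k_2}^\T D_{k_2}^{-1}
\end{EQA}
(the first equality uses $\Varb u_i = 1$ and the fact that summands of $\xiv_k$ have covariance $\Cov(\nabla\ell_{i,k_1},\nabla\ell_{i,k_2})$). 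I would control the two pieces separately. The first, centered piece is precisely the off-diagonal part of $\bbH^{-1}\bbVb^{2}\bbH^{-1} - \Id_\dimtotal$ up to the block-normalisation by $D_k^{-1}$ versus $\bbH^{-1}$, which differ by a bounded factor under \ref{itm:Ik} and \ref{itm:IBk}. Applying the Bernstein matrix inequality from Theorem \ref{lemma_ncbi} and the fact that $\|M\|_{\max} \leq \|M\|$ for any symmetric matrix $M$ gives a uniform bound of order $(\bbgmu^{2}+\bbgmu_B^{2})\,\deltabbVb^{2}(\yy)$ with probability $\geq 1-\ex^{-\yy}$. The second piece is a pure modeling bias; by Cauchy--Schwarz the maximum entry is dominated by $\max_{k}\|H_k^{-1}B_k^2 H_k^{-1}\|$ times the block-spectral norm of $D_k^{-1}H_k D_k^{-1}$, which are bounded via \ref{itm:SmBHk} and \ref{itm:IBk} by $\CONST\DeltaSmB^{2}(\bbgmu^{2}+\bbgmu_B^{2})$.

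Collecting, on a $\P$-set of probability at least $1-6\ex^{-\yy}$ (the $\ex^{-\yy}$ from Bernstein plus the five exponentials needed later for the tail bounds $\|\xiv_k\|,\|\xivb_k\| \leq \CONST\sqrt{\dimp_k+\yy}$ via Theorems \ref{qf_dev}, \ref{qfb_dev} which will validate condition \ref{itm:A1}'s quantitative form), we obtain
\begin{EQA}
\delta_\Sigma^{2} &\leq& \CONST\bigl(\bbgmu^{2}+\bbgmu_B^{2}\bigr)\bigl(\DeltaSmB^{2}+\deltabbVb^{2}(\yy)\bigr),
\end{EQA}
and \ref{itm:SmBHk} guarantees $\DeltaSmB^{2}$ is small enough that $\delta_\Sigma^{2}$ satisfies the threshold $4\CONST(n/\dimp_{\max}^{13})^{1/8}\log^{-7/8}(K)\log^{-3/8}(n\dimtotal)(\bbgmu^{2}+\bbgmu_B^{2})^{-11/4}$ required in Proposition \ref{thm:Cum_norms} for the simplified bound to apply. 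Plugging $\lambda_{\phi,\max}^{2},\lambda_{\psi,\max}^{2} \leq (\bbgmu^{2}+\bbgmu_B^{2})(1+\deltabbVb^{2}(\yy))$ and the above $\delta_\Sigma^2$ into the second inequality of \eqref{ineq:Delta_l2} yields the claimed bound on $\Deltapp$, after raising the prefactor to the $3/4$-power as it appears inside the $\max\{\lambda_{\phi,\max},\lambda_{\psi,\max}\}^{3/4}$ factor. The conclusion then follows by invoking Proposition \ref{thm:Cum_norms} conditional on the above high-probability event.
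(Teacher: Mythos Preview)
Your proposal is correct and follows essentially the same route as the paper: apply Proposition~\ref{thm:Cum_norms} with $\phiv_k=\xiv_k$, $\psiv_k=\xivb_k$, verify \ref{itm:A1} from \ref{itm:ED0k}, \ref{itm:Ik}, \ref{itm:Eb}, \ref{itm:SD0k}, and verify \ref{itm:A2} by splitting $\Varb\Xib-\Var\Xi$ into a fluctuation part controlled by the Bernstein matrix inequality (Theorem~\ref{lemma_ncbi}) and a bias part controlled by \ref{itm:SmBHk}, \ref{itm:IBk}, arriving at $\delta_\Sigma^{2}\leq(\bbgmu^{2}+\bbgmu_B^{2})(\deltabbVb^{2}(\yy)+\DeltaSmB^{2})$. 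The only cosmetic difference is that the paper packages everything in the block-matrix notation $\bbD,\bbV,\bbH,\bbB$ (writing $\Var\Xi=\bbD^{-1}\bbV^{2}\bbD^{-1}$, $\bbV^{2}=\bbH^{2}-\bbB^{2}$, $\E\bbVb^{2}=\bbH^{2}$) rather than working block-by-block as you do; your remark about the change of normalisation from $\bbH^{-1}$ to $D_k^{-1}$ being absorbed by $\|\bbD^{-1}\bbH^{2}\bbD^{-1}\|\leq\bbgmu^{2}+\bbgmu_B^{2}$ is exactly how the paper handles it. Your accounting of the $6\ex^{-\yy}$ via Theorems~\ref{qf_dev}, \ref{qfb_dev} is not quite what is needed (condition~\ref{itm:A1} concerns the summands, not the full sums), but the Bernstein event together with the high-probability events inside Lemma~\ref{lemma:gauss_appr_phiv} already supply the required budget.
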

\begin{proof}[Proof of Lemma \ref{prop:Cum_norms_xivk}]
The statement follows from Proposition \ref{thm:Cum_norms} and Theorem \ref{lemma_ncbi}.
Let us take \(\phiv_{k}:=\xiv_{k}\) and \(\psiv_{k}:=\xivb_{k}\). Define similarly to \(\Phi\) in \eqref{def:Phi}
\begin{EQA}[c]
\label{def:XiXib}
\Xi\eqdef\left(\xiv_{1}^{\T},\dots,\xiv_{\numK}^{\T}\right)^{\T}
\quad
\Xib\eqdef\left({\xivb_{1}}^{\T},\dots,{\xivb_{\numK}}^{\T}\right)^{\T}.
\end{EQA}
%Due to definition \eqref{def:Phi}
Condition \eqref{cond:deltaSigma} rewrites for \eqref{def:XiXib} as
\begin{EQA}[c]
 \left\|\Var\Xi- \Varb\Xib
 \right\|_{\max}
\leq
\delta_{\Sigma}^{2}
\end{EQA}
for some \(\delta_{\Sigma}^{2}\geq 0\). Denote
\begin{EQA}
\bbD^{2}&\eqdef& \diag\left\{D_{1}^{2},\dots,D_{\numK}^{2}\right\},
\\
\bbV^{2}&\eqdef& \Var \left(\nabla_{\thetav}L_{1}(\thetavs_{1})^{\T},\dots,\nabla_{\thetav}L_{\numK}(\thetavs_{\numK})^{\T}\right)^{\T}.
\end{EQA}
\(\bbD^{2}\) is a block-diagonal matrix and \(\bbV^{2}\) is a block matrix. Both of them are symmetric, positive definite and have the dimension \(\dimtotal\times\dimtotal\).
Let also
\begin{EQA}[c]
\bbVb^{2}\eqdef\Varb \left(\nabla_{\thetav}\Lb_{1}(\thetavs_{1})^{\T},\dots,\nabla_{\thetav}\Lb_{\numK}(\thetavs_{\numK})^{\T}\right)^{\T},
\\
\bbg_{i}\eqdef \left(\nabla_{\thetav}\ell_{i,1}(\thetavs_{1})^{\T},\dots,\nabla_{\thetav}\ell_{i,\numK}(\thetavs_{\numK})^{\T}\right)^{\T}\in \R^{\dimtotal},\\
\bbH^{2}\eqdef
\sum\nolimits_{i=1}^{n}\E \left\{\bbg_{i}\bbg_{i}^{\T}\right\},
\quad \bbB^{2}\eqdef
\sum\nolimits_{i=1}^{n}\E \left\{\bbg_{i}\right\}\E \left\{\bbg_{i}\right\}^{\T}.
\end{EQA}
 It holds
\begin{EQA}[c]
\Var\Xi={\bbD}^{-1}\bbV^{2}\bbD^{-1},\quad
\Varb\Xib=\bbD^{-1}\bbVb^{2}\bbD^{-1},\\
\bbH^{2}=\E\bbVb^{2},\quad \bbV^{2}=\bbH^{2}-\bbB^{2}.
\end{EQA}
Therefore
\begin{EQA}
\nquad
 \left\|\Var\Xi- \Varb\Xib
 \right\|_{\max}&=& \bigl\|\bbD^{-1}\bigl(\bbV^{2}-\bbVb^{2} \bigr)\bbD^{-1}\bigr\|_{\max}
 \\&\leq&
 \bigl\|\bbD^{-1}\bigl(\bbH^{2}-\bbVb^{2} \bigr)\bbD^{-1}\bigr\|_{\max}
 +
 \bigl\|\bbD^{-1}\bbB^{2}\bbD^{-1}\bigr\|_{\max}
 \\
 \label{ineq:usencbi}
 &\leq&
\deltabbVb^{2}(\yy) \bigl\|\bbD^{-1}\bbH^{2}\bbD^{-1}\bigr\|
  +
 \bigl\|\bbD^{-1}\bbB^{2}\bbD^{-1}\bigr\|
 %\\&\leq&
 %\left\{\deltabbVb^{2}(\yy)
%+1
%\right\} \bigl\|\bbD^{-1}\bbB^{2}\bbD^{-1}\bigr\|_{\max}
%+
%\bbgmu^{2}
%\deltabbVb^{2}(\yy)
%,
%\label{ineq:ICov_use}
%\\&=&
%\left\{\deltabbVb^{2}(\yy)
%+1
%\right\}
%\max_{1\leq k_{1},k_{2} \leq \numK}
%\bigl\|D_{k_{1}}^{-1}B^{2}_{k_{1},k_{2}} D_{k_{2}}^{-1}\bigr\|_{\max}
%+
%\bbgmu^{2}
%\deltabbVb^{2}(\yy)\\
\\&\leq&
\bigl\{\deltabbVb^{2}(\yy)
+\DeltaSmB^{2}
\bigr\}(\bbgmu^{2}+\bbgmu_{B}^{2})=:\delta_{\Sigma}^{2}\,.
\label{ineq:SmB_use}
\end{EQA}
\begin{comment}
\begin{EQA}
V_{k_{1},k_{2}}^{2}&\eqdef&
\Cov\left\{\nabla_{\thetav}L_{k_{1}}(\thetavs_{k_{1}}),\nabla_{\thetav}L_{k_{2}}(\thetavs_{k_{2}}) \right\},\\
\Vr_{k_{1},k_{2}}^{2}&\eqdef&
\Covb\left\{\nabla_{\thetav}\Lb_{k_{1}}(\thetavs_{k_{1}}),\nabla_{\thetav}\Lb_{k_{2}}(\thetavs_{k_{2}}) \right\}\\
&=& \sum\nolimits_{i=1}^{n}\nabla_{\thetav}\ell_{i,k_{1}}(\thetavs_{k_{1}})\nabla_{\thetav}\ell_{i,k_{2}}(\thetavs_{k_{2}})^{\T}.
\end{EQA}
\end{comment}
%By definitions \eqref{def:xivkxivbr} and \eqref{eq:HB} it holds:
Here inequality \eqref{ineq:usencbi} follows from the  matrix Bernstein inequality by \cite{Tropp2012user} (see Section \ref{sect:NCBI_k}).  Inequality \eqref{ineq:SmB_use} is implied by conditions \ref{itm:IBk} and \ref{itm:SmBHk}, and Cauchy-Schwarz inequality.

Condition \ref{itm:A1} of Proposition \ref{thm:Cum_norms} is fulfilled for the vectors \(\xiv_{i,k}\) and \(\xivb_{i,k}\) due to conditions \ref{itm:ED0k}, \ref{itm:Ik} and \ref{itm:SD0k}, \ref{itm:Eb}, \ref{itm:SmBHk}, \ref{itm:IBk} for \(\cf_{\phi}:=\bbgmu\) and \(\cf_{\psi}^{2}:=\bigl(\bbgmu^{2}+\bbgmu_{B}^{2}\bigr)\left\{\delta_{v^{\ast}}^{2}+\max_{1\leq i\leq n}\|\bbH^{-1}\E\left[\bbg_{i}\bbg_{i}^{\T}\right]\bbH^{-1}\|^{2}\right\}\).
%\bants
\end{proof}

\begin{proof}[Proof of Theorem \ref{thm:mainres1}]
Let us denote \(\yy_{2}\eqdef\yy+\log(\numK)\). It holds with probability \(\geq 1- 12\ex^{-\yy}\)
\begin{EQA}
%\alpha&\geq&
&&
\Pb\left(\bigcup\nolimits_{k=1}^{\numK}
\Bigl\{
\sqrt{2\Lb_{k}(\thetavbt_{k})-2\Lb_{k}(\thetavt_{k})}>z_{k}\Bigr\}
\right)
\\&\overset{\text{{{L.\,\ref{lemma:simultWilks_gen}}}}}{\geq}&
\Pb\left(\bigcup\nolimits_{k=1}^{\numK}
\Bigl\{
\|\xivb_{k}(\thetavt_{k})\|
\geq z_{k}+\Deltab_{\wind,k}(\rr_{0,k},\yy_{2})
\Bigr\}
\right)
\\&\overset{\text{L.\,\ref{lemma:xivb_simult_gen}}}{\geq}&
\Pb\left(\bigcup\nolimits_{k=1}^{\numK}
\Bigl\{
\|\xivb_{k}(\thetavs_{k})\|
> z_{k}+\Deltab_{\wind,k}(\rr_{0,k},\yy_{2})+\Deltab_{\xi,k}(\rr_{0,k},\yy_{2})
\Bigr\}
\right)
\label{ineq:largesmb1}
\\&\overset{\text{L.\,\ref{prop:Cum_norms_xivk}}}{\geq}&
\P\left(\bigcup\nolimits_{k=1}^{\numK}
\Bigl\{
\|\xiv_{k}\|
> z_{k}-\Delta_{\wind,k}(\rr_{0,k},\yy_{2})
\Bigr\}
\right)-\Delta_{\total}
\\&\overset{\text{{{L.\,\ref{lemma:simultWilks_gen}}}}}{\geq}&
\P\left(\bigcup\nolimits_{k=1}^{\numK}
\Bigl\{
\sqrt{2L_{k}(\thetavt_{k})-2L_{k}(\thetavs_{k})}> z_{k}
\Bigr\}
\right)-\Delta_{\total},
\end{EQA}
for
\begin{EQA}
\label{def:Delta_total_1}
\Delta_{\total}&\eqdef& \Deltapp\,,%\left(\{z_{k},\delta_{z_{k}}\}\right),
\\
\label{def:localzk}
%z_{k}&:=&\zzb_{k}(\alpha),\\
%\label{def:localdeltazk}
\delta_{z_{k}}&:=&
\Delta_{\wind,k}(\rr_{0,k},\yy+\log(\numK))+
\Deltab_{\wind,k}(\rr_{0,k},\yy+\log(\numK))
\\&&+\,\Deltab_{\xi,k}(\rr_{0,k},\yy+\log(\numK))\\
&\leq&\CONST\frac{\dimp_{k}+\yy+\log(\numK)}{\sqrt{n}}\sqrt{\yy+\log(\numK)}~\text{ in the case \ref{typical_local}}.
\label{ineq:deltazk_proof}
\end{EQA}
Definition of \(\Deltapp\)  is given in Proposition \ref{thm:Cum_norms},  see also Remark \ref{rem:Deltapp}. The bound from Lemma \ref{prop:Cum_norms_xivk} says:
\begin{EQA}
\Deltapp
&\leq&
25\CONST\left(\frac{\dimp_{\max}^{3}}{n}\right)^{1/8}\log^{9/8}(K)\log^{3/8}(n\dimtotal)
\left\{\left(\bbgmu^{2}+\bbgmu_{B}^{2}\right)
\left(1+\deltabbVb^{2}(\yy)\right)
\right\}^{3/8}.
\end{EQA}
\begin{comment}
\begin{EQA}
\Deltapp\left(\{z_{k},\delta_{z_{k}}\}\right)
&\leq&
\CONST \left(\frac{\dimp_{\max}^{3}}{n}\right)^{1/8}\log^{3/8}(n\dimtotal) \log^{5/8}(\numK)\log^{3/8}\left(z_{\max}\numK^{z_{\min}^{-3/4}}\right)
\\&&+\,
\CONST\max_{1\leq k\leq\numK}\frac{\delta_{z_{k}}}{z_{k}}\log^{1/2}(\numK)+\CONST\delta_{z,\max}\log^{1/2}\left(z_{\max}\right)
\\
&\overset{{\small{\text{by\,\eqref{def:localzk},\eqref{def:localdeltazk}}}}}{\leq}&
\CONST \left(\frac{\dimp_{\max}^{3}}{n}\right)^{1/8}\log^{3/8}(n\dimtotal) \log^{5/8}(\numK)\log^{3/8}\left(\zzb_{\max}(\alpha)\right)
\\&&+\,
\CONST\max_{1\leq k\leq\numK}\frac{\dimp_{k}+\yy+\log(\numK)}{\sqrt{\dimp_{k}n}}\sqrt{\yy+\log(\numK)}\sqrt{\log(\numK)}
\\&&+\,\CONST\max_{1\leq k\leq\numK}\frac{\dimp_{k}+\yy+\log(\numK)}{\sqrt{\dimp_{k}n}}\sqrt{\yy+\log(\numK)}\log^{1/2}\left(\zzb_{\max}(\alpha)\right).
\end{EQA}
\end{comment}
For \(\delta_{z_{k}}\) bounded as in \eqref{ineq:deltazk_proof} the conditions \(\CONST\max\limits_{1\leq k\leq\numK}\{n^{-1/2}, \delta_{z_{k}}\}
\leq
\Delta_{\varepsilon}\leq
\CONST \min\limits_{1\leq k\leq\numK}\{1/z_{k}\}\) are fulfilled.
\end{proof}

%\newpage

%\newpage
\begin{comment}
\subsection{Proof of Theorem \ref{thm:mainres1_2}}
\begin{theorem}[Bootstrap validity for a small modelling bias]
\label{thm:mainres1_2}
Assume the conditions of Theorem \ref{thm:mainres1}, then for ... it holds
\begin{EQA}
\left|
\P\left(\bigcup\nolimits_{k=1}^{\numK}
\Bigl\{
\sqrt{2L_{k}(\thetavt_{k})-2L_{k}(\thetavs_{k})}> \zzb_{k}(\alphaqb)
\Bigr\}
\right)
-
\alpha
\right|
&\leq& \Delta_{\zz,\total}.
\end{EQA}
\end{theorem}

\begin{proof}[Proof of Theorem \ref{thm:mainres1_2}]
%By Lemma \ref{lemma:lik_ac_joint}\\\\
By Theorem \ref{thm:mainres1} and by definitions ...\ant it holds
\begin{EQA}
&&\nquad
\Pb\left(\bigcup\nolimits_{k=1}^{\numK}
\Bigl\{
\sqrt{2\Lb_{k}(\thetavbt_{k})-2\Lb_{k}(\thetavt_{k})}>\zz_{k}\left((\alpha-\Delta_{\total}
)_{\qqs}\right)\Bigr\}
\right)
\\&\leq&
\P\left(\bigcup\nolimits_{k=1}^{\numK}
\Bigl\{
\sqrt{2L_{k}(\thetavt_{k})-2L_{k}(\thetavs_{k})}> \zz_{k}\left((\alpha-\Delta_{\total}
)_{\qqs}\right)
\Bigr\}
\right)+\Delta_{\total}
\leq \alpha,
\end{EQA}
and similarly for the lower bound, therefore
\begin{EQA}[c]
\zzb_{k}\left(\alpha\right)
\leq
\zz_{k}\left((\alpha-\Delta_{\total}
)_{\qqs}\right)
\leq
\end{EQA}
\end{proof}
\end{comment}

%-----------------------------------------------------------------------\\
%-----------------------------------------------------------------------\\
%-----------------------------------------------------------------------\\
%Apr14 \newpage

%Apr14 \input{umb_proof_validity_2}
%\newpage

%!TEX root = simult_bootst_root.tex

\subsection{Proof of Theorem \ref{thm:mainres1_2}}
\begin{proof}[Proof of Theorem \ref{thm:mainres1_2}]
For the pointwise quantile functions \(\zz_{k}(\alpha)\) and \(\zzb_{k}(\alpha)\) it holds for each \(k=1,\dots,\numK\) with dominating probability:
\begin{align}
\label{ineq:pointwquant}
\begin{split}
&\zzb_{k}\left(\alpha+\Delta_{\full,\,k}\right)
\leq
\zz_{k}\left(\alpha\right),\\
&\zzb_{k}\left(\alpha\right)
\geq
\zz_{k}\left(\alpha+\Delta_{\full\,k}\right) -\varepsilon_{k}
\end{split}
\end{align}
\begin{comment}
\begin{EQA}[c]
%\label{ineq:pointwquant}
\zz_{k}\left(\alpha+\Delta_{\full\,k}\right) -\varepsilon_{k}
\leq
\zzb_{k}\left(\alpha\right)
\leq
\zz_{k}\left(\alpha-\Delta_{\full,\,k}
\right),
\end{EQA}
\end{comment}
here \(\Delta_{\full,\,k}\leq \left\{(\dimp_{k}+\yy)^{3}/\sqrt{n}\right\}^{1/8}\), it comes from Theorem \ref{thm:cumulat}, and \(\varepsilon_{k}\leq \CONST(\dimp_{k}+\yy)/\sqrt{n}\),
\begin{EQA}
\varepsilon_{k}
&\eqdef&
\begin{cases}
0,&
\hspace{-5.5cm}
\text{if c.d.f. of }
 L_{k}(\thetavt_{k})-L_{k}(\thetavs_{k}) \text{ is continuous in } \zz_{k}(\alpha+\Delta_{\full,\,k});\\
\CONST(\dimp_{k}+\yy)/\sqrt{n}\ \ \text{s.t. \eqref{eq:zztilde} is fulfilled}
,&\ \ \text{otherwise}.
\end{cases}
%\end{EQA}
%\begin{EQA}
\\&&
\hspace{-0.5cm}
\P\Biggl(\sqrt{2\bigl\{ L_{k}(\thetavt_{k})-L_{k}(\thetavs_{k})\bigr\}}>\zz_{k}(\alpha+\Delta_{\full,\,k})-\varepsilon_{k}\Biggr)
\label{eq:zztilde}
\geq
%&\geq&
\alpha+\Delta_{\full,\,k}.
\end{EQA}
Indeed, due to Theorem \ref{thm:cumulat} and definition \eqref{def:zzalpha_k}
\begin{EQA}
&&\nquad\nquad
\Pb\left(\sqrt{2\Bigl\{\Lb_{k}(\thetavbt_{k})-\Lb_{k}(\thetavt_{k})\Bigr\}}>\zz_{k}(\alpha)\right)
\\&\leq&
\P\Bigl(\sqrt{2\bigl\{ L_{k}(\thetavt_{k})-L_{k}(\thetavs_{k})\bigr\}}>\zz_{k}(\alpha) \Bigr)+\Delta_{\full,\,k}
%\\&\leq&
\leq\alpha+\Delta_{\full,\,k},
\end{EQA}
therefore, by definition \eqref{def:zzbalpha_k} \(\zzb_{k}(\alpha+\Delta_{\full,\,k}) \leq \zz_{k}(\alpha)\). The lower bound is derived similarly.

If there exist the inverse functions \({\qqq}^{-1}(\cdot)\) and \({\qqqb}^{-1}(\cdot)\), then it holds for \(\beta\in(0,1)\):
\begin{align}
\label{ineq:inv_qqqb}
\begin{split}
\P\left(\bigcup\nolimits_{k=1}^{\numK}
\Bigl\{
\sqrt{2L_{k}(\thetavt_{k})-2L_{k}(\thetavs_{k})}\geq \zz_{k}(\beta)
\Bigr\}
\right)
&\leq\qqq^{-1}(\beta),\\
\Pb\left(\bigcup\nolimits_{k=1}^{\numK}
\Bigl\{
\sqrt{2\Lb_{k}(\thetavbt_{k})-2\Lb_{k}(\thetavt_{k})}\geq \zzb_{k}\left(\beta\right)
\Bigr\}
\right)
&\leq
{\qqqb}^{-1}(\beta).
\end{split}
\end{align}
Therefore, it holds
\begin{EQA}
&&
{\qqqb}^{-1}(\beta+\Delta_{\full,\,\max})
\\&\geq&
\Pb\left(\bigcup\nolimits_{k=1}^{\numK}
\Bigl\{
\sqrt{2\Lb_{k}(\thetavbt_{k})-2\Lb_{k}(\thetavt_{k})}\geq \zzb_{k}\left(\beta+\Delta_{\full,\,k}\right)
\Bigr\}
\right)
\\&\overset{\text{by\,\eqref{ineq:pointwquant}}}{\geq}&
\Pb\left(\bigcup\nolimits_{k=1}^{\numK}
\Bigl\{
\sqrt{2\Lb_{k}(\thetavbt_{k})-2\Lb_{k}(\thetavt_{k})}\geq \zz_{k}\left(\beta\right)
\Bigr\}
\right)
\\&\overset{\text{by Th.\,\ref{thm:mainres1}}}{\geq}&
\P\left(\bigcup\nolimits_{k=1}^{\numK}
\Bigl\{
\sqrt{2L_{k}(\thetavt_{k})-2L_{k}(\thetavs_{k})}\geq \zz_{k}\left(\beta\right)
\Bigr\}
\right)-\Delta_{\total}
\\&\overset{
\substack{
\text{by L.\,\ref{lemma:lik_ac_joint}}\\ \text{and\,\eqref{ineq:inv_qqqb}}
}
}{\geq}&
{\qqq}^{-1}(\beta)-\Delta_{\total}-\DeltaacAT, %%\left(\{\varepsilon_{2,k}\}\right),
\end{EQA}
here \(\DeltaacAT\leq \Delta_{\total}\) (by Lemma \ref{lemma:lik_ac_joint}) and
%%\(0\leq\varepsilon_{2,k}\leq \CONST(\dimp_{k}+\yy)/\sqrt{n}\) and
\begin{EQA}
\label{def:Delta_full_max}
\Delta_{\full,\,\max}&\eqdef&
\max_{1\leq k\leq \numK}
\Delta_{\full,\,k}
\\&\leq&
\CONST\{(\dimp_{\max}+\yy)^{3}/n\}^{1/8}
\text{ in the case
\ref{typical_local}.}
\end{EQA}
\begin{comment}
and
\begin{EQA}
\varepsilon_{2,k}
&\eqdef&
\begin{cases}
0,&
\hspace{-5.5cm}
\text{if c.d.f. of }
 L_{k}(\thetavt_{k})-L_{k}(\thetavs_{k}) \text{ is continuous in } \zz_{k}(\beta-\Delta_{\full,\,k});\\
\CONST(\dimp_{k}+\yy)/\sqrt{n}\ \ \text{s.t. \eqref{eq:zztilde_2} is fulfilled}
,&\ \ \text{otherwise}.
\end{cases}
%\end{EQA}
%\begin{EQA}
\\&&
\hspace{-0.5cm}
\P\Biggl(\sqrt{2\bigl\{ L_{k}(\thetavt_{k})-L_{k}(\thetavs_{k})\bigr\}}>\zz_{k}\left(\beta-\Delta_{\full,\,k}\right)-\varepsilon_{2,k}\Biggr)
\label{eq:zztilde_2}
\geq
%&\geq&
\beta-\Delta_{\full,\,k}.
\end{EQA}
\end{comment}
Thus
\begin{EQA}
\label{ineq:inv_qqq_1}
{\qqqb}^{-1}(\beta+\Delta_{\full,\,\max})
&\geq&
{\qqq}^{-1}(\beta)-\Delta_{\total}-\DeltaacAT,\\%\left(\{\varepsilon_{2,k}\}\right),\\
 {\qqqb}(\alpha)
 &\leq&
 {\qqq}(\alpha+\Delta_{\total}+\DeltaacAT%%\left(\{\varepsilon_{2,k}\}\right)
 )
 +\Delta_{\full,\,\max}.
\label{ineq:qqq_1}
\end{EQA}
Hence it holds
\begin{EQA}
&&
\P\left(\bigcup\nolimits_{k=1}^{\numK}
\Bigl\{
\sqrt{2L_{k}(\thetavt_{k})-2L_{k}(\thetavs_{k})}\geq \zzb_{k}(\beta)
\Bigr\}
\right)
\\&\overset{\text{by\,\eqref{ineq:pointwquant}}}{\leq}&
\P\left(\bigcup\nolimits_{k=1}^{\numK}
\Bigl\{
\sqrt{2L_{k}(\thetavt_{k})-2L_{k}(\thetavs_{k})}\geq
\zz_{k}\left(\beta+\Delta_{\full,\,k}\right) -\varepsilon_{k}
\right)
\\&\overset{
\substack{
\text{by L.\,\ref{lemma:lik_ac_joint}}\\ \text{and\,\eqref{ineq:inv_qqqb}}
}
}{\leq}&
{\qqq}^{-1}(\beta+\Delta_{\full,\,\max})+\DeltaacAT.%%\left(\left\{\varepsilon_{k,1}\right\}\right).
%\\&\overset{\text{by\,\eqref{ineq:inv_qqq_1}}}{\leq}&
%{\qqqb}^{-1}(\beta+2\Delta_{\full})+\Delta_{\total}+\Deltaac\left(\left\{\varepsilon_{k}\right\}\right)
\end{EQA}
Therefore, if \(\qqq(\alpha)\geq\Delta_{\full,\,\max}\), then
\begin{EQA}
\P\left(\bigcup\nolimits_{k=1}^{\numK}
\Bigl\{
\sqrt{2L_{k}(\thetavt_{k})-2L_{k}(\thetavs_{k})}\geq \zzb_{k}(\qqq(\alpha)-\Delta_{\full,\,\max})
\Bigr\}
\right)
&\leq&
\alpha +\DeltaacAT. %%\left(\left\{\varepsilon_{k}\right\}\right).
\end{EQA}
And by \eqref{ineq:qqq_1} for \(\qqqb(\alpha)\geq2\Delta_{\full,\,\max}\)  it holds
\begin{EQA}
&&
\P\left(\bigcup\nolimits_{k=1}^{\numK}
\Bigl\{
\sqrt{2L_{k}(\thetavt_{k})-2L_{k}(\thetavs_{k})}\geq \zzb_{k}\left({\qqqb}(\alpha)- 2\Delta_{\full,\,\max}\right)
\Bigr\}
\right)-\alpha
\\
&&\leq
 \Delta_{\total}+2\DeltaacAT.
 %\left(\{\varepsilon_{2,k}\}\right)+\DeltaacAT\left(\left\{\varepsilon_{k}\right\}\right).
\end{EQA}

Similarly for the inverse direction:
\begin{EQA}
{\qqqb}^{-1}(\beta)
&\leq&
\Pb\left(\bigcup\nolimits_{k=1}^{\numK}
\Bigl\{
\sqrt{2\Lb_{k}(\thetavbt_{k})-2\Lb_{k}(\thetavt_{k})}\geq \zzb_{k}\left(\beta\right)
\Bigr\}-\varepsilon_{1,k}
\right)
\\&\leq&
\Pb\left(\bigcup\nolimits_{k=1}^{\numK}
\Bigl\{
\sqrt{2\Lb_{k}(\thetavbt_{k})-2\Lb_{k}(\thetavt_{k})}\geq \zz_{k}\left(\beta+\Delta_{\full,\,k}\right) -\varepsilon_{1,k}-\varepsilon_{k}
\Bigr\}
\right)
\\&\leq&
\P\left(\bigcup\nolimits_{k=1}^{\numK}
\Bigl\{
\sqrt{2L_{k}(\thetavt_{k})-2L_{k}(\thetavs_{k})}\geq \zz_{k}\left(\beta+\Delta_{\full,\,k}\right)
\Bigr\}
\right)+\Delta_{\total} + \DeltaacAT%\left(\left\{\varepsilon_{k}+\varepsilon_{2,k}\right\}\right)
\\&\leq&
{\qqq}^{-1}(\beta+\Delta_{\full,\,\max})+\Delta_{\total}+\DeltaacAT,%\left(\left\{2\varepsilon_{2,k}\right\}\right),
\end{EQA}
where \(0\leq\varepsilon_{1,k}\leq \CONST(\dimp_{k}+\yy)/\sqrt{n}\). This implies
\begin{EQA}
{\qqqb}^{-1}(\beta)
&\leq&
{\qqq}^{-1}(\beta+\Delta_{\full,\,\max})+\Delta_{\total}+\DeltaacAT%\left(\left\{\varepsilon_{2,k}\right\}\right)
,\\
{\qqqb}(\alpha)&\geq& {\qqq}\left(\alpha-\Delta_{\total}-\DeltaacAT%\left(\left\{2\varepsilon_{2,k}\right\}\right)
\right)
-\Delta_{\full,\,\max}.
\label{ineq:qqq_2}
\end{EQA}
\begin{comment}
Here \(\varepsilon_{3,k}\leq \CONST(\dimp_{k}+\yy)/\sqrt{n}\) and
\begin{EQA}
\varepsilon_{3,k}
&\eqdef&
\begin{cases}
0,&
\hspace{-5.5cm}
\text{if c.d.f. of }
 \Lb_{k}(\thetavbt_{k})-\Lb_{k}(\thetavt_{k}) \text{ is continuous in } \zz_{k}(\beta-\Delta_{\full,\,k});\\
\CONST(\dimp_{k}+\yy)/\sqrt{n}\ \ \text{s.t. \eqref{eq:zztilde_2} is fulfilled}
,&\ \ \text{otherwise}.
\end{cases}
%\end{EQA}
%\begin{EQA}
\\&&
\hspace{-0.5cm}
\P\Biggl(\sqrt{2\bigl\{ L_{k}(\thetavt_{k})-L_{k}(\thetavs_{k})\bigr\}}>\zz_{k}\left(\beta-\Delta_{\full,\,k}\right)-\varepsilon_{2,k}\Biggr)
\label{eq:zztilde_2}
\geq
%&\geq&
\beta-\Delta_{\full,\,k}.
\end{EQA}
\end{comment}
\begin{EQA}
&&
\P\left(\bigcup\nolimits_{k=1}^{\numK}
\Bigl\{
\sqrt{2L_{k}(\thetavt_{k})-2L_{k}(\thetavs_{k})}\geq \zzb_{k}(\beta+\Delta_{\full,\,k})
\Bigr\}
\right)
\\&\overset{\text{by\,\eqref{ineq:pointwquant}}}{\geq}&
\P\left(\bigcup\nolimits_{k=1}^{\numK}
\Bigl\{
\sqrt{2L_{k}(\thetavt_{k})-2L_{k}(\thetavs_{k})}\geq
\zz_{k}\left(\beta\right)
\right)
\\&\geq&
{\qqq}^{-1}(\beta)-\DeltaacAT.%\left(\left\{\varepsilon_{2,k}\right\}\right).
\end{EQA}
\begin{EQA}
\P\left(\bigcup\nolimits_{k=1}^{\numK}
\Bigl\{
\sqrt{2L_{k}(\thetavt_{k})-2L_{k}(\thetavs_{k})}\geq \zzb_{k}(\qqq(\alpha)+\Delta_{\full,\,\max})
\Bigr\}
\right)
&\geq&
\alpha-\DeltaacAT.%\left(\left\{\varepsilon_{2,k}\right\}\right).
\end{EQA}
And by \eqref{ineq:qqq_2}
\begin{EQA}
&&
\P\left(\bigcup\nolimits_{k=1}^{\numK}
\Bigl\{
\sqrt{2L_{k}(\thetavt_{k})-2L_{k}(\thetavs_{k})}\geq \zzb_{k}({\qqqb}(\alpha)+ 2\Delta_{\full,\,\max})
\Bigr\}
\right)-\alpha
\\&&\geq
 -\Delta_{\total}-2\DeltaacAT.%\left(\left\{\varepsilon_{2,k}\right\}\right)-\DeltaacAT\left(\left\{2\varepsilon_{2,k}\right\}\right).
\end{EQA}
for
\begin{EQA}
\label{def:Delta_z_total}
\Delta_{\zz,\,\total}&\eqdef&
\Delta_{\total}+2\DeltaacAT
\leq 3\Delta_{\total}.
\end{EQA}

Conditions of Theorem \ref{thm:mainres1} include \(z_{k}\geq C\sqrt{\dimp_{k}}\), therefore, it has to be checked that \(\zzb_{k}(\alpha)\geq C\sqrt{\dimp_{k}}\).
%This inequality had been already proven in \cite{SpZh2014PMB} (version of 2015, proof of Theorem 2.2):
It holds by Theorem \ref{Wilks_boots}, Proposition \ref{thm:Cum_norms}, Lemmas \ref{lemma_xivb_unif} and \ref{lem:gausslower_B} % on the lower bound for deviations of a Gaussian quadratic form
 with probability \(\geq 1-12 \ex^{-\yy}\):
\begin{EQA}
&&\nquad\nquad
\Pb\left(\sqrt{2\bigl\{\Lb_{k}(\thetavbt_{k})-\Lb_{k}(\thetavt_{k})\bigr\}}>
\CONST\sqrt{\dimp_{k}-\sqrt{2\yy\dimp_{k}}} +\CONST (\dimp_{k}+\yy)/\sqrt{n}
\right)
\\&\geq&
1-8\ex^{-\yy}%-\DeltabW(\rups,\yy)-\Deltab_{\xi}(\rups,\yy)
,
\end{EQA}
Taking \( 1-8\ex^{-\yy} \geq \alpha\), we have
\begin{EQA}[c]
\zzb_{k}(\alpha)\geq \CONST\sqrt{\dimp_{k}-\sqrt{2\yy\dimp_{k}}} +\CONST 2 (\dimp_{k}+\yy)/\sqrt{n}.
\end{EQA}

 Inequalities for \({\qqqb}(\alpha)\) had been already derived in \eqref{ineq:qqq_1} and \eqref{ineq:qqq_2} with
\begin{EQA}
\label{def:Delta_qqq}
\Delta_{\qqq}&\eqdef& \Delta_{\total}+\DeltaacAT.
\end{EQA}
\end{proof}
\begin{lemma}
%[Joint anti-concentration inequality for a set of the likelihood ratios]
\label{lemma:lik_ac_joint}
Let the conditions from Section \ref{sect:ConditGeneral} be fulfilled, and the values  \(z_{k}\geq\sqrt{\dimp_{k}}\) and \(\delta_{z_{k}}\geq 0\) be s.t. \(\CONST\max\limits_{1\leq k \leq \numK}\{n^{-1/2}, \delta_{z_{k}}\}
\leq
\Delta_{\varepsilon}\leq
\CONST \min\limits_{1\leq k \leq \numK}\left\{1/z_{k}\right\}\) (\(\Delta_{\varepsilon}\) is given in  \eqref{def:Deltaeps}), then it holds with probability \(\geq 1- 12\ex^{-\yy}\)
\begin{EQA}
&&
\P\left(\bigcup\nolimits_{k=1}^{\numK}
\Bigl\{
\sqrt{2L_{k}(\thetavt_{k})-2L_{k}(\thetavs_{k})}\geq z_{k}
\Bigr\}
\right)
\\&&\quad\quad-
\P\left(\bigcup\nolimits_{k=1}^{\numK}
\Bigl\{
\sqrt{2L_{k}(\thetavt_{k})-2L_{k}(\thetavs_{k})}\geq
z_{k} +\delta_{z_{k}}
\right)\leq \DeltaacAT,
\end{EQA}
where
\begin{EQA}
%\label{ineq:DeltappLR}
%\hspace{-1.2cm}
\DeltappLR&\leq&
12.5\CONST\left(\frac{\dimp_{\max}^{3}}{n}\right)^{1/8}\log^{9/8}(K)\log^{3/8}(n\dimtotal)\bbgmu^{3/4}.
\end{EQA}
\end{lemma}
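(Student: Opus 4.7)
The strategy mirrors the three-step approximation template from the scheme \eqref{rectangle_simult}, but now applied to the \(\Ym\)-world only: reduce the joint distribution of the likelihood ratios to the joint distribution of the \(\ell_2\)-norms \(\|\xiv_k\|\) via simultaneous Wilks, transport to the Gaussian surrogate \(\|\xivbr_k\|\), apply the simultaneous anti-concentration bound there, and then return.

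First, I would invoke Lemma \ref{lemma:simultWilks_gen}, part 1 (with \(\yy_1 = \yy\) and the \(\log\numK\) absorbed into the argument of \(\DeltakW\)), so that on an event of \(\P\)-probability at least \(1-5\ex^{-\yy}\) the random variables \(\sqrt{2L_k(\thetavt_k)-2L_k(\thetavs_k)}\) and \(\|\xiv_k\|\) are within \(\Delta_{\wind,k}(\rr_{0,k},\yy+\log\numK)\) simultaneously in \(k\). Hence the two probabilities in the statement can be sandwiched, up to the small event, between analogous probabilities for \(\bigcup_k\{\|\xiv_k\|>z_k\mp\Delta_{\wind,k}\}\), and it suffices to prove the anti-concentration bound for the joint law of \(\|\xiv_1\|,\dots,\|\xiv_\numK\|\) at the shifted thresholds.

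For the latter, I would apply Proposition \ref{thm:Cum_norms} with the trivial choice \(\phiv_k:=\xiv_k\) and \(\psiv_k:=\xiv_k\) (independent copy), so that \(\delta_\Sigma=0\) and only the Gaussian approximation error \(\delta_{3,\phi}+\delta_{3,\psi}\) plus the simultaneous Gaussian anti-concentration term survive. This transports \(\|\xiv_k\|\) to its Gaussian surrogate \(\|\xivbr_k\|\sim\mathcal{N}(0,\Var\xiv_k)\) on both sides, and the gap between \(z_k\) and \(z_k+\delta_{z_k}\) on the Gaussian side is controlled by Lemma \ref{lemma:anti_conc_nonsmooth} with
\[
\kappa = \max_{1\leq k\leq\numK}\frac{\delta_{z_k}+2(\Delta_{\wind,k}+\frac{3\log\numK}{2\beta})}{z_k},
\]
where \(\Delta\) and \(\beta\) are the free parameters of the Lindeberg smoothing from Lemma \ref{lemma:gauss_appr_phiv}. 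The conditions \(\CONST\max_k\{n^{-1/2},\delta_{z_k}\}\leq \Delta_\varepsilon\leq \CONST\min_k\{1/z_k\}\) assumed in the statement are exactly what guarantees \(\kappa\) is admissible and that \(z_k\geq\sqrt{\dimp_k}+\Delta_\varepsilon\) holds after absorbing the Wilks shifts.

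The bookkeeping then reduces to copying the optimisation over \((\Delta,\beta)\) carried out at the end of the proof of Proposition \ref{thm:Cum_norms}: with \(\delta_\Sigma=0\) the dominant contribution is \(\CONST(\dimp_{\max}^3/n)^{1/8}\log^{9/8}(\numK)\log^{3/8}(n\dimtotal)\max_k\|\Var^{1/2}(\xiv_k)\|^{3/4}\), and condition \ref{itm:Ik} bounds \(\|\Var(\xiv_k)\|\leq\bbgmu^2\), yielding the claimed \(\DeltappLR\leq 12.5\CONST(\dimp_{\max}^3/n)^{1/8}\log^{9/8}(K)\log^{3/8}(n\dimtotal)\bbgmu^{3/4}\). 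The main technical obstacle is the last piece, namely verifying that the contribution of the Wilks shifts \(\Delta_{\wind,k}\) (which are of order \((\dimp_k+\yy+\log\numK)/\sqrt{n}\)) is absorbed into the optimal \(\Delta\approx(\dimp_{\max}^3/n)^{1/8}\log^{1/8}(\numK)\) chosen in Proposition \ref{thm:Cum_norms}; this is precisely where the two-sided constraint on \(\Delta_\varepsilon\) in the hypothesis is used. The upper bound in the statement then follows from this, and the total event on which everything holds has probability at least \(1-12\ex^{-\yy}\) by union bound over the Wilks event, the Bernstein event for the Gaussian approximation, and the events controlling the exponential moments.
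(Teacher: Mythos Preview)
Your proposal is correct and follows essentially the same route as the paper's proof: reduce to \(\|\xiv_k\|\) via the simultaneous Wilks bound (Lemma~\ref{lemma:simultWilks_gen}), then invoke Proposition~\ref{thm:Cum_norms} with \(\phiv_k=\psiv_k=\xiv_k\) and \(\delta_\Sigma=0\) so that only the Gaussian-approximation and anti-concentration terms survive, and finally shift back via Wilks. The paper absorbs the two Wilks shifts \(\Delta_{\wind,k}\) into the \(\delta_{z_k}\) argument of \(\Deltapp\) exactly as you describe, and the bound \(\|\Var\xiv_k\|\le\bbgmu^2\) from \ref{itm:Ik} gives the stated constant; one cosmetic remark is that no Bernstein matrix event is needed here (that enters only in the bootstrap comparison), so the \(12\ex^{-\yy}\) budget is in fact looser than required.
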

\begin{proof}[Proof of Lemma \ref{lemma:lik_ac_joint}]
This statement's proof is similar to the one of Theorem \ref{thm:mainres1} (see Section \ref{sect:proofsofmainres}). Here instead of the bootstrap statistics we consider only the values from  the \(\Ym\)-world.
Let us denote \(\yy_{2}\eqdef\yy+\log(\numK)\). It holds with probability \(\geq 1- 12\ex^{-\yy}\)
\begin{EQA}
&&
\P\left(\bigcup\nolimits_{k=1}^{\numK}
\Bigl\{
\sqrt{2L_{k}(\thetavt_{k})-2L_{k}(\thetavs_{k})}> z_{k}
\Bigr\}
\right)
\\&\overset{\text{{{L.\,\ref{lemma:simultWilks_gen}}}}}{\leq}&
\P\left(\bigcup\nolimits_{k=1}^{\numK}
\Bigl\{
\|\xiv_{k}\|
> z_{k}-\Delta_{\wind,k}(\rr_{0,k},\yy_{2})
\Bigr\}
\right)
\\&\overset{\text{{{Pr.\,\ref{thm:Cum_norms}}}}}{\leq}&
\P\left(\bigcup\nolimits_{k=1}^{\numK}
\Bigl\{
\|\xiv_{k}\|
> z_{k}+\delta_{z_{k}}+\Delta_{\wind,k}(\rr_{0,k},\yy_{2})
\Bigr\}
\right)+ \DeltappLR
\\&\leq&
\P\left(\bigcup\nolimits_{k=1}^{\numK}
\Bigl\{
\sqrt{2L_{k}(\thetavt_{k})-2L_{k}(\thetavs_{k})}> z_{k}
+\delta_{z_{k}}
\Bigr\}
\right)
+\DeltappLR\,,
%\\&\eqdef&
\end{EQA}
where
\begin{EQA}
%\label{ineq:DeltappLR}
%\hspace{-1.2cm}
\DeltappLR&\leq&
12.5\CONST\left({\dimp_{\max}^{3}}/{n}\right)^{1/8}\log^{9/8}(K)\log^{3/8}(n\dimtotal)\bbgmu^{3/4}.
\end{EQA}
Similarly to \eqref{def:Delta_total_1} and \eqref{def:localzk} the term \(\DeltappLR\) is equal to  \(\Deltapp\) from Proposition \ref{thm:Cum_norms} with \(\Delta_{\Sigma}^{2}:=0\), \(\delta_{z_{k}}:=\delta_{z_{k}}+2\Delta_{\wind,k}(\rr_{0,k},\yy+\log(\numK))\).
\begin{comment}
\begin{EQA}
\label{def:DeltappLR}
%\hspace{-1.2cm}
\DeltappLR&\eqdef&
2\delta_{3,\phi}(\Delta,\beta) +2\Deltaac\left(\left\{\delta_{z_{j}}\right\}+2\Delta+\frac{3\log(K)}{\beta}\right)
\end{EQA}
(compare with \eqref{def:Deltapp}).
\end{comment}
\end{proof}
\begin{lemma}[Lower bound for deviations of a Gaussian quadratic form]
\label{lem:gausslower_B}
Let \(\phiv\sim \mathcal{N}(0,
\Id_{\dimp})\) and \(\Sigma\) is any symmetric non-negative definite matrix, then
 it holds for any \(\yy>0\)
\begin{EQA}[c]
\P\left(\tr\Sigma-\|\Sigma^{1/2}\phiv\|^{2}\geq
2\sqrt{\yy\tr(\Sigma^{2})}
\right)\leq \exp(-\yy).
\end{EQA}
\end{lemma}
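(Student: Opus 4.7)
My plan is to reduce the statement to a scalar-coefficient sum of independent chi-squared random variables, then apply the Cramér–Chernoff exponential-moment method. Diagonalizing $\Sigma$ by an orthogonal transformation that preserves the distribution of $\phiv$, I may write $\|\Sigma^{1/2}\phiv\|^2$ as $\sum_{j=1}^{\dimp}\lambda_j\xi_j^2$ with $\xi_j$ i.i.d.\ $\mathcal{N}(0,1)$ and $\lambda_j\geq 0$ the eigenvalues of $\Sigma$. The target event becomes
\begin{EQA}[c]
\sum\nolimits_{j=1}^{\dimp}\lambda_j(1-\xi_j^2)\geq 2\sqrt{\yy\sum\nolimits_{j=1}^{\dimp}\lambda_j^2},
\end{EQA}
with $\tr\Sigma=\sum\lambda_j$ and $\tr(\Sigma^2)=\sum\lambda_j^2$.

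Next I would bound the moment generating function of $\sum\lambda_j(1-\xi_j^2)$. Using the standard Gaussian computation $\E\exp(-t\lambda_j\xi_j^2)=(1+2t\lambda_j)^{-1/2}$ for $t\geq 0$, independence gives
\begin{EQA}[c]
\log\E\exp\Bigl(t\sum\nolimits_{j=1}^{\dimp}\lambda_j(1-\xi_j^2)\Bigr)=\sum\nolimits_{j=1}^{\dimp}\Bigl\{t\lambda_j-\tfrac{1}{2}\log(1+2t\lambda_j)\Bigr\}.
\end{EQA}
The elementary inequality $\log(1+u)\geq u-u^2/2$ for $u\geq 0$ (check via $f(u)=\log(1+u)-u+u^2/2$ with $f(0)=0$ and $f'(u)=u^2/(1+u)\geq 0$) implies $t\lambda_j-\tfrac{1}{2}\log(1+2t\lambda_j)\leq t^2\lambda_j^2$, hence the log-MGF is at most $t^2\tr(\Sigma^2)$ for every $t\geq 0$.

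Finally I would apply the exponential Chebyshev inequality: for any $t\geq 0$,
\begin{EQA}[c]
\P\Bigl(\tr\Sigma-\|\Sigma^{1/2}\phiv\|^2\geq s\Bigr)\leq\exp\bigl(-ts+t^2\tr(\Sigma^2)\bigr),
\end{EQA}
and optimize over $t$ by choosing $t=s/(2\tr(\Sigma^2))$, yielding the bound $\exp\bigl(-s^2/(4\tr(\Sigma^2))\bigr)$. Substituting $s=2\sqrt{\yy\tr(\Sigma^2)}$ makes the exponent equal $-\yy$, which is the claimed bound. There is no real obstacle here—the only mildly delicate point is confirming that the MGF inequality $\log(1+u)\geq u-u^2/2$ is valid on the full range $u\in[0,\infty)$ (not just near zero), but this follows from the monotonicity check above, so the argument is clean and one-shot.
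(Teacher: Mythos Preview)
Your proof is correct and essentially identical to the paper's: both diagonalize $\Sigma$, compute the log-MGF of $\sum_j\lambda_j(1-\xi_j^2)$ as $\sum_j\{t\lambda_j-\tfrac{1}{2}\log(1+2t\lambda_j)\}$, bound each summand by $t^2\lambda_j^2$ via $\log(1+u)\geq u-u^2/2$, and optimize the resulting Chernoff bound. The only differences are notational (the paper writes the Chernoff parameter as $\mu/2$ and the threshold as $\Delta$).
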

\begin{proof}[Proof of Lemma \ref{lem:gausslower_B}]
%Here we use the idea as in Theorem 1.2 by \cite{SPS2011}:
 It is sufficient to consider w.l.o.g. only the case of diagonal matrix \(\Sigma\), since it can be represented as \(\Sigma=U^{\T}\diag\{a_{1},\dots,a_{\dimp}\}U\) for an orthogonal matrix \(U\) and the eigenvalues \(a_{1}\geq\dots \geq a_{\dimp}\); \(U\phiv\sim \mathcal{N}(0,\Id_{\dimp})\).%  , thus \(\E\|\Sigma^{1/2}\phiv\|^{2}=\sum_{j=1}^{\dimp}a_{j}=\tr\Sigma\). ...\ant

By the exponential Chebyshev inequality it holds for \(\mu>0\), \(\Delta>0\)
\begin{EQA}[c]
\P\left(\tr\Sigma-\|\Sigma^{1/2}\phiv\|^{2}\geq \Delta \right)\leq
\exp(-\mu\Delta/2)\E\exp\left(\mu\left\{\tr\Sigma-\|\Sigma^{1/2}\phiv\|^{2}\right\}/2\right).\\
%\end{EQA}
%\begin{EQA}[c]
\log\E\exp\left(\mu\left\{\tr\Sigma-\|\Sigma^{1/2}\phiv\|^{2}\right\}/2\right)\leq
\frac{1}{2}\sum_{j=1}^{\dimp}\left\{\mu a_{j}-\log(1+a_{j}\mu) \right\},
\end{EQA}
therefore
\begin{EQA}
\P\left(\tr\Sigma-\|\Sigma^{1/2}\phiv\|^{2}\geq \Delta \right)
&\leq&
\exp\left(-\frac{1}{2}\left[
\mu\Delta+\sum\nolimits_{j=1}^{\dimp}\left\{\log(1+a_{j}\mu)-\mu a_{j}\right\}
\right]
\right)
\\&\leq&
\exp\left(
-\frac{1}{2}\left[
\mu\Delta-{\mu^{2}}\sum\nolimits_{j=1}^{\dimp}a_{j}^{2}/{2}
\right]
\right)
\\&\leq&
\exp\left(
-\Delta^{2}/\left\{4\sum\nolimits_{j=1}^{\dimp}a_{j}^{2}\right\}
\right).
\end{EQA}
If \(\yy:= \Delta^{2}/\left\{4\sum_{i=1}^{\dimp}a_{j}^{2}\right\}\), then \(\Delta=2\sqrt{\yy\sum_{j=1}^{\dimp}a_{j}^{2}}\).
\end{proof}

%!TEX root = simult_bootst_root.tex
\subsection{Proof of Theorem \ref{thm:mainres2_largesmb}}
\begin{proof}[Proof of Theorem \ref{thm:mainres2_largesmb}]
%By \eqref{ineq:largesmb1}
Let us denote \(\yy_{2}\eqdef\yy+\log(\numK)\).
By Lemmas \ref{lemma:simultWilks_gen}, \ref{lemma:xivb_simult_gen} and \ref{prop:Cum_norms_xivk} it holds with probability \(\geq 1- 12\ex^{-\xx}\)
\begin{EQA}
%\alpha&\geq&
&&
\Pb\left(\bigcup\nolimits_{k=1}^{\numK}
\Bigl\{
\sqrt{2\Lb_{k}(\thetavbt_{k})-2\Lb_{k}(\thetavt_{k})}>z_{k}\Bigr\}
\right)
\\&\geq&
\Pb\left(\bigcup\nolimits_{k=1}^{\numK}
\Bigl\{
\|\xivb_{k}(\thetavs_{k})\|
> z_{k}+\Deltab_{\wind,k}(\rr_{0,k},\yy_{2})+\Deltab_{\xi,k}(\rr_{0,k},\yy_{2})
\Bigr\}
\right)
\label{ineq:lsmb3}
\\&\geq&
\P\left(\bigcup\nolimits_{k=1}^{\numK}
\Bigl\{
\|\tilde{\xiv}_{k}\|
> z_{k}-\Delta_{\wind,k}(\rr_{0,k},\yy_{2})
\Bigr\}
\right)-\Delta_{\operatorname{b},\,\total}
\label{ineq:lsmb1}
\\&\geq&
\label{ineq:lsmb2}
\P\left(\bigcup\nolimits_{k=1}^{\numK}
\Bigl\{
\|\xiv_{k}\|
> z_{k}-\Delta_{\wind,k}(\rr_{0,k},\yy_{2})
\Bigr\}
\right)-\Delta_{\operatorname{b},\,\total}
\\&\geq&
\P\left(\bigcup\nolimits_{k=1}^{\numK}
\Bigl\{
\sqrt{2L_{k}(\thetavt_{k})-2L_{k}(\thetavs_{k})}> z_{k}
\Bigr\}
\right)-\Delta_{\operatorname{b},\,\total},
\end{EQA}
here \(\tilde{\xiv}_{k}\eqdef \bigl(D_{k}^{-1}H_{k}^{2}D_{k}^{-1}\bigr)^{1/2}(\Var\xiv_{k})^{-1/2}\xiv_{k}\), and \(\Delta_{\operatorname{b},\total}\) is given below. Using the same notations as in the proof of Lemma \ref{prop:Cum_norms_xivk}, we have
\begin{EQA}
\tilde{\Xi}&\eqdef&\left(\tilde{\xiv}_{1}^{\T},\dots,\tilde{\xiv}_{\numK}^{\T}\right)^{\T}
\\
&=& \bigl(\bbD^{-1}\bbH^{2}\bbD^{-1}\bigr)^{1/2}(\Var\Xi)^{-1/2}
\Xi,
\end{EQA}
and by Theorem \ref{lemma_ncbi} and by conditions \ref{itm:Ik}, \ref{itm:IBk}, it holds with probability \(\geq 1-\ex^{-\yy}\)
\begin{EQA}
\bigl\|\Var\tilde{\Xi}- \Varb\Xib
 \bigr\|_{\max}&=& \bigl\|\bbD^{-1}\bigl(\bbH^{2}-\bbVb^{2} \bigr)\bbD^{-1}\bigr\|_{\max}
\\ &\leq&
\deltabbVb^{2}(\yy) \bigl\|\bbD^{-1}\bbH^{2}\bbD^{-1}\bigr\|_{\max}
\\ &\leq& \deltabbVb^{2}(\yy) (\bbgmu^{2}+\bbgmu_{B}^{2}).
\end{EQA}
Thus, inequality \eqref{ineq:lsmb1} follows from Proposition \ref{thm:Cum_norms} applied to the sets of vectors \(\xivb_{1}(\thetavs_{1}),\dots,\xivb_{\numK}(\thetavs_{\numK})\) and \(\tilde{\xiv}_{1},\dots,\tilde{\xiv}_{\numK}\). The error term \(\Delta_{\operatorname{b},\total}\) is equal to \(\Delta_{\total}\) from Theorem \ref{sect:proofsofmainres} (see \eqref{def:Delta_total_1}, \eqref{def:localzk}) with \(\DeltaSmB^{2}:=0\), thus
\begin{EQA}
\label{ineq:Delta_b_total}
\Delta_{\operatorname{b},\total}&\leq& 25\CONST\left(\frac{\dimp_{\max}^{3}}{n}\right)^{1/8}\log^{9/8}(K)\log^{3/8}(n\dimtotal)
\left\{\left(\bbgmu^{2}+\bbgmu_{B}^{2}\right)
\left(1+\deltabbVb^{2}(\yy)\right)
\right\}^{3/8}.
\end{EQA}
Inequality \eqref{ineq:lsmb2} is implied by definitions of \(\tilde{\xiv}_{k}\) and matrices \(H_{k}^{2}, V_{k}^{2}\), indeed:
\begin{EQA}
&&
\left\|\bigl(D_{k}^{-1}H_{k}^{2}D_{k}^{-1}\bigr)^{-1/2}\Var\xiv_{k}\bigl(D_{k}^{-1}H_{k}^{2}D_{k}^{-1}\bigr)^{-1/2}\right\|
\\&\leq&
\left\|\bigl(D_{k}^{-1}H_{k}^{2}D_{k}^{-1}\bigr)^{1/2}
\bigl(D_{k}
H_{k}^{-2}
V_{k}^{2}
H_{k}^{-2}
D_{k}\bigr)
\bigl(D_{k}^{-1}H_{k}^{2}D_{k}^{-1}\bigr)^{1/2}\right\|
\\&\leq& 1,
\end{EQA}
therefore, \(\|\tilde{\xiv}_{k}\|\geq \|\xiv_{k}\|\).

The second inequality in the statement is proven similarly to \eqref{ineq:qqq_1}. It implies together with Theorem \ref{thm:resbias} the rest part of the statement having
\begin{EQA}
\label{def:Delta_b_qqq}
\Delta_{\operatorname{b},\qqq}&\eqdef& \Delta_{\operatorname{b},\,\total}+\DeltaacAT.
\end{EQA}
\end{proof}

\phantomsection
\addcontentsline{toc}{section}{References}
\bibliography{exp_ts,listpubm-with-url,references_1}
%\bibliography{../bib/exp_ts,../bib/listpubm-with-url,../bib/references_1}

\end{document}